\theoremstyle{plain}
\newtheorem{thm}{Theorem}[section]
\newtheorem{prop}[thm]{Proposition}
\newtheorem{lem}[thm]{Lemma}
\newtheorem{cor}[thm]{Corollary}
\theoremstyle{definition}
\newtheorem{defn}[thm]{Definition}
\theoremstyle{remark}
\newtheorem{remark}[thm]{Remark}
\newcommand{\mr}[1]{\mathrm{#1}}
\newcommand{\citestacks}[1]{\cite[\href{https://stacks.math.columbia.edu/tag/#1}{Tag #1}]{stacks-project}}
\newcommand{\br}[1]{\left( #1 \right)}
\newcommand{\dbr}[1]{\left( \! \left( #1 \right) \!  \right)}
\newcommand{\dsq}[1]{\left\llbracket #1 \right\rrbracket}
\newcommand{\se}{\subseteq}
\newcommand{\ph}{\varphi}
\newcommand{\xdashrightarrow}[2][]{\ext@arrow 0359\rightarrowfill@@{#1}{#2}}
\def\rightarrowfill@@{\arrowfill@@\relax\relbar\rightarrow}
\def\arrowfill@@#1#2#3#4{%
	$\m@th\thickmuskip0mu\medmuskip\thickmuskip\thinmuskip\thickmuskip
	\relax#4#1
	\xleaders\hbox{$#4#2$}\hfill
	#3$%
}
\newcommand{\Ec}{\mathcal{E}}
\newcommand{\Fc}{\mathcal{F}}
\newcommand{\Gc}{\mathcal{G}}
\newcommand{\Ic}{\mathcal{I}}
\newcommand{\Jc}{\mathcal{J}}
\newcommand{\Kc}{\mathcal{K}}
\newcommand{\Lc}{\mathcal{L}}
\newcommand{\Oc}{\mathcal{O}}
\newcommand{\Xc}{\mathcal{X}}
\newcommand{\OX}{\Oc_X}
\newcommand{\OS}{\Oc_S}
\newcommand{\OXS}{\Oc_{X_S}}
\newcommand{\ODS}[1]{\Oc_{D_{#1,S}}}
\newcommand{\ODnS}{\ODS{n}}
\newcommand{\A}{\mathbb{A}}
\newcommand{\D}{\mathbb{D}}
\newcommand{\F}{\mathbb{F}}
\newcommand{\N}{\mathbb{N}}
\newcommand{\Q}{\mathbb{Q}}
\newcommand{\Z}{\mathbb{Z}}
\renewcommand{\P}{\mathbb{P}}
\newcommand{\Fq}{{\F_q}}
\newcommand{\GaL}{\mathbb{G}_{a, \Lc}}
\newcommand{\GaR}{\mathbb{G}_{a, R}}
\newcommand{\Eb}{\mathbf{E}}
\newcommand{\Gb}{\mathbf{G}}
\newcommand{\Hb}{\mathbf{H}}
\newcommand{\af}{\mathfrak{a}}
\newcommand{\ff}{\mathfrak{f}}
\newcommand{\pf}{\mathfrak{p}}
\DeclareMathOperator{\id}{id}
\DeclareMathOperator{\coker}{coker}
\DeclareMathOperator{\Def}{Def}
\DeclareMathOperator{\End}{End}
\DeclareMathOperator{\Hom}{Hom}
\DeclareMathOperator{\Spec}{Spec}
\DeclareMathOperator{\Spf}{Spf}
\DeclareMathOperator{\Frob}{Frob}
\newcommand{\FrobS}{\Frob_S}
\newcommand{\EEnd}{\mathcal{E}nd}
\DeclareMathOperator{\Sym}{Sym}
\DeclareMathOperator{\GL}{GL}
\DeclareMathOperator{\Bun}{Bun}
\DeclareMathOperator{\Sht}{Sht}
\DeclareMathOperator{\Dmod}{Dr-Mod}
\newcommand{\Nilp}{\mathcal{N}ilp}
\newcommand{\Ell}{\Ec\ell\ell}
\DeclareMathOperator{\Gr}{Gr}
\DeclareMathOperator{\Dr}{Dr}
\newcommand{\sMatrix}[4]{\br{\begin{smallmatrix}#1 & #2\\#3 & #4 \end{smallmatrix}}}
\begin{document}

	\title{Compactification of Level Maps of Moduli Spaces of {Drinfeld~Shtukas}}
	\author[P.\,Bieker]{Patrick Bieker}
	\address{ Fakultät für Mathematik, Universität Bielefeld, Postfach 100 131, 33501 Bielefeld, Germany}
	\email{pbieker@math.uni-bielefeld.de}

		\begin{abstract}
			We define Drinfeld level structures for Drinfeld shtukas of any rank and show that their moduli spaces are regular and admit finite flat level maps.
			In particular, the moduli space of Drinfeld shtukas with Drinfeld $\Gamma_0(\mathfrak{p}^n)$-level structures provides a good integral model and a relative compactification of the moduli space of shtukas with naive  $\Gamma_0(\mathfrak{p}^n)$-level defined using shtukas for dilated group schemes.
		\end{abstract}

	\maketitle
	
	\section{Introduction}

Moduli spaces of shtukas are function field analogues of Shimura varieties and thus play a central role in the Langlands programme over function fields.
While a lot of progress has been made in understanding the geometry of moduli spaces of shtukas for general reductive groups with parahoric level, compare for example \cite{Rad2015}, \cite{Rad2017}, \cite{Breutmann2019}, \cite{Yun2019} and \cite{Zhu2014}, little is known for deeper level structures.

The goal of this work is to study deeper level structures in the case of Drinfeld shtukas (that means $\GL_{r}$-shtukas for some fixed $r\geq 1$ with two legs bounded by the minuscule cocharacters $(0, \ldots, 0, -1)$ and $(1, 0, \ldots, 0)$). 
More explicitly, we define for all $n \geq 0$ an analogue of $\Gamma_0(p^n)$-level structures on elliptic curves that behaves well at the place of bad reduction.
Let us explain the construction in more detail. 

Let $X$ be a smooth, projective and geometrically connected curve over a finite field $\Fq$.
Let us fix an $\Fq$-rational point $\infty$ of $X$ and let us denote $X' = X \setminus \{\infty\}$.
A \emph{Drinfeld shtuka} of rank $r$ (with pole fixed at $\infty$) over a scheme $S$ is given by the data
$\underline{\Ec} = (x, \Ec, \ph),$
	where
	\begin{itemize}
		\item $x \in X'(S)$ is the characteristic section (also called leg or paw),
		\item $\Ec$ is a rank $r$ vector bundle on $X_S$ and
		\item $\ph \colon \sigma^\ast \Ec|_{X_S \setminus (\Gamma_{x} \cup \Gamma_\infty)} \xrightarrow{\cong} \Ec|_{X_S \setminus (\Gamma_{x}\cup \Gamma_\infty)}$ is an isomorphism of $\OXS$-modules away from the graphs $\Gamma_{x}$ of $x$ and $\Gamma_{\infty}$ of $\infty$, such that $\ph$ extends to a map 
		$\ph \colon \sigma^\ast \Ec|_{X'} \to \Ec|_{X'} $
		with $\coker(\ph)$ supported on $\Gamma_{x}$ and invertible on its support, and
		$\ph^{-1}$ extends to a map 
		$\ph^{-1} \colon \Ec|_{X \setminus \Gamma_x} \to \sigma^{\ast} \Ec|_{X \setminus \Gamma_x}$
		with $\coker(\ph^{-1})$ supported on $\Gamma_\infty$ and invertible on its support.
	\end{itemize}
We denote by $\Sht_{r}$ the stack of Drinfeld shtukas of rank $r$, it is a Deligne-Mumford stack locally of finite type over $\Fq$. The projection to the characteristic section defines a map $\Sht_{r} \to X'$, which is smooth of relative dimension $(2r-2)$. 
Moduli spaces of Drinfeld shtukas are used with great success in \cite{Lafforgue2002} to establish the Langlands correspondence for $\GL_{r}$.

Drinfeld shtukas in this sense are a generalisation of Drinfeld modules (compare Proposition \ref{propFunDModSht} for a precise statement).
Moduli spaces of rank 2 Drinfeld modules can be thought of as a function field analogue of the moduli space of elliptic curves.
 
In the case of modular curves with $\Gamma_1$- and $\Gamma_0$-level structures, the reduction modulo $p$ was studied by \cite{Deligne1973a} using a normalisation procedure.
\cite{Katz1985} gave an explicit moduli description of an integral model using Drinfeld level structures.
This notion goes back to \cite{Drinfeld1976}, who first introduced this notion of level structure to study full level structures of Drinfeld modules.
The analogy to the modular curve suggests a strategy to construct good integral models in our shtuka setting, in other words, to define a good notion of level structure that also behaves as desired at a place of bad reduction: to use Drinfeld level structures. 

Let us fix a $\Fq$-rational point $0$ of $X$ and denote by $\Oc_0$ (respectively $\pf = \pf_0$) the completion of the local ring of $X$ and $0$ (respectively its maximal ideal).
In order to define Drinfeld level structures for shtukas, we associate to a Drinfeld shtuka its \emph{scheme of $\pf^n$-torsion points} $\underline{\Ec}[\pf^n]$. This was essentially constructed in \cite{Drinfeld1987a} and shares similar properties with the scheme of $\pf^n$-torsion points of a Drinfeld module (respectively the scheme of $p^n$-torsion points of an elliptic curve).  
It is an $\Oc_0/\pf^n$-module scheme which is finite locally free of rank $q^{nr}$ over $S$. 
Moreover, we show that \'etale locally on $S$ we find an embedding of $\underline{\Ec}[\pf^n]$ as a Cartier divisor into $\A^1_S$ by adapting a similar result for the $p^n$-torsion of one-dimensional $p$-divisible groups of \cite{Frimu2019} (compare Proposition \ref{propEmbItor} and  Remark \ref{remEmbPTorFrimu}).

This allows us to define Drinfeld $\Gamma_0(\pf^n)$-level structures on Drinfeld shtukas of rank 2 as follows.
\begin{defn}[compare Definition \ref{defnGamma0Sht} for general rank]
	A \emph{$\Gamma_0(\pf^n)$-level structure} on a rank 2 shtuka $\underline{\Ec}$ is a finite locally free closed subscheme $\Hb \se \underline{\Ec}[\pf^n]$ of rank $q^n$ that admits a generator fppf-locally on $S$, that means an $\Oc_0/\pf^n$-linear map
	$ \iota \colon (\pf^{-n}/\Oc_0) \to \underline{\Ec}[\pf^n](S)$
	such that after the choice of an embedding $\Ec[\pf^n] \hookrightarrow \A^1_S$ we have
	$$ \sum_{\alpha \in \pf^{-n}/\Oc_0} [\iota(\alpha)] = \Hb \qquad \text{and} \qquad \sum_{\alpha \in \pf^{-1}/\Oc_0} [\iota(\alpha)] \se \underline{\Ec}[\pf]$$
	as Cartier divisors in $\A^1_S$.
\end{defn}
Note the subtle difference compared to the definition of $\Gamma_0(p^n)$-level structures in \cite{Katz1985}: in the setting of elliptic curves the second condition is automatic. However, in our setting the second condition is in particular necessary to get well-defined level maps, see Remark \ref{remKMlevelCounterEx} for an explicit counterexample.

Adapting the theory of Drinfeld level structures for elliptic curves in \cite{Katz1985}, we obtain the following.
\begin{thm}[compare Theorem \ref{thmG0Reg}]
	Let $r \geq 1$ and $n \geq 0$ be integers.
	\begin{enumerate}
		\item The moduli stack $\Sht_{r, \Gamma_0(\pf^n)}$ of rank $r$ Drinfeld shtukas with Drinfeld $\Gamma_0(\pf^n)$-level structures is representable by a regular Deligne-Mumford stack locally of finite type over $\Fq$. 
		\item The level map $\Sht_{r, \Gamma_0(\pf^n)} \to \Sht_r$ is schematic, finite and flat. Moreover, it is finite \'etale away from $\pf$. 
	\end{enumerate}
\end{thm}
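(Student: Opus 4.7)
The plan is to reduce the theorem to an explicit local problem via Proposition \ref{propEmbItor}. Étale-locally on a test scheme $S\to\Sht_r$, the finite flat module scheme $\underline{\Ec}[\pf^n]$ embeds as an effective Cartier divisor of degree $q^{2n}$ in $\A^1_S$, cut out by a monic polynomial $P(T)\in\Gamma(S,\Oc_S)[T]$. In such a chart, a $\Gamma_0(\pf^n)$-level structure is encoded by a monic degree-$q^n$ factor $f(T)\mid P(T)$ admitting a generator fppf-locally and whose vanishing is contained in $\underline{\Ec}[\pf]$ in the Cartier-divisor sense. This reframes the level-structure problem as a factorisation problem for $P$, amenable to direct computation following the strategy of \cite{Katz1985}.

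\medskip

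Representability together with the non-regularity statements of the theorem are then largely formal. The functor of degree-$q^n$ closed subschemes of the finite locally free $\Sht_r$-scheme $\underline{\Ec}[\pf^n]$ is representable by the relative Hilbert scheme, which is finite over $\Sht_r$; the cyclicity condition and the $\pf$-compatibility are closed conditions (expressible as equalities of effective Cartier divisors in the chart above) and hence cut out a closed substack $\Sht_{r,\Gamma_0(\pf^n)}$. This yields the schematic, finite level morphism, and the Deligne-Mumford property is inherited from $\Sht_r$. Over the open locus where the characteristic section avoids the closed point $0$, the scheme $\underline{\Ec}[\pf^n]$ is finite étale of rank $q^{rn}$; cyclic subschemes of rank $q^n$ with the required $\pf$-compatibility are then classified by a discrete datum, making the level map finite étale there.

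\medskip

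The main obstacle is regularity of $\Sht_{r,\Gamma_0(\pf^n)}$ at the boundary above $\pf$. I would prove this by adapting the Katz-Mazur analysis of cyclic subgroups from \cite{Katz1985}. At a closed geometric point of the fibre over $\pf$, one chooses formal parameters for $\underline{\Ec}[\pf^n]$ so that $P(T)$ takes a canonical form compatible with the $\pf$-filtration. The factorisations $P=fg$ corresponding to $\Gamma_0(\pf^n)$-structures are then parametrised by explicit affine models whose completed local rings can be shown to be regular, by exhibiting a regular system of parameters built from pullbacks of parameters on $\Sht_r$ together with coordinates governing the coefficients of $f$. Once regularity is established, flatness follows from the miracle flatness criterion: both source and target are regular Deligne-Mumford stacks of the same dimension over $\Fq$, and the level map is quasi-finite.
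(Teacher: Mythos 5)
Your outline captures the general Katz--Mazur spirit but has a structural gap at the heart of the regularity argument. You propose to prove regularity of $\Sht_{r,\Gamma_0(\pf^n)}$ directly, by ``exhibiting a regular system of parameters built from pullbacks of parameters on $\Sht_r$ together with coordinates governing the coefficients of $f$.'' This is precisely what Katz--Mazur (and the paper) are at pains to \emph{avoid}, and for good reason: the completed local ring of $\Sht_{r,\Gamma_0(\pf^n)}$ at a supersingular point in characteristic $\pf$ does not admit such a simple description. The level map is finite flat but not smooth, so the completed local ring is a finite extension of the local ring of $\Sht_r$ cut out by relations that are not readily seen to define a regular quotient. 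The paper instead first proves regularity of the \emph{$\Gamma_1(\pf^n)$-type} moduli stack $\Sht_{r,\Gamma_1(\pf^n)}$ (Theorem \ref{thmIstructShtReg} and Proposition \ref{propShtBalStructReg}), where Drinfeld's explicit deformation theory of formal modules with level structure actually does produce a regular system of parameters. It then establishes (Theorem \ref{thmMainThmCyc}) that the scheme of generators $\Hb^\times$ of a cyclic subgroup is finite \emph{locally free}, so the forgetful map $\Sht_{r,\Gamma_1(\pf^n)}\to\Sht_{r,\Gamma_0(\pf^n)}$ is faithfully flat. Regularity of $\Sht_{r,\Gamma_0(\pf^n)}$ is then obtained by \emph{descent of regularity along a faithfully flat cover}, not by direct computation. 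Your proposal never introduces this cover, and without it the ``regular system of parameters'' you invoke is not supplied by any cited method.

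A second, related gap: you treat cyclicity and the $\pf$-compatibility as evidently closed conditions ``expressible as equalities of Cartier divisors.'' In fact the assertion that cyclicity is a closed condition is Theorem \ref{thmMainThmCyc} (\ref{thmMainThmCycClosed}) in the paper, and it is not formal. Its proof runs through the identification $D=\Hb^\times$ (comparing the standard subscheme $D$ generated by the restriction of the generator to $(\pf^{-n}/\Oc_0)^\times$ with the scheme of all generators) and relies on regularity of the $\Gamma_1$-type deformation rings at supersingular points, followed by a flattening-stratification argument. So that piece cannot be taken for granted before the $\Gamma_1$-regularity input is in place. Finally, your local model is implicitly rank $2$ (you work with a single monic factor $f$ of degree $q^n$ dividing a degree-$q^{2n}$ polynomial), whereas the theorem is for general $r$, which requires a full flag of cyclic subquotients $\Hb_1\subseteq\cdots\subseteq\Hb_{r-1}\subseteq\underline{\Ec}[\pf^n]$ and the corresponding balanced $\Gamma_1$-level structures; you would need to indicate how the induction on the flag is organised, as in Proposition \ref{propShtBalStructReg} and Corollary \ref{corBalStructCycSht}.
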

As in the case of elliptic curves in \cite[Chapter 5]{Katz1985}, we first show the corresponding results for $\Gamma_1(\pf^n)$-level structures. 
The main step in the proof of the $\Gamma_1(\pf^n)$ case is the study of the deformation theory at supersingular points, where we rely on results of \cite{Drinfeld1976}. 

Using the flatness of the moduli space, we construct a compatible system of level maps 
$$\Sht_{r, \Gamma_0(\pf^n)} \to \Sht_{r, \Gamma_0(\pf^m)}$$
that are finite and flat for all $m \leq n$.
The level maps allow us to interpret our construction in the following way in terms of the combinatorics of the Bruhat-Tits building of $\GL_{r}$ over the fraction field $K_0$ of $\Oc_0$.
Let us denote by  $\Sht_{r, \Omega}$ the moduli stack of shtukas for the Bruhat-Tits group scheme $\GL_{r, \Omega}$ bounded by $(0,\ldots,0,-1)$ and $(1, 0, \ldots,0)$ corresponding to the standard simplex $\Omega$ of sidelength $n$ in the standard apartment of the Bruhat-Tits building.
To such a bounded $\GL_{r, \Omega}$-shtuka we can associate a Drinfeld shtuka with Drinfeld $\Gamma_0(\pf^n)$-level structure, this is explained in more detail below.
Moreover, using the level maps, we get compatible system of maps $\Sht_{r, \Gamma_0(\pf^n)} \to  \Sht_{r,\ff}$ to the moduli space of shtukas for Bruhat-Tits group schemes for all facets $\ff \prec \Omega$ contained in the closure of $\Omega$.
\begin{thm}[compare Theorem \ref{thmDrinfeldClosureBT}]
	\label{thmDrinfeldClosureBTIntro}
	The map $\Sht_{r,\Omega} \to \varprojlim_{\ff \prec \Omega} \Sht_{r,\ff}$ is a quasi-compact open immersion and an isomorphism away from $0$. Its schematic image in the sense of \cite{Emerton2021} is 
	$\overline{\Sht}_{r, \Omega} = \Sht_{r, \Gamma_0(\pf^n)}$
	via the maps 
	$$\Sht_{r,\Omega} \hookrightarrow \Sht_{r, \Gamma_0(\pf^n)} \hookrightarrow  \varprojlim_{\ff \prec \Omega} \Sht_{r,\ff}$$
	constructed above.
	In the parahoric case $n = 1$, the map $\Sht_{r,\Omega} \to \Sht_{r, \Gamma_0(\pf)}$ is an isomorphism.
\end{thm}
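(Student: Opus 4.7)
The plan is to first produce the factorisation $\Sht_{r,\Omega} \to \Sht_{r,\Gamma_0(\pf^n)} \to \varprojlim_{\ff \prec \Omega}\Sht_{r,\ff}$, then show the composite is a quasi-compact open immersion that becomes an isomorphism away from $0$, and finally leverage the flatness and regularity of $\Sht_{r,\Gamma_0(\pf^n)}\to\Sht_r$ from the previous theorem to identify $\Sht_{r,\Gamma_0(\pf^n)}$ with the schematic image. A $\GL_{r,\Omega}$-shtuka can be presented as a chain of vector bundles $\Ec_0 \subset \Ec_1 \subset \cdots \subset \Ec_n \subset \pf^{-n}\Ec_0$ on $X_S$ with compatible Frobenius structures bounded by $(0,\ldots,0,1)$ and $(1,0,\ldots,0)$. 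The quotient $\Ec_n/\Ec_0$ is a finite locally free $\Oc_0/\pf^n$-submodule of $\underline{\Ec_0}[\pf^n]$; checking that it satisfies the Cartier divisor conditions defining a $\Gamma_0(\pf^n)$-level structure is local on $S$ and reduces, via the \'etale local embedding into $\A^1_S$ from Proposition \ref{propEmbItor}, to a verification on successive quotients of the chain. Conversely, given a facet $\ff \prec \Omega$, forgetting the lattices not at vertices of $\ff$ produces a $\GL_{r,\ff}$-shtuka functorially in $\ff$, yielding the second arrow.

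Over $X \setminus \{0\}$ the Bruhat-Tits group scheme $\GL_{r,\Omega}$ equals the constant $\GL_r$, so each $\Sht_{r,\ff}|_{X\setminus\{0\}}$ coincides with $\Sht_r|_{X\setminus\{0\}}$ and the limit trivialises, giving the isomorphism away from $0$. For the open immersion, the composite is monic since the full chain of length $n$ is visibly recoverable from its sub-chains indexed by facets, and the locus in the limit where a compatible collection of sub-chains actually forms a chain of the correct type is open (compatibility is a closed condition, but the genericity of the chain type is open). Quasi-compactness is inherited from $\Sht_{r,\Omega}$ being locally of finite type combined with the finiteness of the level maps $\Sht_{r,\Gamma_0(\pf^n)} \to \Sht_{r,\ff}$ established in the previous theorem.

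To identify the schematic image, I would use that $\Sht_{r,\Gamma_0(\pf^n)}$ is regular and flat over $\Sht_r$, hence in particular reduced and without embedded components. Since the open immersion $\Sht_{r,\Omega} \hookrightarrow \Sht_{r,\Gamma_0(\pf^n)}$ is an isomorphism over $X\setminus\{0\}$, its image is fibrewise dense away from $0$, and by flatness it is schematically dense globally, so $\Sht_{r,\Gamma_0(\pf^n)}$ agrees with the schematic image inside the separated limit. For $n=1$, a $\Gamma_0(\pf)$-level structure on $\underline{\Ec}$ is exactly a rank $q$ subgroup of $\underline{\Ec}[\pf]$, which corresponds bijectively to an intermediate lattice $\Ec \subset \Ec' \subset \pf^{-1}\Ec$ of the prescribed relative length; the two shtuka-boundedness conditions match, so the resulting morphism $\Sht_{r,\Omega} \to \Sht_{r,\Gamma_0(\pf)}$ is an isomorphism of functors. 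The main obstacle will be the schematic density statement at the fibre over $0$: away from $0$ density is immediate, but at $0$ one must rule out spurious components in $\Sht_{r,\Gamma_0(\pf^n)}|_0$ not hit by the chain locus, which rests on the explicit local deformation theory at supersingular points for Drinfeld level structures in the spirit of \cite{Katz1985} that already underlies the regularity result invoked above.
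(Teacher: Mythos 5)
Your overall strategy matches the paper's: factor through $\Sht_{r,\Gamma_0(\pf^n)}$, show the composite is a quasi-compact open immersion which is an isomorphism away from $0$, and conclude schematic density from flatness of $\Sht_{r,\Gamma_0(\pf^n)}$ over $X'$. The flatness argument for schematic density in your last paragraph is correct and is exactly what the paper does — once flatness is in hand, there can be no associated point of $\Sht_{r,\Gamma_0(\pf^n)}$ supported over $0$, so the open locus is automatically schematically dense; no further deformation-theoretic input is needed beyond what went into flatness.

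There are, however, two substantive gaps. First, the claim that the subscheme $\Dr_q(\Ec_n/\Ec_0)$ (more precisely, each successive rank-one $\ODnS$-quotient $\underline{\Lc}_i$) is $\pf^n$-\emph{cyclic} is not a formality that reduces to the Cartier divisor embedding. This is precisely Lemma~\ref{lemLinBunCyc} in the paper, and its proof requires an explicit fppf-local construction of a $(\pf^{-n}/\Oc_0)$-generator by successively solving Artin--Schreier-type equations $t^q - \alpha_0 t = (\text{lower terms})$, coupled with \cite[Lemma~1.11.3]{Katz1985} to pass from $\pf^i$-torsion to $\pf^{i+1}$-torsion. Simply invoking Proposition~\ref{propEmbItor} does not supply the generator, which is the content you need for the first arrow to land in $\Sht_{r,\Gamma_0(\pf^n)}$ rather than merely in the stack of chains of isogenies. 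Second, "forgetting the lattices not at vertices of $\ff$" constructs the map $\Sht_{r,\Omega}\to\Sht_{r,\ff}$, not the second arrow $\Sht_{r,\Gamma_0(\pf^n)}\to\varprojlim_{\ff\prec\Omega}\Sht_{r,\ff}$. A Drinfeld $\Gamma_0(\pf^n)$-level structure is a flag of $\pf^n$-cyclic submodule schemes, not a lattice chain, so there is no lattice to forget; constructing the level maps $F_{\underline m,\tilde n,\tau}$ from Drinfeld level structures requires the canonical submodule schemes $\Hb_{\underline n}$ of Corollary~\ref{corCanSubModG0}, whose well-definedness itself relies on the regularity/flatness theorem. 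Skipping this leaves the factorization $\Sht_{r,\Omega}\hookrightarrow\Sht_{r,\Gamma_0(\pf^n)}\hookrightarrow\varprojlim\Sht_{r,\ff}$ incomplete, and without it the identification of the schematic image has no target to compare against. A smaller point: the lattice chain you wrote, $\Ec_0\subset\cdots\subset\Ec_n\subset\pf^{-n}\Ec_0$, has $n+1$ terms, which only matches the $\GL_2$ picture; for general $r$ the simplex $\Omega$ has $r$ vertices (at various distances up to $n$), and the openness criterion in the paper is phrased in terms of the cokernels of the specific isogenies $\underline{\Ec}_{(n,\ldots,n,0,\ldots,0)}\hookrightarrow\underline{\Ec}_{(n,\ldots,n,n,0,\ldots,0)}$ being locally free of rank one over $\ODnS$, a condition whose openness is checked by a Nakayama argument rather than the "compatibility is closed, genericity is open" heuristic you sketch.
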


Note that this also shows that $\overline{\Sht}_{r, \Omega} = \Sht_{r, \Gamma_0(\pf^n)}$ is the flat closure of the generic fibre inside $\varprojlim_{\ff \prec \Omega} \Sht_{r,\ff}$. 
Theorem \ref{thmDrinfeldClosureBTIntro} suggests that a candidate for a good integral model for the moduli spaces of shtukas for a general reductive group with deep Bruhat-Tits level structure (i.e. level structures generalising $\Gamma_0(\pf^n)$-level structures in the $\GL_r$-case) might be the closure of the moduli stack of shtukas for the Bruhat-Tits group scheme inside the limit of all moduli stacks with corresponding parahoric level.
We study this more general construction in \cite{Bieker2022a}.

We can also interpret this result as follows.  We denote for $n \in \N$ by $D_{n} = n[0] \se X$ the effective Cartier divisor in $X$ defined by (multiples of) the point $0$. Then $D_n = \Spec(\Oc_0/\pf^n)$. 
For a Drinfeld shtuka $\underline{\Ec} \in \Sht_r(S)$ over an ($\Fq$-)scheme $S$, we denote by $\underline{\Ec}|_{D_{n,S}}$ its pullback to $D_{n,S}$.  
A \emph{naive} $\Gamma_0(\pf^n)$-level structure on a rank 2 shtuka $\underline{\Ec}$ is given by a quotient of $\Oc_{D_{n,S}}$-modules $\Ec \twoheadrightarrow \Lc$ such that $\Lc$ is finite locally free of rank 1 on $\Oc_{D_{n,S}}$ and such that $\ph$ descends to a map $\sigma^* \Lc \to \Lc$.
Following \cite{Mayeux2020} we can view Drinfeld shtukas with naive $\Gamma_0(\pf^n)$-level structures as shtukas for the Bruhat-Tits group scheme $\GL_{r, \Omega}$ bounded by $\underline{\mu} = ((0,\ldots,0,-1),(1,0,\ldots,0))$. For a precise definition see Section \ref{secBT} below.

By the analogy with the modular curve, in the fibre of the moduli space of shtukas with $\Gamma_0(\pf^n)$-level structures over $0$ we should expect to find  $n+1$ components intersecting at supersingular points. However, it can be shown that in the non-parahoric case (in other words when $n \geq 2$), the fibre over 0 of the moduli space of Drinfeld shtukas with naive level structures as above only has two components which moreover do not intersect. In particular, its supersingular points are missing (compare Remark \ref{remNaiveLvlBad}).

Another way to phrase Theorem \ref{thmDrinfeldClosureBTIntro} is that we compactified the level map $ \Sht_{r, \Omega} \to \Sht_r$, which we saw in the example above is not proper in general, by factoring it in an open immersion with dense image followed by a finite (hence proper) and surjective map 
$$ \Sht_{r, \Omega} \hookrightarrow \Sht_{r, \Gamma_0(\pf^n)} \to \Sht_r.$$
In particular, in the rank 2 example $\Sht_{2, \Gamma_0(\pf^n)}$ acquires the supersingular points missing in $\Sht_{2, \Omega}$.

\subsection*{Organisation.}
This paper is organised as follows. In Section 2, we recall some facts on shtukas (in particular Drinfeld shtukas) and define naive $\Gamma_0(\pf^n)$-level structures. In Section 3, we explain the comparison with Drinfeld modules. This provides us with a way to associate to (global, local and finite) shtukas group schemes, which is what makes it possible to define Drinfeld level structures in the first place.
In particular, we construct the scheme of $\pf^n$-division points of a Drinfeld shtuka and study its properties.
In Sections 4 and 5, we define our Drinfeld ($\Gamma_1$- and $\Gamma_0$-type) level structures and prove the regularity of their moduli spaces. For this, we follow \cite{Katz1985}. In Section 6, we show that the Drinfeld level structures actually provide a good (relative) compactification of the moduli space with naive level structure  $ \Sht_{r, \Omega}$.

	\subsection*{Acknowledgements.}
	
	First of all I thank my advisor Timo Richarz for introducing me to this topic, his steady encouragement and his interest in my work. 
	I thank Paul Hamacher, Jo\~{a}o Louren\c{c}o, Lenny Taelman and Torsten Wedhorn for helpful conversations surrounding this work. 
	I thank Gebhard Böckle and Urs Hartl for their comments on a previous version of this paper.
	I thank Tasho Kaletha for sharing a preliminary version of \cite{Kaletha} with us.
	
	I thank the anonymous referee for their careful reading of the paper and their valuable comments and suggestions.
	
	This work was partially funded by the Deutsche Forschungsgemeinschaft (DFG, German Research Foundation) TRR 326 \textit{Geometry and Arithmetic of Uniformized Structures}, project number 444845124.

	\subsection*{Notation.}
	We fix the following notation. Let $\Fq$ be a finite field with $q$ elements, let $p$ be the characteristic of $\Fq$. All schemes will be $\Fq$-schemes unless otherwise specified.
Let $X$ be a smooth projective and geometrically connected curve over $\Fq$ with function field $K$.
For a closed point $x$ of $X$ we denote by $\Oc_{X,x}$ the local ring at $x$ and by $\Oc_x$ its completion. Moreover, we denote by $K_x$ the completion of $K$ at $x$.

We fix two distinct $\Fq$-rational points $\infty$ and $0$ of $X$ and denote by $\pf = \pf_0$ the maximal ideal in the complete local ring $\Oc_0$ at $0$.
Let us also fix a uniformiser $\varpi$ of $\pf$. 

We denote by $\sigma$ the (absolute) $q$-Frobenius endomorphism $\FrobS$ of some $\Fq$-scheme $S$, and also the map $\sigma = id_X \times \FrobS \colon X_S \to X_S$. It is always clear from context which map $\sigma$ is meant.

	\section{Moduli spaces of shtukas and naive $\Gamma_0(\pf^n)$-level structures}

Drinfeld shtukas were introduced in \cite{Drinfeld1987a} as \emph{elliptic sheaves} and were vastly generalised to arbitrary reductive groups or even general smooth affine group schemes by \cite{Varshavsky2004} and \cite{Rad2015}, respectively.

We introduce naive $\Gamma_0(\pf^n)$-level structures on Drinfeld shtukas, present how to encode these level structures in terms of Bruhat-Tits group schemes following \cite{Mayeux2020} and explain why the naive definition is  not appropriate for deeper level (that means for $n > 1$).

\subsection{Global shtukas.}
We recall the definitions of global shtukas and isogenies of Drinfeld shtukas.
We restrict ourselves to shtukas with two legs with one leg fixed at the point $\infty$.
	
\begin{defn}[\cite{Rad2015}]
	\label{defGShtuka}
	Let $G$ be a smooth affine group scheme on $X$. A \emph{global $G$-shtuka} over a scheme $S$ is given by the data
	\[
	\underline{\Ec}  = (x, \Ec, \ph \colon \sigma^\ast\Ec \dashrightarrow \Ec),
	\] 
	where 
	\begin{itemize}
		\item $x \in X'(S)$ is a section of $X' = X \setminus \{ \infty\}$,
		\item $\Ec$ is a $G$-bundle on $X_S$ and
		\item $\ph \colon \sigma^\ast \Ec|_{X_S \setminus (\Gamma_{x} \cup \Gamma_\infty)} \xrightarrow{\cong} \Ec|_{X_S \setminus (\Gamma_{x}\cup \Gamma_\infty)}$ is an isomorphism of $G$-bundles away from the graphs $\Gamma_{x}$ of $x$ and $\Gamma_{\infty}$ of $\infty$.
	\end{itemize}
	The point $x$ is called \emph{characteristic} or \emph{leg} of $\underline{\Ec}$.
	A map of $G$-shtukas is a tuple of maps of $G$-bundles compatible with the maps $\ph$ and $\ph'$.	
\end{defn}

Note that there are several ways to bound the zeros (and poles, respectively) of $G$-shtukas, and in general they are not equivalent (compare Remark \ref{remBounds}). 
We will mostly be interested in the case of Drinfeld shtukas, that means we consider $G = \GL_{r}$ (or corresponding Bruhat-Tits group schemes) and bounds given by the minuscule coweights $\underline{\mu} = ((0, \ldots, 0,-1),(1,0,\ldots,0))$. 
These admit the following explicit description.

\begin{defn}[\cite{Drinfeld1987a}]
	\label{defDSht}
	A \emph{Drinfeld shtuka} of rank $r$ over a scheme $S$ is given by the data
	$$\underline{\Ec} = (x, \Ec, \ph),$$
	where
	\begin{itemize}
		\item $x \in X'(S)$ is the characteristic section,
		\item $\Ec$ is a rank $r$ vector bundle on $X_S$ and
		\item $\ph \colon \sigma^\ast \Ec|_{X_S \setminus (\Gamma_{x} \cup \Gamma_\infty)} \xrightarrow{\cong} \Ec|_{X_S \setminus (\Gamma_{x}\cup \Gamma_\infty)}$ is an isomorphism of $\OXS$-modules away from the graphs $\Gamma_{x}$ of $x$ and $\Gamma_{\infty}$ of $\infty$, such that $\ph$ extends to a map 
		$\ph \colon \sigma^\ast \Ec|_{X'} \to \Ec|_{X'} $ 
		with $\coker(\ph)$ supported on $\Gamma_{x}$ and invertible on its support, and
		$\ph^{-1}$ extends to a map 
		$\ph^{-1} \colon \Ec|_{X \setminus \Gamma_x} \to \sigma^{\ast} \Ec|_{X \setminus \Gamma_x}$
		with $\coker(\ph^{-1})$ supported on $\infty$ and invertible on its support.
	\end{itemize}
	We denote by $\Sht_{r}$ the stack of Drinfeld shtukas of rank $r$. 
\end{defn} 

It is well known that $\Sht_{r}$ is a Deligne-Mumford stack locally of finite type over $\Fq$. It has a forgetful map $\Sht_{r} \to X'$ which is smooth of relative dimension $(2r-2)$, see \cite[Proposition 3.2 and 3.3]{Drinfeld1987a}.

In the context of Drinfeld shtukas, the characteristic section $x$ is often called the \emph{zero} of $\Ec$ while the second leg (that we fixed to be $\infty$) is the \emph{pole} of $\underline{\Ec}$.
By a slight abuse of notation we say that $\underline{\Ec}$ is \emph{in characteristic $\pf$} if its characteristic section factors through $0$. 
\begin{remark}
	\label{remdefDSht}
	Note that once the zero and the pole of the shtuka do not intersect, we can glue $\Ec$ and $\sigma^* \Ec$ along the isomorphism $\ph$ over $X_S \setminus (\Gamma_{x} \cup \Gamma_{x'})$ and obtain a vector bundle $\Ec'$ together with maps 
	$$ \ph' \colon \Ec \hookrightarrow \Ec' \hookleftarrow \sigma^\ast \Ec \colon \ph $$
	of $\OXS$-modules that satisfy the analogous conditions on the cokernels as in our definition of Drinfeld shtukas. 
	This notion is used in the original definition of Drinfeld shtukas in \cite{Drinfeld1987a} and does not require the two legs of the shtuka to be disjoint.
	We denote by $\Sht_{r, X^2} \to X^2$ the stack of Drinfeld shtukas in this sense. Then $\Sht_r = \Sht_{r, X^2} \times_{X^2}( \{\infty\} \times X')$.
\end{remark}

For $n \in \N$ we denote by $D_n = n[0] \se X$ the effective Cartier divisor in $X$. Note that $D_n = \Spec(\Oc_0/\pf^n)$.
\begin{defn}
	A map $f \colon \underline{\Ec}_1 \to \underline{\Ec}_2$ of Drinfeld shtukas is an \emph{isogeny} if $f$ is injective and $\coker(f)$ is finite locally free as $\OS$-module. Moreover, we say that $f$ is a $\pf^n$-isogeny, if the $\Oc_{X_S}$-module structure on $\coker(f)$ factors through $\Oc_{D_{n,s}}$, in other words, if $\coker(f)$ is $\pf^n$-torsion.
\end{defn}

In order to give a criterion which $\OS$-modules can arise as cokernels of $\pf^n$-isogenies, we use the following notion of a \emph{$\pf^n$-torsion shtuka}, which are an $\Oc_0/\pf^n$-linear analogue of the $\ph$-sheaves introduced by \cite{Drinfeld1987a}.
\begin{defn}
	A \emph{$\pf^n$-torsion shtuka} over $S$ is a pair $\underline{\Fc} = (\Fc, \ph)$ consisting of a quasi-coherent $\ODnS$-module $\Fc$ which is finite locally free as $\OS$-module and an $\ODnS$-module homomorphism $\ph \colon \sigma^* \Fc \to \Fc$. A map of $\pf^n$-torsion shtukas is a map of the underlying $\ODnS$-modules compatible with $\ph$. We say that a $\pf^n$-torsion shtuka is \emph{\'etale} if $\ph$ is an isomorphism.  
\end{defn}
In \cite{Hartl2019} Drinfeld's $\ph$-sheaves are also called finite shtukas. For our purposes however, the $\Oc_{D_n}$-module structure is central.

To a rank $r$ Drinfeld shtuka $\underline{\Ec} =(x, \Ec, \ph)$ over $S$ we associate its $\pf^n$-torsion shtuka defined as the pullback of $\Ec$ to the divisor $D_{n,S}$, which is more explicitly given by $\underline{\Ec}|_{D_{n,S}} = (\Ec|_{D_{n,S}}, \ph|_{D_{n,S}})$. Note that its underlying $\OS$-module has rank ${nr}$.
A second important class of examples of $\pf^n$-torsion shtukas are cokernels of $\pf^n$-isogenies of Drinfeld shtukas. Note that $\underline{\Ec}|_{D_{n,S}}$ is the cokernel of the $\pf^{n}$-isogeny $\underline{\Ec}(\pf^n) \hookrightarrow \underline{\Ec}$, where we denote by $\underline{\Ec}(\pf^n) = \underline{\Ec} \otimes \Oc(D_{n,S})$ the twist of $\underline{\Ec}$ by the divisor $D_n$.  

\subsection{Local shtukas.}
We can associate to Drinfeld shtukas its local counterparts called \emph{local shtukas} in the same way $p$-divisible groups are local analogues of abelian varieties. 
Local shtukas are introduced as Dieudonn\'e $\Fq\dsq{\varpi}$-modules in \cite{Hartl2005} as analogues of Dieudonn\'e modules of $p$-divisible groups and are studied and generalised for example by \cite{Hartl2011} and \cite{Rad2015}.

Let us denote by $\Fq\dsq{\zeta}$ the ring of formal power series in the formal variable $\zeta$ and by $\Nilp_{\Fq \dsq{\zeta}}$ the category of schemes $S$ over $\Fq\dsq{\zeta}$ such that $\zeta$ is locally nilpotent in $S$.
For a ring $R$, we denote by $R\dsq{\varpi}$ the ring of formal power series in the formal variable $\varpi$ and by $R\dbr{\varpi}$ the ring of formal Laurent series in $\varpi$ on $S$. 
Note that for $\Spec(R) \in \Nilp_{\F \dsq{\zeta}}$, we have $R\dbr{\varpi} = R\dsq{\varpi}\left[ \frac{1}{\varpi - \zeta}\right]$. 
We denote by $\sigma$ the endomorphism of $R \dsq{\varpi}$ (respectively $R \dbr{\varpi}$) that acts as the identity on $\varpi$ and as $b \mapsto b^q$ on $R$.


\begin{defn}
	Let $S = \Spec(R) \in \Nilp_{\Fq \dsq{\zeta}}$. A \emph{local shtuka} $\underline{\Gc} = (\Gc, \ph)$ of rank $r$ over $S$ is a locally free sheaf of $R\dbr{\varpi}$-modules $\Gc$ of rank $r$ together with an isomorphism $\ph \colon \sigma^*\Gc[\frac{1}{\varpi-\zeta}] \to \Gc[\frac{1}{\varpi-\zeta}]$ of $R\dbr{\varpi}$-modules. The local shtuka $\underline{\Gc}$ is called \emph{effective} if $\ph$ comes from a map $\tilde \ph \colon \sigma^*\Gc \hookrightarrow \Gc$ of $R\dsq{\varpi}$-modules and \emph{\'etale} if additionally $\tilde \ph$ is an isomorphism. 
	A \emph{quasi-isogeny} $f \colon \underline{\Gc} \to \underline{\Gc'}$ between local shtukas is an isomorphism $\Gc\left[ \frac{1}{\varpi} \right] \tilde{\to} \Gc'\left[ \frac{1}{\varpi} \right]$ of the underlying $R\dbr{\varpi}$-modules, which is compatible with $\ph$ and $\ph'$. 
\end{defn}

We say a local shtuka $\underline{ \Gc} = (\Gc, \ph)$ is \emph{bounded by $(1,0,\ldots,0)$} if it is effective, $\coker(\ph)$ is locally free of rank 1 as an $R$-module and $(\varpi-\zeta)$ annihilates $\coker(\ph)$. Similarly, we say $\underline{\Gc}$ is bounded by $(0, \ldots,0,-1)$ if $\ph^{-1}$ is bounded by $(1,0,\ldots,0)$ in the above sense. More precisely, $\underline{\Gc}$ is bounded by $(0, \ldots,0,-1)$ if $\ph^{-1}$ induces a map $\Gc \hookrightarrow \sigma^* \Gc$ with a cokernel which is locally free of rank 1 as $R$-module and which is annihilated by $(\varpi-\zeta)$.

\begin{remark}
	\label{remBounds}
	There are several ways to define bounds for local shtukas in general, cf. \cite[Definition 3.5 and Lemma 4.3]{Hartl2011} and \cite[Definition 4.8]{Rad2015}. For the Drinfeld case the bound in the sense of \cite{Rad2015} is also given more explicitly in \cite[Section 7.2]{Breutmann2019}. 
	Note that the straightforward generalisation of our definition above does not produce the correct notion for coweights $(d, 0, \ldots,0)$ with $d > 1$ by Example 8.4 in the arXiv version of \cite{Hartl2019}. In particular \cite[Example 4.5]{Hartl2011} and \cite[Example 2.1.8]{Zhu2016} seem to be problematic.
\end{remark} 

The Newton stratification for local shtukas is defined in \cite{Hartl2011} as an analogue of the Newton stratification for $F$-isocrystals in \cite{Rapoport1996}. 
\begin{defn}
	\label{defNewtonPoint}
	The \emph{Newton point} of of a local shtuka $\underline{\Gc}$ of rank $r$ over an algebraically closed field $\ell$ is $(u_1, \ldots, u_r) \in \Q^r$ with $u_1 \geq \ldots \geq u_r$ and the $u_i$ are the slopes associated to the corresponding isoshtuka $\underline{\Gc}\left[ \varpi^{-1} \right]$ by the Dieudonn\'e-Manin classification in the function field case \cite[Theorem 2.4.5]{Laumon1996}.
\end{defn}
We denote by $B(\GL_r)$ the \emph{Kottwitz set} of isomorphism classes of isoshtukas over an algebraically closed field $\ell$, in other words, the set of $\sigma$-conjugacy classes of invertible $(r \times r)$-matrices over $\ell \dbr{\varpi}$. The set $B(\GL_r)$ does not depend on the choice of $\ell$. 
Recall that the Newton map $\nu_{\GL_r} \colon B(\GL_r) \to \Q^r$ is already injective (this fails for general reductive groups). The Bruhat order on the space of cocharacters $X_\ast(T) \otimes_\Z \Q \cong \Q^r$ induces a partial order on $B(\GL_r)$. 
It is more explicitly given by 
$$ (u_1, \ldots, u_r) \leq (u_1', \ldots, u_r') \qquad \text{if} \qquad \sum_{j = 1}^i u_j \leq \sum_{j = 1}^i u_j' $$
for all $1 \leq i \leq r$ with equality in the case $i = r$. Moreover, for a dominant cocharacter $\mu$, in other words, $\mu = (\mu_1, \ldots, \mu_r) \in \Z^r$ with $\mu_1 \geq \ldots \geq \mu_r$,  we denote by $B(\GL_r, \mu) = \{[b] \in B(\GL_r) \colon \nu_{\GL_r} ([b]) \leq \mu\}$. 

For a local shtuka $\underline{\Gc}$ over a scheme $S = \Spec(R)$ and a geometric point $s$ of $S$ we denote by $[\underline{\Gc}_s]$ the associated point in $B(\GL_r)$ after pullback to $s$. 
Note that if  $\underline{\Gc}$ is bounded by $\mu = (1,0,\ldots,0)$, then $[\underline{\Gc}_s]$ is contained in $B(\GL_r, \mu)$ for all $s \in S$. 
The Newton point induces a stratification in the following way.
\begin{prop}[{\cite[Theorem 7.3]{Hartl2011}}, compare also {\cite[Theorem 3.6]{Rapoport1996}}]
	Let $S = \Spec(R)$ be an affine $\Fq$-scheme and $\underline{\Gc}$ be a local shtuka over $S$ and $b \in B(\GL_r)$. Then the set $\{ s \in S \colon [\underline{\Gc}_s] \leq b\}$ is a Zariski-closed subset of $S$. Furthermore, $\{ s \in S \colon [\underline{\Gc}_s] = b\}$ is an open subset of the former.
\end{prop}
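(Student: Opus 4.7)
The plan is to adapt Grothendieck's specialization theorem for $F$-isocrystals to the equal characteristic setting of local shtukas, following the approach of \cite{Rapoport1996}. By a standard spreading-out argument I would first reduce to the case where $R$ is Noetherian, since $\Gc$ is determined by finitely many equations in a finitely generated $\Fq\dsq{\zeta}$-subalgebra of $R$.

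The first key reduction is to reformulate the inequality $[\underline{\Gc}_s] \leq b$ in $B(\GL_r)$ via wedge powers. Writing $b = (v_1, \ldots, v_r)$ and $[\underline{\Gc}_s] = (u_1, \ldots, u_r)$, the partial sum condition $\sum_{j=1}^i u_j \leq \sum_{j=1}^i v_j$ for $i = 1, \ldots, r-1$ is equivalent to bounding the maximum slope of the local shtuka $\wedge^i \underline{\Gc}_s$ above by $\sum_{j=1}^i v_j$, while the equality for $i = r$ is locally constant on $S$ since it depends only on the $\varpi$-adic valuation of $\det \ph$. Thus it suffices to show that for any local shtuka $\underline{\Hc}$ over Noetherian $S$ and any $\lambda \in \Q$, the locus where the maximum slope of $\underline{\Hc}_s$ is at most $\lambda$ is Zariski-closed.

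For this upper-semicontinuity, I would use the characterization that the maximum slope is at most $\lambda = a/e$ (in lowest terms) if and only if the $e$-fold Frobenius iterate $\ph^{(e)}$, after multiplication by $\varpi^{-a}$, preserves some lattice in $\Hc_s$ after passing to an algebraic closure $\ell = \overline{\kappa(s)}$. Fixing a reference lattice $L_0 \se \Hc$, each iteration condition $(\varpi^{-a}\ph^{(e)})^n(L_0) \se L_0$ defines a closed subscheme of $S$ (its complement is cut out by the nonvanishing of an explicit section of the quotient); the Newton-stratum condition is the intersection of these closed subschemes over all $n \geq 1$, which in the Noetherian setting stabilizes to a finite intersection and is therefore closed. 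For openness of the equality locus, I would use that on each quasi-compact piece of $S$ only finitely many Newton points occur, since the Hodge polygon (given by the valuations of the elementary divisors of $\ph$) is locally bounded and bounds the Newton polygon from below. Then $\{s : [\underline{\Gc}_s] = b\}$ is the closed stratum minus the finite union of strictly smaller closed strata, hence relatively open.

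The main obstacle is the upper-semicontinuity of the maximum slope itself: making the lattice criterion effective in families and verifying that the relevant loci really define closed subschemes requires the equal characteristic Dieudonn\'e-Manin classification over $\ell\dbr{\varpi}$ from \cite{Laumon1996}, together with the structural results of \cite{Hartl2011} on local shtukas over formal schemes that replace the Grothendieck-Katz theorem used in the mixed characteristic case.
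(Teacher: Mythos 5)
The paper does not prove this proposition; it is quoted verbatim from \cite[Theorem 7.3]{Hartl2011}, so there is no in-paper argument to compare against. Judged on its own terms, your overall strategy --- pass to exterior powers so that closedness reduces to upper-semicontinuity of the maximum slope, handle the top exterior power via the valuation of $\det\ph$, and deduce openness of the single stratum from the finiteness of the set of Newton points occurring on a quasi-compact Noetherian base, which in turn is controlled by the (locally bounded) Hodge polygon --- is exactly the route taken in \cite{Rapoport1996} and in Katz's treatment of Grothendieck's specialization theorem that \cite{Hartl2011} adapts to local shtukas. The openness argument and the determinant observation are correct.

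There is, however, a genuine gap in the lattice criterion used for the closedness step. You correctly state that the maximum slope of $\underline{\Hc}_s$ is $\leq a/e$ if and only if $\Psi := \varpi^{-a}\ph^{(e)}$ stabilizes \emph{some} lattice in $\Hc_s \otimes \overline{\kappa(s)}\dbr{\varpi}$. But you then claim that the desired closed locus is the intersection, over all $n$, of the conditions $\Psi^n(L_0) \subseteq L_0$ for a \emph{fixed} reference lattice $L_0$. This intersection is not the Newton locus: if $\Psi(L_0) \subseteq L_0$ then automatically $\Psi^n(L_0) \subseteq L_0$ for every $n$, so your intersection collapses to the single condition $\Psi(L_0) \subseteq L_0$, which is the locus where the \emph{chosen} lattice $L_0$ is $\Psi$-stable. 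That is a strictly stronger (smaller) condition in general than the existence of \emph{some} $\Psi$-stable lattice. The correct reformulation is that the ascending orbit $L_0 + \Psi(L_0) + \Psi^2(L_0) + \cdots$ is \emph{bounded}, i.e.\ contained in $\varpi^{-N}L_0$ for some $N$; and to turn this into a closed condition one needs Katz's basic slope estimate, which supplies a uniform $N_0$ depending only on the (locally constant) Hodge polygon such that boundedness is equivalent to $\Psi^n(L_0) \subseteq \varpi^{-N_0}L_0$ for all $n$ --- and then finitely many $n$ suffice. That growing $\varpi$-power allowance, together with its uniformity via the Hodge polygon, is precisely the content of the upper-semicontinuity, and it is missing from your sketch as written. (A secondary remark: the reduction to Noetherian $R$ by spreading out should be justified with a bit more care, since $\ph$ is a matrix over $R\dbr{\varpi}$ and a priori involves infinitely many coefficients of $R$; boundedness of the local shtuka is what makes the spreading-out work.)
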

We denote by $S_{\leq b}$ the closed subscheme of $S$ given by the reduced subscheme on $\{ s \in S \colon [\underline{\Gc}_s] \leq b\}$ and similarly $S_{b}$ the corresponding open subscheme of $S_{\leq b}$. Then $S_{b}$ is a locally closed subscheme of $S$. 

\subsection{Global-to-local functor and a Serre-Tate theorem.}
We explain how to associate local shtukas to global shtukas. We follow the general construction of \cite{Rad2015}. This is a generalisation of the construction of \cite[Section 8]{Bornhofen2011} for abelian sheaves and Anderson motives. 

We follow the notation of \cite[Section 5.2.]{Rad2015}.
Let $y$ be a closed point of $X$, which we assume for simplicity to be defined over $\Fq$. This is the only case we use later. For the general construction we refer to \cite{Rad2015}. 
Let $\Oc_y$ be the completed local ring at $y$. 
The choice of a uniformiser $\varpi_y$ at $y$ defines an isomorphism $\Oc_y \cong \Fq\dsq{\varpi_y}$. Let $x \in X(\Spec(R))$ be a section that factors through $\Spf(\Oc_y)$. Then $\varpi_y$ is nilpotent in $R$.
Let $\D_y = \Spec(\Oc_y)$ and $\hat{\D}_y = \Spf(\Oc_y)$. We denote by $\hat{\D}_{y,R} $ the $\varpi_y$-adic completion of $\D_y \times_\Fq \Spec(R)$. 

By \cite[Lemma 5.3.]{Rad2015}, the section $x$ induces a canonical isomorphism of the formal completion of $X_R$ along the graph $\Gamma_x$ of $x$ with $\hat{\D}_{y,R}$.
By construction, the formal completion along $\Gamma_x$ has structure sheaf $R\dsq{\varpi_y - \zeta}$, where $\zeta$ is the image of $\varpi_y$ in $R$. As $\zeta$ is nilpotent in $R$, $R\dsq{\varpi_y - \zeta}$ and $R\dsq{\varpi_y}$ are isomorphic.

We fix a pair $\underline{y} = (y_1, y_2)$ of ($\Fq$-rational) closed points of $X$ with $y_1 \neq y_2$.
Let $\Oc_{\underline{y}}$ be the completion of the local ring of $X^2$ at $\underline{y}$. 
We denote by $\Sht_r^{\underline{y}} = \Sht_{r, X^2} \times_{X^2} \Spf(\Oc_{\underline{y}})$ the substack of $\Sht_{r, X^2}$ such that the legs factor through $\Spf(\Oc_{y_1})$ and $\Spf(\Oc_{y_2})$, respectively.
In particular, for points of $\Sht_r^{\underline{y}}$ the graphs of its legs are disjoint. 
Let $\underline{ \Ec} =(x',x, \Ec, \ph) \in \Sht_r^{\underline{y}}(R)$.
The local shtuka associated to $\underline{\Ec}$ at $y_i$ is then its pullback to $\hat{\D}_{y_i,R} $ for $i = 1,2$. 

\begin{defn}
	\label{defnGlobalLocalFun}
	The \emph{global-to-local functor} associates to a global shtuka $\underline{\Ec} \in  \Sht_r^{\underline{y}}(R)$ a pair of local shtukas (at $y_1$ and $y_2$, respectively) given by 
	$$ \widehat{\underline\Ec_{y_i}} := (\Ec|_{\hat{\D}_{y_i,R}}, \tilde{\ph}) \qquad \text{and} \qquad \widehat{\underline\Ec_{\underline{y}}} =  (\widehat{\underline\Ec_{y_1}}, \widehat{\underline\Ec_{y_2}}).$$
	Then, $\widehat{\underline\Ec_{y_i}}$ is called the \emph{local shtuka} of $\underline{\Ec}$ at $y_i$. 
\end{defn}

By definition of $\Sht_{r, X^2}$, the local shtuka at $y_2$ is bounded by $(1, 0, \ldots, 0)$ as the condition that $(\varpi_{y_2}-\zeta_{y_2})$ annihilates the cokernel in the local case directly corresponds to the fact that the cokernel is supported on the graph in the global case. Similarly, the local shtuka at $y_1$ is bounded by $(0, \ldots,0,-1)$.

The global-to-local functor also gives rise to a Serre-Tate theorem relating the deformation theory of global shtukas with the deformation theory of their associated local shtukas. 
Let $S = \Spec(R) \in \Nilp_{\Oc_{\underline{y}}}$ and let $i \colon \overline{S} = \Spec(R/I) \hookrightarrow S$ be a closed subscheme defined by a nilpotent ideal $I$. 
Let $\underline{\bar \Ec} \in \Sht_r^{\underline y}(\overline{S})$. The category $\Def_{\underline{\bar\Ec}}(S)$ is the category of deformations of $\underline{\Ec}$ to $S$, i.e. the category of pairs $(\underline{\Ec}, f \colon i^*\underline{\Ec} \tilde \rightarrow \underline{\bar{\Ec}} )$ where $\Ec \in \Sht_r^{\underline{y}}(S)$ and $f$ is an isomorphism of shtukas over $\overline{S}$. 
Similarly, for a local shtuka $\underline{\bar \Gc}$ bounded by $(1,0,\ldots,0)$ we define $\Def^{\leq(1,0,\ldots,0)}_{\underline{\bar\Gc}}(S)$ as the category of deformations of  $\underline{\bar \Gc}$ to $S$, i.e. the category of pairs $(\underline{\Gc}, g \colon i^*\underline{\Gc} \tilde \rightarrow \underline{\bar{\Gc}} )$ where $\Gc$ is a local shtuka on $S$ bounded by $(1,0, \ldots 0)$ and $g$ is an isomorphism of local shtukas over $\overline{S}$. Similarly, we define deformations of local shtukas bounded by $(0,\ldots,0,-1)$. 
\begin{prop}[Serre-Tate Theorem for shtukas, {\cite[Theorem 5.10]{Rad2015}}]
	\label{propSerreTateSht}
	Let $\underline{\bar \Ec} \in \Sht_r^{\underline{y}}(\overline{S})$. Then the functor 
	$$\widehat{(-)_{\underline{y}}} \colon \Def_{\underline{\bar\Ec}}(S)
	\to  \Def^{\leq(0,\ldots,0,-1)}_{\widehat{\underline{\overline{\Ec}}_{y_1}}}(S) \times \Def^{\leq(1,0,\ldots,0)}_{\widehat{\underline{\overline{\Ec}}_{y_2}}}(S), \qquad (\underline{\Ec}, f) \mapsto \prod_{i = 1,2}(\widehat{\underline\Ec_{y_i}}, \widehat{f_{y_i}}) $$
	induced by the global-to-local functor is an equivalence of categories.
\end{prop}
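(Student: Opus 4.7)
The plan is to combine a Beauville--Laszlo type gluing decomposition with the rigidity of Frobenius-equivariant bundles under nilpotent thickenings. Let $\underline{\Ec} = (x', x, \Ec, \ph) \in \Sht_r^{\underline{y}}(R)$ and write $\Gamma = \Gamma_{x'} \cup \Gamma_x \subset X_R$ for the union of the graphs of the two legs; since $y_1 \neq y_2$ these graphs are disjoint. I would first exhibit a Beauville--Laszlo equivalence: giving $\underline{\Ec}$ is equivalent to giving the pair of local shtukas $\widehat{\underline{\Ec}_{\underline{y}}}$ at the legs, a Frobenius-equivariant vector bundle $(\Ec^\circ, \ph^\circ)$ on the open complement $U_R = X_R \setminus \Gamma$ (where $\ph^\circ$ is an honest $\sigma$-linear isomorphism $\sigma^*\Ec^\circ \xrightarrow{\sim} \Ec^\circ$), and gluing isomorphisms on the punctured formal neighbourhoods $\hat{\D}_{y_i,R}\setminus \Gamma$ compatible with all Frobenius data. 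The boundedness conditions on $\coker(\ph)$ are supported on $\Gamma$ and are entirely encoded in the local shtukas, so no extra datum is imposed on $(\Ec^\circ, \ph^\circ)$ beyond the $\sigma$-equivariant structure.

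The key step is then to show that the open part $(\Ec^\circ, \ph^\circ)$ and the gluing data are rigid under the nilpotent thickening $i\colon \overline{S} \hookrightarrow S$. Because $\ph^\circ$ is a genuine isomorphism, the pair $(\Ec^\circ, \ph^\circ)$ behaves like an \'etale object: by Lang's theorem (equivalently, \'etale descent along the $q$-Frobenius) it is determined by its sheaf of $\sigma$-fixed points, which is a $\GL_r(\Fq)$-torsor on the \'etale site of $U_R$. The topological invariance of the \'etale site (\citestacks{04DZ}) yields a canonical equivalence between the \'etale sites of $U_{\overline S}$ and $U_S$, hence a unique lift of the open part up to unique isomorphism. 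The same reasoning, applied to the punctured formal discs $\hat{\D}_{y_i,R}\setminus \Gamma$ on which $\ph$ is likewise an isomorphism, produces a unique lift of the gluing isomorphisms.

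Putting these ingredients together, any pair of deformations in $\Def^{\leq(0,\ldots,0,-1)}_{\widehat{\underline{\overline\Ec}_{y_1}}}(S) \times \Def^{\leq(1,0,\ldots,0)}_{\widehat{\underline{\overline\Ec}_{y_2}}}(S)$ assembles, via Beauville--Laszlo, with the uniquely lifted open part and gluing data into a global deformation of $\underline{\bar\Ec}$, and this reconstruction is inverse to the global-to-local functor. Full faithfulness follows by applying the same rigidity statement to morphisms: an isomorphism between two deformations is determined by its restriction to the two formal neighbourhoods, since on the open part it lifts uniquely from $\overline{S}$. The main obstacle I expect is formulating the Beauville--Laszlo equivalence precisely enough in this $\sigma$-twisted, two-legged setting so that the Frobenius data glues correctly on the overlaps and the boundedness conditions transport correctly between local and global pictures; once this is in place, the Serre--Tate statement reduces cleanly to the standard rigidity of \'etale objects under nilpotent thickenings.
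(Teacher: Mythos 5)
Your sketch rebuilds from scratch what the paper simply cites. The paper's proof is a one-line reduction: it invokes the unbounded Serre--Tate theorem of \cite[Theorem 5.10]{Rad2015} and observes that a global $\GL_r$-shtuka is bounded by $((0,\ldots,0,-1),(1,0,\ldots,0))$ if and only if its local shtukas at the two legs are bounded by the corresponding coweights -- since boundedness is a condition on $\coker(\ph)$ concentrated on the graphs of the legs -- so the unbounded equivalence restricts to the bounded deformation categories. What you propose is, in effect, a proof of that underlying unbounded equivalence: decompose a global shtuka (\`a la Beauville--Laszlo) into its local shtukas at the two formal discs, a Frobenius-\emph{iso}morphic bundle on the open complement $U_R$, and gluing data on the punctured discs; then observe that the last two pieces are rigid under nilpotent thickenings because an \'etale $\ph$-sheaf is controlled by its sheaf of Frobenius-fixed sections and the small \'etale site is invariant under nilpotent thickenings. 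This is indeed the mechanism behind the cited theorem, so your route is mathematically correct, and it has the virtue of making visible why the boundedness at the legs plays no role in the deformation theory away from them. If you want to expand it into a self-contained argument, make two points explicit that you currently gloss over: first, that $U_R$ is $\sigma$-stable as an open subscheme (this uses that $y_1,y_2$ are $\Fq$-rational and that the legs factor through $\Spf(\Oc_{y_i})$, so the graphs $\Gamma_{x_i}$ are infinitesimal thickenings of $\{y_i\}\times S$ and their open complement coincides with that of $\{y_1,y_2\}\times S$); and second, that $(\Ec^\circ,\ph^\circ)$ is \'etale-locally trivial, so that its sheaf of fixed sections really is a $\GL_r(\Fq)$-torsor -- this is the analogue of \cite[Proposition 2.9]{Rad2015} and deserves to be stated rather than bundled into ``Lang's theorem'' before the topological invariance of the \'etale site is brought to bear.
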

\begin{proof}
As before, this follows directly from the unbounded case in \cite[Theorem 5.10.]{Rad2015} as a global $\GL_r$-shtuka is bounded by $(0,\ldots,0,-1), (1,0,\ldots,0)$ if and only if the associated local shtukas are.
\end{proof}

The Newton stratification induces also a stratification on the special fibre of the stack of (global) Drinfeld shtukas in the sense of \cite[Section 4]{Breutmann2019}.
We continue to restrict ourselves to the case of Drinfeld shtukas with one leg fixed at $\infty$ as this is the only case of interest to us in the following. The following has obvious analogues also for $\Sht_{r, X^2}$. 
For a closed point $y$ of $X$ with residue field $\F_y$ different from $\infty$ we set $\Sht_{r, \F_y} = \Sht_r \times_{X', y} \F_y$. 
%

\begin{defn}[compare {\cite[Proposition 4.1.4]{Neupert2016}}, {\cite[Definition 4.12]{Breutmann2019}} for the general definition]
	Let $b_\infty \in B(\GL_{r, K_\infty}, (0,\ldots,0,-1))$.
	The locally closed and reduced substack of $\Sht_{r}$ where the associated local shtuka at $\infty$ has Newton point $b_\infty$ is called the \emph{Newton stratum} associated to $b_\infty$ and is denoted by $\Sht_{r,b_\infty}$.
	In a similar fashion, for a closed point $y$ of $X$ and a pair $$\underline{b}_{\underline{y}} =(b_{\infty}, b_{y}) \in B(\GL_{r, K_{\infty}}, (0,\ldots,0,-1)) \times B(\GL_{r, K_{y}}, (1, 0, \ldots, 0))$$
	the locally closed and reduced substack of $\Sht_{r, \F_{y}}$ 
	where the associated local shtuka at $\infty$ has Newton point $b_{\infty}$ and the associated local shtuka at $y$ has Newton point $b_{y}$ is denoted by $\Sht_{r,\underline{b}_{\underline{y}}}$.
\end{defn}

	\subsection{Isogenies of Drinfeld shtukas.}
We study isogenies of Drinfeld shtukas in more detail.
We consider the following moduli problem of Drinfeld shtukas with chains of isogenies.
\begin{defn}
	Let $m, r_1, \ldots, r_m \geq 1$ be positive integers such that $\sum_{j=1}^m r_j \leq r$.  
	A \emph{chain of $\pf^n$-isogenies of type $(r_1, \ldots, r_m)$} on a Drinfeld shtuka $\underline\Ec$ over a scheme $S$ is a flag of quotients of $\pf^n$-torsion shtukas
	$$ \underline{\Ec}|_{D_{n,S}} = \underline{\Fc}_{m+1}  \twoheadrightarrow \underline\Fc_m \twoheadrightarrow \ldots \twoheadrightarrow \underline\Fc_1 \twoheadrightarrow 0 $$
	over $S$ such that $\Fc_i$ has rank $n \cdot (r_1 + \ldots + r_i)$ as $\OS$-module.
	We denote the stack of Drinfeld shtukas with chains of $\pf^n$-isogenies of type $(r_1, \ldots, r_m)$ by $\Sht_{r,(r_1, \ldots, r_m)-\pf^{n}\text{-chain}}$.
\end{defn} 

We show below that a chain of $\pf^n$-isogenies of type $(r_1, \ldots, r_m)$ in the sense of the definition is the same as giving a chain of actual $\pf^n$-isogenies of Drinfeld shtukas
$$ \underline\Ec(\pf^n) = \underline\Ec_{m+1} \xrightarrow{f_{m+1}} \underline\Ec_m \xrightarrow{f_{m}}  \ldots \xrightarrow{f_2} \underline\Ec_1 \xrightarrow{f_{1}} \underline\Ec_{0} = \underline\Ec$$
such that 
the composition $f_{1} \circ \ldots \circ f_{m+1}$ is the inclusion $\underline\Ec(\pf^n) \to \underline\Ec$.

\begin{prop}
	\label{propKerIsog}
	Let $\underline{\Ec} \in \Sht_r(S)$ be a Drinfeld shtuka. 
	Every quotient $\underline{\Ec}|_{D_{n,S}} \twoheadrightarrow \underline{\Fc}$, where $\underline{\Fc}$ is a $\pf^n$-torsion shtuka, is the cokernel of a $\pf^n$-isogeny.
	
	Moreover, for two $\pf^n$-isogenies $f_1 \colon \underline{\Ec}_1 \hookrightarrow \underline{\Ec}$ and $f_2 \colon \underline{\Ec}_2 \hookrightarrow \underline{\Ec}$ such that the cokernels factor as  
	successive quotients $\underline{\Ec}|_{D_{n,S}} \twoheadrightarrow \coker(f_1) \twoheadrightarrow \coker(f_2)$, there exists a unique $\pf^n$-isogeny $f \colon \underline{\Ec_1} \hookrightarrow \underline{\Ec_2}$ such that $f_1 = f_2 \circ f$ and $\coker(f) \cong \ker(\coker(f_1) \twoheadrightarrow \coker(f_2))$. 
\end{prop}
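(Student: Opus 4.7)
The plan is to proceed in two steps: in Part 1, I realise a quotient of $\pf^n$-torsion shtukas as the cokernel of an isogeny by taking the kernel of the associated surjection on $\Ec$; Part 2 then follows from the functoriality of this kernel construction.

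For Part 1, given a surjection $\pi\colon \underline{\Ec}|_{D_{n,S}} \twoheadrightarrow \underline{\Fc}$, I set $\Ec_1 \coloneqq \ker\bigl(\Ec \twoheadrightarrow \Ec|_{D_{n,S}} \xrightarrow{\pi} \Fc\bigr)$ as a subsheaf of $\Ec$. Since $\Fc$ is annihilated by $\pf^n$, one has $\Ec(-D_{n,S}) \subseteq \Ec_1 \subseteq \Ec$, so $\Ec_1$ agrees with $\Ec$ away from $D_{n,S}$. Using that $\Fc$ is a coherent $\OXS$-module supported on the Cartier divisor $D_{n,S}$ and flat over $\OS$, a local computation (after completing $\OXS$ along $0$, where it becomes $R\dsq{\varpi}$) shows that $\Fc$ admits a length-one locally free resolution and hence that $\Ec_1$ is locally free of rank $r$ on $X_S$. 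The Frobenius-equivariance of $\pi$ yields $\ph(\sigma^\ast \Ec_1) \subseteq \Ec_1$, giving $\ph_1 \colon \sigma^\ast \Ec_1 \to \Ec_1$. It remains to verify the cokernel conditions of Definition \ref{defDSht}: near $\Gamma_\infty$ we have $\Ec_1 = \Ec$ (as $\infty \neq 0$), so nothing changes; near $\Gamma_x$, the snake lemma applied to the rows $0 \to \sigma^\ast \Ec_1 \to \sigma^\ast \Ec \to \sigma^\ast \Fc \to 0$ and $0 \to \Ec_1 \to \Ec \to \Fc \to 0$ linked by $(\ph_1, \ph, \ph_\Fc)$ shows that $\coker(\ph_1)$ and $\coker(\ph)$ differ only by the contribution of $\underline{\Fc}$, which preserves the required invertibility on $\Gamma_x$. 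The inclusion $\underline{\Ec}_1 \hookrightarrow \underline{\Ec}$ is then by construction a $\pf^n$-isogeny with cokernel $\underline{\Fc}$.

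For Part 2, using the injectivity of $f_i$ to identify each $\Ec_i$ with its image in $\Ec$, Part 1 gives $\Ec_i = \ker(\Ec \to \Fc_i)$ for $\Fc_i \coloneqq \coker(f_i)$. The hypothesis that $\Fc_1 \twoheadrightarrow \Fc_2$ forces $\Ec_1 \subseteq \Ec_2$ as subsheaves of $\Ec$, and the resulting inclusion $f$ is automatically Frobenius-compatible. Tautologically $f_2 \circ f = f_1$, and the snake lemma applied to the two short exact sequences $0 \to \Ec_i \to \Ec \to \Fc_i \to 0$ identifies $\coker(f) = \Ec_2/\Ec_1$ with $\ker(\Fc_1 \twoheadrightarrow \Fc_2)$. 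Uniqueness of $f$ is immediate from the injectivity of $f_2$.

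The main obstacle is the local-freeness of $\Ec_1$ on $X_S$ together with the cokernel condition for $\ph_1$ near $\Gamma_x$ in the case that the characteristic $x$ factors through $0$; both points are handled by a local analysis at $0$ exploiting the projective dimension of $\Fc$ as an $\OXS$-module and the compatibility between $\ph_\Fc$ and $\ph|_{D_{n,S}}$.
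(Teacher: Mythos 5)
Your Part 1 begins along the same lines as the paper: you set $\Ec_1 = \ker(\Ec \to \Fc)$, observe $\Ec(-D_{n,S}) \se \Ec_1 \se \Ec$, and verify local freeness of $\Ec_1$ after completing at $0$ (the paper does exactly this, citing \cite[Lemma 2.2.8]{Genestier1996}). Your Part 2 is also the right argument, and you even have the direction of $f$ consistent with the statement (the paper's written proof of the second part has the roles of $f_1, f_2$ reversed, apparently a typo).

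The gap is in the verification of the cokernel condition for $\ph_1$ near $\Gamma_x$ when the characteristic factors through $0$. The snake lemma applied to your two rows yields
$$ 0 \to \ker\ph_\Fc \to \coker\ph_1 \to \coker\ph \to \coker\ph_\Fc \to 0, $$
and this does give the right $\OS$-rank for $\coker\ph_1$ (since $\ker\ph_\Fc$ and $\coker\ph_\Fc$ have the same $\OS$-rank). But it does \emph{not} show that $\coker\ph_1$ is supported on $\Gamma_x$, let alone annihilated by the ideal $\Jc$ of $\Gamma_x$: the subsheaf $\ker\ph_\Fc \hookrightarrow \coker\ph_1$ is a priori only supported on $D_{n,S}$, and when $\Gamma_x$ meets $D_{n,S}$ (which is precisely the case you flag as the obstacle), $D_{n,S}$ is not contained in $\Gamma_x$. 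Saying that the snake lemma "preserves the required invertibility on $\Gamma_x$" and deferring to "a local analysis at $0$ exploiting... the compatibility between $\ph_\Fc$ and $\ph|_{D_{n,S}}$" names the issue without resolving it; there is no elementary reason for $\ker\ph_\Fc$ to be annihilated by $\Jc$. This is where the paper brings in the one genuinely nontrivial input: it identifies $(\Ec'|_{X'_S}, \ph'|_{X'_S})$ with the associated $A$-motive and invokes \cite[Proposition 2.3]{Hartl2017a} (effectiveness: $\coker\ph'$ is annihilated by some power $\Jc^N$) together with \cite[Proposition 5.8]{Hartl2017a} (the cokernel has $\OS$-rank $1$); these two facts force $\coker\ph'$ to be annihilated by $\Jc$ itself. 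Without that $A$-motive step, or some equivalent argument, the proof of Part 1 is incomplete.
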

\begin{proof}
	Let $\underline{\Fc}$ be a $\pf^n$-torsion shtuka as in the statement of the proposition. 
	Let us denote by $\Ec' = \ker(\Ec \twoheadrightarrow \Fc)$. As a first step, we want to show that $\Ec'$ is finite locally free of rank $r$ on $X_S$. In order to do so, we may by reduction to the universal case assume that $S = \Spec R$ is affine and Noetherian. As $\Ec' \hookrightarrow \Ec$ is an isomorphism away from $\pf$, it then suffices by fpqc-descent to show that the completion at $0$ is finite locally free of rank $r$. 
	As we assumed $R$ to be Noetherian, completion at $0$ is exact. The completion $\widehat {\Ec'_{0}} = \Ec' \otimes \Oc_0$ of $\Ec'$ at $0$ is hence given by the kernel of $\widehat{\Ec_0} \twoheadrightarrow \Fc$. The assertion now follows from \cite[Lemma 2.2.8]{Genestier1996}.
	
	By the right exactness of the tensor product, the cokernel of the induced map $\sigma^* \Ec' \to \sigma^* \Ec$ is given by $\sigma^* \Fc$. By \cite[Lemma 2.2]{Hartl2017a}, the map $\sigma^* \Ec' \to \sigma^* \Ec$ is thus injective, and $\sigma^* \Ec' = \ker(\sigma^* \Ec \twoheadrightarrow \sigma^* \Fc)$. In particular, we obtain an induced map $\tilde \ph \colon \sigma^* \Ec' \dashrightarrow \Ec'$ defined away from $\Gamma_x$ and $\Gamma_\infty$. As the map $\Ec' \to \Ec$ is an isomorphism away from $0$, locally around $\infty$ we obtain a map $\Ec' \to \sigma^* \Ec' $ with cokernel supported at $\infty$ and of rank 1 as $\OS$-module.
	It follows also that $\ph'|_{X'_S} \colon \sigma^* \Ec'|_{X'_S} \to \Ec'|_{X'_S}$ is a well-defined and injective map (as $\ph$ is). Note that $( \Ec'|_{{X'_S}}, \ph'|_{{X'_S}})$ is the associated $A$-motive in the sense of \cite{Hartl2017a} where $A = \Gamma(X \setminus \{\infty\}, \OX)$. By \cite[Proposition 2.3]{Hartl2017a}, the $A$-motive $( \Ec'|_{X'_S}, \ph'|_{X'_S})$ is effective, this means that $\coker(\ph'|_{X'_S})$ is annihilated by $\Jc^n$ for some positive integer $n$, where $\Jc$ is the quasi-coherent sheaf of ideals defining $\Gamma_x \se X_S$. Using \cite[Proposition 5.8]{Hartl2017a} we obtain that $\coker(\ph'|_{X'_S})$ has rank 1 as $\OS$-module.  
	Thus, $\coker(\ph'|_{X'_S})$ is already annihilated by $\Jc$, which means that $(\Ec', \ph')$ defines a point of $\Sht_r(S)$. 
	
	For the second part let $f_1 \colon \underline{\Ec}_1 \hookrightarrow \underline{\Ec}$ and $f_2 \colon \underline{\Ec}_2 \hookrightarrow \underline{\Ec}$ be two $\pf^n$-isogenies as in the assertion.
	It follows essentially by assumption that there is a unique injective homomorphism of shtukas $f \colon \underline{\Ec}_2 \to \underline{\Ec}_1$ such that $f_2 = f_1 \circ f$. It remains to check that $f$ is a $\pf^n$-isogeny. We have the short exact sequence of $R$-modules
	$$ 0 \to \Ec_1/f(\Ec_2) = \coker(f) \hookrightarrow \Ec/f_2(\Ec_2) = \coker(f_2) \twoheadrightarrow \Ec/f_1(\Ec_1) = \coker(f_1) \to 0,$$
	where the first map is $f_1$ and the second map is well-defined by assumption. As both $f_1$ and $f_2$ are isogenies, their cokernels are finite locally free $R$-modules. It follows that $\coker(f)$ is finite locally free as well, and thus $f$ is an isogeny. That it is a $\pf^n$-isogeny is also clear.
\end{proof}

\begin{remark}
	\begin{enumerate}
		\item Note that we only use that $0$ is $\Fq$-rational in order to apply \cite[Lemma 2.2.8]{Genestier1996}. It was pointed out to us by the anonymous referee that we can remove this hypothesis here as follows.
		Let $\F$ be the residue field of $0$. Then the argument of \cite[Lemma 2.2.8]{Genestier1996} goes through if we assume that $\Fc$ is finite locally free as an $R \otimes_\Fq \F$-module (and not only as an $R$-module). 
		
		Moreover, this condition is automatic if we assume that the leg $x$ of the shtuka $\Ec$ factors through $\Spf(\Oc_0)$. Namely, the completion of $X_R$ along $0$ in this case is given by the formal spectrum of 
		$$R \hat \otimes_\Fq \Oc_0 \cong (R \otimes_\Fq \F)\dsq{\varpi} \cong \prod_{1 \leq i \leq [\F \colon \Fq]} R \dsq{\varpi},$$
		compare \cite[Section 8]{Bornhofen2011} or \cite[Remark 5.2]{Rad2015}. In this case, we get $\Fc \cong \bigoplus_i \Fc_i$, where $\Fc_i = \Fc/\af_i \Fc$ and $\af_i \se R \hat \otimes_\Fq \Oc_0$ is the ideal cutting out the $i$-th factor. Then if $\Fc$ is finite locally free as an $R$-module, so are all the $\Fc_i$. 
		In particular, $\Fc$ is finite locally free as $R \otimes_\Fq \F$-module. 
		
		\item Using the comparison \cite[Theorem 5.8]{Hartl2017a} with isogenies of Drinfeld modules, we get as immediate corollaries that any finite locally free closed submodule scheme with strict $\Fq$-action of the $\pf^n$-torsion of a Drinfeld module is the kernel of an isogeny and a factorisation property as in the second part of the proposition. 
		Both of these facts seem to be only proven in the literature when the base is a field in \cite[2, Lemma 3.1 and Lemma 3.2]{Lehmkuhl2009}.
		\item This also shows that giving a point of $\Sht_{r, (r_1, \ldots, r_m)-\pf^{n}\text{-chain}}$ is the same as giving a chain of actual $\pf^n$-isogenies of Drinfeld shtukas
		$$ \underline\Ec(\pf^n) = \underline\Ec_{m+1} \xrightarrow{f_{m+1}} \underline\Ec_m \xrightarrow{f_{m}}  \ldots \xrightarrow{f_2} \underline\Ec_1 \xrightarrow{f_{1}} \underline\Ec_{0} = \underline\Ec$$
		such that $\coker(f_i)$ has rank $n \cdot r_i$ and such that
		the composition $f_{m+1} \circ \ldots \circ f_1$ is the inclusion $\underline\Ec(\pf^n) \to \underline\Ec$. 
	\end{enumerate}
\end{remark}

	\subsection{Naive $\Gamma_0(\pf^n)$-level structures and shtukas for Bruhat-Tits group schemes.}
\label{secBT}

We introduce naive $\Gamma_0(\pf^n)$-level structures on Drinfeld shtukas and explain how to interpret them as shtukas for certain Bruhat-Tits group schemes. 
These naive level structures seem inadequate in the non-parahoric case (that means when $n > 1$), as their moduli spaces are missing points in the fibre over $0$. In other words, the level map to $\Sht_r$ is not proper, compare Remark \ref{remNaiveLvlBad} below. 
The interpretation of naive level structures in terms of Bruhat-Tits group schemes allows us to give a candidate for a compactification of the level map: We can take the closure of the stack of shtukas with naive level in the product of the stacks of Drinfeld shtukas with corresponding parahoric level.

\begin{defn}
	\label{defnG0Class}
	A \emph{naive $\Gamma_0(\pf^n)$-level structure} on a Drinfeld shtuka $\underline{\Ec} = (\Ec, \ph) \in \Sht_{r}(S)$ of rank $r$ is a flag of quotients of $\pf^n$-torsion finite shtukas
	$$ \underline\Ec|_{D_{n,S}} = \underline\Lc_r \twoheadrightarrow \underline\Lc_{r-1} \twoheadrightarrow \ldots \twoheadrightarrow  \underline\Lc_{1} \twoheadrightarrow \underline\Lc_0 =  0$$ 
	such that $\Lc_i$ is finite locally free of rank $i$ as $\ODnS$-module (and hence of rank $in$ as $\OS$-module). 
\end{defn}

\begin{remark}
	By Proposition \ref{propKerIsog}, a naive $\Gamma_0(\pf^n)$-level structure is equivalently given as a chain of isogenies
	$$ \underline{\Ec}(\pf^n) = \underline\Ec_r \stackrel{f_r}{\rightarrow} \underline\Ec_{r-1} \stackrel{f_{r-1}}{\rightarrow} \underline\Ec_{r-2} \rightarrow \ldots \stackrel{f_1}{\rightarrow} \underline\Ec_0 = \underline{\Ec}$$
	such that $\coker(f_i)$ is finite locally free of rank 1 as $\ODnS$-module for all $1 \leq i \leq r$.
\end{remark}

We interpret naive $\Gamma_0(\pf^n)$-level structures on Drinfeld shtukas as shtukas for certain Bruhat-Tits group schemes in the following sense.

\begin{defn}
	\label{defBTS}
	A \emph{Bruhat-Tits group scheme} on $X$ is a smooth affine group scheme $G \to X$ such that
	\begin{enumerate}
		\item all fibres of $G$ are connected,
		\item the generic fibre of $G$ is a reductive group over $K$ and
		\item  \label{defBTScomp}  for every closed point $x$ of $X$ the base change $G_{\Oc_x} = G \times_X \Spec(\Oc_x)$ is a Bruhat-Tits group scheme in the sense that there is a non-empty bounded subset $\Omega$ in some apartment in the Bruhat-Tits building of $G_{K_{x}}$ such that $G(\Oc_x) \se G(K_x)$ is the connected fixator of $\Omega$ in the sense of \cite[(4.6.26)]{Bruhat1984}.
	\end{enumerate}
	A Bruhat-Tits group scheme is \emph{parahoric}, if the subgroups $G(\Oc_x) \se G(K_x)$ in (\ref{defBTScomp}) are parahoric for all places of $X$.
\end{defn}

\begin{remark}
	\label{remG0BT}
	Of particular relevance to our situation is the case where $\Omega$ is the stabiliser of a regular $(r-1)$-simplex $\Omega$ in the standard apartment of the (reduced) Bruhat-Tits building of $\GL_{r, K_0}$ with side-length $n$.
	We denote by $\GL_{r, \Omega} \to X$ the corresponding Bruhat-Tits group scheme that is isomorphic to $\GL_r$ away from $0$ and such that $\GL_{r, \Omega}(\Oc_0) \se \GL_r(K_0)$ is the connected stabiliser of $\Omega$.
	
	We can more explicitly describe this subgroup by 
	$\GL_{r, \Omega}(\Oc_0) = \{ M \in \GL_r(\Oc_0) \colon M \mod \pf^n \in B(\Oc_0/\pf^n)\}$. 
	By \cite[Lemma 3.1 and Theorem 3.2]{Mayeux2020}, the group scheme $\GL_{r, \Omega}$ can thus also be interpreted as the N\'eron blowup of $\GL_r$ in its subgroup $B$ of upper triangular matrices along the divisor $D_n$ in the sense of \cite[Section 3.1]{Mayeux2020}. 
\end{remark}

By \cite[Theorem 4.8]{Mayeux2020}, giving a $\GL_{r, \Omega}$-torsor on $X$ is equivalent to giving a $\GL_r$-torsor $\Ec$ on $X$ together with a reduction of $\Ec$ to an $B$-torsor over the divisor $D_{n}$ of $X$. More explicitly, a point of $\Bun_{\GL_{r, \Omega}}(S)$ is given by a rank $r$ vector bundle $\Ec$ on $X_S$ together  a flag of quotients of $\Ec|_{D_{n,S}}$ as in the definition of naive $\Gamma_0(\pf^n)$-level structures. In this sense, a naive $\Gamma_0(\pf^n)$-level structure on a Drinfeld shtuka $\underline{\Ec}$ defines a $(B, D_n)$-level structure on $\underline{\Ec}$ in the sense of \cite[Section 4.2.2]{Mayeux2020}.
 
A $\GL_{r, \Omega}$-shtuka is called \emph{bounded by $\underline{\mu} = ((0, \ldots, 0,-1), (1,0,\ldots,0))$} if its underlying $\GL_{r}$-shtuka $(x, \Ec, \ph)$ is bounded by  $(0, \ldots, 0,-1), (1,0,\ldots,0)$, and if the flag of quotients given by the $(B, D_n)$-structure on the underlying vector bundle $\Ec$ is $\ph$-stable.
In other words, the $\GL_{r, \Omega}$-shtukas bounded by $\underline{\mu}$ in this sense are exactly the Drinfeld shtukas with naive $\Gamma_0(\pf^n)$-level structures in the sense above.
We denote this stack of bounded $\GL_{r, \Omega}$-shtukas (or equivalently the stack of Drinfeld shtukas with naive $\Gamma_0(\pf^n)$-level structures) by 
 $\Sht_{r, \Omega}$.

For a facet $\ff$ in the Bruhat-Tits building of $\GL_{r, K_0}$ we write $\ff \prec \Omega$ if $\ff$ is contained in the closure of $\Omega$.
In a similar fashion to the construction above, for such a facet $\ff \prec \Omega$ we write $\GL_{r, \ff}$ for the corresponding Bruhat-Tits group scheme and $\Sht_{r, \ff}$ for the stack of $\GL_{r, \ff}$-shtukas bounded by $\underline{\mu}$ in the sense above. 
By Bruhat-Tits theory, for any facet $\ff$ contained in $\Omega$ there is a map of group schemes $\GL_{r,\Omega} \to \GL_{r,\ff}$ that is the identity away from $0$. By \cite[Corollary 3.16]{Breutmann2019}, we get maps  $\Sht_{r, \Omega} \to \Sht_{r,\ff}$. 

In particular, in the case $n = 1$ the set $\Omega$ is just given by the base alcove (corresponding to the standard Iwahori subgroup of matrices that are upper triangular mod $\pf$). 
Hence, for an alcove $\ff$ its corresponding moduli space of shtukas $\Sht_{r,\ff}$ parametrises chains of isogenies of Drinfeld shtukas as in Definition \ref{defnG0Class}. 
For a facet $\ff'$ of the alcove $\ff$ the map $\Sht_{r,\ff} \to \Sht_{r, \ff'}$ is then given by projection to some subchain of isogenies, depending on the position of $\ff'$. 
In particular, when $\ff'$ is a vertex, $\Sht_{r,\ff'}$ parametrises single Drinfeld shtukas and when $\ff'$ is an edge,  $\Sht_{r, \ff'}$ parametrises pairs of Drinfeld shtukas with a certain isogeny between them. 

In order to describe the maps $\Sht_{r, \Omega} \to \Sht_{r, \ff}$ for facets $\ff \prec \Omega$ more explicitly, we label the vertices in $\Omega$ by tuples $\underline{m} = (m_1,\ldots,m_{r-1})$ such that $n \geq m_1 \geq \ldots \geq m_{r-1} \geq 0$, edges are between vertices $\underline{m}$ and $\underline{m}'$ if and only if $0 \leq m_i - m_i' \leq 1$ for all $i$ or $0 \leq m_i' - m_i \leq 1$ for all $i$. 
The base alcove corresponds to the simplex defined by the vertices $(0,\ldots,0), (1,0, \ldots,0), (1,1,0, \ldots,0), \ldots, (1,1, \ldots,1)$. The vertex $(0,\ldots,0)$ corresponds to the constant group scheme $\GL_r$.

Note that every alcove $\ff \prec \Omega$ has a unique base point $\underline{m}$ (such that $m_i \leq x_i$ for all $i$ and points $\underline{x} \in \ff$), and an orientation that we encode by an element $\tau \in \Sym_{r-1}$ of the symmetric group on $(r-1)$ elements.
The orientation $\tau$ is chosen such that the vertices of $\ff$ are given by $\underline{m} + \tau(\mathbf{1}_{r-1}^{(i)})$ for $0 \leq i \leq r-1$, where $\mathbf{1}_{r-1}^{(i)} = (1, \ldots, 1, 0, \ldots, 0) \in \Z^{r-1}$ has exactly $i$ many entries equal to 1.
For a given pair $(\underline{m}, \tau)$ there clearly exists a unique alcove $\ff_{\underline{m}, \tau}$ in the standard apartment of the Bruhat-Tits building with base point $\underline{m}$ and orientation $\tau$. 

Starting from a $ \GL_{r, \Omega}$-shtuka $(\underline{\Ec}, (\underline{\Lc}_i)) \in \Sht_{r, \Omega}(S)$, we construct a Drinfeld shtuka $\underline{\Ec}_{\underline{m}}$ for a vertex $\underline{m} \prec \Omega$ as follows.
Assume that $S = \Spec(R)$ is affine and that all $\Lc_i$ are finite free as $R[\varpi]/(\varpi^n)$-modules. In this case, we can choose a basis $(e_1, \ldots, e_{r-1})$ of $\Lc_{r-1} = \left( R[\varpi]/(\varpi^n) \right)^{r-1}$ such that $(e_1, \ldots, e_i)$ is a basis for $\Lc_i$ for all $1 \leq i \leq r-1$. 
We consider the quotient 
$$\Lc_{r-1} \twoheadrightarrow \Lc_{\underline{m}} := R[\varpi]/(\varpi^{m_1}) e_1 \oplus \ldots \oplus R[\varpi]/(\varpi^{m_{r-1}}) e_{r-1}.$$
As all the $\Lc_i$ are $\ph$-stable quotients of $\Lc_{r-1}$, the matrix representation of $\ph$ with respect to $(e_1, \ldots, e_{r-1})$ is upper-triangular.
This shows that also $\Lc_{\underline{m}}$ is $\ph$-stable as $m_1 \geq \ldots \geq m_{r-1}$ by assumption.
By a similar argument, $\Lc_{\underline{m}}$ does not depend on the choice of basis (any base change matrix is again upper-triangular). Thus, we can glue to obtain a $\ph$-stable quotient $\Lc_{\underline{m}}$ also in the general case. We then associate to the vertex $\underline{m}$ the Drinfeld shtuka corresponding to the kernel $ \underline{\Ec}_{\underline{m}} = \ker(\underline{\Ec} \twoheadrightarrow \underline\Lc_{\underline{m}})$ by Proposition \ref{propKerIsog}. Moreover, by the second part of Proposition \ref{propKerIsog}, there are also canonical isogenies associated to the edges in the Bruhat-Tits building.

Using this construction, for an alcove $\ff_{\underline{m}, \tau} \prec \Omega$ the level map $\Sht_{r, \Omega} \to \Sht_{r, \ff_{\underline{m}}}$ associates to $(\underline{\Ec}, (\underline{\Lc}_i))$ the chain of isogenies 
$$ \underline{\Ec}_{\underline{m}}(\pf^n) \hookrightarrow  \underline{\Ec}_{\underline{m}+\tau(\mathbf{1}^{(r-1)}_{r-1})} \hookrightarrow \ldots \hookrightarrow \underline{\Ec}_{\underline{m} + \tau(\mathbf{1}^{(1)}_{r-1})} \hookrightarrow \underline{\Ec}_{\underline{m}}.$$
This means that the induced map $\Sht_{r, \Omega} \to \varprojlim_{\ff \prec \Omega} \Sht_{r, \ff}$ associates to a Drinfeld shtuka with naive $\Gamma_0(\pf^n)$-level structure a diagram $(\underline{\Ec}_{\underline{m}})_{\underline{m}}$ with the canonical isogenies as constructed above.

\begin{remark}
	\label{remNaiveLvlBad}
	For parahoric level (in our case that means $n \leq 1$) \cite[Theorem 3.20]{Breutmann2019} shows that the level maps are proper and surjective. 
	An explicit calculation for deeper level (that is for $n > 1$) shows that this is false already in the $\GL_2$-case over $X = \P^1$ in general.
	Namely, we study the fibre over $0$ using the local model of \cite[Theorem 4.4.6]{Rad2017}. In our case, the local model is given as the closure of the Schubert variety $\Gr_{\GL_n}^{\leq (1, 0)}$ over $\Spec(K_0)$ inside the affine Grassmannian $\Gr_{\GL_{2, \Omega}}$ over $\Spec(\Oc_0)$ for the corresponding Bruhat-Tits group scheme. 
	For $n = 1$ we get the familiar local picture of two copies of $\P^1$ intersecting transversally at supersingular points.
	
	However, for any $n > 1$ the local model fails to be proper by \cite[Theorem A]{Richarz2016}. More precisely, we can calculate the schematic closure explicitly to see that the special fibre of the corresponding local model only consists of two copies of $\A^1$ that do not intersect. 
	This means in particular that $\Sht_{r, \Omega}$ is missing the supersingular points in the special fibre.
	Moreover, from the comparison with the modular curve, we might expect to find $n+1$-components two of which are reduced by \cite[Theorem 13.4.7]{Katz1985}.
	The two components we see using the naive level structure correspond to the two reduced components, but we do not get the non-reduced ones.
	
	The goal of this paper is to explain one way to remedy this.
	We show that we can explicitly describe the schematic image of the  map $\Sht_{r, \Omega} \to \varprojlim_{\ff \prec \Omega} \Sht_{r, \ff}$
	in terms of \emph{Drinfeld level structures} and that this provides a natural compactification of the level map.
	It turns out (compare Section \ref{secComp} below) that requiring the quotients in the definition of naive level structures to be locally free as $\ODnS$-modules is too restrictive and we rather should allow in the special fibre also degenerations to certain $\pf^n$-torsion finite shtukas which are not locally free as $\ODnS$-modules.
	In particular, we will see that supersingular points admit a Drinfeld $\Gamma_0(\pf^n)$-level structure that does not come from a naive $\Gamma_0(\pf^n)$-level structure.
\end{remark}

%
%

	\section{Group schemes attached to Drinfeld shtukas}

In order to define Drinfeld level structures for Drinfeld shtukas, we explain how to construct a (finite locally free $\Oc_0/\pf^n$-module) scheme of $\pf^n$-torsion points $\underline{\Ec}$ of a Drinfeld shtuka. 
This scheme of $\pf^n$-torsion points serves as an analogue of the scheme of $p^n$-torsion points of an elliptic curve and behaves similarly in many ways.
In order to study properties of $\underline{\Ec}[\pf^n]$ we use an explicit comparison of Drinfeld shtukas and Drinfeld modules.

\subsection{Comparison with Drinfeld modules.}

We recall some facts about Drinfeld modules and show how to construct Drinfeld shtukas from them.
Let $A = \Gamma(X \setminus \{\infty\}, \OX)$. The point $0$ of $X$ then corresponds to a maximal ideal of $A$, which by a slight abuse of notation we denote by $\pf$ as well. 

Roughly speaking, a Drinfeld $A$-module is an $A$-module structure on a (geometric) line bundle. 
Drinfeld modules were first introduced in \cite{Drinfeld1976} in order to construct a Langlands correspondence in the cohomology of their moduli spaces. In this sense, Drinfeld modules (of rank 2) are function field analogues of elliptic curves in the number field case. For a more detailed treatment also compare \cite{Lehmkuhl2009}, \cite{Blum1997} or \cite{Laumon1996}.

Let $\Lc$ be an invertible sheaf on $S$. The corresponding geometric line bundle is denoted by $\GaL = \underline{\Spec}_S(\Sym(\Lc^{-1}))$. 
If $S = \Spec(R)$ is an affine scheme such that $\Lc$ is trivial, the corresponding line bundle is given by $\GaR = \Spec R[t]$. 
Locally, the ring of endomorphisms of a line bundle is then given by the skew-polynomial ring $R\{\tau\}$ with the commutation relation $\tau c = c^q \tau$ for $c \in R$.

\begin{defn}
	\label{defnDModule}
	A \emph{Drinfeld $A$-module} $\Eb = (\GaL, e)$ of rank $r$ over a scheme $S$ consists of
	an additive group scheme $\GaL$ and a ring homomorphism 
	$e \colon A \to \End(\GaL), a \mapsto e_a$
	such that $e_a$ is finite for all $a \neq 0 \in A$ of degree $-r\mr{deg}(\infty) v(a)$, where $v$ is the normalised valuation on $K$ corresponding to $\infty$.  
	The composition $\partial \circ e$ with the differential induces a map $S \to \Spec(A)$ called the \emph{characteristic} of $\Eb$. 
\end{defn}

We denote by $\Dmod_{r}$ the moduli stack of Drinfeld modules of rank $r$. 
It is a Deligne-Mumford stack of finite type over $\Fq$, which is smooth of relative dimension $r-1$ over $X' = \Spec(A)$. 

When $S = \Spec \ell$ is the spectrum of a field (or more generally when the line bundle $\Lc$ is trivial), a Drinfeld module as a ring homomorphism $e \colon A \to \ell \{ \tau \}$. As for Drinfeld shtukas, in a slight abuse of notation, we say $\Eb$ has characteristic $\pf$ if the the characteristic of $\Eb$ factors through $0$, or in other words, if the kernel of the induced map $A \to \OS(S)$ is $\pf$. 
We say that a Drinfeld module over a field $\ell$ in characteristic $\pf$ has height $h$, if the smallest non-vanishing coefficient in $e_{\varpi} \in \ell \{ \tau \}$ has degree $h$, where $\varpi \in \pf$ is a uniformiser.  

There are several ways to associate vector bundles to Drinfeld modules, for example the so-called \emph{elliptic sheaves} due to \cite{Drinfeld1977a}, for a more detailed treatment also compare \cite{Blum1997}, \cite{Hartl2005} or \cite{Wiedmann04}, or $t$-motives \cite{Anderson1986} and their generalisations, see for example \cite{Hartl2017a}. However, a precise comparison to Drinfeld shtukas, which is certainly well-known to the experts, does not seem to be part of the literature yet. 
We explain how to construct Drinfeld shtukas from Drinfeld modules.

Recall that an \emph{elliptic sheaf} $\underline{\Ec}$ over $S$ of rank $r$ is given by the data $(x ,( \Ec_i)_{i \in \Z}, (s_i)_{i \in \Z}, (t_i)_{i \in \Z})$ where $x \colon S \to X' = X \setminus \{ \infty\}$ is a map of schemes,  $\Ec_i$ is a rank $r$ vector bundle on $X \times S$ for every $i \in \Z$ and $s_i \colon \Ec_i \to \Ec_{i+1}$ and $t_i \colon \sigma^* \Ec_i \to \Ec_{i+1}$ are injective maps that satisfy some further properties. In particular, $\coker(s_i)$ and $\coker(t_i)$ are supported on $\infty$ and $\Gamma_{x}$, respectively and invertible as $\OS$-modules on their support. We denote by $\Ell_r$ the moduli stack of elliptic sheaves. We have a well-defined map
$\Ell_r \to \Sht_r$ given by the projection
$$(x, (\Ec_i), (s_i), (t_i))  \mapsto (x, \Ec_0, s^{-1}_{0}|_{X \setminus ( \Gamma_x \cup \Gamma_\infty)} \circ t_{0}|_{X \setminus ( \Gamma_x \cup \Gamma_\infty)}),$$
or by Remark \ref{remdefDSht} equivalently by projection to $(x, \Ec_0, \Ec_1, s_0, t_0)$.
We use this second perspective for the remainder of this section as it more convenient in this context.
We define a functor $\Z \times \Dmod_r \to \Sht_r$ by composing the equivalence $\Z \times \Dmod_r \to \Ell_r$ of \cite{Drinfeld1977a} with this projection.

\begin{lem}
	\label{lemEllToShtFFs}
	The projection $\Ell_r \to \Sht_r$ is fully faithful.
\end{lem}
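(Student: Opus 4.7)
The plan is to exhibit, for any elliptic sheaf $\underline{\Ec} = (x, (\Ec_i)_{i \in \Z}, (s_i)_{i \in \Z}, (t_i)_{i \in \Z})$, a canonical reconstruction of the full chain $(\Ec_i, s_i, t_i)_{i \in \Z}$ from the shtuka data $(x, \Ec_0, \Ec_1, s_0, t_0)$, and then deduce fullness and faithfulness from the universal property underlying this reconstruction.

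The structural input is the commutativity $s_{i+1} \circ t_i = t_{i+1} \circ \sigma^*(s_i)$ that is part of the elliptic sheaf datum, together with the support conditions that $\coker(s_{i+1})$ is supported on $\Gamma_\infty$ and $\coker(t_{i+1})$ is supported on $\Gamma_x$, each invertible as $\OS$-module on its support. I claim that $\Ec_{i+2}$, together with the maps $s_{i+1}, t_{i+1}$, is canonically isomorphic to the pushout of $\Ec_{i+1} \xleftarrow{t_i} \sigma^* \Ec_i \xrightarrow{\sigma^* s_i} \sigma^* \Ec_{i+1}$ in the category of quasi-coherent $\OXS$-modules. Since $\Gamma_x$ and $\Gamma_\infty$ are disjoint (the section $x$ factors through $X'$), this can be verified locally on $X_S$: on the complement of $\Gamma_x \cup \Gamma_\infty$ every map in the square is an isomorphism; near $\Gamma_\infty$ the maps $t_i, t_{i+1}$ become isomorphisms and the pushout reduces to $\sigma^* \Ec_{i+1} \cong \Ec_{i+2}$; symmetrically, near $\Gamma_x$ the maps $\sigma^*(s_i), s_{i+1}$ become isomorphisms and the pushout reduces to $\Ec_{i+1} \cong \Ec_{i+2}$.

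Iterating this pushout identification reconstructs $(\Ec_i, s_{i-1}, t_{i-1})$ uniquely and functorially from $(\Ec_0, \Ec_1, s_0, t_0)$ for all $i \geq 2$; the reconstruction extends to $i \leq -1$ via the periodicity $\Ec_{i-1} \cong \Ec_{i+r-1} \otimes \Oc(-\Gamma_\infty)$ built into the definition of an elliptic sheaf. Faithfulness now follows: a morphism of elliptic sheaves is a compatible family $(f_i)_{i \in \Z}$, and the uniqueness in the universal property of the pushout forces each $f_{i+2}$ to be determined by $f_i, f_{i+1}$; hence a morphism is determined by $f_0, f_1$, equivalently by its image shtuka morphism (recalling from Remark~\ref{remdefDSht} that a shtuka morphism corresponds to a compatible pair $(f_0, f_1)$). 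For fullness, given a shtuka morphism, i.e.\ a compatible pair $(f_0, f_1)$, the universal property produces unique extensions $f_i$ for all $i$ which are automatically compatible with every $s_i$ and $t_i$.

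The delicate verification will be that the pushout is indeed a rank $r$ vector bundle with canonical maps from $\Ec_{i+1}$ and $\sigma^* \Ec_{i+1}$ exhibiting the correct cokernel structure; this is local on $X_S$ and reduces to a direct computation on completed stalks at $\Gamma_x$ and $\Gamma_\infty$. Compatibility of the reconstructed morphisms with the periodicity $\Ec_{i+r} \cong \Ec_i \otimes \Oc(\Gamma_\infty)$ is then automatic: the twist $\Ec_i \otimes \Oc(\Gamma_\infty)$ satisfies the same universal property as $\Ec_{i+r}$ after $r$ iterations of the pushout step, so by uniqueness the reconstructed $f_{i+r}$ agrees with $f_i \otimes \mr{id}$ under this identification, and no extra compatibility needs to be imposed.
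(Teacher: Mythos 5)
Your proof is correct and follows essentially the same route as the paper: both reconstruct the full chain $(f_i)_{i\in\Z}$ inductively from the pair $(f_0,f_1)$ by a gluing step at each degree (for which the paper invokes [Wiedmann04, Corollary~5.4], while you make the gluing explicit as the pushout of $\Ec_{i+1}\xleftarrow{t_i}\sigma^*\Ec_i\xrightarrow{\sigma^*s_i}\sigma^*\Ec_{i+1}$ verified locally), and both handle negative degrees via the periodicity twist. Your observation that compatibility of the reconstructed $f_i$ with the periodicity $\Ec_{i+r}\cong\Ec_i\otimes\Oc(\Gamma_\infty)$ is automatic---since the identification is the composite of the $s_j$'s and maps of vector bundles agreeing on a dense open agree everywhere---is a detail the paper leaves implicit.
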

\begin{proof}
	Let $\underline{\Ec}_\bullet = (x, (\Ec_i), (s_i), (t_i))$ and $\underline{\tilde\Ec}_\bullet = (x, (\tilde\Ec_i), (\tilde s_i), (\tilde t_i))$ be two elliptic sheaves over $S$. Assume that we have a map of the corresponding shtukas, in other words a pair of maps $f_{0} \colon \Ec_{0} \to \tilde\Ec_{0}$ and $f_{1} \colon \Ec_{1} \to \tilde\Ec_{1}$ that commute with $s_{0}$ and $t_{0}$ in the obvious way. 
	By \cite[Corollary 5.4]{Wiedmann04} we may then glue $f_1$ and $\sigma^* f_1$ to get a map $f_2 \colon \Ec_2 \to \tilde{\Ec}_2$ again commuting with  $s_1$ and $t_1$. Such a map is necessarily unique. 
	We continue inductively to define maps in higher degrees. The maps in degrees smaller than $0$ can be constructed as twists.
\end{proof}

Let us denote by $b_\infty = (-1/r, \ldots, -1/r) \in B(\GL_{r, K_\infty}, (0,\ldots,0,-1))$ the basic Newton polygon. Recall that we defined $\Sht_{r, b_\infty}$ to be the (reduced) locus in $\Sht_r$ where the local shtuka at $\infty$ has Newton polygon $b_\infty$. Note that  $\Sht_{r,b_\infty}$ is a closed substack of $\Sht_r$ as $b_\infty$ is basic.

\begin{prop} 
	\label{propFunDModSht}
	The functor $\Z \times \Dmod_{r} \to \Sht_r$ is schematic and a closed immersion which factors through an isomorphism 
	$$ \Z \times \Dmod_{r} \xrightarrow{\cong} \Sht_{r,b_\infty}.$$
\end{prop}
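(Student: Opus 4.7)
The plan is to combine Drinfeld's equivalence $\Z \times \Dmod_r \cong \Ell_r$ from \cite{Drinfeld1977a} with Lemma \ref{lemEllToShtFFs}, which gives full faithfulness of the composite $\Z \times \Dmod_r \to \Sht_r$. The task then reduces to identifying the essential image as $\Sht_{r,b_\infty}$: once this is done, the functor factors as an equivalence $\Z \times \Dmod_r \xrightarrow{\cong} \Sht_{r,b_\infty}$ composed with the inclusion $\Sht_{r,b_\infty} \hookrightarrow \Sht_r$, which is a closed immersion because $b_\infty$ is basic. This simultaneously yields schematicity and the closed immersion property.

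For essential surjectivity, I would start from a Drinfeld shtuka $\underline{\Ec} = (x, \Ec, \ph) \in \Sht_r(S)$ and construct a candidate elliptic sheaf by setting $\Ec_0 = \Ec$ and inductively gluing $\Ec_i$ and $\sigma^{\ast}\Ec_i$ along the isomorphism $\ph$ on $X_S \setminus (\Gamma_x \cup \Gamma_\infty)$ to produce $\Ec_{i+1}$; the maps $s_i$ and $t_i$ are the tautological inclusions into the pushout, and negative indices are built dually via $\ph^{-1}$. The cokernel conditions on $(s_i, t_i)$ follow directly from the cokernel conditions on $(\ph, \ph^{-1})$ in the definition of a Drinfeld shtuka, and the rank and local-freeness of the $\Ec_i$ can be checked after completion at $\Gamma_x$ and $\Gamma_\infty$.

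The only non-automatic axiom of an elliptic sheaf is the periodicity $\Ec_{i+r} \cong \Ec_i(\infty)$. After passing to the formal completion along $\Gamma_\infty$ this becomes a condition purely on the local shtuka $\widehat{\underline{\Ec}_\infty}$ at $\infty$: the $r$-th iterate of the Frobenius must agree with multiplication by a uniformiser of $\Oc_\infty$ up to an isomorphism of local shtukas. For a local shtuka bounded by $(0, \ldots, 0, -1)$, whose total Newton slope equals $-1$, this is equivalent to all $r$ slopes being equal to $-1/r$, i.e.\ to the local Newton point at $\infty$ being $b_\infty$. Hence the essential image is exactly $\Sht_{r,b_\infty}$, while the $\Z$-factor parametrises the choice of origin in the sequence $(\Ec_i)_{i \in \Z}$ (equivalently, the degree of the line bundle underlying the associated Drinfeld module).

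The main obstacle I expect is the second step: making the inductive pushout construction work in families, i.e.\ verifying that the $\Ec_{i+1}$ are again vector bundles with the correct local structure at $\Gamma_x$ and $\Gamma_\infty$, and then carefully translating the global periodicity $\Ec_{i+r} \cong \Ec_i(\infty)$ into the Newton-slope statement at $\infty$. Once this translation is established, the rest of the proof is formal: full faithfulness plus essential surjectivity onto the closed substack $\Sht_{r,b_\infty}$ yields the desired equivalence, and composition with the closed immersion $\Sht_{r,b_\infty} \hookrightarrow \Sht_r$ finishes the argument.
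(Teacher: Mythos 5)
Your overall plan---full faithfulness via Lemma \ref{lemEllToShtFFs}, the gluing construction of the candidate $(\Ec_i)_{i\in\Z}$, and reduction to a periodicity condition---matches the paper's. However, there is a genuine gap in the essential surjectivity step, and it is precisely the point at which the paper's argument takes a different, and necessary, turn.

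You claim that for a Drinfeld shtuka over a general base $S$, the periodicity $\Ec_{i+r} \cong \Ec_i(\infty)$ is ``equivalent to the local Newton point at $\infty$ being $b_\infty$,'' and conclude that the essential image is exactly $\Sht_{r,b_\infty}$. The problem is that these two conditions live on different footings. The Newton-point condition is a statement about geometric fibres only; the stratum $\Sht_{r,b_\infty}$ is \emph{by definition} the \emph{reduced} closed substack on the set of points with that Newton polygon, and carries no information about nilpotents. The periodicity $\Ec_0 = \Ec_r(-\infty)$, by contrast, is a genuine scheme-theoretic identity that must be verified over an arbitrary test scheme $T$, including non-reduced $T$ mapping to $\Sht_{r,b_\infty}$. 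Nothing in your argument rules out the possibility that, over such a $T$, all geometric fibres satisfy the Newton condition yet the ideal sheaf measuring the failure of $\Ec_0 = \Ec_r(-\infty)$ is a nonzero nilpotent ideal. So you cannot directly deduce essential surjectivity onto $\Sht_{r,b_\infty}$ as a stack; you only get it on geometric points.

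The paper circumvents this by reorganizing the logic: it first proves, independently of the Newton stratification, that the periodicity conditions ($\Ec_0 = \Ec_r(-\infty)$ and $\Ec_1 = \Ec_{r+1}(-\infty)$, from which the full periodicity and the compatibility with the $s_i$, $t_i$ are then deduced) cut out a \emph{closed} substack of $\Sht_r$, by exhibiting the locus as the vanishing of an explicit quasi-coherent ideal (built from the action of a uniformiser $\varpi_\infty$ on $\Ec_r/\Ec_0$). This already shows $\Z\times\Dmod_r\to\Sht_r$ is schematic and a closed immersion, with image some closed substack $Z$. Only afterwards does it compare $Z$ with $\Sht_{r,b_\infty}$: both are reduced ($Z$ because $\Z\times\Dmod_r$ is smooth, $\Sht_{r,b_\infty}$ by definition of a Newton stratum), and they have the same geometric points by the classification of bounded local shtukas over an algebraically closed field; hence they coincide. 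Your write-up is missing this reducedness step, which is exactly what makes the geometric-point comparison suffice. Without it, the essential-surjectivity argument does not close.
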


\begin{proof}
	
	As a first step we show that the locus where a Drinfeld shtuka can be extended to an elliptic sheaf is closed.
	Let $\underline{\Ec} = (x, \Ec_{-1}, \Ec_{0}, s_{-1}, t_{-1}) \in \Sht_r(S)$. 
	As the zero and pole of $\underline{\Ec}$ do not intersect,  we can repeatedly glue $\Ec_i$ and $\sigma^*\Ec_i$ to obtain a commutative diagram
	\begin{center}
		\begin{tikzcd}
			\Ec_{-1} \arrow[r, "\ph = s_{-1}"]
			& \Ec_{0} \arrow[r, "s_{0}"] 
			& \Ec_1 \arrow[r, "s_{1}"] 
			& \ldots \\
			\sigma^*\Ec_{-1} \arrow[r] \arrow[ur, "\ph = t_{-1}"] 
			& \sigma^*\Ec_{0} \arrow[r] \arrow[ur, "t_{0}"] &  \sigma^*\Ec_{1} \arrow[r] \arrow[ur, "t_{1}"] &\ldots
		\end{tikzcd}
	\end{center} 
	If the diagram comes from an elliptic sheaf, we have by definition that $\Ec_{0} \hookrightarrow \Ec_{r}$ identifies $\Ec_{0}$ with $\Ec_{r}(-\infty)$. In a similar fashion we get $\Ec_1 = \Ec_{r+1}(-\infty)$.
	 
	We claim that these two conditions are already sufficient for the diagram to come from an elliptic sheaf. 
	By construction, the cokernel of $s_i$ is supported on $\Gamma_\infty$ and the cokernel of $t_i$ is supported on $\Gamma_x$, and both are invertible on their respective supports. 
	We first check that $s_{0} = s_{r} \otimes \id_{\Oc_X(-\infty)}$. As all $s_i$ are isomorphisms away from $\infty$ and the question is fpqc-local on $S$, it suffices to consider the completion at $\infty$ and we may assume that $S = \Spec(R)$ is affine and all $\Ec_i$ are free $R\dsq{\varpi_\infty}$-modules of rank $r$. Thus, the $s_i$ are identified with endomorphisms of $R\dsq{\varpi_\infty}^r$ such that both $s_{r-1} \circ \ldots \circ s_0$ and $s_{r} \circ \ldots \circ s_{1}$ are multiplication by $\varpi_\infty$ by assumption, where $\varpi_\infty$ is a uniformiser at $\infty$.
	But as multiplication by $\varpi_\infty$ is injective and lies in the centre of the endomorphism ring, this implies that $s_{0} = s_{r}$ as desired.
	Moreover, the $s_i$ induce isomorphism $\coker(t_{i-1}) \xrightarrow{\cong} \coker(t_i)$ for all $i \geq 1$, hence $t_{r} = t_{-1} \otimes \id_{\OX(\infty)}$. Hence, we get inductively that  $\Ec_{i+r} = \Ec_i \otimes \OX(\infty)$ for all $i \geq 1$. The data for indices $i \leq 0$ is then obtained by twisting. This shows the claim.
	
	It remains to check that the conditions of the claim are closed conditions. 
	In order to see that the locus where $\Ec_0 = \Ec_r (-\infty)$ is closed, we argue as follows.
	As $\Ec_r/\Ec_0$ is supported on $\Gamma_\infty$, the uniformiser $\varpi_\infty$ at $\infty$ acts on  $\Ec_r/\Ec_0$ and we have $\Ec_0 = \Ec_r (-\infty)$  if and only if $\varpi_\infty = 0$ in $\End_{\OS}(\Ec_r/\Ec_0)$.
	Hence, the locus where $\Ec_0 = \Ec_r (-\infty)$ is represented by the vanishing locus $V(\Ic)$ of the quasi-coherent ideal 
	$ \Ic = \mr{image}(\EEnd_{\OS}(\Ec_r/\Ec_0)^\vee \xrightarrow{\varpi_\infty^\vee} \OS)$.
	In a similar fashion, the locus where  $\Ec_{1} = \Ec_{r+1} (-\infty)$ is representable by a closed subscheme of $S$ given by the vanishing locus of a quasi-coherent sheaf of ideals $\Ic'$ in $\OS$.
	Thus, the locus where $\underline{\Ec}$ defines a (necessarily unique) elliptic sheaf is representable by the closed subscheme $S' = V(\Ic + \Ic')$ of $S$.
	In particular, $\Z \times \Dmod_{r} \to \Sht_r$ is schematic and a closed immersion.

	Moreover, it is clear that both stacks have the same geometric points, as one can easily see by the classification of bounded local shtukas over algebraically closed fields that a Drinfeld shtuka over an algebraically closed field $\ell$ comes from a Drinfeld module if and only if the local shtuka at $\infty$ is $$\left( \ell\dsq{\varpi_\infty}^r, \sigma \cdot \begin{pmatrix}
	0 & &  & \varpi_\infty^{-1} \\
	1 & & & \\
	& \ddots & & \\
	& & 1 & 0
	\end{pmatrix} \right) $$
	The second part of the assertion follows as $\Dmod_{r}$ and the Newton stratum $\Sht_{r,b_\infty}$ both are reduced. 
\end{proof}

\begin{remark}
	\label{remComparisonGeneral}
	\begin{enumerate}
		\item For the general case (when $\infty$ is not defined over $\Fq$) giving a correct definition of the truncation is more subtle, as the pole might not be supported at $\infty$ but at some Frobenius twist of $\infty$. In order to remedy this, one should include a Frobenius twist in the action of $\Z$ by shifts and still obtain a well-defined closed immersion
		$\Z \times \Dmod_{r} \to \Sht_r$.
		\item 
		\label{remProjDmod}
		It follows that we get an essentially surjective functor $\Sht_{r,b_\infty} \to \Dmod_{r}$ that agrees with the construction from \cite[Proposition 7.8]{Breutmann2019} (up to forgetting the level structure).
		
		\item 
		\label{remLocShtDMod}
		By \cite{Hartl2017a} and \cite{Hartl2019}, the comparison is compatible with local and finite objects.
		More precisely, the local equivalence of \cite[Theorem 8.3]{Hartl2019} and \cite[Theorem 7.6]{Hartl2017a} identifies the $\pf$-divisible module associated to a Drinfeld module $\Eb$ over $S \in \Nilp_{\F \dsq{\varpi}}$ the local shtuka at $0$ of any Drinfeld shtuka associated to $\Eb$ by the comparison.
		We call this local shtuka the \emph{local shtuka at $0$} (or the \emph{local shtuka at $\pf$}) of the Drinfeld module $\Eb$.
		By \cite[Proposition 1.7]{Drinfeld1976}, the Newton polygon of the local shtuka at $\pf$ associated to a Drinfeld module of height $h$ over an algebraically closed field in characteristic $\pf$ is given by $(1/h, \ldots, 1/h, 0, \ldots,0)$.
	\end{enumerate}
\end{remark}

\begin{remark}
	Let $\underline{\Ec}$ be an Drinfeld shtuka over an algebraically closed field $\ell$ such that its characteristic factors through $0$. As $\ell$ is algebraically closed, the local shtuka of $\underline{\Ec}$ splits canonically as the product of an \'etale local shtuka and a topologically nilpotent one by \cite[Proposition 2.9]{Hartl2019} (or by \cite[Corollary 10.17]{Hartl2019} equivalently its corresponding $\pf$-divisible module splits canonically into an \'etale and a one-dimensional formal part).
	By \cite[Corollary 2.9]{Rad2015}, the \'etale local shtuka is isomorphic to $\left( \ell\dsq{\varpi_\infty}^r, \sigma \right)$, while the formal part is classified up to isomorphism by its height by \cite[Proposition 1.7]{Drinfeld1976}. 
	In particular, the local shtukas at $0$ of Drinfeld shtukas are classified up to isomorphism (and not only up to isogeny) by their Newton polygons. 
	\label{remClassNewtonIso}
\end{remark}

\subsection{Strong stratification property of the Newton stratification.}
We show the strong stratification property of the Newton stratification and use the comparison of Drinfeld modules and Drinfeld shtukas to show the non-emptiness of Newton strata. 
We deduce that the local shtuka of a Drinfeld shtuka is isomorphic to the local shtuka of an appropriate Drinfeld module. 

\begin{prop}
	\label{propSSShtEx}
	Let $\underline{b}_{\underline{0}} = ((-1/r, \ldots, -1/r), (1/r, \ldots, 1/r)) \in B(\GL_{r, K_\infty}) \times B(\GL_{r, K_0})$. Then the basic Newton stratum $\Sht_{r,\underline{b}_{\underline{0}}} \se \Sht_{r, \F_0}$ is non-empty.
\end{prop}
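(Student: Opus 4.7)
The idea is to produce such a shtuka as the image of a suitable Drinfeld module under the comparison functor of Proposition~\ref{propFunDModSht}. Indeed, by that proposition, any shtuka arising from a Drinfeld module already has Newton polygon $(-1/r, \ldots, -1/r)$ at $\infty$ automatically, so only the condition at $0$ needs to be arranged. Combining this with Remark~\ref{remComparisonGeneral}(\ref{remLocShtDMod}), the local shtuka at $0$ associated to a Drinfeld module of height $h$ in characteristic $\pf$ has Newton polygon $(1/h,\ldots,1/h, 0,\ldots,0)$, so choosing $h = r$ (i.e. a supersingular Drinfeld module) yields exactly $(1/r,\ldots,1/r)$ on the nose.

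Concretely, the plan is the following. First, fix an algebraically closed field $\ell$ over $\F_0$ and exhibit a rank $r$ Drinfeld $A$-module $\Eb$ over $\ell$ of characteristic $\pf$ and height $r$. The existence of such a supersingular Drinfeld module is classical and goes back to \cite{Drinfeld1976}; for instance, after fixing a uniformiser $\varpi$ of $\pf$ and trivialising $\Lc$, one can take $e\colon A \to \ell\{\tau\}$ on generators and prescribe $e_\varpi = \tau^r \cdot (\text{unit})$, and then extend to all of $A$ using that $\ell\{\tau\}$ is a free module of rank $r$ over $e(A_\pf)$. Second, apply the closed immersion $\Z \times \Dmod_r \hookrightarrow \Sht_r$ of Proposition~\ref{propFunDModSht} to obtain a Drinfeld shtuka $\underline{\Ec}$ over $\ell$; since $\ell$ is an $\F_0$-algebra, this gives a point of $\Sht_{r, \F_0}(\ell)$.

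Third, compute the Newton polygons at the two legs. At $\infty$, Proposition~\ref{propFunDModSht} shows the image of $\Z \times \Dmod_r$ lies in $\Sht_{r, b_\infty}$ with $b_\infty = (-1/r,\ldots,-1/r)$, so the local shtuka at $\infty$ has exactly the required slopes. At $0$, Remark~\ref{remComparisonGeneral}(\ref{remLocShtDMod}) identifies the local shtuka at $0$ of $\underline{\Ec}$ with the local shtuka at $\pf$ associated to the Drinfeld module $\Eb$, whose Newton polygon is $(1/h,\ldots,1/h, 0,\ldots,0)$ with $h$ the height; our choice $h = r$ yields $(1/r,\ldots,1/r)$. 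Thus $\underline{\Ec}$ defines a point of $\Sht_{r, \underline{b}_{\underline{0}}}(\ell)$, proving non-emptiness.

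The only non-routine step is the existence of a supersingular Drinfeld module of rank $r$ in characteristic $\pf$; everything else is a direct application of the results already recorded. This existence is standard, but if one wanted a self-contained argument, one would invoke Drinfeld's classification of isogeny classes of Drinfeld modules over $\overline{\F}_0$ in terms of Newton polygons, or construct $\Eb$ explicitly by specifying $e_\varpi = \tau^r$ and extending the $A$-action through a maximal order in the division algebra over $K_\pf$ with invariant $1/r$, as in \cite[\S1]{Drinfeld1976}.
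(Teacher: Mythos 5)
Your proposal is correct and takes essentially the same approach as the paper: reduce to the existence of a supersingular (height $r$) Drinfeld module in characteristic $\pf$, then transport it into $\Sht_{r,\F_0}$ via Proposition~\ref{propFunDModSht} and read off the two Newton polygons from Proposition~\ref{propFunDModSht} (at $\infty$) and Remark~\ref{remComparisonGeneral}(\ref{remLocShtDMod}) (at $0$). The only difference is the source for the existence of supersingular Drinfeld modules: the paper cites \cite[Proposition 7.4.1]{Kondo2020}, whereas you sketch an explicit construction via $e_\varpi = \tau^r$ and a maximal order in the invariant-$1/r$ division algebra, following \cite{Drinfeld1976}; either is fine, though your first sketch (prescribing $e_\varpi$ alone and "extending") would need the division-algebra argument you give afterward to actually pin down the rest of the $A$-action.
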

\begin{proof}
	By Remark \ref{remComparisonGeneral} and Proposition \ref{propFunDModSht} a basic Drinfeld module in characteristic $\pf$, that is, a Drinfeld module of both rank and height $r$, defines a point in $\Sht_{r,\underline{b}_{\underline{{0}}}}$. But basic Drinfeld modules in characteristic $\pf$ exist by \cite[Proposition 7.4.1]{Kondo2020}.
\end{proof}

As a next step, we study closure relations among the Newton strata. 
The result may be well-known to experts. The author was unable to track down a precise reference.
The corresponding statement for Shimura varieties in the Siegel case is due to  \cite{Oort2001} and has been generalised to the PEL case by \cite{Hamacher2015}.

For a pair of closed points $\underline{y} = (y_1, y_2)$ of $X$ we define a partial order on $B(\GL_{r, K_{y_1}}) \times B(\GL_{r, K_{y_2}})$ (and also on $B(\GL_{r, K_{y_1}}, \mu_1) \times B(\GL_{r, K_{y_2}}, \mu_2)$ for a pair of cocharacters $\mu_1, \mu_2$ of $\GL_r$) by $b_{\underline{y}} = (b_{y_1}, b_{y_2}) \leq b'_{\underline{y}} = (b'_{y_1}, b'_{y_2})$ if $b_{y_1} \leq b'_{y_1}$ and $b_{y_2} \leq b'_{y_2}$.
Let us fix the cocharacters $\mu_1 = (0, \ldots,0,-1)$ and $\mu_2 = (1, 0, \ldots, 0)$ of $\GL_r$.

\begin{thm}
	\label{thmNewtonStratStrongStrat}
	The Newton stratification on $\Sht_{r, \F_{0}}$ satisfies the strong stratification property. In other words, 
	for all
	$\underline b \in B(\GL_{r, \infty}, \mu_1) \times B(\GL_{r, 0},\mu_2)$ we have
	$$\overline{\Sht_{r,\underline b}} = \bigcup_{{\underline b'} \leq \underline b} \Sht_{r,\underline b'} = \Sht_{r,\leq {\underline b}}.$$
	
	Moreover, all the Newton strata
	$\Sht_{r,\underline{b}}$ for 
	$\underline b \in B(\GL_{r, \infty}, \mu_1) \times B(\GL_{r, 0}, \mu_2)$ are non-empty.
\end{thm}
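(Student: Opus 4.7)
The plan is to reduce both assertions to the corresponding strong stratification property for local shtukas via the Serre-Tate theorem (Proposition \ref{propSerreTateSht}). The easy inclusion $\overline{\Sht_{r,\underline b}} \subseteq \Sht_{r,\leq \underline b}$ is immediate: the Hartl-Viehmann upper semicontinuity recalled earlier, applied simultaneously to the two legs of the global-to-local functor, shows that $\Sht_{r,\leq\underline b}$ is a closed substack of $\Sht_{r,\F_0}$ containing $\Sht_{r,\underline b}$. For the non-trivial inclusion, fix a closed point $\bar x \in \Sht_{r,\underline b'}(\bar\ell)$ with $\underline b' \leq \underline b$, write $\underline b = (b_\infty, b_0)$ and $\underline b' = (b'_\infty, b'_0)$, and use Proposition \ref{propSerreTateSht} to identify the deformation functor of $\underline{\bar\Ec}_x$ with the product of the deformation functors of the local shtukas $\widehat{\underline{\bar\Ec}_\infty}$ and $\widehat{\underline{\bar\Ec}_0}$. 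It therefore suffices to produce, for each leg separately, a deformation of the corresponding local shtuka to a complete discrete valuation ring whose generic fibre has Newton point $b_\infty$ (respectively $b_0$).

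This local problem is the strong stratification property for bounded local Rapoport-Zink spaces in equal characteristic, which for $\GL_r$ with minuscule bound can be verified directly by means of the local model inside the affine Grassmannian: in the minuscule setting the formal neighbourhood of a bounded local shtuka is smooth, and the Newton strata admit an explicit Oort-type description allowing one to write down one-parameter families of bases in $R\dsq{\varpi}^r$ linking any two consecutive Newton strata in $B(\GL_r,\mu_i)$. In greater generality this local strong stratification is due to Hartl-Viehmann and Hamacher. Taking the product over the two legs via Serre-Tate then yields the desired equality $\overline{\Sht_{r,\underline b}} = \Sht_{r,\leq \underline b}$. For the non-emptiness statement, choose a point $\bar x$ in the basic global stratum $\Sht_{r,\underline b_{\underline 0}}$, which is non-empty by Proposition \ref{propSSShtEx}; its two local shtukas are basic, hence minimal in $B(\GL_r,\mu_\infty)$ and $B(\GL_r,\mu_0)$, respectively. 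Applying the local strong stratification at each leg furnishes, via Serre-Tate, a deformation of $\underline{\bar\Ec}_x$ whose generic fibre lies in $\Sht_{r,\underline b}$ for any prescribed $\underline b \in B(\GL_r,\mu_\infty) \times B(\GL_r,\mu_0)$, proving that every such stratum is non-empty.

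The main obstacle is the local strong stratification result: although the mixed-characteristic analogue is classical (Oort, and Hamacher in the PEL case) and the equal-characteristic version is in principle covered by Hartl-Viehmann, locating a reference covering precisely the $\GL_r$-case with minuscule bounds as needed here may require some bookkeeping. In the minuscule situation considered here the local model is smooth and low-dimensional enough that a self-contained verification via explicit families of matrices over $R\dsq{\varpi}$ specialising the slope datum of $\ph$ between any two admissible Newton points is feasible, which is the approach I would adopt to keep the argument self-contained.
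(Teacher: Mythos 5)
Your overall strategy matches the paper's: use the Serre--Tate theorem (Proposition \ref{propSerreTateSht}) to split the universal deformation ring of a closed point into a product over the two legs, then invoke a \emph{local} strong stratification statement for each factor and take the product, and finally feed the basic stratum (non-empty by Proposition \ref{propSSShtEx}) into the closure relations to get non-emptiness of every stratum. So the architecture is right, and the easy inclusion via upper semicontinuity is also the same as in the paper (there implicit).

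The gap is exactly the one you flag yourself: you do not have a reference or a proof for the local strong stratification. The paper settles this by citing \cite[Theorem~2, Lemma~21~(2)]{Viehmann2011} (Viehmann, \emph{Newton strata in the loop group of a reductive group}, 2011), which is stated for equal-characteristic local shtukas (equivalently, for elements of the loop group) and applies directly to the local deformation rings $R_\infty$ and $R_0$ that appear here. Your proposed substitute --- an explicit construction of one-parameter families of matrices in $R\dsq{\varpi}$ realising the specialisations between adjacent Newton strata --- is plausible for $\GL_r$ with a minuscule bound, but it is not carried out, and smoothness of the minuscule local model by itself does not produce the required specialisations: one still has to exhibit, for every pair of adjacent Newton points, a local shtuka over a dvr with the prescribed generic and special Newton points in a way compatible with the universal deformation. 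Until that computation is actually done (or the Viehmann reference is supplied), the argument is incomplete. You also cite ``Hartl--Viehmann'' for this; the relevant reference is Viehmann's loop-group paper, not \cite{Hartl2011}, which establishes semicontinuity but not the converse closure relations.
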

Viehmann in \cite[Remark 5.6]{Viehmann2020} remarks that the assertion should follow by a similar argument as in \cite{Hamacher2015} for Shimura varieties of PEL type.

\begin{proof}
	Let $\underline{b}^0 $ correspond to the unique basic point 
	$$ ((1/r, \ldots, 1/r), (-1/r, \ldots, -1/r)) \in B(\GL_{r, K_{ \infty}}, \mu_1) \times B(\GL_{r, K_{0}}, \mu_2).$$
	By Proposition \ref{propSSShtEx} the Newton stratum $\Sht_{r,\underline{b}^0 }$ is non-empty.
	The non-emptiness of the other strata will follow from the closure relations.
	
	Now, let $\underline b  \in B(\GL_{r, K_{ \infty}}) \times B(\GL_{r, K_{0}})$ and assume that $\Sht_{r,\underline{b} }$ is non-empty. We fix a point $s \in \Sht_{r,\underline{b} }$ and let $R$ be its universal deformation ring. Then $s$ lies in the closure of some $\Sht_{r,\underline{b'} }$ for ${\underline{b} } \leq {\underline{b}' }$ if and only if the same is true in the Newton stratification on $\Spec R$.
	By the Serre-Tate Theorem (Proposition \ref{propSerreTateSht}) the universal deformation ring factors as $\Spec R = \Spec R_\infty \times \Spec R_0$, where $R_\ast$ is the universal deformation ring of the corresponding local shtuka at $\ast = \infty,0$. Under this isomorphism we have $\Spec(R)_{\underline{b} } = \Spec(R_1)_{{b}_{{ \infty}}} \times \Spec(R_2)_{{b}_{{0}}}$, where we denote by $\Spec(R_\ast)_{{b}_{\ast}}$ the corresponding Newton strata in $\Spec R_\ast$ for $\ast = \infty,0$.
	On $\Spec R_\ast$ the closure properties hold by \cite[Theorem 2, Lemma 21 (2)]{Viehmann2011}, and thus they hold on $\Spec R$. This proves the assertion.
\end{proof}

\begin{remark}
	\label{remStratDmod}
	In a similar fashion, there is a Newton stratification on the moduli space of Drinfeld modules in characteristic $\pf$ defined via the local shtukas as defined in Remark \ref{remComparisonGeneral} (\ref{remLocShtDMod}).
	The Newton stratifications are clearly compatible with the projection $\Sht_{r,b_\infty, \F_0} \to \Dmod_{r, \F_0}$ in the fibre over $0$ from Remark \ref{remComparisonGeneral} (\ref{remProjDmod}).
	Thus, the Newton stratification on Drinfeld modules retains the strong stratification property as in the theorem above.
	In particular, there exist Drinfeld modules of arbitrary height in characteristic $\pf$. 
\end{remark}

\begin{cor}
	\label{corCompLocSht}
	Let $\underline{\Ec}$ be a Drinfeld shtuka over a complete local noetherian ring $R$ with algebraically closed residue field $\ell$ such that the characteristic of $\underline{\Ec}_\ell$ factors through $0$. 
	Then there exists a Drinfeld module $\Eb$ over $R$ such that the local shtukas at $0$ of $\underline{\Ec}$ and $\Eb$ are isomorphic.
\end{cor}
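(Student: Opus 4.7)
The strategy is to combine the identification $\Z \times \Dmod_r \cong \Sht_{r,b_\infty}$ of Proposition~\ref{propFunDModSht} with the Serre--Tate theorem (Proposition~\ref{propSerreTateSht}). First I would construct a Drinfeld module $\Eb_\ell$ over $\ell$ whose local shtuka at $0$ is isomorphic to $\widehat{\underline{\Ec}_{\ell,0}}$; viewed as a shtuka $\underline{\Ec}'_\ell \in \Sht_{r,b_\infty}(\ell)$ via Proposition~\ref{propFunDModSht}, it then matches $\underline{\Ec}_\ell$ at $0$ and is basic at $\infty$. I would then deform $\underline{\Ec}'_\ell$ to $R$ by prescribing its pair of local shtukas to be $\bigl(\widehat{\underline{\Ec}'_{\ell,\infty}}\otimes_\ell R,\,\widehat{\underline{\Ec}_0}\bigr)$. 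Because the deformation at $\infty$ stays basic at every geometric point, the resulting shtuka $\underline{\Ec}'$ lies in the closed substack $\Sht_{r,b_\infty}$ and therefore arises from a Drinfeld module $\Eb$ over $R$ whose local shtuka at $0$ is $\widehat{\underline{\Ec}_0}$.

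\emph{Construction over $\ell$.} The local shtuka $\widehat{\underline{\Ec}_{\ell,0}}$ is bounded by $(1,0,\ldots,0)$, and a direct partial-sum calculation identifies $B(\GL_{r,K_0},(1,0,\ldots,0))$ with the set of polygons $(1/h,\ldots,1/h,0,\ldots,0)$ (with $h$ slopes equal to $1/h$) for $1\leq h\leq r$. The Dieudonné--Manin classification underlying Definition~\ref{defNewtonPoint}, applied over the algebraically closed field $\ell$, determines a local shtuka bounded by $(1,0,\ldots,0)$ up to isomorphism by its Newton polygon. Combining Remark~\ref{remComparisonGeneral}(\ref{remLocShtDMod}) with the existence of Drinfeld modules of rank $r$ and any height $h\leq r$ in characteristic~$\pf$ (as already used in Proposition~\ref{propSSShtEx}), I can choose $\Eb_\ell$ so that its local shtuka at $0$ is isomorphic to $\widehat{\underline{\Ec}_{\ell,0}}$, and fix such an isomorphism.

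\emph{Deformation to $R$.} Since $R$ is $\mf$-adically complete and its closed fibre has characteristic~$\pf$, the characteristic section of any deformation of $\underline{\Ec}_\ell$ factors through $\Spec(\Oc_0)$, placing us in the setting $\underline{y}=(\infty,0)$ of Proposition~\ref{propSerreTateSht}. A Cohen section $\ell \hookrightarrow R$ (available since $R$ is equicharacteristic with perfect residue field) lets me form the constant deformation $\widehat{\underline{\Ec}'_{\ell,\infty}}\otimes_\ell R$ of the basic local shtuka at $\infty$, whose fibres all retain the basic Newton polygon. Pairing this with the given deformation $\widehat{\underline{\Ec}_0}$ at $0$ and applying Proposition~\ref{propSerreTateSht} iteratively to $R/\mf^n\hookrightarrow R/\mf^{n+1}$, together with effectivity of formal deformations for the Deligne--Mumford stack $\Sht_r$, produces a deformation $\underline{\Ec}'\in\Sht_r(R)$ of $\underline{\Ec}'_\ell$ realising the prescribed pair of local shtukas. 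It lies in the closed substack $\Sht_{r,b_\infty}$ and hence corresponds to a Drinfeld module $\Eb$ over $R$ whose local shtuka at $0$ is $\widehat{\underline{\Ec}_0}$ by Remark~\ref{remComparisonGeneral}(\ref{remLocShtDMod}).

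\emph{Main obstacle.} The chief technical hurdle is the passage from the nilpotent-thickening version of Proposition~\ref{propSerreTateSht} to a genuine deformation over the complete local Noetherian ring $R$: one must invoke effectivity of formal deformations for the Deligne--Mumford stack $\Sht_r$ in order to algebraize the compatible system of Artinian deformations built by Serre--Tate. The remaining ingredients --- existence of Drinfeld modules of prescribed height, Cohen's theorem for $R$, and the invariance of the Newton polygon under base change of local shtukas --- are standard once the strategy is in place.
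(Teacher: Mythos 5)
Your strategy has the same skeleton as the paper's — construct $\Eb_\ell$ over $\ell$ with matching Newton polygon at $0$, deform through Artinian quotients using Serre--Tate, then algebraize — and the field case and algebraization step are fine. The problem is in the intermediate step where you conclude that the deformed shtuka $\underline{\Ec}'$ lies in $\Sht_{r,b_\infty}$.

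You justify this by noting that "the deformation at $\infty$ stays basic at every geometric point." That is only a topological condition: it says the map $\Spec(R/\mf^n)\to\Sht_r$ has set-theoretic image in the Newton locus. But $\Sht_{r,b_\infty}$ is, by its definition, the \emph{reduced} locally closed substack on that locus, and for non-reduced $R/\mf^n$ (which is the typical case once $n>1$) a map landing topologically inside a closed subset need not factor through the reduced subscheme. So basicity at geometric points does not give you $\underline{\Ec}'\in(\Z\times\Dmod_r)(R/\mf^n)$, which is what you actually need in order to invoke Proposition~\ref{propFunDModSht} and extract a Drinfeld module.

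Note that your construction has more than you are using: the local shtuka at $\infty$ is not merely basic fiberwise but is literally the base change $\widehat{\underline{\Ec}'_{\ell,\infty}}\otimes_\ell R$ along the Cohen section. The closed conditions cutting out $\Z\times\Dmod_r$ inside $\Sht_r$ in the proof of Proposition~\ref{propFunDModSht} ($\Ec_0=\Ec_r(-\infty)$ and $\Ec_1=\Ec_{r+1}(-\infty)$) are conditions on the completion of $\Ec$ at $\infty$, hence on the local shtuka at $\infty$; since that local shtuka is constant along the Cohen section and the conditions hold over $\ell$, they hold over $R/\mf^n$. Spelling this out would repair the gap. The paper instead invokes Serre--Tate directly for $\Dmod_r$ (deformations of $\Eb_\ell$ are governed by deformations of its local shtuka at $0$), so one deforms the Drinfeld module itself and never has to check membership in $\Sht_{r,b_\infty}$ over a thickening; that route avoids the issue entirely.
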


\begin{proof}
	The case where $R = \ell$ is a field directly follows from the non-emptiness of Newton strata of Theorem \ref{thmNewtonStratStrongStrat} and Remark \ref{remStratDmod}.
	The case that $R$ is local artinian then follows from the Serre-Tate Theorem \ref{propSerreTateSht}, and the general case that $R$ is a complete noetherian local ring with algebraically closed residue field follows from the fact that $\Dmod_r$ is of finite type over $\Fq$ and \cite[Proposition 3.16]{Hartl2011}. 
\end{proof}

\subsection{The $\pf^n$-torsion scheme of a Drinfeld shtuka.}

We briefly explain how to construct a scheme of $\pf^n$-torsion points of a shtuka, which will play the role of the $p^n$-torsion points of an elliptic curve.
The construction goes back to \cite{Drinfeld1987a}. Let $\underline{\Fc}$ be a $\ph$-sheaf (for example a $\pf^n$-torsion shtuka) over $S$. We set 
$$ \Dr_{q}(\underline \Fc) = \underline{\Spec}\left(\Sym^\bullet \Fc \right)/\Ic, $$
where $\Ic$ is the ideal locally generated by the sections $v^{\otimes q} - \ph(\sigma^* v)$. 
It induces a contravariant functor from the category of $\ph$-sheaves to the category of finite locally free group schemes with $\Fq$-action over $S$. 
Assume $S = \Spec R$ is affine, $\Fc = R^r$ is trivial and $\ph$ is given by the matrix $(a_{ij})$. Then
$$ \Dr_{q}(\underline \Fc) = \Spec \left( R[Y_1, \ldots, Y_r]/\left(Y_1^{q} - \sum_{i=1}^r a_{i1} Y_i, \ldots, Y_r^{q} - \sum_{i=1}^r a_{ir} Y_i\right)\right).$$
\begin{prop}[{\cite[Proposition 2.1]{Drinfeld1987a}}, {\cite[Theorem 2]{Abrashkin2006}} and {\cite[Theorem 5.2]{Hartl2019}}]
	\label{propFinShtEq}
	Let $\underline \Fc = (\Fc, \ph)$ be a finite shtuka of rank $r$ on $S$. 
	Then the group scheme $\Dr_{q}(\underline \Fc)$ is finite locally free of rank $q^r$ over $S$, \'etale over $S$ if and only if $\ph$ is an isomorphism, and radicial over $S$ if and only if $\ph$ is locally nilpotent on $S$. 
	Moreover, the functor $\Dr_q$ is $\Fq$-linear and exact. Its essential image is characterised by the property that the $\Fq$-action is strict in the sense of \cite{Faltings2002}.
\end{prop}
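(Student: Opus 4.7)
The plan is to establish each claim separately, reducing to the local picture where $\Fc$ is free and the explicit presentation
$$\Dr_q(\underline{\Fc}) = \Spec\bigl(R[Y_1,\ldots,Y_r]/(Y_j^q - \textstyle\sum_i a_{ij}Y_i : 1\le j\le r)\bigr)$$
is available. From this presentation the ring on the right is visibly a free $R$-module with basis the monomials $Y_1^{e_1}\cdots Y_r^{e_r}$ with $0\le e_i<q$, giving finite locally freeness of rank $q^r$ without difficulty. The group structure and $\Fq$-action are the ones coming from the additive and scalar-multiplication structure on $\Fc$ transported through the quotient; $\Fq$-linearity of $\Dr_q$ in the argument $\underline\Fc$ is then immediate from functoriality of $\Sym^\bullet$.

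Next I would handle the étale/radicial dichotomy via the Jacobian criterion applied to the polynomials $f_j = Y_j^q - \sum_i a_{ij}Y_i$. Since $q=0$ in $R$, the partial derivative $\partial f_j/\partial Y_k$ equals $-a_{kj}$, so the Jacobian is (minus) the transpose of the matrix of $\ph$. Thus $\Dr_q(\underline\Fc) \to S$ is étale precisely when this matrix is invertible, which is the condition that $\ph$ be an isomorphism. For the radicial statement, I would argue that the $Y_j$ must each be nilpotent in the structure ring (this being the fibrewise condition for being supported at the identity section); iterating the defining relations expresses $Y_j^{q^n}$ in terms of an $R$-linear combination of the $Y_i$ with coefficients encoded by iterated applications of $\ph$, and simultaneous nilpotency of all $Y_j$ is then equivalent to local nilpotency of $\ph$ itself.

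For exactness of $\Dr_q$ on the abelian category of $\ph$-sheaves I would invoke \cite[Proposition 2.1]{Drinfeld1987a} (or equivalently \cite[Theorem 5.2]{Hartl2019}), after verifying kernels and cokernels on the level of $\ph$-sheaves correspond to the expected closed immersions and fppf quotients on the group scheme side; the only mildly nontrivial input is that the quotient formation is compatible with the Drinfeld construction, which reduces to an Artin–Schreier-type calculation in the trivialised case.

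The main obstacle will be the last statement: that the essential image of $\Dr_q$ is exactly the full subcategory of finite locally free $\Fq$-module schemes whose $\Fq$-action is strict in the sense of \cite{Faltings2002}. The forward direction is a direct computation on the Lie algebra of $\Dr_q(\underline\Fc)$, which by the explicit presentation is canonically isomorphic to $\Fc$ with its native $\OS$-module structure so the induced $\Fq$-action agrees with the one coming from $\Fq\hookrightarrow\OS$. The essential surjectivity is the genuinely hard step: given such a strict $\Fq$-module scheme $\Gb$, one must reconstruct a finite shtuka whose Drinfeld functor gives $\Gb$. Here I would not attempt a new proof but invoke the equivalences established in \cite[Theorem 2]{Abrashkin2006} and \cite[Theorem 5.2]{Hartl2019}, which produce from the coaugmentation ideal of $\Gb$ a natural $\Fc$ together with a $\ph$, essentially by extracting the $q$-linear piece of the comultiplication; the strictness of the $\Fq$-action is precisely what ensures that this piece recovers the relations $Y_j^q = \sum_i a_{ij}Y_i$ and hence that the construction inverts $\Dr_q$.
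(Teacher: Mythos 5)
The paper does not supply a proof of this proposition: it cites \cite[Proposition 2.1]{Drinfeld1987a}, \cite[Theorem 2]{Abrashkin2006} and \cite[Theorem 5.2]{Hartl2019} wholesale, so there is no ``paper's own proof'' to compare against. Your sketch is consistent with what those references establish and where it departs from pure citation it is sound: the rank computation and the Jacobian criterion (giving $\partial f_j/\partial Y_k = -a_{kj}$, hence \'etaleness equals invertibility of the matrix of $\ph$) are correct, and for the genuinely difficult parts --- exactness of $\Dr_q$ and the characterisation of its essential image via strictness of the $\F_q$-action --- you rightly defer to the same literature the paper cites rather than attempting a new argument. One small point on the radicial direction: the implication ``all $Y_j$ nilpotent $\Rightarrow$ $\ph$ locally nilpotent'' is not quite immediate from iterating the defining relations; the cleanest route is to check it fibrewise, where over a field the Fitting decomposition $\Fc = \Fc_{\mathrm{nil}} \oplus \Fc_{\mathrm{bij}}$ of $\ph$ matches, under $\Dr_q$, the connected--\'etale decomposition of the group scheme, so vanishing of the \'etale part forces $\Fc_{\mathrm{bij}} = 0$.
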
 
Note that the notion of a strict $\Fq$-action is a condition on the $\Fq$-action on the co-Lie complex of a certain deformation of the group scheme. We do not need the exact definition here and refer to \cite{Faltings2002} or \cite{Hartl2019} for more details. 
In our setting the strictness of the $\Fq$-action will usually be automatic. 

\begin{defn}
Let $\underline{\Ec}$ be a rank $r$ Drinfeld shtuka over $S$. 
We denote by 
$$ \underline{\Ec}[\pf^n] = \Dr_q(\underline{\Ec}|_{D_{n,S}})$$
the \emph{scheme of $\pf^n$-division points} of $\underline{\Ec}$.
\end{defn}
The previous proposition implies that $\underline{\Ec}[\pf^n]$ is a finite locally free $S$-group scheme of rank $q^{nr}$ with strict $\Fq$-action. The $\Oc_0/\pf^n$-module structure on $\underline{\Ec}|_{D_{n,S}}$ gives rise to a canonical $\Oc_0/\pf^n$-module structure on $\underline{\Ec}[\pf^n]$. 
The finite shtuka equivalence in particular induces an equivalence of quotients of $\underline{\Ec}|_{D_{n,S}}$ as $\pf^n$-torsion shtukas and finite locally free closed $\Oc_0/\pf^n$-module subschemes with strict $\Fq$-action of $\underline{\Ec}[\pf^n]$.

By comparison with Drinfeld modules, we get the following explicit description of the $\pf^n$-torsion in characteristic $\pf$.

\begin{cor}
	\label{corCompItorUniv}
		Let $\underline{\Ec}$ be a Drinfeld shtuka over a complete local noetherian ring $R$ with algebraically closed residue field $\ell$ such that the characteristic of $\underline{\Ec}_\ell$ factors through $0$.  
		Then there exists a Drinfeld module $\Eb$ over $R$ such that
	$$ \underline{\Ec}[\pf^n] \cong \Eb[\pf^n]$$
	as $\Oc_0/\pf^n$-module schemes over $R$ for all $n \in \N$.
\end{cor}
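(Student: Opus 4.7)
The plan is to directly combine Corollary \ref{corCompLocSht} with the finite shtuka equivalence of Proposition \ref{propFinShtEq}, using the fact that the $\pf^n$-torsion scheme is really an invariant of the local shtuka at $0$, which via Remark \ref{remComparisonGeneral} (\ref{remLocShtDMod}) is also an invariant of the associated Drinfeld module.

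First, I would apply Corollary \ref{corCompLocSht} to obtain a Drinfeld module $\Eb$ over $R$ whose local shtuka at $0$, in the sense of Remark \ref{remComparisonGeneral} (\ref{remLocShtDMod}), is isomorphic to the local shtuka at $0$ of $\underline{\Ec}$. Call this common local shtuka $\underline{\Gc} = (\Gc,\ph)$, an $R\dsq{\varpi}$-module with semilinear map.

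Next, I would identify the $\pf^n$-torsion shtukas. By construction, the $\pf^n$-torsion shtuka $\underline{\Ec}|_{D_{n,R}}$ is obtained from the local shtuka of $\underline{\Ec}$ at $0$ by reduction modulo $\varpi^n$, i.e.\ by tensoring $\Gc$ with $R\dsq{\varpi}/(\varpi^n) = R[\varpi]/(\varpi^n) = \Oc_{D_{n,R}}$ and restricting $\ph$. The analogous statement for $\Eb$ follows from the compatibility assertion of \cite{Hartl2017a} and \cite{Hartl2019} recalled in Remark \ref{remComparisonGeneral} (\ref{remLocShtDMod}): the $\pf^n$-torsion of the Drinfeld module $\Eb$ corresponds under Drinfeld's functor to the truncation of its local shtuka at $0$ modulo $\varpi^n$. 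Hence $\underline{\Ec}|_{D_{n,R}} \cong \Eb|_{D_{n,R}}$ as $\pf^n$-torsion shtukas, compatibly with their $\Oc_0/\pf^n$-module structures.

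Finally, I would apply the functor $\Dr_q$. By Proposition \ref{propFinShtEq}, $\Dr_q$ is an $\Fq$-linear equivalence of categories onto finite locally free group schemes with strict $\Fq$-action, so the isomorphism of $\pf^n$-torsion shtukas yields an isomorphism
\[
\underline{\Ec}[\pf^n] = \Dr_q(\underline{\Ec}|_{D_{n,R}}) \cong \Dr_q(\Eb|_{D_{n,R}}) = \Eb[\pf^n]
\]
as finite locally free $S$-group schemes. The $\Oc_0/\pf^n$-module structure on both sides is induced from the $\Oc_{D_{n,R}}$-module structure on the corresponding finite shtuka via the functoriality of $\Dr_q$, so the isomorphism is automatically $\Oc_0/\pf^n$-linear, and it is compatible for varying $n$ since the truncations and $\Dr_q$ both are. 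No step is really the hard part here; the only thing to be careful about is the compatibility of Drinfeld's functor $\Dr_q$ on $\pf^n$-torsion shtukas with the classical construction of $\pf^n$-torsion schemes of Drinfeld modules, which is exactly the content of Remark \ref{remComparisonGeneral} (\ref{remLocShtDMod}) and hence requires nothing new.
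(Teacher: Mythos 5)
Your proposal is correct and takes essentially the same approach as the paper: the paper's proof is the one-line reduction to Corollary \ref{corCompLocSht}, and you have simply spelled out the implicit chain of reasoning, namely that $\underline{\Ec}|_{D_{n,R}}$ is the truncation of the local shtuka at $0$ modulo $\varpi^n$ (likewise for $\Eb$ via Remark \ref{remComparisonGeneral}(\ref{remLocShtDMod})) and that the $\Oc_0/\pf^n$-linear equivalence $\Dr_q$ carries this to the desired isomorphism of $\pf^n$-torsion schemes.
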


\begin{proof}
	This follows directly from the corresponding assertion for the local shtukas in Corollary \ref{corCompLocSht}.
\end{proof}

\begin{prop}
	\label{propDescPTor}
	The scheme of $\pf^n$-division points of a shtuka $\underline\Ec$ of rank $r$ over an algebraically closed $\ell$ is given by the $\Oc_0/\pf^n $-module scheme
	$$ \underline\Ec[\pf^n] = \alpha_{q^{nh}} \times (\pf^{-n}/\Oc_0)^{r-h},$$
	where the operation of $\varpi$ on $\alpha_{q^{nh}}$ is given by $t \mapsto t^{q^h}$, and where $h$ is the height of $\underline\Ec$ (we use the convention $h = 0$ when the characteristic of $\underline{\Ec}$ is away from $0$). 
\end{prop}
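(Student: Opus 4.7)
The plan is to split into two cases according to whether the characteristic of $\underline{\Ec}$ meets $0$ or not, reducing the interesting case to the corresponding statement for Drinfeld modules via Corollary \ref{corCompItorUniv}.

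If the characteristic of $\underline{\Ec}$ is disjoint from $0$, then $h=0$ by convention and the cokernel of $\ph$ is supported away from $D_n$. Hence $\ph|_{D_{n,\ell}}$ is an isomorphism, so $\underline{\Ec}|_{D_{n,\ell}}$ is an étale $\pf^n$-torsion shtuka that is free of $\Oc_0/\pf^n$-rank $r$. By Proposition \ref{propFinShtEq}, the scheme $\underline{\Ec}[\pf^n]$ is étale over $\ell$ of rank $q^{nr}$; passing to $\sigma$-invariants over the algebraically closed $\ell$ produces a free $\Oc_0/\pf^n$-module of rank $r$, giving $\underline{\Ec}[\pf^n] \cong (\pf^{-n}/\Oc_0)^r$ as desired.

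If the characteristic factors through $0$, I would apply Corollary \ref{corCompItorUniv} with $R = \ell$ to produce a Drinfeld module $\Eb$ of rank $r$ over $\ell$ whose $\pf^n$-torsion is isomorphic to $\underline{\Ec}[\pf^n]$ as an $\Oc_0/\pf^n$-module scheme. By the compatibility of the comparison with local shtukas at $0$ recorded in Remark \ref{remComparisonGeneral}(\ref{remLocShtDMod}), the Drinfeld module $\Eb$ has height $h$, and it therefore suffices to describe $\Eb[\pf^n]$ explicitly.

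Over the algebraically closed field $\ell$ the connected-étale sequence of the finite group scheme $\Eb[\pf^n]$ splits canonically into a product $\Eb[\pf^n]^0 \times \Eb[\pf^n]^\et$. The same argument as in the first case shows that the étale factor is free over $\Oc_0/\pf^n$; a rank count (both sides have $q^{n(r-h)}$ many geometric points) gives $\Eb[\pf^n]^\et \cong (\pf^{-n}/\Oc_0)^{r-h}$. The connected factor identifies with the $\pf^n$-torsion of the formal $\Oc_0$-module $\widehat{\Eb}$ attached to $\Eb$ at $0$, which is of height $h$ by construction. The main obstacle is the final identification of this connected piece with the claimed $\alpha_{q^h}$: this is where one invokes Drinfeld's classification \cite{Drinfeld1976} of formal $\Oc_0$-modules over algebraically closed fields, which asserts that every such module of height $h$ is isomorphic to the standard one whose $\varpi$-action is $t \mapsto t^{q^h}$.
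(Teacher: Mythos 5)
Your reduction strategy --- split according to whether the characteristic of $\underline{\Ec}$ meets $0$, and in the bad case reduce to Drinfeld modules via Corollary \ref{corCompItorUniv} --- is exactly what the paper does; the paper then simply cites \cite{Lehmkuhl2009} for the resulting Drinfeld-module statement, whereas you unpack it. The first case and the final appeal to Drinfeld's classification of formal $\Oc_0$-modules of height $h$ are both fine.

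In the second case, however, there is a gap in the claim that \emph{the same argument as in the first case shows the étale factor is free over $\Oc_0/\pf^n$}. The first-case argument crucially uses that $\Ec|_{D_{n,\ell}}$ is a \emph{free} $\Oc_0/\pf^n$-module of rank $r$ (it is the restriction of a vector bundle to $D_n$), so that the associated trivial étale finite shtuka is free of that rank. In the second case the étale quotient $\Eb[\pf^n]^{\et}$ corresponds under the finite shtuka equivalence to a \emph{quotient} $\pf^n$-torsion shtuka of $\Ec|_{D_{n,\ell}}$, and quotients of free $\ell[\varpi]/\varpi^n$-modules need not be free, so the argument does not transfer. The subsequent rank count $|\Eb[\pf^n]^{\et}(\ell)| = q^{n(r-h)}$ alone does not determine the $\Oc_0/\pf^n$-module structure either: for $n \geq 2$ and $r - h \geq 1$, both $(\Oc_0/\pf^n)^{r-h}$ and $(\Oc_0/\pf^{n-1}) \oplus (\Oc_0/\pf) \oplus (\Oc_0/\pf^n)^{r-h-1}$ have that order. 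You need to control one more invariant, for instance also compute $|\Eb[\pf]^{\et}(\ell)| = q^{r-h}$, which forces the module to have exactly $r-h$ cyclic summands and hence to be free of rank $r-h$; or, more structurally, note that the étale part of the $\pf$-divisible $\Oc_0$-module of $\Eb$ is étale of height $r-h$, so over the algebraically closed $\ell$ it is $(K_0/\Oc_0)^{r-h}$, whose $\pf^n$-torsion is $(\pf^{-n}/\Oc_0)^{r-h}$. With either patch your argument closes and recovers what the paper gets from the Lehmkuhl references.
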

\begin{proof}
	 We first consider the case that $\pf$ is away from the characteristic of $\underline{\Ec}$. Then, $ \underline{\Ec}[\pf^{n}]$ is a finite \'etale scheme by the finite shtuka equivalence. It follows that \'etale locally on $S$ the $\Oc_0/\pf^n$-module scheme $\underline{\Ec}[\pf^{n}]$ is constant. Over geometric points, we have that $\underline{\Ec}[\pf^{n}] \cong (\pf^{-n}/\Oc_0)^{r}$ as the corresponding \'etale local shtuka is trivial by \cite[Corollary 2.9]{Rad2015}.
	In characteristic $\pf$, by the previous Corollary \ref{corCompItorUniv} it suffices to check the assertion for Drinfeld modules, which then follows from \cite[3, Proposition 1.5]{Lehmkuhl2009} and \cite[2, Corollary 2.4]{Lehmkuhl2009}.
\end{proof}

Even more generally, we can embed the scheme of $\pf$-torsion points as a closed subscheme of a smooth curve.  
However, this smooth curve will not be a Drinfeld module in general. 

\begin{prop}
	\label{propEmbItor}
	Let $\underline{\Ec}$ be a Drinfeld shtuka over 
	$S$. Then \'etale-locally on $S$, the scheme of $\pf^{n}$-division points of $\underline{\Ec}$ can be embedded as a closed subscheme of a smooth curve over $S$.
	More precisely, we can \'etale-locally on $S$ embed $\underline{\Ec}[\pf^{n}]$ as a closed subscheme of $\A^1_S$.
\end{prop}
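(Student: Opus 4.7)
Since the statement is \'etale-local on $S$, I would reduce to the case $S = \Spec R$ is affine and noetherian, and further, by standard limit arguments, to the case where $R$ is a localization of a finite type $\Fq$-algebra. Fix a geometric point $\bar s \to S$ over some $s \in S$, let $R^{\mathrm{sh}}$ denote the strict henselization of $\Oc_{S,s}$ and $\widehat R^{\mathrm{sh}}$ its completion (a complete local noetherian ring with algebraically closed residue field). It is enough to produce a closed immersion $\underline{\Ec}[\pf^n]_{R'} \hookrightarrow \A^1_{R'}$ for some \'etale neighborhood $R'$ of $\bar s$.

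The construction over $\widehat R^{\mathrm{sh}}$ splits into two cases according to whether the characteristic section $x$ of $\underline{\Ec}$ meets $0$ at $s$. If it does not, then in a neighborhood of $s$ the cokernel of $\ph$ is disjoint from $D_{n,S}$, so $\underline{\Ec}|_{D_{n,S}}$ is an \'etale $\pf^n$-torsion shtuka and $\underline{\Ec}[\pf^n]$ is finite \'etale over $S$ in a neighborhood of $s$ by Proposition \ref{propFinShtEq}. After passing to $R^{\mathrm{sh}}$ it becomes a constant finite group scheme and embeds in $\A^1_{R^{\mathrm{sh}}}$ by sending its points to pairwise distinct elements of $R^{\mathrm{sh}}$, possible since the residue field is algebraically closed. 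If instead $x$ factors through $0$ at $s$, then Corollary \ref{corCompItorUniv} provides a Drinfeld module $\Eb$ over $\widehat R^{\mathrm{sh}}$ with an isomorphism $\underline{\Ec}[\pf^n]_{\widehat R^{\mathrm{sh}}} \cong \Eb[\pf^n]$. Since the line bundle underlying $\Eb$ is trivial over the local ring $\widehat R^{\mathrm{sh}}$, one has $\Eb = (\Ga_{\widehat R^{\mathrm{sh}}}, e)$, and $\Eb[\pf^n] = \ker(e_{\varpi^n})$ is tautologically a closed subscheme of $\Ga_{\widehat R^{\mathrm{sh}}} = \A^1_{\widehat R^{\mathrm{sh}}}$, yielding the desired embedding over $\widehat R^{\mathrm{sh}}$.

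To conclude I would descend the embedding from $\widehat R^{\mathrm{sh}}$ to an \'etale neighborhood using Artin approximation. Let $U$ be the functor on $R$-algebras sending $R'$ to the set of $T \in \Gamma(\underline{\Ec}[\pf^n]_{R'}, \Oc)$ such that $1, T, T^2, \ldots, T^{q^{nr}-1}$ form an $R'$-basis of $\Gamma(\underline{\Ec}[\pf^n]_{R'}, \Oc)$; giving such a $T$ is the same datum as a closed immersion $\underline{\Ec}[\pf^n]_{R'} \hookrightarrow \A^1_{R'}$. The functor $U$ is representable by an open subscheme of the vector bundle associated to $\Gamma(\underline{\Ec}[\pf^n], \Oc)$, namely the locus where a certain determinant is a unit, and is thus of finite presentation over $S$. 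The embedding constructed above gives a $\widehat R^{\mathrm{sh}}$-point of $U$, which by Artin approximation can be approximated by an $R^{\mathrm{sh}}$-point. Since $R^{\mathrm{sh}}$ is the filtered colimit of \'etale $R$-algebras and $U$ is of finite presentation, this $R^{\mathrm{sh}}$-point already factors through some \'etale neighborhood $R'$ of $\bar s$, producing the required embedding.

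The main obstacle is the descent step: one has to verify the hypotheses of Artin approximation, which is why one first reduces to the case where $R$ is excellent (so that $R^{\mathrm{sh}}$ is an excellent henselian local ring), and one has to identify $U$ as a finitely presented scheme over $S$. The key geometric input is Corollary \ref{corCompItorUniv}, which reduces the embedding question in characteristic $\pf$ to the case of a Drinfeld module, where an embedding in $\A^1$ is built into the very definition.
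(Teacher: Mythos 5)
Your proof is correct, and the key geometric ingredient — reducing to the case of a Drinfeld module via Corollary~\ref{corCompItorUniv}, where the embedding into $\A^1$ is tautological — is the same input the paper uses (the paper goes through Proposition~\ref{propDescPTor}, whose proof in characteristic $\pf$ is itself Corollary~\ref{corCompItorUniv}). The difference is entirely in the descent mechanism. You construct the embedding over the completed strict henselization $\widehat{R^{\mathrm{sh}}}$, package closed immersions $\underline{\Ec}[\pf^n] \hookrightarrow \A^1$ into a finitely presented functor $U$, and invoke Artin approximation to descend to an \'etale neighbourhood. The paper instead argues directly at a closed point $s$ with finite residue field: it spreads the algebraically-closed-fibre embedding to a finite extension $\F/k(s)$, uses \citestacks{00UD} to realise $\F$ as the residue field of an \'etale neighbourhood $R'$ of $s$, lifts the surjection $\F[t] \twoheadrightarrow B \otimes \F$ to $R'[t] \to B \otimes R'$, and concludes by Nakayama. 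Your route is heavier (it needs excellence of $\Oc_{S,s}$ and the full strength of Artin approximation) but arguably cleaner, since it avoids the explicit residue-field bookkeeping and the spreading-out step implicit in "by the argument above there exists a finite extension $\F$ of $k(s)$." The paper's route is more elementary and self-contained. One small point worth flagging: when you embed the constant group scheme over $R^{\mathrm{sh}}$ in the \'etale case, you should justify that $R^{\mathrm{sh}}$ contains enough pairwise-unit-difference elements; this follows from the Teichm\"uller-style lift of $\overline{\Fq}$ into $R^{\mathrm{sh}}$ by henselianness (every element of $\overline{\Fq}$ satisfies a separable polynomial over $\Fq$), but it deserves a sentence.
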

\begin{remark}
	\label{remEmbPTorFrimu}
	For one-dimensional $p$-divisible groups a similar statement is discussed in \cite[Lemma 5.2.1]{Frimu2019}, building on arguments from \cite{Harris2001}. However, \cite{Frimu2019} claims that an embedding even exists Zariski-locally on $S$, this seems to be problematic to us for the following reason. 
	Let us assume that $S$ is the spectrum of a finite field. We assume that the \'etale part of $\underline\Ec[\pf^n]$ is non-trivial and constant over $S$. Then the number of rational points of $\underline\Ec[\pf^n]$ tends to infinity as $n \to \infty$.  However, the number of $S$-rational points on $\A^1_S$ is bounded. In particular, it cannot be possible to embed $\underline\Ec[\pf^n]$ into $\A^1_S$ for all $n \in \N$.
	
	For the proof of the proposition, we essentially adapt the proof of \cite[Lemma 5.2.1]{Frimu2019}, but we allow finite extensions on the residue fields in order to circumvent the issue discussed above.
	So we only get an \'etale local statement. 
\end{remark}
\begin{proof}
	We adapt the proof of \cite[Lemma 5.2.1]{Frimu2019}.
	We first consider the case that  $S = \Spec \ell$ is the spectrum of an algebraically closed field.
	In this case, the assertion follows from the explicit description of $\underline{\Ec}[\pf^n]$ in Proposition \ref{propDescPTor}.

	For the general case, we may by reduction to the universal case assume that $S$ is locally of finite type over $\Fq$. As the statement is local on $S$, we may further assume that $S= \Spec(R)$ is affine and of finite type over $\Fq$. Then $\underline\Ec[\pf^n] = \Spec(B)$ is affine as well. 
	We fix a closed point $s \in S$. By the argument above, there exists a finite extension $\F$ of the residue field $k(s)$ (which is finite by assumption) at $s$ such that there exists a closed immersion $\underline\Ec[\pf^n]_\F \hookrightarrow \A^1_\F$ over $\F$, in other words a surjection $\F[t] \twoheadrightarrow B \otimes \F$.
	By \citestacks{00UD} there exists an \'etale neighbourhood $\Spec R' \to \Spec R$ of $s$ and a point $s'$ over $s$ such that the extension of residue fields $k(s) \to k(s')$ is given by $k(s) \to \F$.
	We can thus lift the surjection to a map $R'[t] \to B \otimes R'$ by choosing a lift of the image of $t$.
	By Nakayama's lemma this is a surjection over some $R'_f$, where $f \in R'$ is not contained in the maximal ideal at $s'$.
	In other words, $\underline\Ec[\pf^n]_\F \hookrightarrow \A^1_\F$ extends to $\underline\Ec[\pf^n]_{R'_f} \hookrightarrow \A^1_{R'_f}$ over the \'etale neighbourhood $\Spec({R'_f})$ of $s$.
\end{proof}

We conclude this section by collecting some consequences on isogenies of Drinfeld shtukas.
Using the finite shtuka equivalence we see that a chain of $\pf^n$-isogenies of type $(r_1, \ldots, r_m)$ on a Drinfeld shtuka $\underline\Ec$ is equivalent to the data of a flag
$$ 0 \se \Hb_1 \se \Hb_2 \se \ldots \se \Hb_m \se \underline\Ec[\pf^{n}] $$
of finite locally free submodule schemes $\Hb_i \se \underline\Ec[\pf^{n}]$ of rank $q^{n \cdot (r_1 + \ldots + r_i)}$ over $S$ with strict $\Fq$-action.
In particular, $\Hb_i/\Hb_{i-1}$ has rank $q^{n r_i}$ and has an induced strict $\Fq$-action.

\begin{prop}
	\label{propIIsoRepSht}
	The stack $\Sht_{r, (r_1, \ldots, r_m)-\pf^{n}\text{-chain}} $ is a Deligne-Mumford stack locally of finite type over $\Fq$. The forgetful map to $\Sht_r$ given by projection to $\underline \Ec$ is schematic and finite. Moreover, the forgetful map $\Sht_{r, (r_1, \ldots, r_m)-\pf^{n}\text{-chain}} \to \Sht_r$ is finite \'etale away from $0$.
\end{prop}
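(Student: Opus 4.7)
My plan is to show that the forgetful map $\Sht_{r, (r_1, \ldots, r_m)-\pf^n\text{-chain}} \to \Sht_r$ is schematic, finite, and finite étale away from $0$; the Deligne--Mumford property and local finiteness of type over $\F_q$ for the total stack then follow from those of $\Sht_r$ by descent along a representable finite morphism. Fix a test scheme $S$ and $\underline{\Ec} \in \Sht_r(S)$. By the finite shtuka equivalence of Proposition \ref{propFinShtEq}, parametrizing flags $0 \subseteq \Hb_1 \subseteq \ldots \subseteq \Hb_m \subseteq \underline{\Ec}[\pf^n]$ of the prescribed type is equivalent to parametrizing the corresponding flag of sub-$\pf^n$-torsion shtukas $\underline{\Kc}_\bullet \subseteq \underline{\Ec}|_{D_{n,S}}$, where $\Kc_i = \ker(\underline{\Ec}|_{D_{n,S}} \twoheadrightarrow \Fc_i)$ is $\OS$-locally free of rank $n(r - (r_1 + \ldots + r_i))$.

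For schematicity and properness, I would embed the moduli into the partial flag variety of the appropriate ranks inside $\Ec|_{D_{n,S}}$, which is projective over $S$ since $\Ec|_{D_{n,S}}$ is $\OS$-locally free of rank $nr$. Inside it, the conditions that each universal subbundle $\Kc_i$ be $\Oc_0/\pf^n$-stable (equivalently, $\varpi$-stable) and $\ph$-stable (i.e.\ that the composition $\sigma^\ast \Kc_i \to \sigma^\ast \Ec|_{D_{n,S}} \to \Ec|_{D_{n,S}} \to \Ec|_{D_{n,S}}/\Kc_i$ vanishes) are closed. Hence the moduli is representable by a closed subscheme of this partial flag variety, and in particular proper over $S$.

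For finiteness I would invoke Proposition \ref{propEmbItor}: étale-locally on $S$ there is a closed immersion $\underline{\Ec}[\pf^n] \hookrightarrow \A^1_S$, necessarily cut out by a monic polynomial $f$ of degree $q^{nr}$. Any finite locally free closed subscheme of $\underline{\Ec}[\pf^n]$ of rank $q^k$ is then an effective relative Cartier divisor in $\A^1_S$ contained in $V(f)$, so it is cut out by a monic divisor of $f$ of degree $q^k$. Consequently the moduli of flags embeds étale-locally as a closed subscheme of the affine space $\prod_{i=1}^{m} \A^{q^{n(r_1 + \ldots + r_i)}}_S$, so it is affine over $S$ by fpqc descent of affineness, and proper plus affine yields finiteness. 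Finally, over the open $U \subseteq S$ on which the characteristic section avoids $0$, the map $\ph$ on $\underline{\Ec}|_{D_{n,U}}$ is an isomorphism, so $\underline{\Ec}[\pf^n]$ is finite étale over $U$ and by Proposition \ref{propDescPTor} étale-locally isomorphic to the constant group scheme $(\pf^{-n}/\Oc_0)^r_U$; the set of flags of sub-$\Oc_0/\pf^n$-modules of prescribed ranks in this free rank-$r$ module is finite and discrete, so the moduli is finite étale over $U$. The main obstacle is the finiteness step, where one must combine the globally established properness with the only étale-locally available affineness; once this is handled, the remaining assertions follow formally.
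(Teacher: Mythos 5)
Your proof is correct, but the finiteness and étaleness steps take a genuinely different route from the paper's. The paper also obtains schematicity and properness by realizing the moduli as a closed subscheme of a (projective) flag variety, but then deduces finiteness via the EGA criterion "proper plus quasi-finite implies finite" (\cite[Corollaire 18.12.4]{EGA4IV}), checking quasi-finiteness by counting submodule schemes of $\alpha_{q^h} \times (\pf^{-n}/\Oc_0)^{r-h}$ over an algebraically closed field using Proposition \ref{propDescPTor}; for étaleness away from $0$ it simply cites \cite[Lemma 3.3 a)]{Varshavsky2004}. You instead establish affineness by realizing the moduli, étale-locally, as a closed subscheme of an affine space of tuples of monic polynomials (via the Cartier-divisor description provided by Proposition \ref{propEmbItor}), descend affineness, and conclude with "proper plus affine implies finite"; and for étaleness you unwind the constancy of $\underline{\Ec}[\pf^n]$ away from $0$ directly. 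Both routes work. Yours is more self-contained and avoids the external Varshavsky citation, at the cost of having to check that the map from the moduli (known a priori as a projective scheme) to the affine space of polynomial coefficients is a closed immersion — this follows because it is a monomorphism from a proper $S$-scheme to a separated $S$-scheme, but the step deserves to be made explicit. The paper's quasi-finiteness check is shorter but less geometric. One small imprecision: Proposition \ref{propDescPTor} describes $\underline{\Ec}[\pf^n]$ only over an algebraically closed field; what makes it constant étale-locally on $U$ is that $\underline{\Ec}[\pf^n]$ is finite étale there (Proposition \ref{propFinShtEq}), combined with the pointwise description, so the citation should really be to both facts.
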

\begin{proof}
	Let $\underline{\Ec} \in \Sht_r(S)$.
	The functor on $S$-schemes
	$$ T \mapsto \left\{ \begin{array}{l}
		\text{flags of quotients } \underline{\Ec}|_{D_{n,T}} \twoheadrightarrow \underline\Fc_m \twoheadrightarrow \ldots \twoheadrightarrow \underline\Fc_1 \twoheadrightarrow 0 \text{ of $\pf^n$-torsion} \\
		\text{finite shtukas such that $\Fc_i$ has rank $ n(r_1 + \ldots + r_i)$ as $\Oc_T$-module}
	\end{array}\right\}$$
	is representable by the closed subscheme of a certain flag variety of quotients of $\underline{\Ec}|_{D_{n,S}}$ (as $\OS$-modules) where both the map $\sigma^*\underline{\Ec}|_{D_{n,S}} \to \underline{\Ec}|_{D_{n,S}}$ and the $\Oc_0/\pf^n$-module structure descend to all the $\Fc_i$.
	As the flag variety is projective, we see that
	$\Sht_{r,  (r_1, \ldots, r_m)-\pf^{n}\text{-chain}} \to \Sht_r$ is schematic and projective.
	
	In order to show finiteness of the map we proceed as in the proof of \cite[Proposition 6.5.1]{Katz1985}. By \cite[Corollaire 18.12.4]{EGA4IV} it suffices to show that the map has finite fibres. 
	Let $\ell$ be an algebraically closed field and let $\underline{\Ec}$ be a rank $r$ shtuka over $\ell$. 
	It suffices to show that $\underline{\Ec}[\pf^n]$ only has finitely many submodule schemes. We know by Proposition \ref{propDescPTor} that for some $h \geq 0$ we have
	$$ \underline\Ec[\pf^n] \cong \alpha_{ q^{nh}} \times \left( \pf^{-n}/\Oc_0 \right)^{r-h}.$$
	As $\ell$ is in particular perfect, any $\Oc_0/\pf^n$-submodule scheme $\Hb \se \underline\Ec[\pf^n]$ factors as $\Hb \cong \Hb^{\text{conn}} \times \Hb^{\text{\'et}}$ but for both factors (which are necessarily submodule schemes of $\alpha_{ q^{nh}}$ and $\left( \pf^{-n}/\Oc_0 \right)^{r-h}$, respectively) there are only finitely many possibilities.
	
	The \'etaleness away from $0$ follows for example from \cite[Lemma 3.3 a)]{Varshavsky2004}.
\end{proof}

	
	\section{Drinfeld $\Gamma_1(\pf^n)$-level structures on shtukas}

In this section, we introduce $\Gamma_1$-type (Drinfeld-) level structures on Drinfeld shtukas adapting similar constructions for Drinfeld modules and elliptic curves. 
We show that the moduli space of Drinfeld shtukas with these level structures is regular following the arguments of \cite{Katz1985}.
For Drinfeld modules, full Drinfeld level structures were studied extensively starting with \cite{Drinfeld1976}, compare for example also \cite{Lehmkuhl2009}. For other kinds of level structures some results are known, \cite{Shastry2007} considers $\Gamma_1(\pf)$-level structures in the rank 2 case and \cite{Kondo2020} study level structures for arbitrary torsion modules and higher rank Drinfeld modules.

We propose a slightly different generalisation of an $M$-level structure on Drinfeld shtukas for a $\pf$-torsion $\Oc_0$-module $M$. In this notation $(\pf^{-n}/\Oc_0)^r$-structures are full level structures and in the rank 2 case $(\pf^{-n}/\Oc_0)$-structures are $\Gamma_1(\pf^n)$-level structures.
For us, it does not seem to be a priori clear that our definition and the analogue of \cite{Kondo2020} agree, even for full level, as is claimed in \cite[(4.1.2.)]{Kondo2020}. For full level structures on Drinfeld modules, this follows from a deep result on the deformation theory of \cite[Proposition 3.3]{Lehmkuhl2009}. We show that the two definitions agree in general in a similar fashion. 
One could also directly adapt the definition of \cite{Katz1985}, as does for example \cite{Taelman2006}. However, it seems to us that this definition does not give the correct moduli space, see Remark \ref{remKMlevelCounterEx}.

Moreover, we define analogues of balanced level structures of \cite{Katz1985} and use this notion of balanced level structure to give a definition of $\Gamma_1(\pf^n)$-level structure for Drinfeld shtukas of arbitrary rank and arbitrary $n \in \N$ in Definition \ref{defnBalIStructSht}.

\subsection{$M$-Structures on Drinfeld shtukas.}

In order to define Drinfeld level structures we use the notion of \emph{full sets of sections}, compare \cite[Section 1.8]{Katz1985}. When working with closed subschemes of a smooth curve, this can be expressed in terms of Cartier divisors by \cite[Theorem 1.10.1]{Katz1985}. Katz and Mazur hoped that the notion of "full sets of sections" might be useful to define level structures for higher dimensional abelian varieties. However, this notion gives rise to a moduli problem which is not even flat over $\Z$ in general (compare \cite{Chai1990}). 
Nevertheless, these issues do not appear in our setting, as Proposition \ref{propEmbItor} allows us to locally work with Cartier divisors in $\A^1$. Note that in a similar fashion Drinfeld level structures are well-behaved when working with one-dimensional $p$-divisible groups, as do \cite{Harris2001} and \cite{Scholze2013b}.

Let $M$ be a finitely generated $\pf^n$-torsion $\Oc_0$-module.
In order to define $M$-structures on Drinfeld shtukas, we would like for an $\Oc_0$-module homomorphism $\iota \colon M \to \underline{\Ec}[\pf^n](S)$ to induce a 
(unique) scheme \emph{generated} by $\iota$, similar to the Cartier divisor generated by an $\Gamma_1(p^n)$-Drinfeld level structure on elliptic curves.
In other words, we are looking for a unique finite locally free scheme over $S$ such that the image of $\iota$ forms a full set of sections for $S$ in the sense of \cite[Section 1.8]{Katz1985}.
This notion is defined as follows. Let $Z$ be a finite locally free $S$-scheme of rank $N$. A set of sections $P_1, \ldots, P_N \in Z(S)$ is called \emph{full set of sections} of $Z$ if for every affine $S$-scheme $\Spec(R) \to S$ and every $f \in \Gamma(Z_R, \Oc_{Z_R})$ we have $\mr{Norm}(f) = \prod_{i = 1}^N f(P_i)$. By \cite[Theorem 1.10.1]{Katz1985}, when $Z \hookrightarrow C$ is embedded as a relative effective Cartier divisor in a smooth curve $C$ over $S$, the set $P_1, \ldots, P_N \in Z(S)$ is a full set of sections of $Z$ if and only if $Z = \sum_{i =1}^N [P_i]$ as Cartier divisors in $C$.

Recall that in general, that given a set of sections $P_1, \ldots, P_{N'} \in Z(S)$ we can neither expect that a finite locally free subscheme $Z'$ of $Z$ of rank $N'$ such that $P_1, \ldots, P_{N'}$ forms a full set of sections of $Z'$ exists nor that it is unique when it exists (compare \cite[Remark 1.10.4]{Katz1985}). However, Proposition \ref{propEmbItor} allows us to construct such a unique scheme in the cases we are interested in.

\begin{lem} 
	\label{lemIstrctDefSch}
	Let $\underline{\Ec}$ be a Drinfeld shtuka over $S$ and let $M$ be a $\pf^n$-torsion module. Let $\iota \colon M \to \underline{\Ec}[\pf^n](S)$ be an $\Oc_0$-linear map.
	\begin{enumerate}
		\item Assume there exists a closed immersion $\underline{\Ec}[\pf^n] \hookrightarrow C$ into a smooth curve $C$ over $S$. Then there exists a unique finite locally free closed subscheme $\Hb$ of $C$ such that the image of $\iota$ (in $C(S)$) forms a full set of sections for $\Hb$. 
		\item There exists at most one finite locally free closed subscheme $\Hb \se \underline{\Ec}[\pf^n]$ such that the image of $\iota$ forms a full set of sections for $\Hb$. 
		\item There exists a (by the previous point necessarily unique) finite locally free closed subscheme $\Hb$ of $\underline{\Ec}[\pf^n]$ such that $\iota$ gives a full set of sections for $\Hb$ if and only if the following equivalent conditions are satisfied:
		\begin{enumerate}
			\item For all \'etale maps $U \to S$ and all closed immersions $\underline{\Ec}[\pf^n]_U \hookrightarrow C$ into a smooth curve over $U$, the Cartier divisor defined by the image of $\iota$ in $C(U)$ is a subscheme of $ \underline{\Ec}[\pf^n]_U$.
			\item There exists an \'etale cover $\{ U_i\}_{i \in I}$ of $S$ and for each $i \in I$ a smooth curve $C_i$ over $U_i$ together with a closed immersion $\underline{\Ec}[\pf^n]_{U_i} \hookrightarrow C_i$ such that the Cartier divisor defined by the image of $\iota$ in $C_i(U_i)$ is a subscheme of $ \underline{\Ec}[\pf^n]_{U_i}$ for all $i \in I$.
		\end{enumerate}
		The existence of such an $\Hb$ is a closed condition on $S$, defined locally on $S$ by finitely many equations.
	\end{enumerate}
\end{lem}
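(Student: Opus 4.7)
The plan is to reduce everything to explicit Cartier-divisor calculations inside smooth curves via \cite[Theorem 1.10.1]{Katz1985}, combined with the \'etale-local embedding $\underline{\Ec}[\pf^n]\hookrightarrow\A^1_S$ supplied by Proposition~\ref{propEmbItor}. For part~(1), I would take $\Hb=\sum_{m\in M}[\iota(m)]$, the relative effective Cartier divisor in $C$ generated by the sections $\iota(m)$; it is finite locally free of rank $|M|$ and admits the image of $\iota$ as a full set of sections by \cite[Theorem~1.10.1]{Katz1985}. Any other candidate $\Hb'$ is a finite locally free closed subscheme of $C$ with the same full set of sections, so the same theorem applied to the inclusion $\Hb'\hookrightarrow C$ forces $\Hb'$ to equal this Cartier divisor. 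For part~(2), given $\Hb_1,\Hb_2\se\underline{\Ec}[\pf^n]$ both admitting the image of $\iota$ as a full set of sections, I would choose by Proposition~\ref{propEmbItor} an \'etale cover $\{U_i\to S\}$ with closed immersions $\underline{\Ec}[\pf^n]_{U_i}\hookrightarrow\A^1_{U_i}$; applying part~(1) inside each $\A^1_{U_i}$ yields $(\Hb_1)_{U_i}=(\Hb_2)_{U_i}$, and \'etale descent gives $\Hb_1=\Hb_2$.

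For part~(3), I would establish the equivalence of existence of $\Hb$, condition~(a) and condition~(b). Existence implies~(a): for any \'etale $U\to S$ and embedding $\underline{\Ec}[\pf^n]_U\hookrightarrow C$, the base change $\Hb_U$ is a finite locally free subscheme of $C$ admitting the image of $\iota$ as a full set of sections, so by part~(1) it equals the Cartier divisor $\sum[\iota(m)]$ in $C$, which therefore lies in $\underline{\Ec}[\pf^n]_U$. Condition~(a) implies~(b) immediately by Proposition~\ref{propEmbItor}. For~(b) implies existence, on each $U_i$ of the given cover the Cartier divisor inside $C_i$ lies by hypothesis in $\underline{\Ec}[\pf^n]_{U_i}$, hence defines a finite locally free closed subscheme $\Hb_{U_i}\se\underline{\Ec}[\pf^n]_{U_i}$ with the required full set of sections; part~(2) forces these local $\Hb_{U_i}$ to agree on overlaps, so they descend to a global $\Hb\se\underline{\Ec}[\pf^n]$.

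Finally, to see that existence is a closed condition defined locally by finitely many equations, I would work \'etale-locally on $S$. After Zariski shrinking one may assume $S$ is affine, $\underline{\Ec}[\pf^n]$ embeds into $\A^1_S$, and $\Gamma(\underline{\Ec}[\pf^n],\Oc)$ is free of rank $q^{nr}$ over $\Oc_S$, so this embedding cuts out $\underline{\Ec}[\pf^n]=V(f)$ for some monic $f\in\Oc_S[t]$ of degree $q^{nr}$. The Cartier divisor generated by $\iota$ is $V(g)$ with $g(t)=\prod_{m\in M}(t-\iota(m))$ monic of degree $|M|$, and $V(g)\se V(f)$ is equivalent to the divisibility $g\mid f$ in $\Oc_S[t]$. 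Polynomial division yields $f=gq+r$ with $\deg r<|M|$, so the condition reduces to the vanishing of the $|M|$ coefficients of $r$, which are explicit polynomial expressions in the coefficients of $f$ and the values $\iota(m)$. Uniqueness from part~(2) together with \'etale descent then shows that this closed subscheme is independent of the local choices and defines a single closed subscheme of $S$, cut out Zariski-locally by finitely many equations. The main technical point I expect to require most care is precisely this last descent step, where the uniqueness in~(2) is indispensable for ensuring that the \'etale-local divisibility condition glues into a well-defined closed condition on $S$ itself.
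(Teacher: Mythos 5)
Your proof is correct and follows essentially the same route as the paper: parts (1) and (2) are handled via \cite[Theorem 1.10.1]{Katz1985} and Proposition~\ref{propEmbItor}, and part (3) by gluing the Cartier divisors using the uniqueness of part (2). The only departure is at the very end, where the paper simply invokes \cite[Key Lemma 1.3.4]{Katz1985} for the closedness of the divisibility condition, whereas you unpack that lemma explicitly as polynomial division with remainder in $\Oc_S[t]$ and vanishing of the remainder's coefficients; this is merely a spelled-out version of the cited lemma, not a genuinely different argument.
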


\begin{proof}
	\begin{enumerate}
		\item This is \cite[Theorem 1.10.1]{Katz1985}. The scheme $\Hb$ is the Cartier divisor $ \sum_{\alpha \in M} [\iota(\alpha)]$.
		\item This is clear from the previous point, as \'etale-locally on $S$, $\underline{\Ec}[\pf^n]$ admits an embedding into a smooth curve over $S$ by Proposition \ref{propEmbItor}.
		\item It is clear that the existence of an $\Hb$ implies condition (a), and that condition (a) implies (b) using Proposition \ref{propEmbItor}. Let us now assume that condition (b) is satisfied.
		We denote by $\Hb_i$ the Cartier divisor in $C_i$ defined by $\iota$. We can glue the $\Hb_i$ to form a finite locally free scheme $\Hb$ over $S$ by the uniqueness in the previous point. It is clear that $\iota$ forms a full set of sections for $\Hb$, this can be checked \'etale-locally on $S$.
		
		In order to check that the locus of existence of $\Hb$ is closed in $S$, we may choose an \'etale cover $\{U_i\}$ of $S$ together with embeddings of $\underline{\Ec}[\pf^n]$ into a smooth curve over $U_i$. The assertion on $U_i$ follows from \cite[Key Lemma 1.3.4]{Katz1985}.
	\end{enumerate}
\end{proof}

We can now give our definition of $M$-structures for shtukas.
 
\begin{defn}
	\label{defnIstructSht}
	Let $\underline{\Ec}  \in \Sht_r(S)$ be a rank $r$ shtuka over $S$. Let $M$ be a finitely generated $\Oc_0/\pf^n$-module. A \emph{$M$-structure} on $\underline{\Ec}$ is an $\Oc_0$-module homomorphism $\iota \colon M \to \underline{\Ec}[\pf^{n}](S)$ such that there exists a finite locally free subscheme $\Hb$ of $\underline{\Ec}[\pf]$ of rank $|M[\pf]|$ such that the image of the restriction $\iota|_{M[\pf]}$ of $\iota$ to the $\pf$-torsion forms a full set of sections for $\Hb$ in the sense of \cite[Section 1.8]{Katz1985}. 
\end{defn}

\begin{remark}
	Note that in the theory of Drinfeld modules the modules $M$ and $\underline{\Ec}[\pf]$ would usually be considered as $\pf$-torsion $A$-modules, where $A = \Gamma(X \setminus \{\infty\}, \OX)$. As $A/\pf^n \cong \Oc_0/\pf^n$ this does not give a different notion of level structures. 
	In our context, working with $\Oc_0$- instead of $A$-modules seems more natural and should stress that the level structure only depends on local data of $\underline{\Ec}$ at $0$ and in particular not on the choice of $\infty$.
\end{remark}

\begin{remark}
	\label{remKMlevelCounterEx}
	Our definition is an analogue of Drinfeld's original definition of full level structures in \cite{Drinfeld1976}.
	The definition in \cite{Katz1985} for elliptic curves is slightly different.
	The direct analogue of their definition asserts that there is a finite locally free  subgroup scheme $\Hb \se \underline{\Ec}[\pf^n]$ of rank $|M|$ such that $\iota$ is a full set of sections for $\Hb$ (instead of the corresponding assertion only for the $\pf$-torsion).
	We show that this is implied by our definition in Proposition \ref{propSchIStruct}.	
	For full level structures on Drinfeld modules, \cite[3, Proposition 3.1]{Lehmkuhl2009} and \cite{Wiedmann2010} show that the two notions are equivalent. 
	However, this is not true in general as we can see in the following example. 		
	Consider $X = \P^1_\Fq$, and $S = D_2 = 2[0] = \Spec(\Fq[\zeta]/(\zeta^2))$ viewed as an $X$-scheme via the canonical inclusion. Then the map 
	$$ \ph \colon \sigma^*\OXS^2 \xdashrightarrow{\sMatrix{0}{\varpi-\zeta}{1}{-\zeta}} \OXS^2 $$
	defines a rank 2 Drinfeld shtuka over $S$. Its schemes of $\pf$- and $\pf^2$-torsion points  are given by
	$$ \underline{\Ec}[\pf] = \Spec \left( R[t]/(t^{q^2} + \zeta t^{q} + \zeta t)\right) \qquad \text{and} \qquad \underline{\Ec}[\pf^2] = \Spec \left( R[t]/(t^{q^4} + \zeta t^{q^3} + \zeta t^{q^2})\right), \quad \text{respectively}. $$
	Then the constant zero map $\iota \colon \pf^{-2}/\Oc_0 \to R, \varpi^{-2} \mapsto 0$ defines the subscheme of $\Spec(R[t]/(t^{q^2})) \se \underline{\Ec}[\pf^2]$.
	However, the restriction of $\iota$ to $\pf^{-1}/\Oc_0$ induces the subscheme $\Spec(R[t]/(t^{q}))$  of $\underline{\Ec}[\pf^2]$, which is not a subscheme  of $\underline{\Ec}[\pf]$.
	Thus, $\iota$ is not an $\pf^{-2}/\Oc_0$-structure in the sense of our definition.
	Hence, the definition of \cite{Katz1985} does not yield well-defined level maps in our setting and thus does not seem to be adequate here.
	
	We also do not require the subscheme defined by $\iota$ to be a subgroup scheme as we show this is already automatic below in Proposition \ref{propSchIStruct}. Moreover, we show that it is even automatically an $\Oc_0$-module subscheme.
\end{remark}

\begin{prop}
	\label{propResIstructSht}
	Let $\underline{\Ec} \in \Sht_r(S)$ be a Drinfeld shtuka over $S$ and let $M$ be a finitely generated $\Oc_0/\pf^n$-module. Let $\iota \colon M \to \underline{\Ec}[\pf^{n}](S)$ be an $M$-structure on $\underline{\Ec}$ and let $M' \se M$ be a submodule. The restriction of $\iota$ to $M'$ defines an $M'$-structure on $\underline{\Ec}$.
\end{prop}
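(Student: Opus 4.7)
The plan is to verify the defining conditions of Definition \ref{defnIstructSht} for the restriction $\iota|_{M'} \colon M' \to \underline{\Ec}[\pf^n](S)$. The $\Oc_0$-linearity is inherited from $\iota$ for free, so the only substantive task is to produce a finite locally free closed subscheme $\Hb' \se \underline{\Ec}[\pf]$ of rank $|M'[\pf]|$ for which the image of $\iota|_{M'[\pf]}$ forms a full set of sections. For this I would invoke the local-to-global criterion in Lemma \ref{lemIstrctDefSch}(3).

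First I would pass to an \'etale cover of $S$ on which $\underline{\Ec}[\pf]$ embeds into a smooth curve $C$ over the cover; this is permitted by Proposition \ref{propEmbItor}. On such an \'etale neighbourhood, Lemma \ref{lemIstrctDefSch}(1) translates the hypothesis that $\iota$ is an $M$-structure into the statement that the effective Cartier divisor $\sum_{\alpha \in M[\pf]} [\iota(\alpha)]$ in $C$ coincides with $\Hb$ and is contained in $\underline{\Ec}[\pf]$ as a closed subscheme.

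Next, I would define $\Hb' := \sum_{\alpha \in M'[\pf]} [\iota(\alpha)]$ as an effective Cartier divisor in $C$ on the same \'etale neighbourhood. Since $M'[\pf] = M' \cap M[\pf] \se M[\pf]$, one has $\Hb' \le \Hb$ as effective Cartier divisors, and inclusion of effective Cartier divisors corresponds to inclusion of the associated closed subschemes; hence $\Hb' \se \Hb \se \underline{\Ec}[\pf]$. This is exactly condition (b) of Lemma \ref{lemIstrctDefSch}(3) applied to $\iota|_{M'[\pf]}$, so a unique global finite locally free $\Hb' \se \underline{\Ec}[\pf]$ with the required property exists, and its rank equals $|M'[\pf]|$ by construction in $C$. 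This supplies the subscheme required by Definition \ref{defnIstructSht}.

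I do not expect a real obstacle here: the proposition is essentially a bookkeeping consequence of the fact, already encoded in Lemma \ref{lemIstrctDefSch}, that the ``full set of sections'' condition may be tested \'etale-locally via Cartier divisors in an auxiliary smooth curve. The only small point worth emphasising is the passage from the Cartier-divisor inequality $\Hb' \le \Hb$ to the scheme-theoretic inclusion $\Hb' \se \Hb$, which is what allows the containment in $\underline{\Ec}[\pf]$ to be inherited from $\Hb$ to $\Hb'$.
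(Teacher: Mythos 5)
Your proof is correct and follows essentially the same route as the paper: working \'etale-locally via Proposition \ref{propEmbItor}, observing $M'[\pf] \se M[\pf]$ so that the Cartier divisor for $\iota|_{M'[\pf]}$ is a subdivisor of the one for $\iota|_{M[\pf]}$ and hence a closed subscheme of $\underline{\Ec}[\pf]$, and then concluding via Lemma \ref{lemIstrctDefSch}. The paper's own proof is just a more terse statement of the same argument.
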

\begin{proof}
	\'Etale locally, the Cartier divisor defined by the restriction of $\iota$ to $M'[\pf]$ is a closed subscheme of the Cartier divisor defined by $\iota|_{M[\pf]}$, which in turn is a closed subscheme of $\underline{\Ec}[\pf]$ by assumption. The assertion follows from Lemma \ref{lemIstrctDefSch}.
\end{proof}

In the \'etale case, we have the following descriptions of $M$-structures.
\begin{prop}
	\label{propLevelEtaleSht}
	Let $\underline{\Ec} \in \Sht_r(S)$ be a Drinfeld shtuka over $S$ and let $M$ be a finitely generated $\Oc_0/\pf^n$-module.
	Let $\iota \colon M \to \underline{\Ec}[\pf^{n}](S)$ be an $\Oc_0$-module homomorphism. The following are equivalent:
	\begin{enumerate}
		\item 
		\label{propLevelEtShtField}
		For every geometric point $\Spec \ell \to S$, the induced homomorphism 
		$$\iota_{\ell} \colon M \to \underline{\Ec}[\pf^{n}](\ell)$$
		is injective.
		\item
		\label{propLevelEtShtCartier} 
		The map $\iota$ defines a locally free closed subscheme of $\underline{\Ec}[\pf^{n}]$ which is finite \'etale over $S$.
		\item
		\label{propLevelEtShtAmod}			 
		The map $\iota$ defines a closed immersion of $\Oc_0/\pf^{n}$-module $S$-schemes $$M_S \hookrightarrow \underline{\Ec}[\pf^{n}]$$
		and $\iota$ is a full set of sections for the image of this map (as subscheme of $\underline{\Ec}[\pf^{n}]$).
	\end{enumerate}
	If the equivalent conditions (\ref{propLevelEtShtField})-(\ref{propLevelEtShtAmod}) are satisfied, $\iota$ is an $M$-structure on $\underline{\Ec}$. 
	Moreover, when $S$ is connected, these conditions are equivalent to saying that $M \to \Hb(S)$ is an isomorphism of $\Oc_0$-modules for some constant closed finite locally free $\Oc_0$-module subscheme $\Hb$ of $\underline{\Ec}$.
	
	Moreover, these conditions are automatically satisfied when the characteristic of $\underline{\Ec}$ is away from $0$ and $\iota$ is an $M$-structure on $\underline{\Ec}$.
\end{prop}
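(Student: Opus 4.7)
The plan is to establish the cycle (\ref{propLevelEtShtField}) $\Rightarrow$ (\ref{propLevelEtShtCartier}) $\Rightarrow$ (\ref{propLevelEtShtAmod}) $\Rightarrow$ (\ref{propLevelEtShtField}) and then derive the remaining assertions (the $M$-structure property, the connected case, and the situation in characteristic away from $\pf$). The implication (\ref{propLevelEtShtAmod}) $\Rightarrow$ (\ref{propLevelEtShtField}) will be immediate by passing to geometric fibres of the closed immersion $M_S \hookrightarrow \underline{\Ec}[\pf^n]$. I expect the main obstacle to lie in (\ref{propLevelEtShtField}) $\Rightarrow$ (\ref{propLevelEtShtCartier}), where a fibrewise injectivity assumption must be promoted to the existence of a global finite étale closed subscheme of $\underline{\Ec}[\pf^n]$ for which $\iota$ is a full set of sections.

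For this main implication, my strategy is to work étale locally on $S$, using Proposition \ref{propEmbItor} to obtain a closed immersion $\underline{\Ec}[\pf^n] \hookrightarrow \A^1_S$ under which each section $\iota(m)$ corresponds to an element $t_m \in \OS(S)$. The first key observation is that condition (\ref{propLevelEtShtField}) forces $t_m - t_{m'}$ to be a unit in $\OS(S)$ for every $m \neq m'$: it is nonzero at every geometric point of $S$, hence lies in no prime ideal of $\OS(S)$, and therefore generates the unit ideal. With this in hand, the relative Cartier divisor $\Hb = \sum_{m \in M}[\iota(m)]$ on $\A^1_S$ is finite locally free of rank $|M|$, and the pairwise distinctness of its sections on geometric fibres upgrades $\Hb$ to a finite étale $S$-scheme. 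It then remains to show $\Hb \subseteq \underline{\Ec}[\pf^n]$, which I would reduce to proving that the monic polynomial $g(t) = \prod_{m \in M}(t - t_m)$ divides the monic polynomial $f(t)$ locally cutting out $\underline{\Ec}[\pf^n]$ in $\A^1_S$. Euclidean division yields $f = qg + h$ with $\deg h < |M|$ and $h(t_m) = 0$ for every $m$; the fact that the differences $t_m - t_{m'}$ are units makes a Lagrange interpolation argument available, which by induction on $|M|$ forces $h = 0$. The uniqueness in Lemma \ref{lemIstrctDefSch} then glues the étale-local subschemes to a closed subscheme $\Hb \subseteq \underline{\Ec}[\pf^n]$ defined on all of $S$.

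For (\ref{propLevelEtShtCartier}) $\Rightarrow$ (\ref{propLevelEtShtAmod}), I would observe that the resulting $\Hb$ is finite étale of rank $|M|$ with $|M|$ sections pairwise distinct on geometric fibres, so the induced morphism $M_S \to \Hb$ is a map of finite étale $S$-schemes of equal rank inducing bijections on all geometric fibres, hence an isomorphism; the $\Oc_0$-linearity of $\iota$ then ensures, checked on the distinguished sections étale-locally, that $\Hb$ is stable under the $\Oc_0$-action on $\underline{\Ec}[\pf^n]$. Applying the already-established implication to the restriction $\iota|_{M[\pf]}$ produces the finite étale subscheme of $\underline{\Ec}[\pf]$ that certifies $\iota$ as an $M$-structure. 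For $S$ connected, the finite étale scheme $\Hb$ admitting $|M|$ sections with pairwise unit differences is necessarily the trivial cover $M_S$, yielding the claim. Finally, if the characteristic of $\underline{\Ec}$ is away from $0$, the finite shtuka equivalence in Proposition \ref{propFinShtEq} makes $\underline{\Ec}[\pf^n]$ finite étale, so any $M$-structure automatically has $\iota|_{M[\pf]}$ injective on geometric fibres; since every nonzero element of the $\pf^n$-torsion $\Oc_0$-module $M$ admits a nonzero $\Oc_0$-multiple inside $M[\pf]$, this upgrades to injectivity of $\iota$ on all geometric fibres, giving (\ref{propLevelEtShtField}).
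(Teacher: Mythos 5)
Your proposal is correct and rests on the same pillars as the paper's proof: the \'etale-local embedding $\underline{\Ec}[\pf^n]\hookrightarrow\A^1_S$ from Proposition~\ref{propEmbItor}, reduction to geometric fibres, the restriction trick $\iota\mapsto\iota|_{M[\pf]}$ to obtain the $M$-structure, and the observation that torsion in $\ker(\iota_\ell)$ propagates down to $M[\pf]$ in the case of characteristic away from $0$. The difference is mostly one of packaging: the paper establishes $(1)\Leftrightarrow(3)$ and $(2)\Leftrightarrow(3)$ by citing \cite[Lemma 1.4.4 and Proposition 1.8.3]{Katz1985}, whereas you run the implications cyclically and, crucially, unfold the hard direction $(1)\Rightarrow(2)$ into an explicit, self-contained argument --- the observation that fibrewise injectivity makes the differences $t_m-t_{m'}$ global units, so $g=\prod_m(t-t_m)$ has unit discriminant (giving \'etaleness of $\Hb$), together with the Euclidean division plus invertible-Vandermonde step that forces $g\mid f$ and hence $\Hb\se\underline{\Ec}[\pf^n]$. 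This is essentially the content hidden behind the Katz--Mazur references, so the substance agrees; your version just makes the interpolation argument explicit rather than outsourcing it, and defers slightly less to external lemmas. One small point worth making explicit when you invoke the established implication for $\iota|_{M[\pf]}$: since $\iota$ is $\Oc_0$-linear and $M[\pf]$ is $\pf$-torsion, the restricted map actually lands in $\underline{\Ec}[\pf](S)$, not merely $\underline{\Ec}[\pf^n](S)$, which is what guarantees the resulting \'etale subscheme sits inside $\underline{\Ec}[\pf]$ as required by Definition~\ref{defnIstructSht}.
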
	
\begin{proof}
	We adapt the proof of an analogous assertion for elliptic curves in \cite[Lemma 1.4.4]{Katz1985}.
	
	\noindent
	(\ref{propLevelEtShtCartier}) $\Leftrightarrow$ (\ref{propLevelEtShtAmod}): This follows directly from the set-theoretic analogue in \cite[Proposition 1.8.3]{Katz1985}.
	
	\noindent
	(\ref{propLevelEtShtField}) $\Leftrightarrow$ (\ref{propLevelEtShtAmod}): 
	The map $\iota$ defines a map of $\Oc_0$-module schemes $M_S \rightarrow \underline{\Ec}[\pf^{n}]$.
	We may work \'etale-locally on $S$ and assume that we can embed $\underline{\Ec}[\pf^{n}]$ into a smooth curve $C$ over $S$. Let us denote by $D$ the Cartier divisor in $C$ defined by $\iota$. We can check that the natural map $M_S \to D$ is an isomorphism on geometric points as in the proof of \cite[Lemma 1.4.4]{Katz1985}, and this is clearly satisfied if and only if $\iota$ is injective on geometric points.
	
	Let us now assume that (\ref{propLevelEtShtField})-(\ref{propLevelEtShtAmod}) are satisfied.
	By (\ref{propLevelEtShtAmod}) the restriction of $\iota$ to $M[\pf]$ defines a subscheme of $\underline{\Ec}[\pf]$. Thus, $\iota$ is an $M$-structure on $\underline{\Ec}$. 
	
	Now assume that $\pf$  is away from the characteristic.
	Let $\iota$ be an $M$-structure on $\underline{\Ec}$. We check that condition (\ref{propLevelEtShtField}) is satisfied. Let  $\Spec \ell \to S$ be a geometric point of $S$. By Proposition \ref{propDescPTor}, we have an $\Oc_0$-linear isomorphism
	$ \underline\Ec[\pf^{n}](\ell) \cong  (\pf^{-n}/\Oc_0)^r.$
	Now $\iota_\ell$ is injective if and only if the restriction $\iota_\ell|_{M[\pf]}$ is injective, as multiplying a non-trivial element $m$ in the kernel of $\iota_\ell$ by the maximal power of $\varpi$ that does not kill $m$ produces a non-trivial element in the kernel of $\iota_\ell|_{M[\pf]}$. The injectivity of  $\iota_\ell|_{M[\pf]}$ follows by assumption and the implication (\ref{propLevelEtShtCartier}) $\Rightarrow$ (\ref{propLevelEtShtField}) in the $n = 1$ case.	
\end{proof}
\begin{remark}
	If the characteristic is away from $0$, Drinfeld level structures agree with corresponding classical level structures.  
	Let $\underline{\Ec} \in \Sht_r(S)$ be such that the characteristic is away from $0$. By the previous Proposition \ref{propLevelEtaleSht}, a full level structure is an isomorphism of \'etale $\Oc_0/\pf^{n}$-module schemes over $S$
	$$ \left( \pf^{-n}/\Oc_0 \right)^{r}_S \xrightarrow{\cong} \underline{\Ec}[\pf^{n}]$$
	by. By \cite[Proposition 2.2]{Drinfeld1987a}, this is the same as giving a trivialisation of $\underline \Ec|_{D_{n,S}}$.
\end{remark}

\subsection{Regularity of the moduli stack of shtukas with $M$-structures.}

We show the main result on $M$-structures: the corresponding moduli problem gives rise to a Deligne-Mumford stack, which we show to be regular following the corresponding result on elliptic curves in \cite{Katz1985}.
\begin{prop}
	\label{propIstructShtRep}
	Let $\underline{ \Ec}$ be a Drinfeld shtuka over $S$ and let $M$ be a finitely generated $\Oc_0/\pf^n$-module.
	The functor on $S$-schemes 
	$$ T \mapsto  \{ M\text{-structures on $\underline{ \Ec}_T$} \}$$
	is representable by a finite $S$-scheme locally of finite presentation. Moreover, it is finite \'etale over $S$ if $\pf$ is away from the characteristic of $\underline{\Ec}$ and can in this case \'etale locally on $S$ be represented by the constant $S$-scheme 
	$$S \times \{ \text{injective $\Oc_0$-module homomorphisms } M \hookrightarrow (\pf^{-n}/\Oc_0)^r \}.$$
\end{prop}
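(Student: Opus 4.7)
The plan is to build the moduli as a closed subscheme of the scheme of $\Oc_0$-linear homomorphisms $M \to \underline{\Ec}[\pf^n]$ and then cut out the $M$-structure condition using the étale-local embedding of $\underline{\Ec}[\pf^n]$ into $\A^1_S$ from Proposition \ref{propEmbItor}. First, I would represent the functor $T \mapsto \Hom_{\Oc_0}(M, \underline{\Ec}[\pf^n](T))$ by a finite $S$-scheme $H$: choosing a presentation of $M$ by generators $m_1, \ldots, m_k$ and finitely many relations, the scheme $H$ is cut out of the $k$-fold fibre product of $\underline{\Ec}[\pf^n]$ over $S$ by the closed conditions demanding that the tuple of images satisfies the given $\Oc_0$-linear relations. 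Since $\underline{\Ec}[\pf^n]$ is finite locally free over $S$ by Proposition \ref{propFinShtEq}, so is its fibre product, and hence $H$ is finite over $S$ and locally of finite presentation.

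Second, I would carve out the $M$-structure condition on $H$. There is a universal $\Oc_0$-linear map $\iota^{\mathrm{univ}} \colon M \to \underline{\Ec}[\pf^n](H)$, and the condition that its restriction to $M[\pf]$ generates (in the full-set-of-sections sense) a closed subscheme of $\underline{\Ec}[\pf]_H$ is, by Lemma \ref{lemIstrctDefSch}(3) combined with Proposition \ref{propEmbItor}, a closed condition on $H$ that étale-locally reduces, via the embedding $\underline{\Ec}[\pf]_H \hookrightarrow \A^1_H$, to the corresponding closed condition on Cartier divisors in $\A^1$, defined by finitely many equations. The resulting closed subscheme $H^{\mathrm{lvl}} \se H$ represents the $M$-structure functor by construction and is again finite over $S$ and locally of finite presentation.

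For the assertion away from $\pf$, I would assume the characteristic of $\underline{\Ec}$ avoids $0$, so that $\underline{\Ec}[\pf^n]$ is finite étale over $S$ by Proposition \ref{propFinShtEq}. Étale-locally on $S$, Proposition \ref{propDescPTor} identifies $\underline{\Ec}[\pf^n]$ with the constant $\Oc_0/\pf^n$-module scheme $(\pf^{-n}/\Oc_0)^r_S$; hence $H$ becomes the constant $S$-scheme on $\Hom_{\Oc_0}(M, (\pf^{-n}/\Oc_0)^r)$. Proposition \ref{propLevelEtaleSht} then identifies the $M$-structure condition in this setting with geometric-fibrewise injectivity of $\iota$, which is a clopen condition on this constant scheme. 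Thus $H^{\mathrm{lvl}}$ becomes étale-locally the constant $S$-scheme on the set of injective $\Oc_0$-linear maps $M \hookrightarrow (\pf^{-n}/\Oc_0)^r$, giving the last claim.

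The main obstacle is the verification that the $M$-structure condition is closed; this is precisely the point where one needs the étale-local embedding into $\A^1_S$ and the full-set-of-sections machinery packaged in Lemma \ref{lemIstrctDefSch}(3). Note that at no stage in this argument is any deep input about deformation theory or supersingular behaviour required, since the delicate questions (regularity, flatness, existence of supersingular generators) are deferred to the subsequent analysis of $\Sht_{r, \Gamma_1(\pf^n)}$.
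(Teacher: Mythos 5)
Your proposal is correct and takes essentially the same approach as the paper: represent the Hom-scheme as a closed subscheme of a product of torsion schemes (the paper uses the structure theorem for $\Oc_0/\pf^n$-modules, you use an arbitrary presentation — same effect), then cut out the $M$-structure locus via Lemma \ref{lemIstrctDefSch}(3), and finally deduce étaleness and the constant description away from $\pf$ using Propositions \ref{propDescPTor} and \ref{propLevelEtaleSht}. The one minor divergence is that the paper verifies étaleness by showing formal étaleness directly (lifting $\iota_0$ uniquely along a nilpotent thickening using that $\underline{\Ec}[\pf^n]$ is étale), whereas you go straight to the étale-local constant model and observe that the fibrewise-injectivity condition of Proposition \ref{propLevelEtaleSht} cuts out a clopen union of components; both routes are valid, and yours folds the étaleness and the explicit description into a single step.
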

\begin{proof}
	We proceed as in the proof of the corresponding assertions for elliptic curves in \cite[Proposition 1.6.2, Proposition 1.6.4 and Corollary 3.7.2]{Katz1985}.
	By the classification of finitely generated modules over principal ideal domains, there exists an isomorphism of $\Oc_0/\pf^n$-modules $M \cong (\Oc_0/\pf^{n_1}) \oplus \ldots \oplus (\Oc_0/\pf^{n_m})$ for some $m \geq 0$ and integers $1 \leq n_i \leq n$ for $1 \leq i \leq m$. Using this isomorphism, we find for a scheme $T$ over $S$ that 
	$$ \Hom_{\Oc_0}(M, \underline{\Ec}[\pf^n](T)) = \prod_i \Hom_{\Oc_0/\pf^{n_i}}(\Oc_0/\pf^{n_i}, \underline{\Ec}[\pf^{n_i}](T)) = \prod_i \underline{\Ec}[\pf^{n_i}](T).$$
	The functor of $M$-structures on $\underline{\Ec}$ is clearly represented by the closed subscheme of $\prod_i \underline{\Ec}[\pf^{n_i}](T)$ over which the universal homomorphism defines an $M$-structure on $\underline{\Ec}$. This is a closed subscheme defined locally by finitely many equations by Lemma  \ref{lemIstrctDefSch}.
	
	Now assume that the characteristic of $\underline{\Ec}$ is away from $0$. By the above, it suffices to show that the scheme is formally \'etale. 
	Therefore, let $T_0 \se T$ be a closed subscheme defined by a locally nilpotent sheaf of ideals. Let $\iota_0$ be an $M$-structure on $\underline{\Ec}_{T_0}$. Then $\iota_0$ factors through $\underline{\Ec}[\pf^{n}](T_0)$. As $\pf$ is away from the characteristic of $\underline{\Ec}$, the scheme $\underline{\Ec}[\pf^{n}]$ is \'etale over $S$. 
	We construct a map $\iota \colon M \to \underline{\Ec}[\pf^{n}](T)$ where we associate to $m \in M$ the unique lift of $\iota_0(m)$ to $\underline{\Ec}[\pf^{n}](T)$. As lifts are unique, the $\Oc_0$-linearity of $\iota_0$ implies that $\iota$ is an $\Oc_0$-linear map as well.
	We check that $\iota$ defines an $M$-structure on $\underline{\Ec}$. Using Proposition \ref{propLevelEtaleSht} this can be done on geometric points, but the geometric points of $T_0$ and $T$ agree. 
	
	For the second claim, we may assume that $\underline{\Ec}[\pf^n] \cong (\pf^{-n}/\Oc_0)^{r}_S$ by Proposition \ref{propDescPTor}. So, the claim follows from Proposition \ref{propLevelEtaleSht} (\ref{propLevelEtShtAmod}).
\end{proof}

We denote by $\Sht_{r, M}$ the stack parametrising shtukas of rank $r$ with an $M$-structure as defined above.
\begin{thm}
	\label{thmIstructShtReg}
	Let $M$ be a submodule of $(\pf^{-n}/\Oc_0)^r$.
	The stack $\Sht_{r, M}$ is a regular Deligne-Mumford stack locally of finite type over $\Fq$. Moreover, the forgetful map $\Sht_{r,M} \to \Sht_r$ is schematic and finite flat. It is finite \'etale away from $0$. 
\end{thm}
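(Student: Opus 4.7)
The plan is to build on Proposition \ref{propIstructShtRep} and follow the Katz--Mazur strategy, the deep input being Drinfeld's explicit description of deformations of Drinfeld modules with level.

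First, I would observe that the representability of $\Sht_{r,M}$ as a Deligne--Mumford stack locally of finite type over $\Fq$, the schematicity and finiteness of the forgetful map $\Sht_{r,M}\to\Sht_r$, and its finite étale property away from $0$ are all direct consequences of Proposition \ref{propIstructShtRep} together with the known DM-structure of $\Sht_r$. The two genuine statements that remain are the regularity of $\Sht_{r,M}$ and the flatness of the forgetful map, and for each it suffices to work at closed geometric points of $\Sht_{r,M}$ in the fibre above $0$.

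Next, I would reduce both to a local calculation. Let $(\underline{\bar\Ec},\bar\iota)$ be such a geometric point and let $R$ be the complete local ring of $\Sht_r$ at $\underline{\bar\Ec}$; by smoothness of $\Sht_r$ it is formally smooth over $\Fq$ of relative dimension $2r-2$. The Serre--Tate theorem (Proposition \ref{propSerreTateSht}) gives a canonical decomposition $R\cong R_\infty\widehat\otimes R_0$ matching the splitting into deformations of the two local shtukas, and the entire $M$-structure problem depends only on $\underline\Ec[\pf^n]$ and hence only on the local shtuka at $0$. Thus the complete local ring of $\Sht_{r,M}$ at $(\underline{\bar\Ec},\bar\iota)$ has the form $R_\infty \widehat\otimes R_0^M$, where $R_0^M$ is the universal ring of $M$-structures on the universal deformation of the local shtuka at $0$.

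Then I would transport the problem to Drinfeld modules via Corollary \ref{corCompItorUniv}: there exists a Drinfeld module $\Eb$ over $R_0$ with $\underline\Ec[\pf^n]\cong\Eb[\pf^n]$ as $\Oc_0/\pf^n$-module schemes, which identifies $R_0^M$ with the completed local ring of the analogous moduli of Drinfeld modules with $M$-structure. Here Drinfeld's regularity theorem for full level structures on Drinfeld modules (\cite{Drinfeld1976}, see also \cite{Lehmkuhl2009}) produces an explicit regular system of parameters from a basis of a full level structure on the universal deformation. For a general submodule $M$ of $(\pf^{-n}/\Oc_0)^r$, I would deduce regularity by covering the $M$-moduli by the full-level moduli via a finite surjection and descending the regular parameter system, or equivalently by adapting Drinfeld's coordinate argument directly to an $M$-structure, using Lemma \ref{lemIstrctDefSch} to control the ideal cut out by the image of $\iota$. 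Because $R_\infty$ is itself regular, regularity of $R_0^M$ transfers to $R_\infty \widehat\otimes R_0^M$, giving regularity of $\Sht_{r,M}$.

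Finally, flatness of the forgetful map follows from miracle flatness: it is a finite map between regular DM stacks of the same equidimension $2r-1$ over $\Fq$, hence necessarily flat. The main obstacle of the proof is the regularity of $R_0^M$ at supersingular points in characteristic $\pf$; everything else is either immediate from Proposition \ref{propIstructShtRep} or standard formal deformation theory, and it is precisely the passage to Drinfeld modules via Corollary \ref{corCompItorUniv} that makes Drinfeld's explicit parametrisation available in the shtuka setting.
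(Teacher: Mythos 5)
Your overall architecture is right (representability and finiteness from Proposition \ref{propIstructShtRep}, reduction to a local deformation-theoretic statement via Serre--Tate, invoke Drinfeld's explicit deformation theory, miracle flatness at the end), but there are two genuine gaps in the regularity step.

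First, you never explain why it suffices to treat supersingular (basic) points. You correctly identify the supersingular locus as the hard case, but Drinfeld's explicit regular parameter system in \cite[Proposition 4.3]{Drinfeld1976} is for \emph{formal} modules, which is what one has at a basic point. At a non-basic point of the fibre over $0$ the local shtuka has a nontrivial \'etale part, and Drinfeld's coordinates do not apply as written. The paper closes this gap with a separate argument: the regularity locus $U\subseteq S$ in an \'etale presentation is open (its complement is the image of the non-regular closed locus under a finite map); the completed local ring of $\Sht_{r,M}$ at a point over $0$ depends, by Serre--Tate and the fact that $M$-structures only see $\underline\Ec[\pf^n]$, only on the Newton polygon of the local shtuka at $0$; hence membership in $U$ is constant on Newton strata; and by the strong stratification property Theorem \ref{thmNewtonStratStrongStrat} (whose proof also relies on the existence of supersingular points, Proposition \ref{propSSShtEx}) every stratum's closure contains the basic stratum, so openness of $U$ forces $U=S$ as soon as the basic point is regular. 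Without some version of this step you cannot reduce to the formal-module case.

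Second, your fallback for general $M$ --- ``covering the $M$-moduli by the full-level moduli via a finite surjection and descending the regular parameter system'' --- is circular. Regularity does not descend along a finite surjection of local rings without flatness (consider a non-normal curve and its normalization), and the flatness of level maps such as $\Sht_{r,(\pf^{-n}/\Oc_0)^r}\to\Sht_{r,M}$ (Corollary \ref{corLvlMapResBalSht}) is itself a consequence of the regularity theorem via miracle flatness, not an input to it. The paper instead runs Drinfeld's inductive construction directly for a general submodule $M=(\pf^{-n_1}/\Oc_0)\oplus\cdots\oplus(\pf^{-n_{r'}}/\Oc_0)$, building a chain $R_0\subset R_1\subset\cdots$ of complete regular local rings finite flat over one another, which is your second proposed alternative; that one works, but only once you are sitting at a basic point where everything is formal.

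Minor point: the paper's proof of this theorem does not actually invoke Corollary \ref{corCompItorUniv}; once one is at a basic point the deformation problem is stated directly in terms of formal $\Oc_0$-modules and Drinfeld's lemma, so the detour through a global Drinfeld module is unnecessary (though harmless). Corollary \ref{corCompItorUniv} is used later, in the proof of Theorem \ref{thmMainThmCyc}.
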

\begin{proof}
	By Proposition \ref{propIstructShtRep}, the forgetful map to $\Sht_r$ is schematic and finite. In particular, $\Sht_{r, M}$ is a Deligne-Mumford stack locally of finite type over $\Fq$ (since $\Sht_r$ is a DM-stack locally of finite type over $\Fq$). Also by Proposition \ref{propIstructShtRep}, the forgetful map is finite \'etale away from $0$.
	
	We proceed as in the proof of \cite[Theorems 5.1.1 and 5.2.1]{Katz1985}. Again since $\Sht_r$ is a smooth DM-stack of dimension $(2r-1)$ over $\Fq$, we find an \'etale presentation $S \to \Sht_r$ by a $(2r-1)$-dimensional smooth scheme $S$ over $\Fq$. We denote by $T = S \times_{\Sht_{r}} \Sht_{r, M}$. 
	
	We denote by $U \se S$ the set of points in $s \in S$ such that the local rings at all points in $T$ over $s$ are regular. Then $U$ is open in $S$, as its complement is the image under a finite (hence closed) map of the non-regular locus in $T$, which is closed in $T$ as $T$ is locally of finite type over a perfect field.
	
	In order to show that $U = S$, it suffices to show that all closed points of $S$ are contained in $U$.
	As the map $T \to S$  is \'etale away from $0$, clearly all points away from $0$ are contained in $U$. It remains to check that all closed points in the fibre over $0$ are in $U$.
	By passing to the completion of the strict henselisation we are reduced to showing that for all $\ell$-valued points $\bar{s}$ of $S$ in the fibre over $0$, where $\ell$ is an algebraic closure of $\Fq$, the complete local rings at all $\ell$-valued points of $T$ over $\bar{s}$ are regular. 
	Note that the completion of the strict henselisation at a closed point $s \in S$ over $0$ is then given by $\widehat{\Oc_{S, s}^{\mr{sh}}} = \widehat{\Oc}_{S_{ \ell\dsq{\varpi}},\overline{s}}$, where $S_{\ell\dsq{\varpi}}$ denotes the base change $S \times_X \Spec(\ell\dsq{\varpi})$.
	
	Let us fix some $\ell$-valued point $\bar{s}$ of $S$ over $0$ and let $\underline{\Ec}_0 \in \Sht_r(\ell)$ be the corresponding shtuka. 
	By \citestacks{07N9}, the disjoint union of the spectra of the completions of all local rings at $\ell$-valued points of $T_{\ell\dsq{\varpi}}$ over $\overline{s}$ is given by the scheme 
	$$ T' = T_{\ell\dsq{\varpi}} \times_{S_{\ell\dsq{\varpi}}} \Spec( \widehat{\Oc}_{S_{ \ell\dsq{\varpi}},\overline{s}}). $$
	As $S$ is \'etale over $\Sht_r$, we get by \cite[Proposition 3.3]{Drinfeld1976} that $ \widehat{\Oc}_{S_{ \ell\dsq{\varpi}},\overline{s}} \cong \ell \dsq{\varpi, T_1, \ldots, T_{2r-2}}$, 
	which identifies the pullback of the universal shtuka to $\Spec( \widehat{\Oc}_{S_{ \ell\dsq{\varpi}}})$ with the universal deformation $\underline{\Ec}$ of $\underline{\Ec}_0$. In particular, 
	$T' = (\Sht_{r, M})_{\underline{\Ec}} := \Sht_{r, M} \times_{\Sht_{r}}  \Spec(\widehat{\Oc}_{S_{ \ell\dsq{\varpi}},\overline{s}})$, where we interpret $\underline{\Ec}$ as the corresponding $\Spec(\widehat{\Oc}_{S_{ \ell\dsq{\varpi}},\overline{s}})$-valued point of $\Sht_r$. 
	
	Thus, by construction, $T'$ only depends on the scheme of $\pf^n$-torsion points of $\underline{\Ec}$ (and thus on its local shtuka at $0$ by \cite[Theorem 7.6]{Hartl2017a}), which in turn by the Serre-Tate Theorem (Proposition \ref{propSerreTateSht}) only depends on the local shtuka at $0$ of $\underline{\Ec}_0$, which are classified up to isomorphism by their Newton polygons by Remark \ref{remClassNewtonIso}. In particular, $\bar{s}$ is contained in $U$ if and only if $U$ contains all points in the fibre over $0$ in the same Newton stratum. By Theorem \ref{thmNewtonStratStrongStrat} it thus suffices to show that $U$ contains a basic point.
	
	Let thus $\bar{s}$ be a basic point in characteristic $\pf$ (recall that such a point exists by Proposition \ref{propSSShtEx}) corresponding to a shtuka $\underline{\Ec}_0$. 
	By Proposition \ref{propDescPTor}, we get that $\underline{\Ec}_0[\pf^n](\ell) = \{0\}$ for all $n \in \N$, so in particular the only possible $M$-structure is the zero map (which is readily checked to be an $M$-structure as $M$ is a submodule of $(\pf^{-n}/\Oc_0)^r$), so there is exactly one point lying over $\bar{s}$. 
	This means that $T'$ is the spectrum of the complete local ring pro-representing the deformation functor of the unique point lying over $\bar{s}$.
	Note that since $\underline{\Ec}$ is basic, the associated divisible module at $0$ is formal (in the sense of \cite[§1]{Drinfeld1976} or \cite[Definition 1.1]{Hartl2019}). As $M$-structures only depend on the local shtukas, the Serre-Tate Theorem is also compatible with $M$-structures and we are thus reduced to showing that the deformation functor of formal modules with $M$-structures is representable by a $r$-dimensional regular local ring. 
	
	This can be shown as in \cite[Proposition 4.3]{Drinfeld1976}. 
	We sketch the argument.
	We write $M = (\pf^{-n_1}/\Oc_0) \oplus \ldots \oplus (\pf^{-n_{r'}}/\Oc_0)$. Note that by assumption $r' \leq r$. 
 	The lemma in the proof of \cite[Proposition 4.3]{Drinfeld1976} shows that the deformation functor on formal modules with $(\pf^{-1}/\Oc_0)^{r'}$-structure is pro-represented by a complete regular local ring $R_1$ finite flat over $R_0 = \ell \dsq{\varpi, T_1, \ldots, T_{r-1}}$ whose maximal ideal is generated by $\iota(e_1), \ldots \iota(e_{r'}), T_{r'}, \ldots, T_{r-1}$, where $e_1, \ldots, e_{r'}$ is a basis of $(\pf^{-1}/\Oc_0)^{r'}$ and $\iota$ is the universal $(\pf^{-1}/\Oc_0)^{r'}$-structure. This settles the case that $M$ is $\pf$-torsion. 
 	As a next step we show the claim for $M[\pf^m]$ by induction on $m$. 
 	Let us assume that a complete regular local ring $R_m$ finite flat over $R_{m-1}$ pro-represents the deformation functor of formal modules with $M[\pf^m]$-level and that $\iota_m(\varpi^{-\min\{n_1, m\}}), \ldots, \iota_m(\varpi^{-\min\{n_{r'}, m\}}), T_{r'}, \ldots, T_r$ forms a system of local parameters for $R_m$, where $\iota_m$ is the universal $M[\pf^m]$-level structure. Let us denote by $i_1, \ldots, i_j$ the indices such that $n_i > m$. Then
 	$$ R_{m+1} = R_m \dsq{\tilde{T}_{i_1}, \ldots, \tilde{T}_{i_j}}/(e_\varpi(\tilde{T}_{i_1}) - \iota_m(\varpi^{-\min\{n_{i_1}, m\}}), \ldots, e_\varpi(\tilde{T}_{i_j}) - \iota_m(\varpi^{-\min\{n_{i_j}, m\}})).$$
 	This also shows that $R_{m+1}$ is regular and finite flat over $R_m$. Moreover, it has a system of local parameters as desired.
\end{proof}

The regularity allows us to collect some consequences.
We show that an $M$-structure defines an $\Oc_0$-module subscheme $\Hb \se \underline{\Ec}[\pf^n]$.
\begin{prop}
	\label{propSchIStruct}
	Let  $\underline{\Ec}  \in \Sht_r(S)$ and let $\iota \colon M \to \underline{\Ec}[\pf^{n}](S)$ be an $M$-structure on $\underline{\Ec}$ for some submodule $M \se (\pf^{-n}/\Oc_0)^r$.
	\begin{enumerate}
		\item 	
		There exists a (necessarily unique) finite locally free closed subscheme $\Hb \se \underline{\Ec}[\pf^{n}]$ of rank $|M|$ such that the image of $\iota$ forms a full set of sections for $\Hb$. 
	
		Moreover, for each submodule $M'$ of $M$ the restriction of $\iota$ to $M'$ defines an $M'$-structure on $\underline{\Ec}$ and in particular there exists a finite locally free closed subscheme $\Hb_{M'}$ of $\underline{\Ec}$ of rank $M'$ such that $\iota|_{M'}$ forms a full set of sections for $\Hb_{M'}$.
		\label{propSchIStructEx}
		\item 
		$\Hb$ is an $\Oc_0$-module subscheme of $\underline{\Ec}[\pf^{n}]$ such that the $\Fq$-module structure on $\Hb$ is strict.
		\label{propSchIStructMod}
	\end{enumerate}
\end{prop}
	We call $\Hb$ the \emph{subscheme generated by $\iota$} and the map $\iota$ a \emph{$M$-generator of $\Hb$}. To be more precise, when we say that a map $\iota$ is an $M$-generator of a finite locally free closed subscheme $\Hb \se \underline{\Ec}[\pf^{n}]$ of rank  $|M|$, we really mean both that $\iota$ gives a full set of sections of $\Hb$ \emph{and} that $\iota$ is an $M$-structure on $\underline{\Ec}$ (recall that the first condition does not imply the second one, compare Remark \ref{remKMlevelCounterEx}).

For full level structures on Drinfeld modules the assertion is essentially shown in \cite[3, Proposition 3.3]{Lehmkuhl2009}. The proposition also implies that for general $M$-structures on Drinfeld modules our definition agrees with the one given in \cite{Kondo2020}.
\begin{proof}
	That the restriction to $M'$ defines an $M'$-structure is Proposition \ref{propResIstructSht} and thus, the second statement in (\ref{propSchIStructEx}) follows from the first. 
	In order to show both the first part of (\ref{propSchIStructEx}) and (\ref{propSchIStructMod}), we may assume by reduction to the universal case that $S$ is locally Noetherian and flat over $X'$. Both assertions are true away from $0$ by Proposition \ref{propLevelEtaleSht}. 
	It thus remains to show that the conditions are closed in both cases.
	For the first part of (\ref{propSchIStructEx}) this follows from Lemma \ref{lemIstrctDefSch}.
	
	For (\ref{propSchIStructMod})  we may additionally assume that $S = \Spec(R)$ is affine and that we can embed $\underline{\Ec}[\pf^{n}]$ in $\A^1_R$ as the assertion is \'etale local on $S$ (for the strictness of the $\Fq$-action this is \cite[Lemma 4.4]{Hartl2017a}).
	The locus where the group structure on $\underline{\Ec}$ restricts to a group structure on $\Hb$ is closed by the argument of \cite[Corollary 1.3.7]{Katz1985}. 
	
	By the discussion above, we can write $\underline{\Ec}[\pf^{n}] = \Spec(R[t]/(f))$ for some monic polynomial $f \in R[t]$ and $\Hb = \Spec(R[t]/(g))$ for some monic polynomial $g \in R[t]$ such that $f \in (g)$.
	Then the $\Oc_0/\pf^n$-module structure restricts to $\Hb$ if and only if for each $a \in \Oc_0/\pf^n$ the map $e_a$ induces a map 
		\begin{center}
			\begin{tikzcd}
				R[t]/(f) \arrow[d, two heads] \arrow[r, "e_a"] & R[t]/(f) \arrow[d, two heads] \\
				R[t]/(g) \arrow[r, dashed] & R[t]/(g),
			\end{tikzcd}
		\end{center}
	in other words, that $e_a(g)(t) = g(e_a^\sharp(t)) \equiv 0 \mod g$, where $e_a^\sharp(t) \in R[t]$ is a polynomial defining the map $e_a$. Thus, the locus where $\Hb$ is an $\Oc_0/\pf^n$-module subscheme is the closed subscheme of $\Spec(R)$ where all the coefficients of the remainders of $g(e_a^\sharp(t))$ modulo $g$ vanish. Note that this is clearly independent of the choice of $e_a^\sharp$.
		
	It remains to show that the locus where the $\Fq$-action is strict is closed. 
	As $\underline{\Ec}[\pf^{n}]$ carries a strict $\Fq$-action by construction, we have a lift of the $\Fq$-action to $\underline{\Ec}[\pf^{n}]^\flat = \Spec(R[t]/(tf))$ by \cite[Remark b) after Definition 1]{Faltings2002}, compare also \cite[Lemma 4.4 in the arXiv version]{Hartl2019}. By the same argument as for the $\Oc_0/\pf^n$-module structure, the $\Fq$-action restricts to a map on the deformation $\Hb^\flat = \Spec(R[t]/(tg)) \se \underline{\Ec}[\pf^{n}]^\flat$. That it induces the correct operation on the co-Lie complex of $(\Hb, \Hb^\flat)$ is again a closed condition.
\end{proof}

We now define $M$-cyclic isogenies.
\begin{defn}
	\label{defnIcycSht}
	Let $\underline{\Ec} \in \Sht_r(S)$ and let $M$ be a finitely generated $\pf^n$-torsion module. 
	\begin{enumerate}
		\item A \emph{$M$-generator} of a finite locally free subscheme $\Hb \se \underline{ \Ec}[\pf^n]$ is an $M$-structure $\iota$ on $\underline{\Ec}$ such that the subscheme of $\underline{\Ec}[\pf^{n}]$ defined by $\iota$ is $\Hb$.
		\item A finite locally free subscheme $\Hb \se \underline{ \Ec}[\pf^n]$ is called \emph{$M$-cyclic}  if there is an fppf cover $S' \to S$ such that $\Hb_{S'}$ admits an $M$-generator.
		\item A $\pf^n$-isogeny of Drinfeld shtukas $f \colon \underline{ \Ec} \to \underline{ \Ec}'$ is called \emph{$M$-cyclic} if $\Dr_q(\coker(f))$ is $M$-cyclic.
	\end{enumerate}
\end{defn}
Note that an $M$-cyclic subscheme necessarily has rank $|M|$. 
We also use the term \emph{$\pf^{n}$-cyclic} as abbreviation for $(\pf^{-n}/\Oc_0)$-cyclic submodule schemes or isogenies, respectively.

\begin{lem}
	\label{lemIcycOrdI}
	Let $M$ be a submodule of $(\pf^{-n}/\Oc_0)^r$.
	Every $M$-cyclic subscheme $\Hb \se \underline{ \Ec}[\pf^n]$ is an $\Oc_0$-module subscheme with strict $\Fq$-action.
\end{lem}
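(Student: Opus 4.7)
The plan is to reduce everything to Proposition \ref{propSchIStruct} via fppf descent. By Definition \ref{defnIcycSht}, being $M$-cyclic means that there is an fppf cover $S' \to S$ such that $\Hb_{S'}$ admits a $M$-generator $\iota \colon M \to \underline{\Ec}[\pf^n](S')$. Since $M$ is a submodule of $(\pf^{-n}/\Oc_0)^r$ by assumption, Proposition \ref{propSchIStruct} applies to $\iota$, so the subscheme of $\underline{\Ec}[\pf^n]_{S'}$ generated by $\iota$ (which is exactly $\Hb_{S'}$) is an $\Oc_0$-module subscheme with strict $\Fq$-action.

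It then remains to descend both structures from $S'$ to $S$. Concretely, for each $a \in \Oc_0$ the multiplication map $e_a \colon \underline{\Ec}[\pf^n] \to \underline{\Ec}[\pf^n]$ is defined on $S$, and I claim that it restricts to a map $\Hb \to \Hb$. Working \'etale-locally on $S$ so that $\underline{\Ec}[\pf^n]$ embeds into $\A^1_S$ (by Proposition \ref{propEmbItor}), this restriction is a closed condition of the type made explicit in the proof of Proposition \ref{propSchIStruct} (vanishing of certain remainders). Such a closed condition can be checked after pullback along the fppf cover $S' \to S$, where it holds by the previous paragraph; hence it already holds on $S$. The same argument works for the strict $\Fq$-action: that the $\Fq$-action on $\underline{\Ec}[\pf^n]^\flat$ restricts to $\Hb^\flat$ and induces the correct map on the co-Lie complex of $(\Hb, \Hb^\flat)$ are closed conditions on $S$, again verified in the proof of Proposition \ref{propSchIStruct}, and hence descend from $S'$ to $S$ by faithful flatness.

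The only subtle point — which is not really an obstacle — is to make sure that the $\Oc_0$-module structure and the strict $\Fq$-action on $\Hb_{S'}$ provided by Proposition \ref{propSchIStruct} are simply the restrictions of the corresponding structures on $\underline{\Ec}[\pf^n]_{S'}$; this is built into the statement of that proposition, so no independent verification is needed. Altogether, this gives the lemma.
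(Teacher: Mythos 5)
Your proof is correct and follows the same route as the paper's one-line argument (reduce fppf-locally to Proposition~\ref{propSchIStruct}), simply spelled out in more detail via closed conditions and descent. One very minor omission: besides stability under each $e_a$, you should also mention that $\Hb$ is a subgroup scheme (closed under the group law), which is likewise a closed condition treated in the proof of Proposition~\ref{propSchIStruct} and descends by the same faithful-flatness argument.
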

\begin{proof}
	All of the assertions can be checked fppf-locally on the base, where they follow from Proposition \ref{propSchIStruct}.
\end{proof}

We collect two representability results.
\begin{prop}
	Let $\underline{\Ec} \in \Sht_r(S)$ and assume that its characteristic is away from $0$. Let $M$ be a finitely generated $\Oc_0/\pf^n$-module. Then the  functor on $S$-schemes 
	$$ T \mapsto \{ M \text{-cyclic subgroups of } \underline\Ec[\pf^n]_T \}.$$
	is representable by a finite \'etale $S$-scheme. 
	Moreover, \'etale locally on $S$ the functor is representable by the constant $S$-scheme 
	$$ S \times \{ \text{submodules of } (\pf^{-n}/\Oc_0)^r \text{ isomorphic to } M\}. $$
\end{prop}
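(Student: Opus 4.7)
The plan is to reduce to the case where $\underline{\Ec}[\pf^n]$ is a constant $\Oc_0/\pf^n$-module scheme via étale descent. Since the characteristic of $\underline{\Ec}$ is away from $0$, Proposition \ref{propLevelEtaleSht} shows that $\underline{\Ec}[\pf^n]$ is finite étale over $S$, and Proposition \ref{propDescPTor} combined with the étale-local triviality of finite étale schemes allows us to pass to an étale cover of $S$ on which $\underline{\Ec}[\pf^n] \cong (\pf^{-n}/\Oc_0)^r_S$ as $\Oc_0/\pf^n$-module schemes.

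In this trivialised setting I would classify $M$-cyclic subgroups directly. By Proposition \ref{propLevelEtaleSht} any $M$-generator is automatically an injective $\Oc_0$-linear map $\iota\colon M \hookrightarrow (\pf^{-n}/\Oc_0)^r$ that becomes a closed immersion of $\Oc_0$-module schemes with image the generated subgroup. Hence the $M$-cyclic subgroups of $(\pf^{-n}/\Oc_0)^r_{S'}$ are exactly the locally constant families of submodules of $(\pf^{-n}/\Oc_0)^r$ isomorphic to $M$. Since there are only finitely many such submodules, after the trivialising étale cover the functor is represented by the announced constant $S$-scheme, proving the second assertion.

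To globalise I would identify the functor of $M$-cyclic subgroups with the étale sheaf quotient of the finite étale scheme of $M$-structures (Proposition \ref{propIstructShtRep}) by the right action of the finite group $\Aut_{\Oc_0}(M)$ given by precomposition. Two $M$-generators produce the same subgroup precisely when they lie in the same orbit, and the action is free because in the étale case all $M$-generators are injective by Proposition \ref{propLevelEtaleSht}. A free action of a finite group on a finite étale $S$-scheme admits a finite étale quotient, which represents our functor and restricts to the explicit local description obtained above.

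The main subtlety will be confirming that the fppf-local sheafification built into Definition \ref{defnIcycSht} really coincides with this étale quotient description. Here the characteristic-away-from-$0$ hypothesis is decisive: all relevant objects are finite étale, so fppf- and étale-local admission of a generator agree, and the sheafification and the naive orbit quotient coincide. Combining this with Galois descent for the finite group $\Aut_{\Oc_0}(M)$ yields both representability and finite étaleness over $S$.
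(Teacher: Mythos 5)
Your argument is correct, but it takes a genuinely different route from the paper's. The paper's proof is a direct descent argument: it first observes that the functor of $M$-cyclic subgroups is an fppf (hence \'etale) sheaf --- this follows from descent for finite locally free schemes and closed immersions together with the fact that cyclicity is by definition fppf-local --- and then reduces, by \'etale descent, to proving the explicit local description when $\underline\Ec[\pf^n]$ is constant. There it invokes the lemma (from the proof of Theorem 3.7.1 of Katz--Mazur) that any finite locally free closed subgroup scheme of a constant group scheme over a connected base is itself constant, and concludes with Proposition \ref{propLevelEtaleSht}. Your proof instead realises the functor as the quotient of the finite \'etale scheme of $M$-structures (Proposition \ref{propIstructShtRep}) by the free right action of $\Aut_{\Oc_0}(M)$, using that free actions of finite groups on finite \'etale schemes admit finite \'etale quotients. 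Both approaches are sound. The paper's is more economical --- it needs only sheafiness plus descent and one structural fact about subgroups of constant schemes --- whereas your quotient construction exhibits the representing object more explicitly and makes the relationship with the scheme of generators transparent, at the cost of verifying freeness of the action and matching the fppf sheafification with the \'etale quotient. Your observation that the fppf- and \'etale-local existence of generators agree in the \'etale setting is the right point to settle that last issue, and it does hold here because $\Hb^\times$ is finite \'etale over $S$ whenever $\Hb$ is \'etale.
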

\begin{proof}
We proceed as in the proof of \cite[Theorem 3.7.1]{Katz1985}. By descent for finite locally free schemes and closed immersions, and the fact that cyclicity is local for the fppf-topology by definition, the functor is a fppf (and hence an \'etale) sheaf. By \'etale descent, it thus suffices to show the second statement. 
We may assume that $\underline\Ec[\pf^n] \cong (\pf^{-n}/\Oc_0)^r_S$ by Proposition \ref{propDescPTor}. By the argument in the proof of \cite[Theorem 3.7.1]{Katz1985}, over a connected base $T$ any finite locally free closed subgroup scheme of a constant scheme is itself constant. The claim follows from the explicit description in Proposition \ref{propLevelEtaleSht}.
\end{proof}

\begin{prop}
	\label{propIGenSht}
	Let $\underline{\Ec} \in \Sht_r(S)$, let $M$ be a finitely generated $\Oc_0/\pf^n$-module and let $\Hb \se \underline{ \Ec}[\pf^{n}]$ be a finite locally free closed $\Oc_0$-module subscheme of rank $|M|$. We consider its functor of generators, i.e. the functor on $S$-schemes 
	$$ T \mapsto \{ M\text{-generators of } \Hb_T \text{ in the sense of Definition \ref{defnIcycSht}} \}.$$
	It is representable by a finite scheme of finite presentation over $S$ denoted by $\Hb^\times$. Moreover, it is finite \'etale when $\Hb$ is \'etale (in particular, when the characteristic of $\underline{\Ec}$ is away from $0$).
\end{prop}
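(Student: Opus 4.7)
The plan is to realize $\Hb^\times$ as a closed subscheme of the finite $S$-scheme of $M$-structures on $\underline{\Ec}$ provided by Proposition \ref{propIstructShtRep}. By that proposition, the functor $T \mapsto \{M\text{-structures on } \underline{\Ec}_T\}$ is represented by a finite $S$-scheme $Z$ of finite presentation. Over $Z$, the universal $M$-structure $\iota^{\mr{univ}}$ generates, by Proposition \ref{propSchIStruct}, a finite locally free closed $\Oc_0$-module subscheme $\Hb^{\mr{univ}} \se \underline{\Ec}[\pf^n]_Z$ of rank $|M|$. The functor $\Hb^\times$ is then precisely the locus in $Z$ over which $\Hb^{\mr{univ}}$ and $\Hb_Z$ agree as closed subschemes of $\underline{\Ec}[\pf^n]_Z$.

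The key step is to verify that this equality defines a closed condition on $Z$. I would work \'etale-locally on $S$ and use Proposition \ref{propEmbItor} to embed $\underline{\Ec}[\pf^n]$ as a closed subscheme of $\A^1_S$. Inside $\A^1_Z$, both $\Hb_Z$ and $\Hb^{\mr{univ}}$ are cut out by monic polynomials of the same degree $|M|$ (the latter being the Cartier divisor $\sum_{\alpha \in M}[\iota^{\mr{univ}}(\alpha)]$), and their coincidence is equivalent to the vanishing of the $|M|$ differences of coefficients. By descent of closed subschemes along \'etale covers, $\Hb^\times$ is thus representable by a closed subscheme of $Z$; as such, it inherits finiteness and finite presentation over $S$.

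For the second assertion, assume $\Hb$ is finite \'etale over $S$; note that when the characteristic of $\underline{\Ec}$ is away from $0$, this is automatic from the finite shtuka equivalence (Proposition \ref{propFinShtEq}). Passing to an \'etale cover I may assume $\Hb$ is constant, so $\Hb(S)$ is a fixed $\Oc_0$-module of order $|M|$. By Proposition \ref{propLevelEtaleSht}, a $M$-generator of $\Hb_T$ for $T \to S$ is the same as an $\Oc_0$-linear isomorphism $M \xrightarrow{\cong} \Hb(S)$ on each connected component of $T$. Hence $\Hb^\times$ is \'etale-locally represented by the constant $S$-scheme $S \times \Isom_{\Oc_0}(M, \Hb(S))$, which is finite \'etale. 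The main subtle point is the closedness in the previous paragraph; one could alternatively invoke separatedness of the Hilbert scheme of $\underline{\Ec}[\pf^n]$, but the explicit polynomial comparison via Proposition \ref{propEmbItor} is what makes this immediate in our setting.
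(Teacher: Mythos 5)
Your proof is correct and follows essentially the same strategy as the paper: realize $\Hb^\times$ as a closed subscheme of a finite $S$-scheme, where the defining condition ``the universal homomorphism generates exactly $\Hb$'' is checked to be closed via an embedding into $\A^1$ (Proposition \ref{propEmbItor}) and comparison of monic polynomials. The only cosmetic difference is the choice of ambient finite scheme: the paper cuts inside $\Hom(M,\Hb)$ and handles separately (via Lemma \ref{lemIstrctDefSch} and \cite[Corollary 1.3.5]{Katz1985}) both the condition that the universal homomorphism is a $M$-structure and the condition that the generated subscheme equals $\Hb$, whereas you start from the scheme $Z$ of all $M$-structures on $\underline{\Ec}$ from Proposition \ref{propIstructShtRep} and invoke Proposition \ref{propSchIStruct} to produce $\Hb^{\mathrm{univ}}$ before comparing with $\Hb_Z$. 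One small thing to note: Proposition \ref{propSchIStruct} is stated for $M \se (\pf^{-n}/\Oc_0)^r$, while the present statement allows any finitely generated $\pf^n$-torsion module; this is harmless because if $M$ has more than $r$ cyclic factors then $Z$ is empty, but you should record this to justify the appeal to Proposition \ref{propSchIStruct}. For the \'etale case both arguments amount to the same thing---the paper says ``formally \'etale as in Proposition \ref{propIstructShtRep}''; your explicit reduction to the constant case via Proposition \ref{propLevelEtaleSht} is a valid way to phrase it.
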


\begin{proof}
	We adapt the proof of \cite[Proposition 1.6.5]{Katz1985}. The functor clearly is representable by the closed subscheme of $\Hom(M, \Hb)$ over which the universal homomorphism is an $M$-structure on $\underline{\Ec}$ (which is a closed condition locally defined by finitely many equations by Lemma \ref{lemIstrctDefSch}) and over which the subscheme defined by the universal homomorphism is $\Hb$, which is also a closed condition given by finitely many equations by Lemma \ref{lemIstrctDefSch} and \cite[Corollary 1.3.5]{Katz1985}.
	
	If $\Hb$ is \'etale, we see as in the proof of Proposition \ref{propIstructShtRep}
	 that $\Hb^\times$ is formally \'etale.
\end{proof}

\subsection{Balanced level structures for shtukas.}

\begin{defn}
	\label{defnBalIStructSht}
	Let $m \in \N$ and let $r_1, \ldots, r_m$ be positive integers such that $\sum_{i = 1}^m r_i \leq r$.  
	A \emph{balanced $\pf^{n}$-level structure of type $(r_1, \ldots, r_m)$} on a Drinfeld shtuka $\underline\Ec$ over $S$ is a chain of isogenies 
	$$ \underline\Ec(\pf^n) = \underline\Ec_{m+1} \xrightarrow{f_{m+1}} \underline\Ec_m \xrightarrow{f_{m}}  \ldots \xrightarrow{f_2} \underline\Ec_1 \xrightarrow{f_{1}} \underline\Ec_{0} = \underline\Ec$$
	such that the composition $f_{m+1} \circ \ldots \circ f_1$ is the inclusion $\underline\Ec(\pf^n) \to \underline\Ec$, 
	together with $(\pf^{-n}/\Oc_0)^{r_i}$-generators of $\Dr_q(\coker(f_i)) \se \underline\Ec_i[\pf^{n}]$ for all $1 \leq i \leq m$ in the sense of Definition \ref{defnIcycSht}.
	We denote by $\Sht_{r, \pf^{n}\text{-bal-}(r_1, \ldots, r_m)}$ the stack parametrising Drinfeld shtukas together with a balanced $\pf^{n}$-level structure of type $(r_1, \ldots, r_m)$.
	
	A \emph{$\Gamma_1(\pf^n)$-level structure} on a Drinfeld shtuka of rank $r$ is a balanced $\pf^{n}$-level structure of type $\mathbf{1}_r = (1, \ldots, 1) \in \Z^r$. We denote by $\Sht_{r, \Gamma_1(\pf^n)} = \Sht_{r, \pf^{n}\text{-bal-}\mathbf{1}_r}$ the stack of Drinfeld shtukas with a $\Gamma_1(\pf^n)$-level structure.
\end{defn}

As for Drinfeld shtukas with chains of isogenies, we see that a balanced $\pf^{n}$-level structure of type $(r_1, \ldots, r_m)$ on a Drinfeld shtuka $\underline\Ec$ is equivalent to the data of a flag
$$ 0 \se \Hb_1 \se \Hb_2 \se \ldots \se \Hb_m \se \underline\Ec[\pf^{n}] $$
of finite locally free submodule schemes $\Hb_i \se \underline\Ec[\pf^{n}]$ of rank $n \cdot (r_1 + \ldots + r_i)$ with strict $\Fq$-action together with $(\pf^{-n}/\Oc_0)^{r_i}$-generators of $\Hb_i/\Hb_{i-1}$ for all $1 \leq i \leq m$.

\begin{lem}
	\label{lemShtBalStructRep}
		The stack $\Sht_{r, \pf^{n}-(r_1, \ldots, r_m)-\mr{bal}}$ is representable by a Deligne-Mumford stack locally of finite type over $\Fq$. The projection  $\Sht_{r, \pf^{n}-(r_1, \ldots, r_m)-\mr{bal}} \to \Sht_r$ is schematic and finite. Moreover, it is finite \'etale away from $0$. 
\end{lem}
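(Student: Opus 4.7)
The plan is to factor the forgetful map as
$$\Sht_{r, \pf^n\text{-bal-}(r_1,\ldots,r_m)} \xrightarrow{p} \Sht_{r, (r_1,\ldots,r_m)\text{-}\pf^n\text{-chain}} \xrightarrow{q} \Sht_r,$$
where the first map forgets the generators and remembers only the chain of $\pf^n$-isogenies, and the second is the forgetful map studied in Proposition \ref{propIIsoRepSht}. The strategy is to show that both $p$ and $q$ are schematic and finite, and finite \'etale over the open substack where the characteristic is away from $\pf$; then the claim follows by composition, using that $\Sht_r$ is a Deligne-Mumford stack locally of finite type over $\Fq$. The properties of $q$ are exactly the content of Proposition \ref{propIIsoRepSht}, so the bulk of the argument lies in analysing $p$.

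Over the chain stack, there is a universal chain $\underline\Ec(\pf^n) = \underline\Ec_{m+1} \to \cdots \to \underline\Ec_0 = \underline\Ec$ whose cokernels $\coker(f_i)$ are $\pf^n$-torsion shtukas. By Proposition \ref{propFinShtEq}, each $\Dr_q(\coker(f_i))$ is a finite locally free closed subscheme of the corresponding $\underline\Ec_\bullet[\pf^{n}]$ with strict $\Fq$-action, and the $\Oc_0/\pf^n$-module structure on $\coker(f_i)$ induces an $\Oc_0/\pf^n$-module structure on it. Consequently Proposition \ref{propIGenSht} applies, and for each $i$ the functor of $(\pf^{-n}/\Oc_0)^{r_i}$-generators of $\Dr_q(\coker(f_i))$ is representable by a finite scheme of finite presentation over the chain stack, which is moreover finite \'etale whenever $\Dr_q(\coker(f_i))$ is \'etale. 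The map $p$ is then represented by the iterated fibre product of these generator schemes over the chain stack for $i = 1, \ldots, m$, so $p$ is schematic and finite.

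Composing, the forgetful map to $\Sht_r$ is schematic and finite; since $\Sht_r$ is a Deligne-Mumford stack locally of finite type over $\Fq$, so is $\Sht_{r, \pf^n\text{-bal-}(r_1,\ldots,r_m)}$. For the \'etaleness away from $0$: when the characteristic is away from $\pf$, Proposition \ref{propLevelEtaleSht} ensures that $\underline\Ec[\pf^n]$ is finite \'etale, hence so are all its subschemes $\Dr_q(\coker(f_i))$, and Proposition \ref{propIGenSht} gives that $p$ is finite \'etale there; combined with the analogous statement for $q$, this proves the last claim. There is no genuinely difficult step; the only point requiring a moment of care is to identify, in the universal family, the subquotients of the chain as finite locally free $\Oc_0$-module subschemes of $\underline\Ec_\bullet[\pf^n]$ carrying a strict $\Fq$-action, which is precisely what the finite shtuka equivalence of Proposition \ref{propFinShtEq} provides.
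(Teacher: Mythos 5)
Your proof is correct and takes essentially the same route as the paper: factoring through $\Sht_{r,(r_1,\ldots,r_m)\text{-}\pf^n\text{-chain}}$, using Proposition \ref{propIGenSht} for the generator step and Proposition \ref{propIIsoRepSht} for the chain step. The only nominal difference is that you cite Proposition \ref{propLevelEtaleSht} for \'etaleness of $\underline\Ec[\pf^n]$ away from $0$, whereas this fact really comes from Proposition \ref{propFinShtEq} and is already built into Proposition \ref{propIGenSht}; this does not affect the argument.
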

\begin{proof}
		We have a well-defined map of stacks
		$$ \Sht_{r, \pf^{n}-(r_1, \ldots, r_m)-\mr{bal}} \to \Sht_{r,(r_1, \ldots, r_m)-\pf^{n}\text{-chain}}.$$
		This map is schematic, finite and moreover  finite  \'etale away from $0$ by Proposition \ref{propIGenSht}.
		The assertions then follow from Proposition \ref{propIIsoRepSht}.
\end{proof}

\begin{prop}
	\label{propShtBalStructReg}
	The Deligne-Mumford stack $\Sht_{r, \pf^{n}-(r_1, \ldots, r_m)-\mr{bal}}$ is regular.
\end{prop}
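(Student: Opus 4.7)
The plan is to follow the proof of Theorem \ref{thmIstructShtReg} verbatim in its outer structure, reducing to an explicit computation of the universal deformation ring at basic points, and then handle that computation by induction on $m$, adjoining generators one layer at a time. By Lemma \ref{lemShtBalStructRep} the forgetful map $\Sht_{r,\pf^{n}\text{-bal-}(r_1,\ldots,r_m)}\to\Sht_r$ is schematic, finite, and finite étale away from $0$. Fix a smooth étale presentation $S\to\Sht_r$ by a $(2r-1)$-dimensional smooth $\Fq$-scheme $S$, and set $T:=S\times_{\Sht_r}\Sht_{r,\pf^{n}\text{-bal-}(r_1,\ldots,r_m)}$. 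The subset $U\subseteq S$ over which $T$ has regular local rings is open (as $T$ is of finite type over the perfect field $\Fq$), and it suffices to check $U=S$ on closed points; away from the fibre $S_0$ over $0$ this is automatic by étaleness.

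Passing to the completion of the strict henselisation at a closed point $\bar s\in S_0$ and applying the Serre--Tate theorem (Proposition \ref{propSerreTateSht}), the deformation ring of any point of $T$ above $\bar s$ depends only on the local shtuka of $\underline{\Ec}_{\bar s}$ at $0$, since balanced level structures only involve the $\pf^n$-torsion. Hence $U$ is a union of Newton strata, and by Theorem \ref{thmNewtonStratStrongStrat} and Proposition \ref{propSSShtEx} it suffices to verify regularity at a basic $\ell$-valued point $\bar s$ corresponding to a shtuka $\underline{\Ec}_0$ of height $r$. Since $\underline{\Ec}_0[\pf^n](\ell)=0$ by Proposition \ref{propDescPTor}, every $(\pf^{-n}/\Oc_0)^{r_i}$-generator required by Definition \ref{defnBalIStructSht} must be the zero map, the corresponding flag $\Hb_\bullet$ is uniquely determined, and there is a single point of $T$ above $\bar s$.

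By Corollary \ref{corCompItorUniv} and Proposition \ref{propSerreTateSht}, the completed local ring at this point pro-represents the deformation functor of a basic formal $\Oc_0$-module of height $r$ equipped with a compatible chain of generators. I would then proceed by induction on $m$: the case $m=1$ is the key lemma from \cite[Proposition 4.3]{Drinfeld1976} that was already invoked in the proof of Theorem \ref{thmIstructShtReg}, producing a regular local ring $R_1$ finite flat over $R_0=\ell\dsq{\varpi,T_1,\ldots,T_{r-1}}$ with the components of the generator and the remaining $T_j$ as a regular system of parameters. For the inductive step, note that quotienting by the previously constructed subscheme $\Hb_{i-1}$ produces another formal $\Oc_0$-module $\underline{\Ec}/\Hb_{i-1}$, basic over $\bar s$ and deformed over $R_{i-1}$; adjoining a $(\pf^{-n}/\Oc_0)^{r_i}$-generator of $\Hb_i/\Hb_{i-1}$ on this quotient is then the same problem as in the base case applied to $\underline{\Ec}/\Hb_{i-1}$, so Drinfeld's lemma again yields a regular local ring $R_i$ finite flat over $R_{i-1}$ obtained by substituting the components of $\iota_i$ for the corresponding formal parameters via Weierstrass-type relations.

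The main obstacle is the inductive step: one must verify that forming the quotient $\underline{\Ec}/\Hb_{i-1}$ is compatible with Drinfeld's formal module deformation theory and that the cyclicity condition coming from Definition \ref{defnBalIStructSht} on the quotients $\Hb_i/\Hb_{i-1}$ translates correctly into the Weierstrass relation used by Drinfeld. This compatibility should follow from Proposition \ref{propSchIStruct} applied to $\underline{\Ec}/\Hb_{i-1}$, since the generator of $\Hb_i/\Hb_{i-1}$ defines an $\Oc_0$-module subscheme with strict $\Fq$-action. After $m$ such steps we obtain a regular local ring $R_m$ of dimension $r$, which gives $\bar s\in U$ and hence $U=S$.
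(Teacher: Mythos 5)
Your outer reduction — étale presentation, Serre--Tate via Proposition \ref{propSerreTateSht}, Newton strata and non-emptiness via Theorem \ref{thmNewtonStratStrongStrat} and Proposition \ref{propSSShtEx}, and reduction to the deformation ring at a basic supersingular point — matches the paper exactly. The divergence, and the gap, is in how regularity of that deformation ring $B$ is established. The paper does \emph{not} construct $B$ layer by layer. It instead shows that the maximal ideal of $B$ is generated by $r$ elements, namely the components $\iota_i(e^{(i)}_j)$ together with $T_{r_1+\cdots+r_m},\ldots,T_{r-1}$, via the Nakayama-type criterion of \cite[Proposition 5.2.2]{Katz1985}: over any artinian quotient in which these elements vanish, the universal deformation pulls back to the constant one. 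The decisive tool is \cite[Lemma 1.11.3]{Katz1985} (additivity of full sets of sections in a short exact sequence), used inductively up the flag to show that the vanishing of each $\tilde\iota_i$ makes $0$ a $(\pf^{-n}/\Oc_0)^{r_1+\cdots+r_m}$-structure on the deformed module, whereupon constancy follows from the proof of Theorem \ref{thmIstructShtReg}.

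Your inductive step has a genuine gap, precisely at what you yourself flag as the ``main obstacle,'' and the proposed fix via Proposition \ref{propSchIStruct} does not close it. Passing from $R_{i-1}$ to $R_i$ means adjoining a pair $(\Hb_i,\iota_i)$ with $\Hb_{i-1}\subseteq\Hb_i\subseteq\Gb[\pf^n]$ and $\iota_i$ a $(\pf^{-n}/\Oc_0)^{r_i}$-generator of $\Hb_i/\Hb_{i-1}$. That subquotient lies inside $\Gb[\pf^n]/\Hb_{i-1}$, which for $i>1$ is a \emph{proper} finite locally free subgroup scheme of $(\Gb/\Hb_{i-1})[\pf^n]$: the first has rank $q^{n(r-r_1-\cdots-r_{i-1})}$, the second $q^{nr}$, because the quotient formal module still has height $r$. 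Drinfeld's deformation computation applied to the formal module $\Gb/\Hb_{i-1}$ pro-represents \emph{arbitrary} $(\pf^{-n}/\Oc_0)^{r_i}$-structures on $\Gb/\Hb_{i-1}$, whose generated submodule scheme has no reason to sit inside the smaller $\Gb[\pf^n]/\Hb_{i-1}$ and hence no reason to come from a flag in $\Gb[\pf^n]$. So the ring your step produces is strictly larger than $R_i$ — the generic degrees over $R_{i-1}$ already disagree — and $R_i$ is a proper closed quotient of it. Regularity does not pass to closed subschemes, so the induction does not close. Proposition \ref{propSchIStruct} only tells you the generated subscheme is an $\Oc_0$-module scheme with strict $\Fq$-action; it says nothing about this containment or about the deformation ring. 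The paper avoids the issue altogether by never quotienting by $\Hb_{i-1}$: \cite[Lemma 1.11.3]{Katz1985} lets it reassemble the flag of generators on the subquotients into a single $M$-structure on the original $\Gb$, which is exactly the missing ingredient here.
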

\begin{proof}
	As in the proof of Theorem \ref{thmIstructShtReg} it suffices to check that the deformation functor of the $\pf$-divisible module of a basic point over $0$ with balanced $\pf^{n}$-level structure of type $(r_1, \ldots, r_m)$ is pro-representable by a regular local ring. 
	By \cite[Proposition 5.2.2]{Katz1985} it suffices to show that the maximal ideal is generated by $r$ elements.
	
	Let $(\Gb_0, (\Hb_{0,i}, \iota_{0,i})_{1 \leq i \leq m})$ be the $\pf$-divisible module of a basic Drinfeld shtuka of rank $r$ over an algebraically closed field $\ell$ in the fibre over $0$ together with a balanced $\pf^{n}$-level structure of type $(r_1, \ldots, r_m)$ on $\Gb_0$. Note that $\Gb_0$ is automatically formal and the level structure is unique, and all the $\iota_{0,i}$ are the zero map. Then by the Serre-Tate theorem the deformation functor of $(\Gb_0, (\Hb_{0,i}, \iota_{0,i})_{1 \leq i \leq m})$ is representable by a complete local ring denoted by $B$.
	Let $(\Gb, ((\Hb_i), (\iota_i))_{1 \leq i \leq m})$ be the universal deformation over $B$. 	
	For every $1 \leq i \leq m$ we choose a basis $e^{(i)}_{1}, \ldots, e^{(i)}_{r_i}$ of $(\pf^{-n}/\Oc_0)^{r_i}$.
	We claim that the maximal ideal of $B$ is generated by $\iota_i(e^{(i)}_j)$ for $1 \leq i \leq m$ and $1 \leq j \leq r_i$ and $T_{r_1+ \ldots + r_m}, \ldots, T_{r-1}$. We proceed as in the proof of \cite[Theorem 5.3.2, (5.3.5)]{Katz1985}.
	We need to check that for every Artin local $\ell\dsq{\varpi}$-algebra $R$ such that $T_{r_1+ \ldots + r_m}, \ldots, T_{r-1}$ vanish in $R$ every deformation $(\tilde\Gb, ((\tilde\Hb_i), (\tilde \iota_i))_{1 \leq i \leq m})$ on $R$ such that all $\tilde{\iota}_i$ are the constant zero maps is itself constant.
	
	Using \cite[Lemma 1.11.3]{Katz1985} we see inductively that the zero map is an $(\pf^{-n}/\Oc_0)^{r_1 + \ldots + r_i}$-structure on $\Hb_i$ for all $1 \leq i \leq m$. In particular, the zero map is an $(\pf^{-n}/\Oc_0)^{r_1 + \ldots + r_i}$-structure on $\tilde{\Gb}$ and thus $\Gb$ is constant by the proof of Theorem \ref{thmIstructShtReg}.
\end{proof}

We collect some consequences. We start by constructing balanced level structures from $(\pf^{-n}/\Oc_0)^{r'}$-structures.
Let  $m \in \N$ and let $r_1, \ldots, r_m$ be positive integers such that $r' = \sum_{i = 1}^m r_i \leq r$. 
Let $(\underline \Ec,  \iota)$ be a Drinfeld shtuka together with an $(\pf^{-n}/\Oc_0)^{r'}$-structure on $S$.
For $1 \leq i \leq m$  the restriction of $\iota$ restricted to the first $r_1 + \ldots + r_i$ components is an $(\pf^{-n}/\Oc_0)^{r_1 + \ldots + r_i}$-structure by Proposition \ref{propResIstructSht} and thus defines an $\Oc_0/\pf^{n}$-submodule scheme $\Hb_i$ of $\underline{\Ec}[\pf^{n}]$ by Proposition \ref{propSchIStruct}.  We denote by $\iota_i$ the induced map $(\pf^{-n}/\Oc_0)^{r_i} \to \Hb_i/\Hb_{i-1} (S)$.

\begin{prop}
	\label{propResBalSht}
	Let $(\underline{\Ec}, \iota)$ be a rank $r$ Drinfeld shtuka together with an $(\pf^{-n}/\Oc_0)^{r'}$-structure over $S$. 
	Using the notation as above, the flag of finite locally free closed submodule schemes $0 = \Hb_0 \se \Hb_1 \se \ldots \se \Hb_m \se \underline\Ec[\pf^{n}]$ together with the maps $(\iota_i)_{1 \leq i \leq m}$ defines a balanced $\pf^{n}$-level structure of type $(r_1, \ldots, r_m)$ on $\underline\Ec$. 
\end{prop}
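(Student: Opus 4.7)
The plan is to first use the results already established to produce the flag of submodule schemes and then verify the generator condition by a regularity-plus-density argument.

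First I would apply Proposition \ref{propResIstructSht} to conclude that for each $1 \leq i \leq m$ the restriction $\iota^{(i)}$ of $\iota$ to the first $r_1+\ldots+r_i$ direct summands is itself a $(\pf^{-n}/\Oc_0)^{r_1+\ldots+r_i}$-structure on $\underline{\Ec}$. By Proposition \ref{propSchIStruct}, this restriction generates a finite locally free $\Oc_0$-module subscheme $\Hb_i \se \underline{\Ec}[\pf^n]$ of rank $q^{n(r_1+\ldots+r_i)}$ with strict $\Fq$-action. The compatibility of the construction under further restriction implies $\Hb_{i-1} \se \Hb_i$, so we obtain the required flag. Via Proposition \ref{propKerIsog}, this flag is equivalent to a chain of $\pf^n$-isogenies $\underline{\Ec}(\pf^n) = \underline{\Ec}_{m+1} \to \ldots \to \underline{\Ec}_0 = \underline{\Ec}$, and the short exact sequences of $\pf^n$-torsion shtukas obtained by applying $\Dr_q$ place $\Hb_i/\Hb_{i-1}$ canonically inside $\underline{\Ec}_{i-1}[\pf^n]$.

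It remains to verify that $\iota_i$ is a $(\pf^{-n}/\Oc_0)^{r_i}$-generator of $\Hb_i/\Hb_{i-1} \se \underline{\Ec}_{i-1}[\pf^n]$ in the sense of Definition \ref{defnIcycSht}. This amounts to two conditions: that $\iota_i$ is an $(\pf^{-n}/\Oc_0)^{r_i}$-structure on $\underline{\Ec}_{i-1}$, and that the subscheme it generates equals $\Hb_i/\Hb_{i-1}$. The first is a closed condition on $S$ by Lemma \ref{lemIstrctDefSch}(3), and the second is closed by the representability statement in Proposition \ref{propIGenSht} (together with the observation that a closed immersion between finite locally free schemes of the same rank is an isomorphism).

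To verify these closed conditions, I would reduce to the universal case and take $S = \Sht_{r,(\pf^{-n}/\Oc_0)^{r'}}$, which is regular, hence reduced, by Theorem \ref{thmIstructShtReg}. The composition $\Sht_{r,(\pf^{-n}/\Oc_0)^{r'}} \to \Sht_r \to X'$ is flat (finite flat followed by smooth), so the preimage of $X' \setminus \{0\}$ is dense. On this preimage, $\underline{\Ec}[\pf^n]$ is \'etale and Proposition \ref{propLevelEtaleSht} identifies $\iota$ with a closed immersion $(\pf^{-n}/\Oc_0)^{r'}_S \hookrightarrow \underline{\Ec}[\pf^n]$; under this identification $\Hb_i$ becomes the constant subscheme on the first $r_1+\ldots+r_i$ summands and $\iota_i$ is the obvious isomorphism onto $\Hb_i/\Hb_{i-1}$, which is an $(\pf^{-n}/\Oc_0)^{r_i}$-generator by another application of Proposition \ref{propLevelEtaleSht} (this time applied to the shtuka $\underline{\Ec}_{i-1}$).

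The main subtle point will be carefully setting up the ambient space in which $\Hb_i/\Hb_{i-1}$ lives, so that the generator condition really is a closed condition on $S$; this uses the exactness of $\Dr_q$ on short exact sequences of $\pf^n$-torsion shtukas to identify $\Hb_i/\Hb_{i-1}$ with $\Dr_q(\underline{\Ec}_{i-1}/\underline{\Ec}_i) \se \underline{\Ec}_{i-1}[\pf^n]$. Once this identification is in place, the closedness-plus-density argument gives the claim cleanly, without any case analysis at the supersingular points.
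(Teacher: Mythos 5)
Your proposal is correct and follows essentially the same strategy as the paper's proof: reduce to the universal case, use the regularity of $\Sht_{r,(\pf^{-n}/\Oc_0)^{r'}}$ (Theorem \ref{thmIstructShtReg}) to get flatness over $X'$ and hence density of the locus away from $0$, check the generator condition there via Proposition \ref{propLevelEtaleSht}, and conclude by closedness of that condition. The paper's own proof is terser and leaves the closedness of the generator condition and the identification of the ambient scheme $\underline{\Ec}_{i-1}[\pf^n]$ implicit, but the underlying argument is the same.
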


\begin{proof}
	We follow the proof of \cite[Theorem 5.5.2]{Katz1985}.
	By reduction to the universal case and Theorem \ref{thmIstructShtReg} we may assume that $S$ is flat and affine over $X'$. 
	The assertion is clear when the characteristic of $\underline\Ec$ is away from $0$.
	The condition that $\iota_i$ generates $\Hb_i/\Hb_{i-1}$ is closed and thus the assertion follows by flatness.
\end{proof}

The proposition can also be applied in the following more general situation. Let $\underline{\Ec}$ be a Drinfeld shtuka together with a balanced $\pf^{n}$-level structure of type $(r_1, \ldots, r_m)$ denoted by $((\Hb_i), (\iota_i))$. Let $1 \leq m' \leq m$ and for each $1 \leq j \leq m'$ let $i_j$  and $r'_{i_1 + \ldots + i_{j-1} +1}, \ldots, r'_{i_1 + \ldots + i_{j} }$ be positive integers such that $\sum_{i = 1}^{i_j} r'_{{i_1 + \ldots + i_{j-1} +i}} = r_j$.
By applying Proposition \ref{propResBalSht} to each $\iota_j$ for $1 \leq j \leq m'$, we obtain a well-defined  balanced $\pf^{n}$-level structure of type $(r'_1,\ldots, r'_{i_1 + \ldots + i_{m'}})$ on $\underline\Ec$. This construction thus induces a map of stacks
\begin{equation}
	\label{eqnLvlMapResBalSht}
	\Sht_{r, \pf^{n}-(r_1, \ldots, r_m)-\mr{bal}} \to \Sht_{r, \pf^{n}-(r'_1, \ldots, r'_{i_1 + \ldots + i_{m'}})-\mr{bal}}.
\end{equation}

\begin{cor}
	\label{corLvlMapResBalSht}
	The map (\ref{eqnLvlMapResBalSht}) is finite locally free of constant rank. In particular, fppf-locally on the base, any balanced $\pf^{n}$-level structure of type $(r'_1,\ldots, r'_{i_1 + \ldots + i_{m'}})$ on $\underline{\Ec}$ can be extended to a balanced $\pf^{n}$-level structure of type $(r_1, \ldots, r_m)$ on $\underline\Ec$.
\end{cor}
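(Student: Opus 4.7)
The plan is to combine miracle flatness with a combinatorial count at the \'etale locus. By Proposition \ref{propShtBalStructReg}, both the source and the target of \eqref{eqnLvlMapResBalSht} are regular Deligne-Mumford stacks, and by Lemma \ref{lemShtBalStructRep} both are finite over $\Sht_r$. Since their structure maps to $\Sht_r$ are generically \'etale and hence dominant on every connected component, both stacks are equidimensional of dimension $2r-1$. The morphism in \eqref{eqnLvlMapResBalSht} is schematic and finite (the latter because its composition with the separated finite map from the target to $\Sht_r$ is finite). Working \'etale-locally on source and target, miracle flatness for schemes---a finite morphism between regular noetherian schemes of the same dimension is flat---implies that the map is finite flat, hence finite locally free.

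To upgrade to globally constant rank, I would restrict to the open substack on which the characteristic is away from $0$. There, both stacks are finite \'etale over $\Sht_r$ by Lemma \ref{lemShtBalStructRep}, and on geometric fibers $\underline{\Ec}[\pf^n] \cong (\pf^{-n}/\Oc_0)^r$ by Proposition \ref{propDescPTor}. So the rank of our map over such points is a purely combinatorial count of refinements depending only on $(r_1, \ldots, r_m)$ and $(r'_1, \ldots, r'_{i_1 + \ldots + i_{m'}})$. Regularity of the target forces each of its connected components to be irreducible of dimension $2r-1$; as the fiber over $0$ is of codimension $\geq 1$, every connected component of the target meets the characteristic-away-from-$0$ locus. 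Combined with local constancy of the rank under a finite flat map, this propagates the common generic rank to the whole target.

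The fppf-local extension statement is then immediate: a finite locally free morphism of positive constant rank is surjective and hence an fppf covering, so every balanced $\pf^n$-level structure of the coarser type admits a refinement fppf-locally on the base. Positivity of the rank is visible on a single geometric fiber over the characteristic-away-from-$0$ locus, where the existence of refinements is combinatorially obvious. The main technical point is to verify the dimension and regularity hypotheses of miracle flatness at the level of stacks, but these follow directly from Proposition \ref{propShtBalStructReg} together with the fact that both source and target are finite over the smooth, equidimensional stack $\Sht_r$.
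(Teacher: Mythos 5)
Your proposal is correct and follows essentially the same strategy as the paper's (very terse) proof: invoke regularity from Proposition \ref{propShtBalStructReg}, finiteness of the map from Lemma \ref{lemShtBalStructRep}, deduce flatness by miracle flatness, and compute the rank on the \'etale locus away from $0$. The paper simply cites \cite[Corollaries 5.5.3.\ \& 5.5.4.]{Katz1985} and compresses these steps into three sentences (with a small typo there: the stacks are regular of dimension $2r-1$, not $r$, as you correctly use).

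One small point of caution: the step ``their structure maps to $\Sht_r$ are generically \'etale and hence dominant on every connected component, both stacks are equidimensional of dimension $2r-1$'' is a bit circular as phrased. Generic \'etaleness of the map to $\Sht_r$ does not by itself rule out a connected component of the source lying entirely over the fiber of $0$. The correct justification for equidimensionality is the explicit computation in the proofs of Theorem \ref{thmIstructShtReg} and Proposition \ref{propShtBalStructReg}: the completed local rings at all geometric points (including supersingular points in characteristic $\pf$) are shown to be regular of dimension $2r-1$. Once equidimensionality is established this way, your density argument for propagating the constant rank from the \'etale locus goes through without issue, and the rest of the proof is sound.
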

\begin{proof}
	We follow the proof of  \cite[Corollaries 5.5.3 \& 5.5.4]{Katz1985}
	As both the stacks  $\Sht_{r, \pf^{n}-(r_1, \ldots, r_m)-\mr{bal}}$ 
	and $\Sht_{r, \pf^{n}-(r'_1, \ldots, r'_{i_1 + \ldots + i_{m'}})-\mr{bal}}$ are regular $r$-dimensional and finite over $\Sht_r$, the map (\ref{eqnLvlMapResBalSht}) is necessarily finite flat. 
	The degree can be computed in the \'etale case, where it is clearly constant.
	The second assertion follows immediately.
\end{proof}

\begin{cor}
	\label{corBalStructCycSht} 
	Let $\underline \Ec$ be a Drinfeld shtuka over $S$ together with a balanced $\pf^{n}$-level structure of type $(r_1, \ldots, r_m)$ denoted by  $(\Hb_i, \iota_i)_{1 \leq i \leq m}$. Then $\Hb_i$ is $(\pf^{-n}/\Oc_0)^{r_1 + \ldots + r_i}$-cyclic.
\end{cor}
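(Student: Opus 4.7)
My plan is to deduce this directly from Corollary \ref{corLvlMapResBalSht}, by applying it to the refinement that splits a single coarse piece of size $r_1+\ldots+r_i$ into the $i$ fine pieces of sizes $r_1,\ldots,r_i$.

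First, I would restrict the given balanced level structure to its first $i$ pieces. In the flag picture, a balanced $\pf^n$-level structure of type $(r_1,\ldots,r_m)$ is the datum of a flag $\Hb_1\se\ldots\se\Hb_m\se\underline{\Ec}[\pf^n]$ together with $(\pf^{-n}/\Oc_0)^{r_j}$-generators of each successive quotient $\Hb_j/\Hb_{j-1}$. Simply forgetting $\Hb_{i+1},\ldots,\Hb_m$ and the generators $\iota_{i+1},\ldots,\iota_m$ yields a balanced level structure of type $(r_1,\ldots,r_i)$ on the same shtuka $\underline{\Ec}$ whose top piece is $\Hb_i$.

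Next, I would apply Corollary \ref{corLvlMapResBalSht} in the special case where the unprimed (coarse) type is a single piece $m=1$, $r_1=r_1+\ldots+r_i$, and the primed (fine) type is $m'=1$, $i_1=i$, $r'_j=r_j$ for $1\le j\le i$. The corollary then produces a finite locally free map
\[
\Sht_{r, \pf^n-(r_1+\ldots+r_i)-\mr{bal}} \to \Sht_{r, \pf^n-(r_1,\ldots,r_i)-\mr{bal}},
\]
and states that fppf-locally on the base any balanced level structure of the fine type lifts along this map. Since by Definition \ref{defnBalIStructSht} a balanced level structure of type $(r_1+\ldots+r_i)$ on $\underline{\Ec}$ is nothing but a finite locally free subscheme $\Hb\se\underline{\Ec}[\pf^n]$ equipped with a $(\pf^{-n}/\Oc_0)^{r_1+\ldots+r_i}$-generator, such a lift of our restricted structure provides exactly a pair $(\Hb,\iota)$ with $\iota$ a $(\pf^{-n}/\Oc_0)^{r_1+\ldots+r_i}$-generator of $\Hb$, fppf-locally on $S$.

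Finally, I would identify the resulting $\Hb$ with $\Hb_i$ by unwinding Proposition \ref{propResBalSht}: the refinement map sends $(\Hb,\iota)$ to the flag whose $j$-th term is the subscheme generated by $\iota$ restricted to the first $r_1+\ldots+r_j$ standard basis vectors. Compatibility with our fixed fine structure forces this flag to coincide with $\Hb_1\se\ldots\se\Hb_i$; in particular $\Hb=\Hb_i$, and $\iota$ is a $(\pf^{-n}/\Oc_0)^{r_1+\ldots+r_i}$-generator of $\Hb_i$, as desired. I do not expect a real obstacle here; the only care required is to match the indexing conventions in Corollary \ref{corLvlMapResBalSht} and to check that the fppf-local lift is indeed the one that refines back, via Proposition \ref{propResBalSht}, to the given balanced level structure of type $(r_1,\ldots,r_i)$.
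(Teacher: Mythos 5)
Your proof is correct; it follows the same overall strategy as the paper (realize $\Hb_i$ as the subscheme generated by a coarse structure obtained via Corollary~\ref{corLvlMapResBalSht}) but with a different decomposition. The paper argues by induction on $i$: the induction hypothesis furnishes an fppf-local generator of $\Hb_{i-1}$, the two-term flag $0 \se \Hb_{i-1} \se \Hb_i$ together with the generators of $\Hb_{i-1}$ and $\Hb_i/\Hb_{i-1}$ gives a balanced $\pf^{n}$-level structure of type $(r_1+\cdots+r_{i-1},\, r_i)$, and Corollary~\ref{corLvlMapResBalSht} is applied to the two-piece refinement of the single coarse piece $r_1+\cdots+r_i$. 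You instead restrict the given data to the full sub-flag $0 \se \Hb_1 \se \cdots \se \Hb_i$ and apply Corollary~\ref{corLvlMapResBalSht} once, refining the coarse piece $r_1+\cdots+r_i$ directly into the $i$ fine pieces $(r_1,\ldots,r_i)$. Your route is non-inductive and somewhat cleaner; the paper's route has the modest advantage of only ever invoking the corollary in its simplest two-piece form. The compatibility you flag at the end is already supplied by the statement of Corollary~\ref{corLvlMapResBalSht}: the fppf-local extension it produces is by definition a point of the coarse stack whose image under the finite flat map (\ref{eqnLvlMapResBalSht}) is the pullback of the given fine structure, and since (\ref{eqnLvlMapResBalSht}) sends a $(\pf^{-n}/\Oc_0)^{r_1+\cdots+r_i}$-structure $\iota$ to the flag whose top term is the subscheme generated by $\iota$ (Proposition~\ref{propResBalSht}), that subscheme is forced to equal $\Hb_i$.
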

\begin{proof}
	We use induction on $i$. For $i=1$ the assertion is clear by definition. Let now $i > 1$ and let us assume that $\Hb_{i-1}$ is $(\pf^{-n}/\Oc_0)^{r_1 + \ldots + r_{i-1}}$-cyclic. As the question is fppf-local on $S$, we may assume that  $\Hb_{i-1}$ admits a generator over $S$. Then $0 \se \Hb_{i-1} \se \Hb_i \se \underline{\Ec}[\pf^n]$ together with the generators of $\Hb_{i-1}$ and $\Hb_i/\Hb_{i-1}$ defines a balanced $\pf^n$-level structure of type $(r_1 + \ldots + r_{i-1}, r_i)$ on $\underline{\Ec}$. By Corollary \ref{corLvlMapResBalSht}, it can be completed fppf-locally to an $(\pf^{-n}/\Oc_0)^{r_1 + \ldots + r_i}$-structure. But this exactly means that $\Hb_i$ admits a generator fppf-locally on $S$.
\end{proof}

	\section{Drinfeld $\Gamma_0(\pf^n)$-level structures on shtukas}

	We are now in a position to discuss $\Gamma_0(\pf^{n})$-level structures on Drinfeld shtukas. 
	We closely follow the exposition of \cite[Chapter 6]{Katz1985} for elliptic curves and adapt the arguments to suit our situation.

	\subsection{Main theorem on $\pf^n$-cyclic submodule schemes.}
	
	The goal of this section is to show the following analogue of \cite[Theorems  6.1.1 and 6.4.1]{Katz1985}.
	
	\begin{thm}[Main Theorem on $\pf^{n}$-Cyclic Modules]
		\label{thmMainThmCyc}
		Let $\underline{\Ec} \in\Sht_r(S)$ be a Drinfeld shtuka of rank $r$ over a scheme $S$. Let $\Hb \se \underline\Ec[\pf^n]$ be a finite locally free $\Oc_0/\pf^n$-submodule scheme of rank ${q}^n$ over $S$. 
		\begin{enumerate}
			\item 
			\label{thmMainThmCycGen}
			Suppose that $\Hb$ is $\pf^n$-cyclic and admits a generator $\iota$. 
			Let $D \se \Hb$ be the finite locally free subscheme of $\Hb$ of rank ${q}^{n-1}({q}-1)$ defined by the restriction of $\iota$ to $ (\pf^{-n}/\Oc_0)^\times = (\pf^{-n}/\Oc_0) \setminus (\pf^{-(n-1)}/\Oc_0)$. Then $D = \Hb^\times$ as subschemes of $\Hb$.			
			\item 
			\label{thmMainThmCycGenFree}
			$\Hb$ is $\pf^{n}$-cyclic if and only if its scheme of generators $\Hb^\times$ is finite locally free over $S$ of rank $${q}^{n-1}({q}-1).$$			
			\item 
			\label{thmMainThmCycClosed}
			Cyclicity of $\Hb$ is a closed condition, in the sense that there is a closed subscheme $W \se S$ locally of finite presentation over $S$ such that for any $T \to S$ the pullback $\Hb_T$ is $\pf^{n}$-cyclic if and only if $T \to S$ factors through $W$.
		\end{enumerate} 
	\end{thm}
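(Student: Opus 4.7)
The plan is to adapt the Katz--Mazur strategy from \cite[Chapter 6]{Katz1985} to our setting, exploiting Proposition \ref{propEmbItor} to reduce, étale-locally on $S$, to Cartier-divisor calculations inside $\A^1_S$. Throughout, a $(\pf^{-n}/\Oc_0)$-generator $\iota$ is determined by the single section $P = \iota(\varpi^{-n}) \in \Hb(S)$, so that $\Hb^\times$ of Proposition \ref{propIGenSht} may be regarded as a closed subscheme of $\Hb$ via $\iota \mapsto \iota(\varpi^{-n})$; this is the identification implicit in the statement of (1).

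For (1), I would first pass to an étale cover so that $\Hb \subseteq \underline{\Ec}[\pf^n] \hookrightarrow \A^1_S$ is a relative effective Cartier divisor. Since $\iota$ is a generator, Lemma \ref{lemIstrctDefSch} gives the identity of Cartier divisors
$$
\Hb \;=\; \sum_{\alpha \in \pf^{-n}/\Oc_0} [\iota(\alpha)] \;=\; \Hb_{n-1} \;+\; D,
$$
where $\Hb_{n-1}$ is the subscheme generated by $\iota|_{\pf^{-(n-1)}/\Oc_0}$ (well-defined by Proposition \ref{propResIstructSht}). For the inclusion $D \subseteq \Hb^\times$, note that for each unit $u \in (\Oc_0/\pf^n)^\times$ the precomposition $\iota \circ u$ is again a $(\pf^{-n}/\Oc_0)$-generator of $\Hb$; by the universal property of $\Hb^\times$ this yields a morphism $D \to \Hb^\times$ which is a closed immersion over $\Hb$. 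For the reverse inclusion $\Hb^\times \subseteq D$, I would argue that a section of $\Hb^\times$ cannot factor through $\Hb_{n-1}$: if $P'$ were such a section, the corresponding generator $\iota'$ would satisfy $\varpi^{n-1} \iota'(\varpi^{-n}) \in \Hb_{n-1}[\pf]$, forcing the divisor generated by $\iota'$ to be contained in $\Hb_{n-1} \subsetneq \Hb$.

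For (2), the ``only if'' direction is immediate from (1) combined with the observation that $(\pf^{-n}/\Oc_0)^\times$ has cardinality $q^{n-1}(q-1)$ (all the $\iota(\alpha)$ for $\alpha$ a unit are distinct sections of $\Hb$ in the Cartier-divisor sense, by the argument above). For the converse, I would pass to an fppf cover on which $\Hb^\times$ admits a section $P$, consider the $\Oc_0$-linear map $\iota \colon \alpha \mapsto \alpha \cdot P$, and show that it is an $(\pf^{-n}/\Oc_0)$-generator of $\Hb$. After étale-local embedding into $\A^1_S$, the monic polynomial $\prod_\alpha (t - \iota(\alpha))$ divides the monic equation cutting out $\Hb$, and an equality of degrees---precisely the hypothesis $\rk \Hb^\times = q^{n-1}(q-1)$ combined with the existing subscheme $\Hb_{n-1}$ obtained inductively on $n$---forces equality of divisors.

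For (3), the locus $W \subseteq S$ of cyclicity agrees, by (2), with the locus where the finite $S$-scheme $\Hb^\times$ has fibre rank at least $q^{n-1}(q-1)$; by upper semicontinuity for finite morphisms this is closed and locally of finite presentation, and on $W$ the scheme $\Hb^\times$ is automatically finite locally free of that rank, since in every geometric fibre the cardinality of $\Hb^\times$ cannot exceed $q^{n-1}(q-1)$ (any generator of $\Hb$ over a geometric point has $\Oc_0$-annihilator exactly $\pf^n$, so the generators form a torsor under $(\Oc_0/\pf^n)^\times$ when they exist). The main obstacle will be the ``if'' direction in (2): one needs to ensure that a section produced only over an fppf cover of $\Hb^\times$ genuinely cuts out $\Hb$ as a Cartier divisor, which is where the interplay of Proposition \ref{propFinShtEq}, the Cartier-divisor formalism of \cite[Section 1.10]{Katz1985}, and the inductive step on $n$ has to be set up carefully so that the degree count forces the desired equality.
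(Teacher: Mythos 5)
Your proposal correctly identifies the easy direction $D \subseteq \Hb^\times$ and correctly reduces (2) and (3) to (1), but the core of the theorem --- the reverse inclusion $\Hb^\times \subseteq D$ in part (1) --- is treated with an argument that does not work and cannot work.

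Your claimed proof of $\Hb^\times \subseteq D$ is: ``a section $P'$ of $\Hb^\times$ cannot factor through $\Hb_{n-1}$, because then the divisor generated by the corresponding $\iota'$ would be contained in $\Hb_{n-1} \subsetneq \Hb$.'' This is set-theoretic reasoning, and the relevant phenomena are all infinitesimal. Over a supersingular geometric point $\ell$ in characteristic $\pf$ (for $r = 2$, say), $\underline{\Ec}[\pf^n]$ is entirely connected and $\Hb = \Spec(\ell[t]/t^{q^n})$ as a subscheme of $\A^1_\ell$, with the \emph{zero} map as its unique generator. That zero section lies in \emph{every} closed subscheme of $\Hb$, in particular in $\Hb_{n-1} = \Spec(\ell[t]/t^{q^{n-1}})$, and yet it is a generator of all of $\Hb$: the Cartier divisor $\sum_\alpha [\iota'(\alpha)] = q^n[0]$ has the correct degree even though every $\iota'(\alpha) = 0$ lies in $\Hb_{n-1}$. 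So ``$P' \in \Hb_{n-1}$ forces the generated divisor to lie in $\Hb_{n-1}$'' is simply false, and there is no contradiction to extract. The statement $D = \Hb^\times$ at such a point is a genuine equality of non-reduced scheme structures on a single point, and cannot be detected by any argument at the level of underlying topological spaces.

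The paper's proof of the reverse inclusion goes through the deformation-theoretic route of Katz--Mazur. After reducing (by flatness and a closedness argument) to the completed local ring at a supersingular point, it introduces the auxiliary moduli stacks
$\Xc_1$ (Drinfeld shtukas with $\pf^n$-cyclic $\Hb$, a chosen generator $\iota_1$, and a section $P$ of $D$) and
$\Xc_2$ (the same but with a second generator $\iota_2 \in \Hb^\times$ in place of $P$),
applies the Serre--Tate theorem (Proposition \ref{propSerreTateSht}) and Corollary \ref{corCompItorUniv} to present the corresponding deformation rings $B_1$ and $B_2$ explicitly as quotients of $B_0[Q]$ (using the Drinfeld-module presentation of the $\pf$-divisible module), then uses injectivity of multiplication by $Q$ on $B_1$ (as in \cite[Lemma 6.3.4]{Katz1985}), the snake lemma, and Nakayama to reduce to $B_2/QB_2 \to B_1/QB_1$, and finally carries out an explicit coefficient computation showing $\prod_\alpha e_\alpha(P)$ is a unit multiple of the degree-$(q^n - q^{n-1}(q-1))$ coefficient of $t^{q^n} - \prod_\alpha(t - e_\alpha(P))$. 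All of this machinery --- the regularity of $\Sht_{r, (\pf^{-n}/\Oc_0)}$ from Theorem \ref{thmIstructShtReg}, the Newton stratification to reduce to a single supersingular point, and the explicit deformation-ring algebra --- is absent from your proposal, and there is no shortcut around it. Without it, part (1), and hence (2) and (3) which you correctly reduce to it, remain unproved.

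Two smaller issues in your sketch of (3): the bound you want is on the \emph{rank} of $\Hb^\times$ over a geometric fibre, not its cardinality (in the supersingular case $\Hb^\times(\ell)$ is a single point but the scheme has rank $q^{n-1}(q-1)$); and the parenthetical ``any generator over a geometric point has annihilator exactly $\pf^n$'' is false at a supersingular point, where the zero map is the generator. The correct bound on the rank over fields comes from part (2) applied over fields (cyclic case: exactly $q^{n-1}(q-1)$; non-cyclic case: $\Hb^\times = \emptyset$), which is what the paper uses before invoking the flattening stratification.
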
 
	
	\begin{proof}
		Assertion (\ref{thmMainThmCycClosed}) follows from (\ref{thmMainThmCycGenFree}) by the flattening stratification as in the proof of \cite[Theorem 6.4.1]{Katz1985}. 
		We sketch the argument. 
		As a first step we note that in the case where $S = \Spec(k)$ is the spectrum of a field and $\Hb$ is not cyclic, we have $\Hb^\times = \emptyset$. Namely, any generator of $\Hb$ can be defined over a finite extension of $k$, but by assumption $\Hb$ does not admit a generator after any finite extension of $k$. Hence, $\Hb^\times$ does not have any field valued points and is thus the empty scheme.
		
		As the question is Zariski-local on $S$ and both $\Hb$ and $\underline{\Ec}[\pf^n]$ are of finite presentation over $S$, we may assume that $S = \Spec(R)$ is affine and Noetherian. By the above argument and (\ref{thmMainThmCycGenFree}), the maximal rank of $\Hb^\times$ over $S$ is ${q}^{n-1}({q}-1)$. By the flattening stratification, the locus where $\Hb^\times$ has rank ${q}^{n-1}({q}-1)$ and hence $\Hb$ is cyclic by (\ref{thmMainThmCycGenFree}) is closed.
		
		It is also clear that (\ref{thmMainThmCycGenFree}) follows from (\ref{thmMainThmCycGen}) as in \cite[Theorem 6.1.1]{Katz1985}. Namely, if $\Hb^\times$ is finite locally free of rank ${q}^{n-1}({q}-1)$, the diagonal map $\Hb^\times \to \Hb^\times_{\Hb^\times}$ is a section of $\Hb^\times$ after base change along $\Hb^\times \to S$. 
		Hence, $\Hb$ admits a generator after the fppf base change to $\Hb^\times$ and is thus $\pf^n$-cyclic. Conversely, assume that $\Hb$ is $\pf^n$-cyclic. The question is fppf-local on $S$, we may thus assume $\Hb$ admits a generator. The assertion in this case follows from (\ref{thmMainThmCycGen}). 
		
		It thus remains to show (\ref{thmMainThmCycGen}). We adapt the proof of \cite[Theorem 6.1.1]{Katz1985}. The assertion is certainly true when the characteristic of $\underline\Ec$ is away from $\pf$ by Proposition \ref{propLevelEtaleSht}.  
		
		As a first step we show that $D \se \Hb^\times$. It is clear by definition that $D \se \Hb$.  By reduction to the universal case and using Theorem \ref{thmIstructShtReg}, it suffices to consider the case when $S$ is Noetherian and flat over $X'$ and as the question is local on $S$, we may further assume that $S = \Spec(R)$ is affine. It follows that $D$ is then also flat over $X'$. In order to show that $D \se \Hb^\times$, we show that the tautological section of $\Hb$ over $D$ induced by the inclusion $D \hookrightarrow \Hb$ is a generator of $\Hb_D$. This is certainly true away from $\pf$. 
		The claim follows from the flatness of $D$ over $X'$ and the fact that the locus where $D \to \Hb_D$ is a generator is closed in $D$ by Lemma \ref{lemIstrctDefSch}. 
		Hence, $D \hookrightarrow \Hb$ factors over $\Hb^\times$ and the induced map $D \se \Hb^\times$ is necessarily a closed immersion. 
		
		In order to show that the closed immersion $D \hookrightarrow \Hb^\times$ is an isomorphism, we introduce the following two auxiliary moduli problems.
		\begin{align*}
			\Xc_1(S) &= \left\langle (\underline\Ec, \Hb, \iota_1, P) \colon  \begin{array}{l}
				\underline\Ec \in\Sht_{r}(S),  \
				\Hb \se \underline\Ec[\pf^{n}] \text{ a $\pf^{n}$-cyclic submodule scheme, }  \iota_1 \in \Hb^\times(S),\\
				 P \in D(S) \text{ such that } 
				\{ \alpha P \colon \alpha \in (\Oc_0/\pf^{n})^\times\} \text{ is a full set of sections for } D
			\end{array}
			\right\rangle \\
			\Xc_2(S) &= \left\langle (\underline\Ec, \Hb, \iota_1, \iota_2) \colon  \begin{array}{l}
				\underline\Ec \in\Sht_{r}(S), \
				\Hb \se \underline\Ec[\pf^{n}] \text{ a $\pf^{n}$-cyclic submodule scheme, }
				\iota_1, \iota_2 \in \Hb^\times(S)  
			\end{array}
			\right\rangle
		\end{align*}
		It is clear that $\Xc_1$ and $\Xc_2$ are stacks, and both map to $\Sht_{r, (\pf^{-n}/\Oc_0)}$ by forgetting $P$ and $\iota_2$, respectively. This maps are clearly schematic and finite as they are representable by (a closed subscheme of) the finite schemes $D$ and $\Hb^\times$, respectively.
		Note that both $\Xc_1$ and $\Xc_2$ have a unique point lying over a supersingular point of $\Sht_r$ over an algebraically closed field in characteristic $\pf$.
		
		Since $D \se \Hb^\times$ there is a natural map $\Xc_1 \to \Xc_2$ over $\Sht_{r, (\pf^{-n}/\Oc_0)}$ which is an isomorphism away from $\pf$ as noted above.
		We show that the map is an isomorphism. By an argument as in the proof of Theorem \ref{thmIstructShtReg} (compare also \cite[Theorem 6.2.1]{Katz1985}), it suffices to check it is an isomorphism at the completed local rings at the unique points lying over supersingular points over algebraically closed fields in characteristic $\pf$.
		
		
		Let $\underline\Ec_0 \in\Sht_r(\ell)$ be a supersingular rank $r$ Drinfeld shtuka over some algebraically closed field $\ell$ in characteristic $\pf$ and let $\underline\Ec$ be its universal formal deformation over $\tilde B = \ell\dsq{\varpi, T_1, \ldots, T_{r-1}, T_r, \ldots, T_{2r-2}}$. We denote by $B = \ell\dsq{\varpi, T_1, \ldots, T_{r-1}}$. Note that $B$ pro-represents the deformation functor of the local shtuka of $\underline \Ec_0$ at $\pf$.  
		Recall that $(\Sht_{r, (\pf^{-n}/\Oc_0)})_{\underline\Ec} = \Spec(\tilde B_0)$ is an affine scheme, which is finite over $ \tilde B$ and that $\tilde B_0$ is a complete regular noetherian ring by Theorem \ref{thmIstructShtReg}.  
		Then $\Xc_{1, \underline\Ec} = \Spec(\tilde B_1)$ and $\Xc_{2, \underline\Ec} = \Spec(\tilde B_2)$ are affine schemes finite over $\tilde B_0$ (and therefore also over $\tilde B$). 
		Thus, $\tilde B_1$ and $\tilde B_2$ are complete, local and noetherian rings. We have to check that the map $\tilde B_2 \to \tilde B_1$ is an isomorphism. 
		By the Serre-Tate theorem (which is clearly compatible with all the relevant level structures as they only depend on the $\pf^n$-torsion), we can write $\tilde B_i = B_i \hat \otimes_\ell \ell\dsq{T_r, \ldots T_{2r-2}}$ for some complete, local and noetherian rings $B_i$ finite over $B$. Moreover, $B_0$ is regular. It clearly suffices to check that $B_2 \to B_1$ is an isomorphism. 
		Note that since $D \se \Hb^\times$  is a closed immersion, we obtain that the map $B_2 \rightarrow B_1$ is surjective.
		
		By Corollary \ref{corCompItorUniv}, we may assume that we can identify $\underline{\Ec}[\pf^n]$ in an $A$-linear fashion with the $\pf^n$-torsion of a Drinfeld $A$-module $\Eb$ with trivial underlying vector bundle (as the base $B$ is local). We have the following explicit descriptions of the rings $B_0, B_1$ and $B_2$.
		As $B$-algebras we find
		$$ B_0 = B[P]/\Ic,$$
		where $\Ic$ is the ideal expressing the fact that the map $\iota \colon \pf^{-n}/\Oc_0 \to B[P] = \Eb(B[P])$ defined by $\varpi^{-n} \mapsto P$ is a well-defined $(\pf^{-n}/\Oc_0)$-structure. More precisely, $\Ic$ is generated by $e_{\varpi^n}(P), e_a(P)$, where $(\varpi^n, a) = \pf^n$ in $A$, (this implies that $P \in \underline\Ec[\pf^n](B_0)$ and thus that the map can be extended $A$-linearly to a well-defined map $\iota$)  and the equations defining the condition that $\sum_{\alpha \in \pf^{-1}/\Oc_0} [\iota(\alpha)] = \sum_{\alpha \in \Oc_0/\pf}  [e_{\alpha \varpi^{n-1}} (P)]$ 
		is a subscheme of $\underline\Ec[\pf]$ (this condition is defined by finitely many equations by \cite[Lemma 1.3.4]{Katz1985}). Recall that $B_0$ is a regular local ring with maximal ideal generated by $P, T_1, \ldots, T_{r-1}$ by the proof of Theorem \ref{thmIstructShtReg}. In a similar fashion the rings $B_1$ and $B_2$ are given as $B_0$-algebras as
		$$ B_1 = B_0[Q]/\Jc,$$
		where $\Jc$ is the principal ideal generated by $\prod_{\alpha \in \Oc_0/\pf^{n}} (Q - e_\alpha(P))$, and
		$$ B_2 = B_0[Q]/\Kc,$$
		where $\Kc$ is the ideal expressing the fact that $Q$ defines an $(\pf^{-n}/\Oc_0)$-structure as above and defines the same submodule scheme as $P$, i.e. $\Kc$ is generated by $e_{\varpi^n}(Q), e_a(Q)$, where $(\varpi, a) = \pf$ in $A$, the equations defining the condition that $\sum_{\alpha \in \Oc_0/\pf}  [e_{\alpha \varpi^{n-1}}(Q)]$ is a subscheme of $\underline\Ec[\pf]$ and the coefficients of the polynomial $\prod_{\alpha \in \Oc_0/\pf^n} (t- e_\alpha(Q)) - \prod_{\alpha \in \Oc_0/\pf^{n}} (t- e_\alpha(P))$.
		
		By \cite[Lemma 6.3.4]{Katz1985}, the multiplication by $Q$ on $B_1$ is injective.
		We denote by $K$ the kernel of the map $B_2 \to B_1$. Applying the snake lemma to the diagram
		$$
		\begin{tikzcd}
			0 \arrow[r] & K \arrow[r]\arrow[d] & B_2 \arrow[r]\arrow[d] & B_1 \arrow[r]\arrow[d]& 0 \\
			0 \arrow[r] & K\arrow[r] & B_2 \arrow[r] & B_1 \arrow[r] & 0,
		\end{tikzcd}
		$$
		where the vertical maps are given by multiplication by $Q$,
		yields the short exact sequence (using the injectivity of multiplication by $Q$ on $B_1$)
		$$ 0 \to K/QK \to B_2/QB_2 \to B_1/QB_1 \to 0.$$
		By Nakayama's Lemma $K$ vanishes if and only if $K/QK$ vanishes. It thus suffices to show that 
		$$B_2/QB_2 \to B_1/QB_1$$
		is an isomorphism.
		
		We proceed as in \cite[Lemma 6.3.5]{Katz1985}. From the explicit description of $B_1$ and $B_2$ above we get that
		$$ B_1 /QB_1 = B_0/\overline{\Jc} \qquad \text{and} \qquad B_2/QB_2 = B_0/\overline{\Kc},$$
		where $\overline{\Jc}$ is the ideal generated by $\prod_{\alpha} e_\alpha(P)$ and $\overline{\Kc}$ is the ideal generated by the coefficients of the polynomial $t^{ q^n} - \prod_{\alpha} (t-e_\alpha(P))$, and the reductions of $e_{\varpi^n}(Q), e_a(Q)$, where $(\varpi, a) = \pf$ and the equations defining the condition that $\sum_{\alpha \in \Oc_0/\pf}  [e_{\alpha \varpi^{n-1}}(Q)]$ is a subscheme of $\underline\Ec[\pf]$ modulo $Q$. It suffices to show that $\prod_{\alpha} e_\alpha(P) \in \overline{\Kc}$. We show that it is (up to multiplication by a unit in $B_0$) the coefficient of the term of degree $ q^n -  q^{n-1}({q}-1)$ of $t^{ q^n} - \prod_{\alpha} (t-e_\alpha(P))$. 
		This coefficient is the sum of all $ q^{n-1}({q}-1)$-fold products of distinct elements of the set 
		$\{ e_\alpha(P) \colon \alpha \in \Oc_0/\pf^n\}$. 
		
		By the definition of Drinfeld modules it follows that 
		$e_\alpha(P)$ is of the form $(\text{unit in }B) \cdot P$ for $\alpha \in (\Oc_0/\pf^n)^\times$ and of the form $(\text{elt in } \max(B)) \cdot P$ for $\alpha \in \pf$. 
		Thus, both $\prod_{\alpha} e_\alpha(P)$ and the coefficient of the term of degree $q^n -  q^{n-1}({q}-1)$ of are of the form $(\text{unit in } B) \cdot P^{\phi(q^n)}$. 
	\end{proof}
	
	It would be desirable to have a similar statement also for other types of level structures.
	
\subsection{$\Gamma_0(\pf^n)$-level structures on Drinfeld shtukas.}

\begin{defn}
	\label{defnGamma0Sht}
	A \emph{$\Gamma_0(\pf^{n})$-level structure} on a Drinfeld shtuka $\underline{\Ec}$ over a scheme $S$ is a chain of $\pf^{n}$-cyclic isogenies 
	$$ \underline\Ec_r = \underline\Ec(\pf^n) \stackrel{f_r}{\rightarrow} \underline\Ec_{r-1} \stackrel{f_{r-1}}{\rightarrow} \underline\Ec_{r-2} \rightarrow \ldots \stackrel{f_1}{\rightarrow} \underline\Ec_0 = \underline{\Ec}  $$
	such that the composition $f_1 \circ \ldots \circ f_r$
	is the inclusion $\underline \Ec(\pf^n) \hookrightarrow \underline\Ec$. We denote the stack of Drinfeld shtukas with $\Gamma_0(\pf^{n})$-level structures by  $\Sht_{r, \Gamma_0(\pf^{n})}$. 
\end{defn}
Using the finite shtuka equivalence, a $\Gamma_0(\pf^{n})$-level structure on $\underline{\Ec}$ is equivalently given by a flag
$$ 0 = \Hb_0 \se \Hb_1 \se \Hb_2 \se \ldots \se \Hb_r = \underline\Ec[\pf^{n}] $$
of finite locally free submodule schemes $\Hb_i \se \underline\Ec[\pf^{n}]$ of rank $n \cdot i$ with strict $\Fq$-action such that $\Hb_i/\Hb_{i-1}$ is $\pf^{n}$-cyclic for all $1 \leq i \leq r$. 
In particular, a $\Gamma_0(\pf^n)$-level structure can fppf-locally on the base be extended to a $\Gamma_1(\pf^n)$-structure by definition. 
By Corollary \ref{corLvlMapResBalSht}, such a level structure can fppf-locally be extended to a $(\pf^{-n}/\Oc_0)^{r-1}$-structure on $\underline{\Ec}$. We call such an extension a  \emph{$(\pf^{-n}/\Oc_0)^{r-1}$-generator} of the $\Gamma_0(\pf^n)$-level structure.

We can now show one of our main theorems, that Drinfeld  $\Gamma_0(\pf^n)$-level structures produce a regular moduli problem.
\begin{thm}
	\label{thmG0Reg}
		The stack $\Sht_{r, \Gamma_0(\pf^{n})}$  is a  regular Deligne-Mumford stacks locally of finite type over $\Fq$. The forgetful map $\Sht_{r, \Gamma_0(\pf^{n})} \to\Sht_r $ is schematic and finite flat. It is finite \'etale away from $0$. Moreover, the forgetful map $\Sht_{r, \Gamma_1(\pf^n)} \to\Sht_{r, \Gamma_0(\pf^{n})}$ is schematic, faithfully flat and locally of finite presentation.
\end{thm}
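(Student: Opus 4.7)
The plan is to identify $\Sht_{r, \Gamma_0(\pf^n)}$ as a closed substack of the chain moduli $\Sht_{r, \mathbf{1}_r-\pf^n\text{-chain}}$ from Proposition \ref{propIIsoRepSht}, and then exploit the regular stack $\Sht_{r, \Gamma_1(\pf^n)}$ of Proposition \ref{propShtBalStructReg} to descend flatness along the generator-forgetting map. Via the finite shtuka equivalence, a $\Gamma_0(\pf^n)$-level structure on $\underline{\Ec}$ is precisely a flag $0 = \Hb_0 \subset \Hb_1 \subset \ldots \subset \Hb_r = \underline{\Ec}[\pf^n]$ of finite locally free $\Oc_0/\pf^n$-submodule schemes with strict $\Fq$-action such that each $\Hb_i/\Hb_{i-1}$ has rank $q^n$ and is $\pf^n$-cyclic. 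Theorem \ref{thmMainThmCyc} (\ref{thmMainThmCycClosed}) asserts that imposing $\pf^n$-cyclicity on each successive quotient is a closed condition, so $\Sht_{r, \Gamma_0(\pf^n)}$ is a closed substack of $\Sht_{r, \mathbf{1}_r-\pf^n\text{-chain}}$. Combined with Proposition \ref{propIIsoRepSht}, this at once gives that $\Sht_{r, \Gamma_0(\pf^n)}$ is a Deligne--Mumford stack locally of finite type over $\Fq$ with a schematic and finite forgetful map to $\Sht_r$. Away from $0$, every rank-$q^n$ \'etale $\Oc_0/\pf^n$-module scheme is fppf-locally constant, and cyclicity of $\Hb_i/\Hb_{i-1}$ becomes the fibrewise condition $\Hb_i/\Hb_{i-1} \cong \pf^{-n}/\Oc_0$, which is both open and closed on connected components of the \'etale cover; hence the forgetful map is finite \'etale away from $0$.

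Next I analyse the forgetful map $\Sht_{r, \Gamma_1(\pf^n)} \to \Sht_{r, \Gamma_0(\pf^n)}$ that drops the generators of the successive cyclic quotients. Its fiber over a $\Gamma_0$-level structure is the product $\prod_{i=1}^r (\Hb_i/\Hb_{i-1})^\times$ of the schemes of generators of the $r$ cyclic quotients, and by Theorem \ref{thmMainThmCyc} (\ref{thmMainThmCycGenFree}) each factor is finite locally free of rank $q^{n-1}(q-1)$. Consequently this map is schematic and finite locally free of constant positive rank $(q^{n-1}(q-1))^r$, and in particular faithfully flat and locally of finite presentation; this settles the final claim of the theorem.

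To get flatness of $\Sht_{r, \Gamma_0(\pf^n)} \to \Sht_r$, I observe that $\Sht_{r, \Gamma_1(\pf^n)}$ is regular of dimension $2r-1$ by Proposition \ref{propShtBalStructReg}, equal to $\dim \Sht_r$. The schematic finite map $\Sht_{r, \Gamma_1(\pf^n)} \to \Sht_r$ provided by Lemma \ref{lemShtBalStructRep} is then automatically flat by miracle flatness, and faithfully flat descent of flatness along the surjection established in the previous paragraph yields that $\Sht_{r, \Gamma_0(\pf^n)} \to \Sht_r$ is flat, hence finite flat.

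The main obstacle is the regularity of $\Sht_{r, \Gamma_0(\pf^n)}$: finite flatness over the regular $\Sht_r$ only produces Cohen--Macaulayness, and the regularity of $\Sht_{r, \Gamma_1(\pf^n)}$ does not descend automatically along a finite flat map (a quotient of a regular ring by a free group action is typically singular). Regularity away from $0$ is immediate from \'etaleness over $\Sht_r$. At a closed geometric point $z$ above $0$ with underlying shtuka $\underline{\Ec}_z$ of height $h$ at $0$, I reduce via the Serre--Tate theorem (Proposition \ref{propSerreTateSht}) together with the comparison of torsion schemes (Corollary \ref{corCompItorUniv}) to computing the formal deformation ring of a formal $\Oc_0$-module of height $h$ equipped with a flag of $\pf^n$-cyclic submodule schemes. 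Mimicking the local calculations of Theorem \ref{thmIstructShtReg} and Proposition \ref{propShtBalStructReg}, which implement the Drinfeld--Katz--Mazur style argument, Theorem \ref{thmMainThmCyc} (\ref{thmMainThmCycGen}) together with fppf-local generators of each cyclic quotient produces an explicit system of $2r-1$ elements generating the maximal ideal of the completed local ring at $z$. Since the ring is Cohen--Macaulay of dimension $2r-1$ by the finite flatness established above, any such system is forced to be a regular system of parameters, and regularity at $z$ follows.
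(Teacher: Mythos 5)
Your handling of the representability, finiteness, \'etaleness away from $0$, and the structure of the map $\Sht_{r, \Gamma_1(\pf^n)} \to \Sht_{r, \Gamma_0(\pf^n)}$ all match the paper's argument, and you even correctly route the closed-immersion step through the chain moduli $\Sht_{r, \mathbf{1}_r\text{-}\pf^n\text{-chain}}$ rather than through $\Sht_{r, \Gamma_1(\pf^n)}$ as the paper's proof mistakenly writes.

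The genuine gap is in your treatment of regularity. You assert that ``the regularity of $\Sht_{r, \Gamma_1(\pf^n)}$ does not descend automatically along a finite flat map (a quotient of a regular ring by a free group action is typically singular),'' and then launch into a deformation-theoretic workaround. This is backwards: faithfully flat descent of regularity \emph{is} a theorem. If $\varphi \colon A \to B$ is a flat local homomorphism of Noetherian local rings and $B$ is regular, then $A$ is regular; see \cite[VII, Theorem 4.8]{Altman1970} or EGA IV, 6.5.1. Your intuition about quotient singularities concerns a different setting --- quotient singularities like $k[x,y]^{\Z/2}$ arise precisely because the inclusion $A \hookrightarrow B$ fails to be flat when the action is not free. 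Since you already established that $\Sht_{r, \Gamma_1(\pf^n)} \to \Sht_{r, \Gamma_0(\pf^n)}$ is faithfully flat, regularity of $\Sht_{r, \Gamma_0(\pf^n)}$ follows immediately from regularity of $\Sht_{r, \Gamma_1(\pf^n)}$ (Proposition \ref{propShtBalStructReg}) by this descent. This is precisely what the paper does, and then Miracle Flatness gives flatness of $\Sht_{r, \Gamma_0(\pf^n)} \to \Sht_r$ (not the other way around, as in your third paragraph).

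Your replacement argument is also not substantiated. You claim that Theorem \ref{thmMainThmCyc} (\ref{thmMainThmCycGen}) ``together with fppf-local generators of each cyclic quotient produces an explicit system of $2r-1$ elements generating the maximal ideal of the completed local ring at $z$.'' But the whole point of a $\Gamma_0$-level structure is that the generators are \emph{not} part of the data; the element $\iota(\varpi^{-n})$ that serves as a local parameter upstairs in $\Sht_{r, \Gamma_1(\pf^n)}$ does not live on $\Sht_{r, \Gamma_0(\pf^n)}$. Producing a regular system of parameters at a supersingular $\Gamma_0$-point is exactly the nontrivial problem that the descent argument elegantly sidesteps, and your sketch gives no mechanism for doing so directly.
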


\begin{proof}	
	We follow the proof of \cite[Theorem 6.6.1]{Katz1985}.
	As cyclicity is a closed condition by Theorem \ref{thmMainThmCyc}, $\Sht_{r, \Gamma_0(\pf^{n})}$ is the closed substack of $\Sht_{r,\mathbf{1}^r-\pf^n\text{-chain}}$ over which the universal isogeny is cyclic. Thus, the forgetful map $\Sht_{r, \Gamma_0(\pf^{n})} \to\Sht_r$ is schematic and finite. It follows that $\Sht_{r, \Gamma_0(\pf^{n})}$ is a Deligne-Mumford stack of finite type over $\Fq$. 
	Moreover, the forgetful map $\Sht_{r, \Gamma_1(\pf^n)} \to\Sht_{r, \Gamma_0(\pf^{n})}$ is representable by the scheme of generators $(\Hb_1/\Hb_0)^\times \times (\Hb_2/\Hb_1)^\times \ldots \times (\Hb_r/\Hb_{r-1})^\times$ and thus in particular finite flat by Theorem \ref{thmMainThmCyc}. 
	Since  $\Sht_{r, \Gamma_1(\pf^n)}$ is regular by Theorem \ref{propShtBalStructReg}, it follows that $\Sht_{r, \Gamma_0(\pf^{n})}$ is also regular by \cite[VII, Theorem 4.8]{Altman1970}. Thus, also the map $\Sht_{r, \Gamma_0(\pf^{n})} \to\Sht_r$ is finite flat by Miracle Flatness \cite[§23]{Matsumura1986}. It is finite \'etale away from $0$ by Proposition \ref{propIGenSht}.
\end{proof}

\begin{remark}
	In a similar fashion we can also show the regularity of the moduli stack of Drinfeld shtukas together with a chain of $\pf^n$-cyclic isogenies of length $r' < r$. In other words,  a chain of $\pf^{n}$-isogenies 
	$$ \underline\Ec_{r'+1} = \underline\Ec_0(\pf^n) \stackrel{f_{r'+1}}{\rightarrow} \underline\Ec_{r'} \stackrel{f_{r'}}{\rightarrow} \underline\Ec_{r'-1} \rightarrow \ldots \stackrel{f_1}{\rightarrow} \underline\Ec_0 = \underline{\Ec}  $$
	such that $f_1, \ldots, f_{r'}$ are $\pf^n$-cyclic. The following corollaries have also obvious analogues in this setting.
	Note that we could generalise the argument to moduli spaces of  other kinds of cyclic isogenies provided we had an analogue of Theorem \ref{thmMainThmCyc}. 
\end{remark}

Using the flatness of our moduli problems, we show that there are well-defined level maps.
\begin{cor}
	\label{corCanSubModG0}
	Let $\underline\Ec \in\Sht_r(S)$ and let $(\Hb_i)_i$ be a $\Gamma_0(\pf^{n})$-level structure on $\underline\Ec$. Let $\underline{n} = (n_1, \ldots, n_{r-1})$ with $0 \leq n_{r-1} \leq \ldots \leq n_1 \leq n$. Then there is a canonical subscheme $\Hb_{\underline{n}} \se \Hb_{r-1}$, which is an $\Oc_0/\pf^n$-module subscheme $\Hb_{\underline{n}} \se \underline{\Ec}[\pf^{\max\{n_i\}}]$.
	Fppf-locally on $S$, the scheme $\Hb_{\underline{n}}$ is defined for any $(\pf^{-n}/\Oc_0)^{r-1}$-generator $\iota \colon (\pf^{-n}/\Oc_0)^{r-1} \to \underline\Ec(S)$ of $(\Hb_i)_i$ by the restriction $\iota|_{(\pf^{-n_1}/\Oc_0) \oplus \ldots \oplus (\pf^{-n_{r-1}}/\Oc_0) }$ as in Proposition \ref{propSchIStruct}. 
\end{cor}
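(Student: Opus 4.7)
The plan is to construct $\Hb_{\underline{n}}$ by fppf descent after a local construction from a $(\pf^{-n}/\Oc_0)^{r-1}$-generator $\iota$ of the $\Gamma_0(\pf^n)$-level structure; the existence of such an $\iota$ over an fppf cover $S' \to S$ is the content of the discussion preceding the corollary. Given such $\iota$, I would set
$$M = (\pf^{-n_1}/\Oc_0) \oplus \ldots \oplus (\pf^{-n_{r-1}}/\Oc_0) \se (\pf^{-n}/\Oc_0)^{r-1},$$
observe using Proposition~\ref{propResIstructSht} that $\iota|_M$ is an $M$-structure on $\underline{\Ec}_{S'}$, and define $\Hb_{\underline{n},\iota}$ to be the canonical closed $\Oc_0$-module subscheme of $\underline{\Ec}[\pf^n]_{S'}$ generated by it, provided by Proposition~\ref{propSchIStruct}. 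Since $M$ is killed by $\varpi^{\max\{n_i\}}$ and $\iota$ generates $\Hb_{r-1}$, this subscheme is automatically contained in $\Hb_{r-1, S'} \cap \underline{\Ec}[\pf^{\max\{n_i\}}]_{S'}$.

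The essential step is to show that $\Hb_{\underline{n},\iota}$ is independent of the choice of generator. For this I would consider two generators $\iota_1, \iota_2$ on a common fppf refinement $S'$ and, after reducing to the universal case (so that $S'$ is $X'$-flat by Theorem~\ref{thmG0Reg}), first compare them on the dense open locus $S'_\circ \se S'$ where the characteristic avoids $0$. There $\underline{\Ec}[\pf^n]$ is \'etale by Proposition~\ref{propLevelEtaleSht}, so both generators are isomorphisms $(\pf^{-n}/\Oc_0)^{r-1} \xrightarrow{\cong} \Hb_{r-1}$, and they necessarily differ by an $\Oc_0/\pf^n$-linear automorphism $\phi$ of $(\pf^{-n}/\Oc_0)^{r-1}$ preserving the standard flag $(\pf^{-n}/\Oc_0)^i$, i.e., upper triangular with units on the diagonal. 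The key algebraic fact, and really the substantive content of the corollary, is that such a $\phi$ automatically preserves $M$: the monotonicity hypothesis $n_1 \geq \ldots \geq n_{r-1}$ makes $\varpi^{n_j - n_i} \in \Oc_0$ whenever $j \leq i$, which is exactly what is needed for a short calculation to show that $\varpi^{n_j}$ kills the $j$-th coordinate of $\phi(x)$ for every $x \in M$. Thus $\phi|_M \in \Aut(M)$, and Proposition~\ref{propSchIStruct} yields $\Hb_{\underline{n},\iota_1} = \Hb_{\underline{n},\iota_2}$ over $S'_\circ$.

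To spread this equality from $S'_\circ$ to all of $S'$, I would invoke a standard flatness argument: both subschemes are finite locally free of rank $|M|$ over $S'$, so their defining ideal sheaves in $\underline{\Ec}[\pf^n]_{S'}$ are $\Oc_{S'}$-flat; $X'$-flatness of $S'$ ensures that a uniformiser at $0$ pulls back to a non-zero-divisor, and hence agreement after inverting this non-zero-divisor propagates to agreement on $S'$. Fppf descent then produces the canonical subscheme $\Hb_{\underline{n}} \se \Hb_{r-1}$ as required. The only substantive obstacle in this plan is the flag-to-$M$ compatibility of $\phi$; it is exactly this verification that forces the monotonicity condition on $\underline{n}$ in the statement, and everything else reduces to results already established.
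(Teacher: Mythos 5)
Your proof is correct and follows essentially the same route as the paper: construct $\Hb_{\underline n}$ fppf-locally from a $(\pf^{-n}/\Oc_0)^{r-1}$-generator via Propositions~\ref{propResIstructSht} and \ref{propSchIStruct}, reduce the independence question to the universal $X'$-flat case, compare away from $0$, and spread by flatness. Your explicit analysis of the flag-preserving automorphism $\phi$---showing that the monotonicity $n_1 \geq \cdots \geq n_{r-1}$ is exactly what makes $\phi$ preserve $M$---is a welcome elaboration of what the paper dismisses as "clear away from $0$." Two small imprecisions: in the spreading step the paper invokes \cite[Lemma 6.7.3]{Katz1985} (equality of the two closed subschemes is a \emph{closed} condition, then schematic density of the generic locus finishes), whereas your direct ideal-theoretic argument, while equivalent, actually rests on $\Oc_{S'}$-flatness of the \emph{quotients} $\Oc(\Hb_{\underline n})$ and $\Oc(\Hb'_{\underline n})$ (so that $\varpi$ is a non-zero-divisor on them and $\varpi^k f \in I_1$ forces $f \in I_1$), not on flatness of the ideal sheaves $I_i$ themselves as you write. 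Also, the $X'$-flatness of the universal base---the double fibre product of $\Sht_{r,(\pf^{-n}/\Oc_0)^{r-1}}$ over $\Sht_{r,\Gamma_0(\pf^n)}$---requires Corollary~\ref{corLvlMapResBalSht} in addition to Theorem~\ref{thmG0Reg}.
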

\begin{proof}
	We follow the proof of \cite[Theorem 6.7.2]{Katz1985}.
	It suffices to construct $\Hb_{\underline{n}}$ fppf-locally on $S$. We may thus assume that $(\Hb_i)_i$ admits a $(\pf^{-n}/\Oc_0)^{r-1}$-generator. Let $\iota$ and $\iota'$ be two such $(\pf^{-n}/\Oc_0)^{r-1}$-generators of $(\Hb_i)_i$.
	By Proposition \ref{propSchIStruct} the restrictions to $(\pf^{-n_1}/\Oc_0) \oplus \ldots \oplus (\pf^{-n_{r-1}}/\Oc_0)$ of both $\iota$ and $\iota'$ define closed submodule schemes $\Hb_{\underline{n}}, \Hb'_{\underline{n}} \se \underline\Ec[\pf^m]$.  
	We have to check $\Hb_{\underline{n}} = \Hb'_{\underline{n}}$.
	
	By reduction to the universal case we may assume that $S$ is Noetherian and flat over $X'$, as the moduli space of Drinfeld shtukas with $\Gamma_0(\pf^n)$-level together with two generators is given by 
	$$\Sht_{r, (\pf^{-n}/\Oc_0)^{r-1}} \times_{\Sht_{r, \Gamma_0(\pf^n)}}\Sht_{r, (\pf^{-n}/\Oc_0)^{r-1}},$$ 
	which is flat over $X'$ by Theorem \ref{thmG0Reg} and Corollary \ref{corLvlMapResBalSht}. In this case equality of closed subschemes of $\underline\Ec[\pf^{m}]$ is a closed condition by \cite[Lemma 6.7.3]{Katz1985}. The assertion is clear away from $0$ and thus follows from the flatness of $S$ in the general case.
\end{proof}
Via the finite shtuka equivalence and Proposition \ref{propKerIsog} the submodule scheme $\Hb_{\underline{n}} \se \underline{\Ec}[\pf^n]$ corresponds to a $\pf^n$-isogeny $\underline{\Ec}_{\underline{n}} \hookrightarrow \underline{\Ec}$.
For $1 \leq m \leq n$ we denote by $\underline{m}^{(i)} = (m, \ldots, m, 0, \ldots,0)$ with $i$ non-zero entries.
Then  
$$0 = \Hb_{\underline{m}^{(0)}} \se \Hb_{\underline{m}^{(1)}} \se \ldots \se \Hb_{\underline{m}^{(r-1)}} \se \underline{\Ec}[\pf^m]$$
is a $\Gamma_0(\pf^m)$-level structure on $\underline{\Ec}$. 
This shows that we have a well-defined level map $\Sht_{r, \Gamma_0(\pf^n)} \to\Sht_{r, \Gamma_0(\pf^m)}$ for all $0 \leq m \leq n$ which is automatically finite flat.
We show that we can also construct this level map by taking closures without making explicit reference to the generators.
\begin{cor}
	Let $S$ be a scheme which is flat over $X'$. Let $\underline{\Ec}$ be a Drinfeld shtuka over $S$ and
	let $(\Hb_i)_i$ be a $\Gamma_0(\pf^{n})$-level structure on $\underline{\Ec}$. For every $1 \leq i \leq r$ the canonical submodule scheme $\Hb_{\underline{m}^{(i)}} \se \Hb_i$ is the schematic closure of $\Hb_i|_{S \times_{X'} (X'\setminus\{0\})}[\pf^m]$ in $\underline\Ec[\pf^m]$.
\end{cor}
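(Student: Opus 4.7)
The plan is to work fppf-locally on $S$. By Corollary \ref{corCanSubModG0}, after passing to an fppf cover the $\Gamma_0(\pf^n)$-structure $(\Hb_j)$ admits a $(\pf^{-n}/\Oc_0)^{r-1}$-generator $\iota$, and for $1 \leq i \leq r-1$ the canonical scheme $\Hb_{\underline{m}^{(i)}}$ is the finite locally free closed subscheme of $\underline{\Ec}[\pf^m]$ generated, in the sense of Proposition \ref{propSchIStruct}, by the restriction $\iota|_{(\pf^{-m}/\Oc_0)^{i}}$ to the first $i$ components. Because formation of the schematic closure of a closed subscheme of a finite $S$-scheme commutes with flat base change, it suffices to make the identification after such a cover; the case $i = r$ (interpreting $\Hb_{\underline{m}^{(r)}}$ as $\underline{\Ec}[\pf^m]$) is immediate since $\underline{\Ec}[\pf^m]$ is itself flat over $X'$.

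The first step is to show that $\Hb_{\underline{m}^{(i)}}$ is flat over $X'$. By Proposition \ref{propEmbItor}, after a further étale localisation on $S$ we may embed $\underline{\Ec}[\pf^m]$ as a closed subscheme of $\A^1_S$. Lemma \ref{lemIstrctDefSch} then identifies $\Hb_{\underline{m}^{(i)}}$ with the relative effective Cartier divisor $\sum_{\alpha \in (\pf^{-m}/\Oc_0)^{i}} [\iota(\alpha)]$ in $\A^1_S$, which is finite locally free over $S$. Combined with the hypothesis that $S$ is flat over $X'$, this yields the required flatness of $\Hb_{\underline{m}^{(i)}}$ over $X'$.

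It remains to compare $\Hb_{\underline{m}^{(i)}}$ with $\Hb_i[\pf^m]$ over the open $U = S \times_{X'} (X' \setminus \{0\})$. Over $U$ the characteristic of $\underline{\Ec}|_U$ avoids $0$, so Proposition \ref{propLevelEtaleSht} shows that both $\Hb_{\underline{m}^{(i)}}|_U$ and $\Hb_i[\pf^m]|_U$ are finite étale closed subschemes of $\underline{\Ec}[\pf^m]|_U$. The containment $\Hb_{\underline{m}^{(i)}}|_U \subseteq \Hb_i[\pf^m]|_U$ is immediate from the construction, since $\iota|_{(\pf^{-m}/\Oc_0)^{i}}$ takes values in $\Hb_i[\pf^m]$, and since both sides are finite étale over $U$ of the same rank $q^{mi}$, they must coincide. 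Because $\Hb_{\underline{m}^{(i)}}$ is flat over $X'$ and $X' \setminus \{0\}$ is scheme-theoretically dense in $X'$, the open subscheme $\Hb_{\underline{m}^{(i)}}|_U$ is scheme-theoretically dense in $\Hb_{\underline{m}^{(i)}}$, which exhibits $\Hb_{\underline{m}^{(i)}}$ as the schematic closure of $\Hb_i[\pf^m]|_U$ in $\underline{\Ec}[\pf^m]$. The main subtlety to be careful about is the descent in the opening paragraph: since the generator $\iota$ only exists fppf-locally on $S$, one must use compatibility of schematic closure with fppf base change so that the local identification glues to a statement on $S$.
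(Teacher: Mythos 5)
Your proof is correct and takes essentially the same approach as the paper: show $\Hb_{\underline{m}^{(i)}}$ is flat and closed in $\underline{\Ec}[\pf^m]$ and agrees with $\Hb_i[\pf^m]$ away from $0$, then conclude by density of the generic locus. The paper states the same argument much more tersely; your fppf-descent preamble is a safe but unnecessary detour since $\Hb_{\underline{m}^{(i)}}$ is already globally defined on $S$ by Corollary~\ref{corCanSubModG0}, and both the flatness and the étale-rank comparison over $U$ are the same ingredients the paper invokes.
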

\begin{proof}
	From the explicit descriptions away from $\pf$ it is clear that $\Hb_{\underline{m}^{(i)}}$ is given by the $\pf^m$-torsion of $\Hb_i$ away from $0$.
	The assertion then follows from the fact that $\Hb_{\underline{m}^{(i)}}$ is flat over $S$ and closed in $\underline{\Ec}{[\pf^m]}$.
\end{proof}

Motivated by the discussion in Section \ref{secBT}, we also construct additional level maps.
Recall that the $\Gamma_0(\pf^n)$-level corresponds to a standard $(r-1)$-simplex $\Omega$ of sidelength $n$ in the standard apartment of the Bruhat-Tits building of $\GL_r$. 
We want to construct level maps corresponding to inclusions of sub-$(r-1)$-simplices (of smaller sidelength).
Recall that we enumerated alcoves in the standard apartment by its basepoint $\underline{m}$ and its orientation given by a permutation $\tau$.
In a similar fashion, a $(r-1)$-subsimplex of $\Omega$ is determined by its basepoint $\underline{m} = (m_1, \ldots, m_{r-1})$ with $m_1 \geq \ldots \geq m_{r-1}$, a sidelength $\tilde n$ and an orientation given by some $\tau \in \Sym_{r-1}$. 
Let us denote by $\underline{\tilde n}_\tau^{(i)} \in \Z^{r-1}$ the vector containing $\tilde n$ in entries $\tau(1), \ldots, \tau(i)$ and 0 otherwise.
Note that the simplex with basepoint $\underline{m}$, sidelength $\tilde n$ and orientation $\tau \in \Sym_{r-1}$ is contained in $\Omega$ if and only if  $\underline{m} + \underline{\tilde n}_\tau^{(i)} \se \Omega$.
\begin{cor}
	\label{corCanSubQuot}
	Let $\underline\Ec \in\Sht_r(S)$ and let $(\Hb_i)_i$ be a $\Gamma_0(\pf^{n})$-level structure on $\underline\Ec$. Let $\underline{m} = (m_1, \ldots, m_{r-1})$ with $0 \leq m_{r-1} \leq \ldots \leq m_1 \leq n$. 
	Let $0 \leq \tilde{n} \leq n$ such that $\underline{m} + \underline{\tilde n}_\tau^{(i)} \se \Omega$ for all $i$. 
	Then the flag of quotients
	$$ 0 \se \Hb_{\underline{m} + \underline{\tilde n}_\tau^{(1)}} / \Hb_{\underline{m}}  \se \Hb_{\underline{m} + \underline{\tilde n}_\tau^{(2)}} / \Hb_{\underline{m}}  \se  \ldots \se \Hb_{\underline{m} + \underline{\tilde n}_\tau^{(r-1)}} / \Hb_{\underline{m}} \se \underline{\Ec}_{\underline{m}}[\pf^{\tilde n}]$$
	defines a $\Gamma_0(\pf^{\tilde n})$-level structure on the Drinfeld shtuka $\underline{\Ec}_{\underline{m}}$, which we denote by $(\Hb_{\underline{m} + \underline{\tilde n}_\tau^{(i)}} / \Hb_{\underline{m}})_i$.
	In case that $(\Hb_i)_i$ admits a $(\pf^{-n}/\Oc)^{r-1} $-generator $\iota$,
	$(\Hb_{\underline{m} + \underline{\tilde n}_\tau^{(i)}} / \Hb_{\underline{m}})_i$ is generated by 
	\begin{align*}
		\iota_{\underline{m}, \tilde{n}, \tau}  \colon (\pf^{-\tilde{n}}/\Oc)^{r-1} & \cong \left({(\pf^{-m_{\tau(1)}-\tilde{n}}/\Oc) \oplus \ldots \oplus (\pf^{-m_{\tau(r-1)}-\tilde{n}}/\Oc) } \right)/ \left( {(\pf^{-m_{\tau(1)}}/\Oc) \oplus \ldots \oplus (\pf^{-m_{\tau(r-1)}}/\Oc) } \right) \\
		& \to  \Hb_{\underline{m} + \underline{\tilde n}^{(r-1)}} / \Hb_{\underline{m}} (S).
	\end{align*}
	Moreover, the canonical subscheme from Corollary \ref{corCanSubModG0} for $\underline{m}' = (m'_1, \ldots , m'_{r-1})$ with $\tilde n \geq m'_1 \geq \ldots \geq m'_{r-1} \geq 0$ is  given by 
	$$(\Hb_{\underline{m} + \underline{\tilde n}_\tau^{(i)}} / \Hb_{\underline{m}})_{\underline{m}'} \cong \Hb_{\underline{m} + \tau(\underline{m}')} / \Hb_{\underline{m}}.$$ 
\end{cor}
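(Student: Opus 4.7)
The plan is to work fppf-locally on $S$ so that we may assume that $(\Hb_i)_i$ admits a $(\pf^{-n}/A)^{r-1}$-generator $\iota$, and then deduce the general statements by descent. Under this assumption, Corollary \ref{corCanSubModG0} identifies $\Hb_{\underline{m}}$ with the subscheme generated by the restriction of $\iota$ to $(\pf^{-m_{\tau(1)}}/A) \oplus \ldots \oplus (\pf^{-m_{\tau(r-1)}}/A)$, reordered by $\tau$. This immediately gives that the composition of $\iota|_{(\pf^{-m_{\tau(1)}-\tilde n}/A) \oplus \ldots \oplus (\pf^{-m_{\tau(r-1)}-\tilde n}/A)}$ with the quotient map factors through the quotient module, so $\iota_{\underline{m}, \tilde n, \tau}$ is a well-defined $\Oc_0$-linear map.

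Next I would show that $\iota_{\underline{m}, \tilde n, \tau}$ is a $(\pf^{-\tilde n}/A)^{r-1}$-structure on $\underline{\Ec}_{\underline{m}}$. By reduction to the universal case and Theorem \ref{thmG0Reg} (together with Corollary \ref{corLvlMapResBalSht}), we may assume that $S$ is noetherian and flat over $X'$. Away from $0$ the assertion follows directly from the \'etale characterisation in Proposition \ref{propLevelEtaleSht}, since there the explicit description of $\underline{\Ec}[\pf^n]$ as a constant $\Oc_0/\pf^n$-module (Proposition \ref{propDescPTor}) makes $\iota_{\underline{m}, \tilde n, \tau}$ pointwise injective. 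Being an $M$-structure is a closed condition on $S$ by Lemma \ref{lemIstrctDefSch}, so flatness of $S$ over $X'$ extends the conclusion to the fibre over $0$. With this in hand, Proposition \ref{propResBalSht} turns $\iota_{\underline{m}, \tilde n, \tau}$ into a balanced $\pf^{\tilde n}$-level structure of type $(1, \ldots, 1)$ on $\underline{\Ec}_{\underline{m}}$, and forgetting the generator produces a $\Gamma_0(\pf^{\tilde n})$-level structure on $\underline{\Ec}_{\underline{m}}$.

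I would then identify the resulting flag with $(\Hb_{\underline{m} + \underline{\tilde n}_\tau^{(i)}} / \Hb_{\underline{m}})_i$. Using the explicit description of the canonical subschemes in Corollary \ref{corCanSubModG0} for the $\Gamma_0(\pf^n)$-level structure $(\Hb_i)_i$, both subschemes are generated by the same restriction of $\iota$ after passage to the quotient modulo $\Hb_{\underline{m}}$. Equality of closed subschemes of $\underline{\Ec}_{\underline{m}}[\pf^{\tilde n}]$ is a closed condition on $S$ (by the same argument as in \cite[Lemma 6.7.3]{Katz1985}), holds trivially away from $0$, and therefore extends over all of $S$ by flatness. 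The identity $(\Hb_{\underline{m} + \underline{\tilde n}_\tau^{(i)}} / \Hb_{\underline{m}})_{\underline{m}'} \cong \Hb_{\underline{m} + \tau(\underline{m}')} / \Hb_{\underline{m}}$ then follows by applying Corollary \ref{corCanSubModG0} to the generator $\iota_{\underline{m}, \tilde n, \tau}$: restricting to $(\pf^{-m'_1}/A) \oplus \ldots \oplus (\pf^{-m'_{r-1}}/A)$ and tracing through the bookkeeping of the quotient identification exactly reproduces the restriction of $\iota$ to $(\pf^{-m_{\tau(i)} - m'_i}/A)$-summands, which by Corollary \ref{corCanSubModG0} generates $\Hb_{\underline{m} + \tau(\underline{m}')}$.

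Finally, independence of the choice of $(\pf^{-n}/A)^{r-1}$-generator (needed for the fppf descent argument) is seen exactly as in Corollary \ref{corCanSubModG0}: two generators define the same flag of canonical subschemes after passing to the doubly-generated moduli stack, which is flat over $X'$ via Theorem \ref{thmG0Reg}, so equality away from $0$ propagates to the special fibre. The main obstacle I expect is the bookkeeping in the third step, namely writing out carefully how the orientation permutation $\tau$ interacts with the quotients and with Corollary \ref{corCanSubModG0}; the actual analytic content (closedness plus flatness) is already packaged by earlier results.
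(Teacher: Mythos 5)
Your proposal is correct and follows the same overall skeleton as the paper's proof: reduce fppf-locally to the case where a $(\pf^{-n}/A)^{r-1}$-generator exists, pass to the universal case (flat and noetherian over $X'$), observe that all assertions are transparent away from $0$, and then propagate them across the fibre over $0$ using flatness plus the fact that the relevant conditions are closed. The difference lies in which closedness inputs you invoke for the first two claims: you first show $\iota_{\underline{m},\tilde n,\tau}$ is a $(\pf^{-\tilde n}/A)^{r-1}$-structure via Lemma \ref{lemIstrctDefSch}, build a balanced level structure through Proposition \ref{propResBalSht}, and then identify that flag with $(\Hb_{\underline{m} + \underline{\tilde n}_\tau^{(i)}} / \Hb_{\underline{m}})_i$; the paper proceeds more directly, proving that each quotient $\Hb_{\underline{m} + \underline{\tilde n}_\tau^{(i)}}/\Hb_{\underline{m} + \underline{\tilde n}_\tau^{(i-1)}}$ is $\pf^{\tilde n}$-cyclic by Theorem \ref{thmMainThmCyc}(3) (closedness of cyclicity), and that $\iota_{\underline{m},\tilde n,\tau}$ is a generator by \cite[Proposition 1.9.1]{Katz1985}. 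Both routes are valid; yours is somewhat more indirect because you must separately identify the flag produced from $\iota_{\underline{m},\tilde n,\tau}$ with the target flag, whereas the paper avoids this identification step altogether. Your final ``independence of generator'' paragraph is redundant once the fppf-local reduction is set up properly: the assertions concern intrinsically defined objects ($\underline{\Ec}_{\underline{m}}$ and the quotients $\Hb_{\underline{m}+\underline{\tilde n}_\tau^{(i)}}/\Hb_{\underline{m}}$ from Corollary \ref{corCanSubModG0}), so fppf descent applies directly without any comparison of two generators.
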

\begin{proof}
	We follow the proof of \cite[Theorem 6.7.4]{Katz1985}. The question is fppf-local on $S$, so we can assume that $\Hb$ has a generator. 
	By reduction to the universal case, we may further assume that $S$ is flat over $X'$ and Noetherian by Theorem \ref{thmIstructShtReg}. 
	Note that all assertions are clear away from $0$. It thus suffices to show that the locus, where they are satisfied is closed.
	
	The locus where each $\Hb_{\underline{m} + \underline{\tilde n}_\tau^{(i)}}/\Hb_{\underline{m} + \underline{\tilde n}_\tau^{(i-1)}}$ is ${\pf^{\tilde n}}$-cyclic is closed in $S$ by Theorem \ref{thmMainThmCyc} (\ref{thmMainThmCycClosed}). This shows the first claim.
	For the second claim, the locus where $\iota_{\underline{m}, \tilde n, \tau}$ is a generator of $(\Hb_{\underline{m} + \underline{\tilde n}_\tau^{(i)}} / \Hb_{\underline{m}})_i$ is closed by \cite[Proposition 1.9.1]{Katz1985}.
	Moreover, the condition that $(\Hb_{\underline{m} + \underline{\tilde n}_\tau^{(i)}} / \Hb_{\underline{m}})_{\underline{m}'} \cong \Hb_{\underline{m} + \tau(\underline{m}')} / \Hb_{\underline{m}}$ is closed by \cite[Lemma 6.7.3]{Katz1985}. This shows the last claim.  
\end{proof}
Associating the $\Gamma_0(\pf^{\tilde n})$-level structure  $(\Hb_{\underline{m} + \underline{\tilde n}_\tau^{(i)}} / \Hb_{\underline{m}})_i$ on $\underline{\Ec}_{\underline{m}}$ to $(\Hb_i)_i$ as in the previous corollary defines a map of stacks 
$$ F_{\underline{m},\tilde n, \tau} \colon\Sht_{r,\Gamma_0(\pf^n)} \to\Sht_{r,\Gamma_0(\pf^{\tilde n})}.$$

\begin{prop}
	\label{propLevelMapDrinfeld}
	The level map $F_{\underline{m},\tilde n,\tau}$ is schematic and finite locally free.
\end{prop}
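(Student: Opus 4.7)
The plan is to verify schematicity, finiteness, and then flatness via Miracle Flatness, following the pattern established in the proof of Theorem \ref{thmG0Reg}. Both $\Sht_{r,\Gamma_0(\pf^n)}$ and $\Sht_{r,\Gamma_0(\pf^{\tilde n})}$ are regular Deligne--Mumford stacks locally of finite type over $\Fq$ by Theorem \ref{thmG0Reg}, and since $\Sht_r$ is smooth of relative dimension $2r-2$ over $X'$, both stacks have dimension $2r-1$. For schematicity, a $T$-valued point of the fiber product $\Sht_{r,\Gamma_0(\pf^n)}\times_{\Sht_{r,\Gamma_0(\pf^{\tilde n})}}T$ is given by a finite locally free $\Oc_0$-submodule subscheme $\Hb_{\underline m,T}\subseteq\underline{\Ec}'_T[\pf^n]$ of the prescribed rank together with a compatible $\Gamma_0(\pf^n)$-level on the Drinfeld shtuka $\underline{\Ec}_T$ reconstructed by Proposition \ref{propKerIsog} from the sub-$\pf^n$-torsion shtuka dual to $\Hb_{\underline m,T}$. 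Each of these pieces of data is parametrized by a scheme (Hilbert-scheme type arguments combined with the closed cyclicity condition of Theorem \ref{thmMainThmCyc}(\ref{thmMainThmCycClosed})), so the fiber product is schematic.

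For finiteness, I would first observe that away from $0$ both source and target are finite \'etale over $\Sht_r$, and the explicit description of Proposition \ref{propLevelEtaleSht} shows that $F_{\underline m,\tilde n,\tau}$ is itself finite \'etale there. Quasi-finiteness on geometric fibers over $0$ follows because, on an algebraically closed field, the preimage of a given $(\underline{\Ec}',(\Hb'_i)_i)$ consists of finitely many choices of the extension isogeny $\underline{\Ec}'\hookrightarrow\underline{\Ec}$ paired with finitely many extensions of the level structure, using the finiteness of $\Sht_{r,\Gamma_0(\pf^n)}\to\Sht_r$ from Theorem \ref{thmG0Reg}. Properness I would check via the valuative criterion: over a DVR $R$ with fraction field $K$, given an $R$-point $(\underline{\Ec}'_R,(\Hb'_{i,R})_i)$ of the target and a $K$-lift $(\underline{\Ec}_K,(\Hb_{i,K})_i)$, take the schematic closure $\overline{\Hb_{\underline m,R}}$ of $\Hb_{\underline m,K}\subseteq\underline{\Ec}'_K[\pf^n]$ inside $\underline{\Ec}'_R[\pf^n]$. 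This closure is $R$-flat, finite over $R$ of the correct rank (constant in the generic-to-special direction for a DVR), inherits the $\Oc_0$-module and strict $\Fq$-action structures from the generic fiber by flatness, and by Proposition \ref{propKerIsog} determines an isogeny $\underline{\Ec}'_R\hookrightarrow\underline{\Ec}_R$ extending the generic situation. The closures of the remaining $\Hb_{i,K}$ inside $\underline{\Ec}_R[\pf^n]$ then extend the full $\Gamma_0(\pf^n)$-level, with cyclicity of successive quotients preserved because cyclicity is a closed condition by Theorem \ref{thmMainThmCyc}(\ref{thmMainThmCycClosed}) which already holds on the generic fiber.

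With schematicity, quasi-finiteness, and properness established, $F_{\underline m,\tilde n,\tau}$ is schematic and finite; Miracle Flatness (\cite[\S 23]{Matsumura1986}) applied on an \'etale presentation then yields flatness, since source and target are regular of the same dimension $2r-1$. The main obstacle is the valuative criterion step: one must simultaneously confirm that all the schematic closures retain flatness, the $\Oc_0$-module structure, the strict $\Fq$-action, and the constant correct ranks, and moreover that successive quotients in the flag remain $\pf^n$-cyclic. Each of these is a closed condition, but coordinating the closure of $\Hb_{\underline m}$ (which controls the ambient shtuka $\underline{\Ec}$) with the closures of the level-structure subschemes so that they fit into a single $R$-valued $\Gamma_0(\pf^n)$-level structure is the point that requires most care.
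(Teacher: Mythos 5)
Your overall plan (schematic $\Rightarrow$ finite $\Rightarrow$ flat via Miracle Flatness) matches the paper at the top level, and the regularity/equi-dimensionality input from Theorem \ref{thmG0Reg} is exactly right for the last step. However, for schematicity and finiteness you take a genuinely different and substantially more delicate route than the paper, and the route as you sketched it has gaps.

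The paper does not verify properness by hand. It instead introduces an auxiliary moduli stack $\Sht_{r, \underline{m}\text{-isog}, \Gamma_0(\pf^n)}$ parametrising a shtuka $\underline{\Ec}$, a $\pf^n$-isogeny $f \colon \underline{\Ec} \hookrightarrow \underline{\Ec}'$ of the prescribed length, and a $\Gamma_0(\pf^n)$-level on $\underline{\Ec}'$, shows that its projection to $\Sht_r$ is schematic and finite using Proposition \ref{propIIsoRepSht} and Theorem \ref{thmG0Reg}, and identifies $\Sht_{r,\Gamma_0(\pf^n)}$ as a closed substack of it (via \cite[Lemma 6.7.3]{Katz1985}, the locus $\underline{\Ec} = \underline{\Ec}'_{\underline m}$ is closed). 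This proves that the composed map $F_{\underline{m},0} = F_{\underline 0,0}\circ F_{\underline m, \tilde n, \tau} \colon \Sht_{r,\Gamma_0(\pf^n)} \to \Sht_r$ is schematic and finite. Since $F_{\underline 0,0}$ is also finite (Theorem \ref{thmG0Reg}), the cancellation law for finite morphisms then gives directly that $F_{\underline m, \tilde n, \tau}$ is finite. Schematicity of $F_{\underline m, \tilde n, \tau}$ is obtained by pulling back $F_{\underline m, 0}$ along a test scheme and carving out the matching closed condition, again with \cite[Lemma 6.7.3]{Katz1985}. This is cleaner than the valuative criterion and entirely avoids the extension problem you have to deal with.

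Concretely, your valuative criterion step has several issues. First, $\Hb_{\underline m}$ lives in $\underline{\Ec}[\pf^n]$, not in $\underline{\Ec}'[\pf^n]$, so ``take the schematic closure of $\Hb_{\underline m, K} \subseteq \underline{\Ec}'_K[\pf^n]$'' does not parse as written: to reconstruct the \emph{larger} shtuka $\underline{\Ec}$ from the smaller $\underline{\Ec}'$ one needs to parametrise isogenies $\underline{\Ec}' \hookrightarrow \underline{\Ec}$ (e.g.\ by passing through the twist $\underline{\Ec}'(\pf^n)$ and applying Proposition \ref{propKerIsog} there), which is exactly what the paper's auxiliary stack packages. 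Second, even granting a fixed convention, the coordination you flag yourself — simultaneously closing up all the $\Hb_{i,K}$, proving the closures assemble into a flag with the correct ranks as $\Oc_0/\pf^n$-modules, and that the resulting $R$-point actually hits the given $R$-point of the target — is the whole content of the argument and is not carried out; ``cyclicity is a closed condition'' only helps after you already have a finite locally free flag of the right type over $R$. Third, your schematicity argument via ``Hilbert-scheme type arguments'' is hand-waving; the paper needs the specific closed-condition result \cite[Lemma 6.7.3]{Katz1985} to cut out the compatibility locus, and that is not addressed. Your proof should be reorganised around the auxiliary moduli stack together with the cancellation law for finite morphisms.
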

\begin{proof}
	Note that by Theorem \ref{thmG0Reg} the map $F_{\underline{0}, 0}$ is schematic and finite locally free.
	As a first step we show that $F_{\underline{n},0} \colon\Sht_{r,\Gamma_0(\pf^n)} \to\Sht_{r}$ is schematic and finite locally free for all $\underline{m}$.
	In order to show that the map is representable by a finite scheme, we consider the auxiliary moduli problem $\Sht_{r, \underline{m}-\text{isog}, \Gamma_0(\pf^n)}$ parametrising a Drinfeld shtuka $\underline{\Ec}$, a $\pf^n$-isogeny $f \colon \underline{\Ec} \hookrightarrow \underline{\Ec}'$ such that $\coker(f)$ has rank $\sum_{i=1}^{r-1} m_i$ as an $\OS$-module, and a $\Gamma_0(\pf^n)$-level structure $(\Hb_i)_i$ on $\underline{\Ec}'$. 
	The projection to $\underline{\Ec}$ defines then a map of stacks $\Sht_{r, \underline{m}-\text{isog}, \Gamma_0(\pf^n)} \to\Sht_r$ which is schematic and finite by Proposition \ref{propIIsoRepSht} and Theorem \ref{thmG0Reg}.
	We also have a map $\Sht_{r, \Gamma_0(\pf^n)} \to\Sht_{r, \underline{m}-\text{isog}, \Gamma_0(\pf^n)}$ sending $(\underline{\Ec}, (\Hb_i)_i)$ to $(\underline{\Ec}_{\underline{m}}, \underline{\Ec}, (\Hb_i)_i)$, which identifies $\Sht_{r, \Gamma_0(\pf^n)}$ with the substack of  $\Sht_{r, \underline{m}-\text{isog}, \Gamma_0(\pf^n)}$ where $\underline{\Ec} = \underline{\Ec}'_{\underline{m}}$. By \cite[Lemma 6.7.3]{Katz1985}, this is schematic and representable by a closed immersion. 
	The composition of the maps $\Sht_{r, \Gamma_0(\pf^n)} \to\Sht_{r, \underline{m}-\text{isog}, \Gamma_0(\pf^n)} \to\Sht_r$ is clearly given by $F_{\underline{m}, 0}$, which is thus schematic and finite.
	
	Note that we have a commutative diagram
	\begin{center}
	\begin{tikzcd}
		\Sht_{r,\Gamma_0(\pf^n)} \arrow[rr, "F_{\underline{m}, \tilde n}"] \arrow[rd, "F_{\underline{m}, 0}"] & &\Sht_{r,\Gamma_0(\pf^{\tilde n})} \arrow[ld, "F_{\underline{0}, 0}"] \\
		&\Sht_r &
	\end{tikzcd}
	\end{center}
	with vertical arrows that are schematic and finite. In order to see that $ F_{\underline{m}, \tilde n, \tau} $ is schematic we argue as follows.
	We fix a map $S' \to\Sht_{r,\Gamma_0(\pf^{\tilde n})}$ from some $\Fq$-scheme $S'$, in other words a Drinfeld shtuka $(\underline{\Ec}, (\Hb_i)_i)$ together with a $\Gamma_0(\pf^{\tilde n})$-level structure. By composition with $F_{\underline{0},0}$, we get a map $S' \to\Sht_r$. By the discussion above, $S'' = S' \times_{\Sht_r} \Sht_{r,\Gamma_0(\pf^n)}$ is representable by a finite $S'$-scheme. Let $(\underline{\Ec}', (\Hb'_i)_i)$ denote the corresponding $\Gamma_0(\pf^n)$-level structure.
	Then the fibre product $S' \times_{\Sht_{r,\Gamma_0(\pf^{\tilde n})}} \Sht_{r,\Gamma_0(\pf^n)}$ is the locus where the image of $(\underline{\Ec}', (\Hb'_i)_i)$ under $ F_{\underline{m}, \tilde n, \tau} $ is given by  $(\underline{\Ec}, (\Hb_i)_i)$. By \cite[Lemma 6.7.3]{Katz1985}, this is representable by a closed subscheme of $S''$.
	
	As both $F_{\underline{m}, 0}$ and $F_{\underline{0},0}$ are finite, it is immediate that $F_{\underline{m}, \tilde n, \tau}$ is finite as well. As both $\Sht_{r. \Gamma_0(\pf^n)}$ and $\Sht_{r, \Gamma_0(\pf^{\tilde n})}$ are regular and $(2r-1)$-dimensional, the level map is flat by miracle flatness.
\end{proof}

	\section{Comparison with naive level structures and Bruhat-Tits theory}
\label{secComp}

We compare the Drinfeld level structures defined above with naive $\Gamma_0(\pf^n)$-level structures.
The naive $\Gamma_0(\pf^n)$-level structures seem inadequate when $n > 1$ as the fibre above $0$ is missing points (compare Remark \ref{remNaiveLvlBad}).
We construct a map from the stack of Drinfeld shtukas with naive $\Gamma_0(\pf^n)$-level structure to our stack of Drinfeld shtukas with Drinfeld $\Gamma_0(\pf^n)$-level which is  an open immersion and  an isomorphism away from $0$. Moreover, we show that the two notions of level structures agree in the parahoric case.	
In this sense, the Drinfeld level structures provide a compactification of the level maps.

Recall that we defined 
a naive $\Gamma_0(\pf^n)$-level structure on a Drinfeld shtuka $\underline{\Ec} = (\Ec, \ph)$ of rank $r$ as a flag of quotients as $\pf^n$-torsion finite shtukas
$$ \underline{\Ec}|_{D_{n,S}} = \underline\Lc_r \twoheadrightarrow \underline\Lc_{r-1} \twoheadrightarrow \ldots \twoheadrightarrow  \underline\Lc_{1} \se \underline\Lc_0 =  0$$ 
such that $\Lc_i$ is finite locally free of rank $i$ as $\Oc_{D_{n,S}}$-module. 
Equivalently, using Proposition \ref{propKerIsog}, a naive $\Gamma_0(\pf^n)$-level structure is a chain of $\pf^n$-isogenies
$$ \underline{\Ec}(\pf^n) = \underline\Ec_r  \stackrel{f_r}{\rightarrow} \underline\Ec_{r-1} \stackrel{f_{r-1}}{\rightarrow} \underline\Ec_{r-2} \rightarrow \ldots \stackrel{f_1}{\rightarrow} \underline\Ec_0 = \underline{\Ec} $$
such that $\coker(f_i)$ is finite locally free of rank 1 as $\ODnS$-module.

\begin{lem}
	\label{lemLinBunCyc}
	Let $\underline{\Ec} = (\Ec, \ph)$ be a Drinfeld shtuka of rank $r$ over $S$ and let $\Ec|_{D_{n,S}} \twoheadrightarrow \Lc$ be a quotient $\pf^n$-torsion finite shtuka such that $\Lc$ is finite locally free of rank 1 as $\ODnS$-module.
	Then $\Dr_q(\underline{\Lc}) \se \underline{\Ec}[\pf^n]$ is a $\pf^n$-cyclic submodule scheme.
\end{lem}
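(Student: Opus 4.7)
The plan is to reduce the lemma to classical cyclicity results for Drinfeld modules via the comparison of $\pf^n$-torsion schemes.

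Since cyclicity is fppf-local on $S$ and a closed condition by Theorem~\ref{thmMainThmCyc}\,(\ref{thmMainThmCycClosed}), it suffices to verify the assertion after base change to the completion of the strict henselisation at every geometric point $s \in S$, which is faithfully flat over $\Oc_{S,s}$. If $s$ has characteristic away from $\pf$, the submodule scheme $\Hb := \Dr_q(\underline{\Lc})$ is étale, and Proposition~\ref{propLevelEtaleSht} identifies it fppf-locally with the constant group scheme $(\pf^{-n}/\Oc_0)_S$, which carries a tautological $\pf^{-n}/\Oc_0$-generator.

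If instead $s$ lies in characteristic $\pf$, Corollary~\ref{corCompItorUniv} provides a Drinfeld $A$-module $\Eb$ over the complete local ring such that $\underline{\Ec}[\pf^n] \cong \Eb[\pf^n]$ as $\Oc_0/\pf^n$-module schemes. Under this isomorphism, $\Hb$ corresponds to the kernel of a cyclic $\pf^n$-isogeny of Drinfeld modules, via the finite-shtuka equivalence of Proposition~\ref{propFinShtEq} and the Drinfeld-module analogue of Proposition~\ref{propKerIsog}. The $\pf^n$-cyclicity of such a kernel in our Drinfeld-level sense is then exactly the content of Drinfeld's classical construction of generators for cyclic isogenies of Drinfeld modules, see \cite[Section~4]{Drinfeld1976}, with the higher-rank generalisation in \cite[Chapter~3]{Lehmkuhl2009}. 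Concretely, the decomposition of Proposition~\ref{propDescPTor} identifies $\Eb[\pf^n]$ over the residue field with $\alpha_{q^h} \times (\pf^{-n}/\Oc_0)^{r-h}$, and rank-$q^n$ subgroup schemes arising from rank-$1$ $\ODnS$-quotients are either étale summands $\pf^{-n}/\Oc_0$ (tautologically generated) or one-dimensional formal $\Oc_0$-modules of height $h$ inside $\alpha_{q^h}$, for which the zero section serves as a generator via the Cartier-divisor identity $\sum_{\beta \in \Oc_0/\pf^n}[0] = q^n \cdot [0]$ in $\A^1$. Drinfeld's deformation theory then lifts these generators to the complete local ring.

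The main obstacle is therefore the verification of cyclicity for rank-$1$ $\ODnS$-quotients of $\Eb|_{D_{n,S}}$ in the Drinfeld-module setting; this reduces, via the connected-étale decomposition of $\Eb[\pf^n]$, to the case of a one-dimensional formal $\Oc_0$-module of finite height, for which Drinfeld's original construction directly produces the required $\pf^{-n}/\Oc_0$-generator fppf-locally on the base.
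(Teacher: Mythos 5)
The argument has a genuine gap at the final lifting step. Your reduction to the complete local ring of a geometric point via fpqc descent is sound, and your observation that over the residue field a rank-$1$ $\ODnS$-quotient gives a subscheme $\Hb$ that is either wholly \'etale (when the constant term of $\ph$ is a unit) or wholly connected (when it is not), and that the zero section generates the connected case via the Cartier-divisor identity $\sum_{\alpha\in\Oc_0/\pf^n}[0]=q^n\cdot[0]$, is correct. The problem is the closing assertion that \emph{``Drinfeld's deformation theory then lifts these generators to the complete local ring.''} This is exactly the nontrivial content of the lemma, and neither \cite[\S4]{Drinfeld1976} nor \cite[Chapter 3]{Lehmkuhl2009} provides it: those sources construct and deform \emph{full} level structures, i.e.\ $(\pf^{-n}/\Oc_0)^{r'}$-structures given by injective maps into the torsion scheme, not $\pf^n$-cyclic submodule schemes or $\Gamma_0$-type data, and there is no classical result stating that the kernel of a naive $\Gamma_0(\pf^n)$-isogeny of Drinfeld modules admits a $(\pf^{-n}/\Oc_0)$-generator fppf-locally. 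Finding a generator over the residue field does not propagate to the deformation ring: the scheme of generators $\Hb^{\times}$ is merely a closed subscheme of $\Hb$, and without knowing flatness (which, by Theorem~\ref{thmMainThmCyc}~(\ref{thmMainThmCycGenFree}), is \emph{equivalent} to cyclicity) one cannot lift a point of $\Hb^{\times}$ from the residue field to $R$. Attempting to circumvent this by invoking closedness plus flatness of a universal moduli space parametrising rank-$1$ $\ODnS$-quotients would require establishing that flatness independently — which is not done in the paper and would amount to a comparable amount of work.

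The paper's proof avoids these issues by being entirely explicit: after reducing to $S=\Spec R$ affine with $\Lc$ free of rank $1$ over $R[\varpi]/(\varpi^n)$, it writes $\ph$ as a lower-triangular Toeplitz matrix, computes $\Hb^{(i)}=\Dr_q(\underline{\Lc}^{(i)})$ as an explicit affine scheme, fppf-locally adjoins roots $\beta_0,\ldots,\beta_{n-1}$ of a chain of Artin--Schreier-type polynomials $t^q-\alpha_0t-c_j$, and verifies by induction (using \cite[Lemma 1.11.3]{Katz1985}) that the resulting tuple is a generator. This bypasses the comparison with Drinfeld modules entirely and, crucially, produces an actual generator over the base rather than merely over the closed point. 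If you want to rescue your approach, you would need to supply a proof that the moduli problem of a Drinfeld shtuka together with a rank-$1$ $\ODnS$-quotient is flat over $X'$ in a neighbourhood of $0$ — an assertion not available from the cited references.
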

\begin{proof}
	We denote by $\underline{\Lc}^{(i)} = \underline{\Lc}|_{D_{i, S}}$ for $1 \leq i \leq n$. Then $\underline{\Lc}^{(i)}$ is a locally free $\Oc_{D_{i, S}}$-module of rank 1, and consequently a locally free $\OS$-module of rank $i$.
	Thus,
	$$ \underline{\Lc} = \underline{\Lc}^{(n)} \twoheadrightarrow \underline{\Lc}^{(n-1)} \twoheadrightarrow \ldots \twoheadrightarrow \underline{\Lc}^{(2)} \twoheadrightarrow \underline{\Lc}^{(1)} \twoheadrightarrow 0$$
	corresponds via the finite shtuka equivalence to a flag of finite locally free submodule schemes with strict $\Fq$-action
	$$ 0 \se \Hb^{(1)} \se \ldots \se \Hb^{(n-1)} \se \Hb^{(n)} \se \underline{\Ec}[\pf^n],$$
	where we denote by $\Hb^{(i)} = \Dr_q(\underline{\Lc}^{(i)})$. It is clear that $\Hb^{(i)} \se \Eb[\pf^i]$ by construction.  
	As a next step, we inductively construct a generator of $\Hb^{(i)}$ fppf-locally on $S$. 
	
	We may assume that $S = \Spec(R)$ is affine and that $\Lc$ is a free $\Oc_{D_{n,S}} = R[\varpi]/(\varpi^n)$-module of rank 1. Then, $\Lc^{(i)} \cong R[\varpi]/(\varpi^i)$. 
	We choose the standard basis $1, \varpi, \ldots, \varpi^{i-1}$ of $\Lc^{(i)}$ as $R$-module. 
	As a map of finite free $R[\varpi]/(\varpi^i)$-modules, $\ph$ is given by multiplication by an element $\alpha = \sum_{j = 0}^{i-1} \alpha_j \varpi^j \in R[\varpi]/(\varpi^i)$, and thus, its matrix as an $R$-linear map with respect to the standard basis is given  by 
	$$ \begin{pmatrix}
		\alpha_0 &  & & \\
		\alpha_1 &  \alpha_0 &  &  \\
		\vdots &  & \ddots \\
		\alpha_i & \alpha_{i-1} & \ldots & \alpha_0 
	\end{pmatrix}.$$
	It follows that 
	$$ \Hb^{(i)} = \Dr_q(\underline{\Lc}^{(i)}) =  \Spec\left(  R[t_0, \ldots, t_{i-1}] / (t_0^q - \sum_{j = 0}^{i-1}\alpha_j t_j, t_1^q - \sum_{j = 1}^{i-1}\alpha_{j-1} t_j, \ldots, t_{i-1}^q - \alpha_0 t_{i-1} )\right).$$
	As the question is fppf-local on $R$, we may assume that
	$R$ contains a root $\beta_0$ of the polynomial $t^{q-1} - \alpha_0$, a root $\beta_1$ of the polynomial $t^q - \alpha_0 t - \alpha_1 \beta_0$ and inductively a root $\beta_j$ of the polynomial $t^q - \alpha_0 t - \alpha_{1} \beta_{j-1} - \ldots - \alpha_{j} \beta_0$ for all $0 \leq j \leq i-1$. Then $(\beta_{i-1}, \beta_{i-2}, \ldots, \beta_1, \beta_{0})$ is a section of $\Hb^{(i)}$ over $R$ by construction. We claim that the map
	\begin{align*}
		\iota^{(i)} \colon \pf^{-i}/\Oc_0 & \to \Hb^{(i)}(R) \\
		\varpi^{-i} & \mapsto (\beta_{i-1}, \beta_{i-2}, \ldots, \beta_1, \beta_{0})
	\end{align*}
	is a $\pf^{-i}/\Oc_0$-generator of $\Hb^{(i)}$. We proceed by induction on $i$.
	 
	Let $i =1$. In this case $\Hb^{(1)} = \Spec(R[t]/(t_0^q - \alpha_0 t_0))$. In particular, $\Hb^{(1)}$ can be embedded in $\A^1_R$. Then
	$ \pf^{-1}/\Oc_0 \to \Hb^{(1)}(R)$ given by $\varpi^{-1} \mapsto \beta$ is a generator of $\Hb^{(1)}$, as $\prod_{a \in \Fq} (t - a \beta) = t^q - \beta^{q-1} t = t^q - \alpha t$. 
	
	Let us now assume that the claim is true for $i \geq 1$. Note that the subscheme $\Hb^{(i)} \se \Hb^{(i+1)}$ is given by the locus where $t_i = 0$ by construction.	
	Note that the map $\iota^{(i+1)}|_{\pf^{-i}/\Oc_0} = \iota^{(i)}$ is given by $\varpi^{-i} \mapsto  (\beta_{i-1}, \beta_{i-2}, \ldots, \beta_1, \beta_{0}, 0)$. Thus, it factors through $\Hb^{(i)}$ and is a generator of $\Hb^{(i)}$ by hypothesis. 
	
	Moreover, the quotient $\Hb^{(i+1)} /\Hb^{(i)}$ is then given by the canonical inclusion $$R[t_{i}]/(t_{i}^q - \alpha_{0}t_{i}) \to R[t_0, \ldots, t_{i-1}, t_i] / (  t_0^q - \sum_{j = 0}^{i}\alpha_j t_j, \ldots, t_{i}^q - \alpha_0 t_{i}).$$
	Moreover, the image of the section $(\beta_0, \ldots, \beta_{i-1})$ of $\Hb^{(i+1)}$ in the quotient $\Hb^{(i+1)}/\Hb^{(i)}$ is $\beta_0$. In particular, the map 
	$$\iota^{(i+1)} \! \mod \pf^{(i)} \colon  \pf^{-1}/\Oc_0 \cong (\pf^{-(i+1)}/\Oc_0)/(\pf^{-i}/\Oc_0) \to \left( \Hb^{(i+1)}/\Hb^{(i)}\right)(R)$$ is well-defined and
	sends $\varpi^{-1}$ to $\beta_0$ and is thus a generator of $\Hb^{(i+1)}/\Hb^{(i)}$ by the discussion of the case $i =1$ above. 
	By \cite[Lemma 1.11.3]{Katz1985} it follows that $\iota^{^{(i+1)}}$ is a full set of sections of $\Hb^{(i+1)}$. 
	
	Thus, $(\Hb^{(1)}, \iota)$ is a generator of $\Hb^{(i+1)}$ in the sense of Definition \ref{defnIstructSht} and $\Hb^{(i+1)}$ is $\pf^{i+1}$-cyclic.
	This shows the claim.
\end{proof}

\begin{prop}
	Let $\underline{\Ec}=(\Ec, \ph)$ be a Drinfeld shtuka over $S$ and let
	$$ \Ec|_{D_{n,S}} = \Lc_r \twoheadrightarrow \Lc_{r-1} \twoheadrightarrow \ldots \twoheadrightarrow  \Lc_{1} \twoheadrightarrow \Lc_0 =  0$$
	be a naive $\Gamma_0(\pf^n)$-level structure on $\underline{\Ec}$. Then
	$$ 0 \se \Dr_q(\underline{\Lc}_1) \se \ldots \Dr_q(\underline{\Lc}_{r-1}) \se \underline{\Ec}[\pf^n] $$
	is a Drinfeld $\Gamma_0(\pf^n)$-level structure on $\underline{\Ec}$ in the sense of Definition \ref{defnGamma0Sht}.
\end{prop}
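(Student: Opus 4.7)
The plan is to use the equivalent description of a Drinfeld $\Gamma_0(\pf^n)$-level structure as a chain of $\pf^n$-cyclic isogenies from Definition \ref{defnGamma0Sht}, and to verify cyclicity of each successive isogeny by invoking Lemma \ref{lemLinBunCyc}. The heavy lifting has already been done in that lemma; what remains is essentially bookkeeping to put ourselves into a position to apply it.

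First, I would translate the naive level structure via Proposition \ref{propKerIsog} into a chain of $\pf^n$-isogenies $\underline{\Ec}(\pf^n) = \underline{\Ec}_r \xrightarrow{f_r} \underline{\Ec}_{r-1} \xrightarrow{f_{r-1}} \cdots \xrightarrow{f_1} \underline{\Ec}_0 = \underline{\Ec}$, with each cokernel $\Mc_i := \coker(f_i) = \Ec_{i-1}/\Ec_i$ finite locally free of rank $1$ as $\Oc_{D_{n,S}}$-module. Under this correspondence $\Lc_i = \coker(\underline{\Ec}_i \hookrightarrow \underline{\Ec})$, and applying the exact contravariant functor $\Dr_q$ to the short exact sequences $0 \to \Mc_i \to \Lc_i \to \Lc_{i-1} \to 0$ identifies the successive quotients as $\Dr_q(\Lc_i)/\Dr_q(\Lc_{i-1}) \cong \Dr_q(\Mc_i)$. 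The rank ($q^{in}$), finite local freeness, and strict $\Fq$-action of each $\Dr_q(\Lc_i)$ then follow directly from Proposition \ref{propFinShtEq}.

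The key step is to verify that each $f_i$ is a $\pf^n$-cyclic isogeny, i.e.\ that $\Dr_q(\Mc_i) \se \underline{\Ec}_{i-1}[\pf^n]$ is $\pf^n$-cyclic in the sense of Definition \ref{defnIcycSht}(3). Since $\Mc_i = \Ec_{i-1}/\Ec_i$ is naturally a quotient of $\Ec_{i-1}$ and is annihilated by $\pf^n$, this quotient factors through a surjection of $\pf^n$-torsion finite shtukas $\Ec_{i-1}|_{D_{n,S}} \twoheadrightarrow \Mc_i$. Applying Lemma \ref{lemLinBunCyc} to the Drinfeld shtuka $\underline{\Ec}_{i-1}$ together with this quotient yields the desired cyclicity. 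The only subtlety, and the point that requires care, is to realise $\Mc_i$ as a quotient of the restriction of the \emph{correct} Drinfeld shtuka, namely $\Ec_{i-1}|_{D_{n,S}}$ rather than $\Ec|_{D_{n,S}}$, so that Lemma \ref{lemLinBunCyc} produces cyclicity inside the ambient $\underline{\Ec}_{i-1}[\pf^n]$ demanded by Definition \ref{defnIcycSht}(3). With this observation in hand, the chain $(f_i)$ consists of $\pf^n$-cyclic isogenies and thus constitutes a Drinfeld $\Gamma_0(\pf^n)$-level structure on $\underline{\Ec}$ by Definition \ref{defnGamma0Sht}.
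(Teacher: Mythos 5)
Your proof is correct and follows the same route as the paper, whose entire proof is the single line ``This follows directly from Lemma \ref{lemLinBunCyc}.'' You are right to flag the one genuine subtlety: Lemma \ref{lemLinBunCyc} must be applied to each $\underline{\Ec}_{i-1}$ with the quotient $\Ec_{i-1}|_{D_{n,S}} \twoheadrightarrow \Mc_i$, not to $\underline{\Ec}$ itself, so that the resulting cyclicity lives inside $\underline{\Ec}_{i-1}[\pf^n]$ as Definition \ref{defnIcycSht} requires — precisely the reading the paper leaves implicit.
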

\begin{proof}
	This follows directly from Lemma \ref{lemLinBunCyc}.
\end{proof}

Recall that a Drinfeld shtuka with a naive $\Gamma_0(\pf^n)$-level structure is a bounded global $\GL_{r, \Omega}$-shtuka for the Bruhat-Tits group scheme $\GL_{r, \Omega}$ as defined in Remark \ref{remG0BT}.
In particular, we thus constructed a map of Deligne-Mumford stacks
\begin{equation}
	\Sht_{r, \Omega} \to \Sht_{r, \Gamma_0(\pf^n)}.
	\label{eqnGamma0Map}
\end{equation}
As a next step, we show that the map (\ref{eqnGamma0Map}) is an isomorphism in the case $n =1$.
\begin{prop} 
	\label{propPara}
	Let $\underline{\Ec} \in \Sht_{r}(S)$. 
	Then every $\Gamma_0(\pf)$-level structure on $\underline{\Ec}$ comes from a naive $\Gamma_0(\pf)$-level structure.
\end{prop}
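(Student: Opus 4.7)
The plan is to invert, via the finite shtuka equivalence of Proposition \ref{propFinShtEq}, the construction just before the statement, and then exploit the fact that the parahoric case ($n=1$) together with the $\Fq$-rationality of $0$ makes the $\Oc_{D_{n,S}}$-local freeness condition in Definition \ref{defnG0Class} automatic.

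First, I would start from a Drinfeld $\Gamma_0(\pf)$-level structure $0 = \Hb_0 \subseteq \Hb_1 \subseteq \ldots \subseteq \Hb_r = \underline{\Ec}[\pf]$, i.e.\ an increasing flag of finite locally free closed $\Oc_0/\pf$-submodule schemes of $\underline{\Ec}[\pf]$ with strict $\Fq$-action, with $\Hb_i$ of rank $q^i$ over $S$ (using that $\Hb_i/\Hb_{i-1}$ is $\pf$-cyclic, hence of rank $q$, by Theorem \ref{thmMainThmCyc}). Applying $\Dr_q$ contravariantly and using its exactness from Proposition \ref{propFinShtEq}, this flag translates into a flag of quotients of $\pf$-torsion shtukas
$$\underline{\Ec}|_{D_{1,S}} = \underline{\Lc}_r \twoheadrightarrow \underline{\Lc}_{r-1} \twoheadrightarrow \ldots \twoheadrightarrow \underline{\Lc}_1 \twoheadrightarrow \underline{\Lc}_0 = 0$$
with $\Dr_q(\underline{\Lc}_i) = \Hb_i$, and, again by Proposition \ref{propFinShtEq}, each $\Lc_i$ is automatically finite locally free of rank $i$ as $\OS$-module. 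The $\Oc_0/\pf$-module structures on the $\Hb_i$ correspond to the ones on the $\Lc_i$ under the equivalence, so the $\Lc_i$ are $\Oc_{D_{1,S}}$-modules and the quotient maps are $\Oc_{D_{1,S}}$-linear.

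Second, I would invoke the hypothesis that $0$ is $\Fq$-rational, which gives $\Oc_0/\pf = \Fq$ and hence $\Oc_{D_{1,S}} = \OS$. Under this identification, the requirement in Definition \ref{defnG0Class} that $\Lc_i$ be finite locally free of rank $i$ as $\Oc_{D_{1,S}}$-module becomes identical to being finite locally free of rank $i$ as $\OS$-module, which was just verified. Thus $(\underline{\Lc}_i)_i$ is a naive $\Gamma_0(\pf)$-level structure on $\underline{\Ec}$, and by construction its image under the map \eqref{eqnGamma0Map} (which is defined by applying $\Dr_q$ termwise, cf.\ the statement just before Lemma \ref{lemLinBunCyc}) recovers the original flag $(\Hb_i)_i$.

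I expect no real obstacle: the whole content of the statement is the observation that the naive $\Oc_{D_{n,S}}$-freeness condition, which is genuinely restrictive for $n \geq 2$ and responsible for the pathology described in Remark \ref{remNaiveLvlBad}, trivialises in the parahoric case because $\Oc_{D_{1,S}}$ collapses to $\OS$. The only subtlety worth double-checking is the compatibility of the $\Oc_0/\pf$-module structures under the finite shtuka equivalence, but this is part of the equivalence in Proposition \ref{propFinShtEq}, so nothing beyond bookkeeping is required.
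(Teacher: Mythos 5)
Your proposal is correct and takes essentially the same approach as the paper: both proofs hinge on the observation that, since $0$ is $\Fq$-rational, $\Oc_{D_{1,S}} = \OS$, so the $\Oc_{D_{1,S}}$-local-freeness requirement in the definition of a naive $\Gamma_0(\pf)$-level structure is automatically satisfied by the $\OS$-local-freeness built into the finite shtuka equivalence. You spell out the bookkeeping in more detail than the paper's two-line argument, but the content is the same.
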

\begin{proof}
	It suffices to show that for a $\pf$-cyclic submodule scheme $\Hb \se \underline{\Ec}[\pf]$, the corresponding $\pf^n$-torsion shtuka $\underline{M}_q(\Hb)$ is finite locally free of rank 1 as an $R \otimes \Oc_0/\pf \cong R$-module. But this is clear by construction. 
\end{proof}

\begin{lem}
	\label{lemEt}
	Let $\underline{\Ec} \in\Sht_{r}(S)$ and assume its characteristic is away from $0$. 
	Then every $\Gamma_0(\pf^n)$-level structure on $\underline{\Ec}$ comes from a naive $\Gamma_0(\pf^n)$-level structure.
\end{lem}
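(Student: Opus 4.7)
The plan is to invert the construction of Lemma~\ref{lemLinBunCyc} and the preceding proposition in the \'etale setting. Given a Drinfeld $\Gamma_0(\pf^n)$-level structure $0 \subseteq \Hb_1 \subseteq \ldots \subseteq \Hb_{r-1} \subseteq \underline{\Ec}[\pf^n]$, applying the finite shtuka equivalence $\Dr_q$ of Proposition~\ref{propFinShtEq} to the filtration produces a chain of $\pf^n$-torsion shtuka quotients $\underline{\Ec}|_{D_{n,S}} \twoheadrightarrow \underline{\Lc}_{r-1} \twoheadrightarrow \ldots \twoheadrightarrow \underline{\Lc}_1 \twoheadrightarrow 0$ with $\Dr_q(\underline{\Lc}_i) = \Hb_i$. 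So I only need to check that each $\underline{\Lc}_i$ is finite locally free of rank $i$ as an $\ODnS$-module. By the exactness of $\Dr_q$ it suffices to show that for each $\pf^n$-cyclic $\Oc_0/\pf^n$-module scheme $\Hb_i/\Hb_{i-1}$ the associated finite shtuka is free of rank $1$ as $\ODnS$-module; call this statement $(\ast)$.

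Since the characteristic of $\underline{\Ec}$ is away from $0$, the scheme $\underline{\Ec}[\pf^n]$ is finite \'etale over $S$, hence so are all subquotients $\Hb_i/\Hb_{i-1}$. Freeness of rank $1$ as an $\ODnS$-module is an fppf-local condition (the base change to any fppf cover is faithfully flat over $\ODnS$), so I may check $(\ast)$ fppf-locally on $S$. By definition of $\pf^n$-cyclicity there exists, after such a cover, a $(\pf^{-n}/\Oc_0)$-generator $\iota \colon \pf^{-n}/\Oc_0 \to (\Hb_i/\Hb_{i-1})(S)$. Since the target is \'etale of rank $q^n$ and $\iota$ generates a subscheme of this full rank, Proposition~\ref{propLevelEtaleSht} forces $\iota$ to be injective on geometric points, and hence to define an isomorphism $(\pf^{-n}/\Oc_0)_S \xrightarrow{\cong} \Hb_i/\Hb_{i-1}$ of $\Oc_0/\pf^n$-module schemes.

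Thus $(\ast)$ reduces to computing the finite shtuka of the constant $\Oc_0/\pf^n$-module scheme $(\pf^{-n}/\Oc_0)_S$. An explicit calculation, as in the rank one case $G=\Fq$ at the beginning of the proof of Lemma~\ref{lemLinBunCyc}, shows that this finite shtuka is $\underline{\Fc} = (\ODnS, \ph_{\mr{std}})$ with $\ph_{\mr{std}}$ the identity, which is visibly free of rank $1$ as $\ODnS$-module (one checks $\Dr_q$ applied to this yields $\Spec(\Oc_S[t_1,\ldots,t_n]/(t_j^q - t_j))$, the constant scheme $(\Fq)^n_S \cong (\pf^{-n}/\Oc_0)_S$). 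This settles $(\ast)$ after pulling back to the fppf cover, and by descent the original $\underline{\Lc}_i = \underline{M}(\Hb_i)$ are finite locally free of the correct rank over $\ODnS$. Finally, the ascending chain of quotients $\underline{\Ec}|_{D_{n,S}} \twoheadrightarrow \underline{\Lc}_{r-1} \twoheadrightarrow \ldots \twoheadrightarrow \underline{\Lc}_1$ is by construction a naive $\Gamma_0(\pf^n)$-level structure whose image under the correspondence of Lemma~\ref{lemLinBunCyc} recovers the given filtration $(\Hb_i)_i$, as both the forward and inverse operations are inverse equivalences of finite shtukas and $\Oc_0/\pf^n$-module schemes with strict $\Fq$-action.

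The main technical obstacle is the passage from the fppf-local model $(\pf^{-n}/\Oc_0)_S$ back down to $S$: one must ensure that the locally-free-of-rank-one structure as an $\ODnS$-module descends with its $\ODnS$-action intact. Since the equivalence $\Dr_q$ is $\Fq$-linear (and in fact $\Oc_0/\pf^n$-linear via the canonical module structure on $\underline{\Ec}|_{D_{n,S}}$), and since the $\ODnS$-module structure on the finite shtuka descends canonically under fppf descent of quasi-coherent sheaves, this presents no essential difficulty.
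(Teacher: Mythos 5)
Your proof is correct and follows essentially the same route as the paper's: pass to an fppf cover where a generator exists, invoke Proposition~\ref{propLevelEtaleSht} in the \'etale setting to identify the relevant submodule schemes with constant $\Oc_0/\pf^n$-module schemes, and then apply the finite shtuka equivalence to read off the required $\ODnS$-module structure. The only difference is that the paper chooses a single $(\pf^{-n}/\Oc_0)^{r-1}$-generator of the entire flag (citing the fact, established before Theorem~\ref{thmG0Reg}, that such a generator exists fppf-locally), which makes the whole filtration constant at once, whereas you work subquotient by subquotient using a $(\pf^{-n}/\Oc_0)$-generator of each graded piece and then implicitly use the standard fact that a short exact sequence of $\ODnS$-modules with locally free outer terms splits, so that local freeness of the subquotients implies $\underline{\Lc}_i$ is locally free of rank $i$ over $\ODnS$.
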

\begin{proof}
	Let $(\Hb_i)_{1 \leq i \leq r}$ be a $\Gamma_0(\pf^n)$-structure on $\underline{\Ec}$. As  the characteristic of $\underline{\Ec}$ is away from $0$, all the $\Hb_i$ are finite \'etale over $S$.  As the claim is fppf-local on the base, we may choose a $(\pf^{-n}/\Oc_0)^{r-1}$-generator of $(\Hb_i)_{1 \leq i \leq r}$. 
	By Proposition \ref{propLevelEtaleSht}, the $\Hb_i$ are then given by 
	$$ 0 \se (\pf^{-n}/\Oc_0)_S \se (\pf^{-n}/\Oc_0)^2_S \se \ldots \se (\pf^{-n}/\Oc_0)^{r-1}_S \se \underline{\Ec}[\pf^n].$$
	By the finite shtuka equivalence, this corresponds to the flag of quotients
	$$ \underline{\Ec}|_{D_{n,S}}  \twoheadrightarrow  \ODnS^{r-1} \twoheadrightarrow  \ldots \twoheadrightarrow \ODnS \twoheadrightarrow 0,$$
	where the map $\ODnS^{i+1} \to \ODnS^{i}$ is given by the projection to the first $i$ components and the Frobenius-linear map on $\ODnS^{i}$ is the trivial one. This is clearly a naive $\Gamma_0(\pf^n)$-level structure. 
\end{proof}

\begin{prop}
	\label{prop1Open}
	The map (\ref{eqnGamma0Map}) is schematic and a quasi-compact open immersion.
\end{prop}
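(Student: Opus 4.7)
The plan is to verify the open immersion property pointwise by testing against an arbitrary $S$-valued point of $\Sht_{r, \Gamma_0(\pf^n)}$, and then to identify the resulting fibre product with an explicit open subscheme of $S$. Concretely, such an $S$-point corresponds to a Drinfeld shtuka $\underline{\Ec}$ together with a Drinfeld $\Gamma_0(\pf^n)$-level structure $0 = \Hb_0 \se \Hb_1 \se \ldots \se \Hb_{r-1} \se \underline{\Ec}[\pf^n]$. Applying the inverse $\underline{M}_q$ of $\Dr_q$ from Proposition \ref{propFinShtEq} produces a unique flag of quotient $\pf^n$-torsion shtukas $\underline{\Ec}|_{D_{n,S}} = \underline{\Lc}_r \twoheadrightarrow \underline{\Lc}_{r-1} \twoheadrightarrow \ldots \twoheadrightarrow \underline{\Lc}_0 = 0$, where the $\Oc_{D_{n,S}}$-module structure on each $\underline{\Lc}_i$ is the canonical one coming from the $\Oc_0/\pf^n$-action on $\Hb_{r-i}$. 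By the equivalence of the flag of quotients and the flag of submodule schemes noted after Proposition \ref{propFinShtEq}, the fibre product $S \times_{\Sht_{r, \Gamma_0(\pf^n)}} \Sht_{r, \Omega}$ represents the subfunctor of $S$-schemes over which this flag becomes a naive $\Gamma_0(\pf^n)$-level structure, that is, over which each $\underline{\Lc}_i/\underline{\Lc}_{i-1}$ is locally free of rank $1$ as $\Oc_{D_{n,S}}$-module. In particular the uniqueness of the lift already shows that the map $\Sht_{r, \Omega} \to \Sht_{r, \Gamma_0(\pf^n)}$ is schematic and a monomorphism.

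It then remains to show that this subfunctor is represented by an open subscheme of $S$. For this, I observe that each quotient $\underline{\Lc}_i/\underline{\Lc}_{i-1}$ is coherent as $\Oc_{D_{n,S}}$-module and finite locally free of rank $n$ as $\OS$-module. By Nakayama's lemma applied with respect to the uniformiser $\varpi$, the locus where $\underline{\Lc}_i/\underline{\Lc}_{i-1}$ is locally free of rank $1$ over $\Oc_{D_{n,S}}$ coincides with the locus where $(\underline{\Lc}_i/\underline{\Lc}_{i-1})/\varpi(\underline{\Lc}_i/\underline{\Lc}_{i-1})$ is a line bundle on $S$; since any surjection $\Oc_{D_{n,S}} \twoheadrightarrow \underline{\Lc}_i/\underline{\Lc}_{i-1}$ produced from a local generator is automatically an isomorphism by the equality of $\OS$-ranks, flatness of the successive extensions then forces $\underline{\Lc}_i$ itself to be locally free of rank $i$. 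Openness therefore reduces to the standard fact that the rank function of a coherent sheaf is upper semi-continuous and takes its generic value on an open subset; taking the intersection over $1 \leq i \leq r-1$ still yields an open subscheme of $S$.

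Finally, for quasi-compactness of the open immersion, I invoke the fact that both $\Sht_{r, \Omega} \to \Sht_r$ (from Lemma \ref{lemShtBalStructRep} via its description in terms of chains of $\pf^n$-isogenies, compare Proposition \ref{propIIsoRepSht}) and $\Sht_{r, \Gamma_0(\pf^n)} \to \Sht_r$ (Theorem \ref{thmG0Reg}) are schematic and finite. By the usual cancellation property for finite morphisms with separated targets, the map $\Sht_{r, \Omega} \to \Sht_{r, \Gamma_0(\pf^n)}$ is of finite type, so the open immersion established above is automatically quasi-compact.

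I expect the main technical point to be verifying in Step~1 that the $\Oc_{D_{n,S}}$-module structure read off through the finite shtuka equivalence $\underline{M}_q$ on $\underline{\Lc}_i/\underline{\Lc}_{i-1}$ is exactly the one induced by the $\Oc_0/\pf^n$-module structure on $\Hb_{r-i+1}/\Hb_{r-i}$ (and hence that naive level structures are in bijection, not merely in correspondence, with the $\Oc_{D_n}$-free locus of Drinfeld level structures). Once this compatibility is in place, the remaining steps reduce to the openness-of-rank statement for coherent sheaves, and to invoking the finiteness results already established in Sections~4--5.
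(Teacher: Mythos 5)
Your openness argument is essentially the paper's: both identify $\Sht_{r,\Omega}$ with the locus in $\Sht_{r,\Gamma_0(\pf^n)}$ where the successive subquotients correspond under the finite shtuka equivalence to rank-$1$ locally free $\ODnS$-modules, and both prove openness via a Nakayama-type argument. You reduce to the line-bundle locus of $M/\varpi M$ over $\OS$; the paper instead inverts a local function $a \in R[\varpi]/(\varpi^n)$ and observes that inverting $a$ is the same as inverting its constant term $a_0 \in R$. These are interchangeable formulations of the same idea, so on the main point the argument is sound. (Your phrase ``upper semi-continuity of the rank'' slightly undersells what is needed — one wants that the locus where a coherent sheaf is invertible is open, which takes the extra Nakayama step you already sketched — but that is a standard fact and your intermediate reformulation via $M/\varpi M$ is correct.)

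The quasi-compactness paragraph, however, contains a genuine error. You assert that $\Sht_{r,\Omega} \to \Sht_r$ is finite, citing Lemma \ref{lemShtBalStructRep} and Proposition \ref{propIIsoRepSht}. Those results concern balanced level structures and chains of $\pf^n$-isogenies; the stack $\Sht_{r,\Omega}$ of \emph{naive} $\Gamma_0(\pf^n)$-level structures is only an \emph{open} substack of $\Sht_{r,(1,\ldots,1)\text{-}\pf^n\text{-chain}}$ (by exactly the openness argument under discussion), so finiteness does not descend. Indeed $\Sht_{r,\Omega}\to\Sht_r$ fails to be proper for $n>1$ — that is the whole content of Remark \ref{remNaiveLvlBad} and the motivation for this paper. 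If your cancellation argument were valid, the map $\Sht_{r,\Omega}\to\Sht_{r,\Gamma_0(\pf^n)}$ would be both finite and an open immersion, hence an isomorphism onto a union of connected components, contradicting the missing supersingular points. Quasi-compactness of the open immersion does hold, but for a softer reason: both stacks are Deligne-Mumford and locally of finite type over $\Fq$, hence locally noetherian, and any open immersion into a locally noetherian algebraic stack is automatically quasi-compact. Replace the cancellation argument with this observation.
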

\begin{proof}
	By construction, $\Sht_{r, \Omega}$ is identified with the substack of $\Sht_{r, \Gamma_0(\pf^n)}$ where all the $\Hb_i$ correspond to finite locally free $\ODnS$-modules of rank $i$ via the finite shtuka equivalence, or equivalently the locus where all $\Hb_i/\Hb_{i-1}$ correspond to finite locally free $\ODnS$-modules of rank $1$.
	
	In order to show that this condition is representable by an open subscheme, we work locally and assume that $S = \Spec(R)$ is affine. Let $M$ be an $R[\varpi]/(\varpi^n)$-module such that $M$ is free of rank $n$ as $R$-module. Then $M$ is locally free of rank 1 as $R[\varpi]/(\varpi^n)$-module if and only if it is generated by a single element.
	
	Let $q \se R$ be a prime ideal such that $M \otimes \kappa(q)$ is a one-dimensional vector space over the residue field $\kappa(q)$ of $R[\varpi]/(\varpi^n)$ at $(q, \varpi)$ ($\kappa(q)$ is also the residue field of $R$ at $q$). By Nakayama's Lemma, there exists a $a \in (R[\varpi]/(\varpi^n))\setminus(q, \varpi)$ such that $M[a^{-1}]$ is free of rank 1.
	Let $a_0 = a(0)$ be the constant term of $a$. Then $M[a^{-1}] = M[a_0^{-1}]$. 
	Hence, the principal open $D(a_0) \se \Spec(R)$ is an open neighbourhood of $q$ such that $M[a_0^{-1}]$ is locally free of rank 1 as $R[a_0^{-1}][\varpi]/\varpi^n$-module over $D(a)$. Hence, the condition is representable by an open immersion on the base scheme.
\end{proof}

By Propositions \ref{propLevelMapDrinfeld} and \ref{propPara}, we can interpret the level maps to $\Gamma_0(\pf)$-level structures as maps 
$$F_{\underline{m}, 1, \tau} \colon\Sht_{r, \Gamma_0(\pf^n)} \to\Sht_{r, \ff_{\underline{m}, \tau}},$$ 
where $\ff_{\underline{m}, \tau}$ is the alcove in the Bruhat-Tits building corresponding to $\underline{m}$ and $\tau$. This system of maps is compatible with level maps to parahoric levels given by smaller facets by Corollary \ref{corCanSubModG0} and thus define a map
\begin{equation}
	F_{\Omega}\colon \Sht_{r, \Gamma_0(\pf^n)} \to \varprojlim_{\ff \prec \Omega}\Sht_{r,\ff}.
	\label{eqnGamma0MapJoin}
\end{equation} 
\begin{prop}
	\label{prop2Closed}
	The map $F_{\Omega}$ is a closed immersion.
\end{prop}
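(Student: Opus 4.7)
The plan is to prove that $F_\Omega$ is representable by closed immersions by directly analyzing fiber products over the target. Consider any scheme $T$ equipped with a morphism $T \to \varprojlim_{\ff \prec \Omega} \Sht_{r,\ff}$, i.e.\ a compatible system $(\underline{\Ec}_{\underline{m}})_{\underline{m} \prec \Omega}$ of Drinfeld shtukas indexed by the vertices of $\Omega$, together with the prescribed $\pf$-isogenies along edges and all higher-facet compatibilities. Setting $\underline{\Ec} := \underline{\Ec}_{(0,\ldots,0)}$, the limit compatibilities ensure that for each vertex $\underline{m}$, any sequence of edge-isogenies from $(0,\ldots,0)$ to $\underline{m}$ yields one and the same $\pf^n$-isogeny $\underline{\Ec}_{\underline{m}} \hookrightarrow \underline{\Ec}$. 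By the finite shtuka equivalence together with Proposition \ref{propKerIsog}, this isogeny corresponds to a finite locally free $\Oc_0/\pf^n$-submodule scheme $\Hb_{\underline{m}} \se \underline{\Ec}[\pf^n]$ with strict $\Fq$-action of rank $\sum_i m_i$. Writing $\Hb_i := \Hb_{(n,\ldots,n,0,\ldots,0)}$ with $i$ entries equal to $n$ for $0 \le i \le r-1$ and $\Hb_r := \underline{\Ec}[\pf^n]$, I obtain a flag $0 = \Hb_0 \se \Hb_1 \se \ldots \se \Hb_r = \underline{\Ec}[\pf^n]$ of the expected ranks.

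I then claim that the fiber product $Z := \Sht_{r, \Gamma_0(\pf^n)} \times_{\varprojlim_{\ff \prec \Omega} \Sht_{r,\ff}} T$ is representable by the closed subscheme $W \se T$ cut out by two conditions: (I) each successive quotient $\Hb_i/\Hb_{i-1}$ is $\pf^n$-cyclic; and (II) for every vertex $\underline{m} \prec \Omega$, the canonical subscheme $\Hb_{\underline{m}}^{\mathrm{can}}$ built from the flag $(\Hb_i)_i$ via Corollary \ref{corCanSubModG0} equals the given $\Hb_{\underline{m}}$. Condition (I) is closed by Theorem \ref{thmMainThmCyc}(\ref{thmMainThmCycClosed}), and condition (II) is closed because equality of two closed subschemes of the finite $T$-scheme $\underline{\Ec}[\pf^n]$ is locally defined by finitely many equations, as in \cite[Lemma 6.7.3]{Katz1985}. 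The identification $Z \cong W$ then unfolds as follows: in one direction, any $\Gamma_0(\pf^n)$-level structure on $\underline{\Ec}$ whose image under $F_\Omega$ reproduces the given system must have its flag equal to $(\Hb_{(n,\ldots,n,0,\ldots,0)})_i$ because $F_\Omega$ was defined by reading off canonical subschemes at standard vertices, so (I) holds and the same definition at remaining vertices forces (II); conversely, when (I) holds the flag $(\Hb_i)_i$ is a valid $\Gamma_0(\pf^n)$-structure on $\underline{\Ec}$, and (II) states precisely that its image under $F_\Omega$ agrees with the system on $T$. Hence $Z \hookrightarrow T$ is a closed immersion, which shows that $F_\Omega$ is schematic and a closed immersion.

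The main obstacle will be the 2-categorical bookkeeping needed to pin down condition (II): one must verify that isomorphisms of compatible systems in $\varprojlim_{\ff \prec \Omega} \Sht_{r, \ff}$ are completely rigidified once one fixes the isomorphism at the vertex $(0, \ldots, 0)$, so that the fiber product $Z$ really is representable by a closed subfunctor of $T$ rather than a nontrivial stack over $T$. This rigidity follows from the observation that an isomorphism of systems is determined by its restriction to any single shtuka via the connecting $\pf^n$-isogenies, since those isogenies are injective on the underlying vector bundles; combined with the finiteness of the automorphism group of a point of $\Sht_r$ valued in any reasonable base, this reduces matching of systems to matching of submodule schemes of $\underline{\Ec}[\pf^n]$. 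With this rigidity in hand, the fiber-product analysis identifies $Z$ with the closed subscheme $W$, yielding the desired conclusion.
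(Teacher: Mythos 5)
Your proof is correct, but it takes a genuinely different route from the paper's. The paper's proof is very short: since each component map $\Sht_{r,\Gamma_0(\pf^n)} \to \Sht_{r,\ff}$ is schematic and finite (Proposition \ref{propLevelMapDrinfeld}, together with Proposition \ref{propPara} for the alcoves), the induced map to the limit $F_\Omega$ is schematic and finite; and by unwinding the moduli descriptions $F_\Omega$ is a monomorphism, because a $\Gamma_0(\pf^n)$-structure is recovered from its images at the standard vertices $(n,\ldots,n,0,\ldots,0)$. A finite monomorphism of (Deligne--Mumford) stacks is a closed immersion, which finishes the proof. Your argument instead works directly with $T$-points of the limit, reconstructs candidate submodule schemes $\Hb_{\underline{m}}$ from the compatible chain of isogenies, and identifies the fiber product over $T$ with the closed subscheme cut out by cyclicity of the successive quotients (Theorem \ref{thmMainThmCyc}(\ref{thmMainThmCycClosed})) and by agreement of the canonical submodule schemes (Corollary \ref{corCanSubModG0}) with the given ones, closedness of the latter being \cite[Lemma 6.7.3]{Katz1985}. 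The trade-off is clear: the paper's route is far shorter because it leans on the already-established finiteness of the individual level maps and a standard fact about finite monomorphisms, while your route is more self-contained and has the benefit of producing explicit equations for the image of $F_\Omega$ inside the limit, at the cost of the extra bookkeeping you flag at the end. On that last point, the rigidity issue you worry about can be dispatched more cleanly than by appealing to finiteness of automorphism groups: both source and target of $F_\Omega$ are schematic and finite over $\Sht_r$, so $F_\Omega$ is a morphism of schemes after base change to any scheme over $\Sht_r$, and the $2$-categorical ambiguity disappears.
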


\begin{proof}
	As all the maps $\Sht_{r, \Gamma_0(\pf^n)} \to \varprojlim_{\ff \prec \Omega}\Sht_{r,\ff}$ are schematic and finite, so is their limit. Moreover, by the explicit moduli description it is clear that the map $F_{\Omega}$ is a monomorphism.
\end{proof}

\begin{thm}
	\label{thmDrinfeldClosureBT}
	The map $\Sht_{r,\Omega} \to \varprojlim_{\ff \prec \Omega} \Sht_{r,\ff}$ is schematic and representable by a quasi-compact open immersion that is an isomorphism away from $0$. Its schematic image in the sense of \cite{Emerton2021} is 
	$$\overline{\Sht}_{r, \Omega} = \Sht_{r, \Gamma_0(\pf^n)}$$ 
	via the maps 
	$$\Sht_{r,\Omega} \hookrightarrow \Sht_{r, \Gamma_0(\pf^n)} \hookrightarrow  \varprojlim_{\ff \prec \Omega} \Sht_{r,\ff}$$
	constructed above.
	In the parahoric case $n = 1$, the map $\Sht_{r,\Omega} \to \Sht_{r, \Gamma_0(\pf)}$ is an isomorphism.
\end{thm}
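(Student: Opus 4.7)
My plan is to combine Propositions~\ref{prop1Open} and~\ref{prop2Closed} with a density argument. First I would note that these two propositions supply the factorisation
$$\Sht_{r,\Omega}\hookrightarrow\Sht_{r,\Gamma_0(\pf^n)}\hookrightarrow\varprojlim_{\ff\prec\Omega}\Sht_{r,\ff},$$
where the first arrow is a schematic quasi-compact open immersion and the second is a schematic closed immersion. Consequently the composition is schematic, quasi-compact, and at least a locally closed immersion. To upgrade it to an \emph{open} immersion it suffices to see that $\Sht_{r,\Omega}$ is the preimage in $\varprojlim_{\ff\prec\Omega}\Sht_{r,\ff}$ of an open substack of some parahoric factor. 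The idea is that the open condition cutting out $\Sht_{r,\Omega}$ inside $\Sht_{r,\Gamma_0(\pf^n)}$ in the proof of Proposition~\ref{prop1Open}, namely that each isogeny cokernel be locally free of rank one as an $\Oc_{D_{n,S}}$-module, is intrinsic to the edge data in $\varprojlim_{\ff\prec\Omega}\Sht_{r,\ff}$. Concretely, for each edge $\ff$ connecting adjacent vertices $\underline{m}\prec\underline{m}'\prec\Omega$, the stack $\Sht_{r,\ff}$ parametrises a pair of Drinfeld shtukas with a $\pf^n$-isogeny between them, and the locus where this cokernel is locally free of rank one as $\Oc_{D_{n,S}}$-module is open by the same argument as in Proposition~\ref{prop1Open}. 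Intersecting these open conditions along all edges of $\Omega$ and pulling back to the limit recovers $\Sht_{r,\Omega}$.

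Next I would establish the isomorphism away from~$0$. Since $\pf$ becomes invertible on $X'\setminus\{0\}$, every $\pf^n$-isogeny in each $\Sht_{r,\ff}$ becomes an isomorphism; hence every transition map $\Sht_{r,\ff}\to\Sht_{r,\ff'}$ in the inverse system becomes an isomorphism, and the limit collapses to $\Sht_r$. Combined with Lemma~\ref{lemEt}, which says that every Drinfeld $\Gamma_0(\pf^n)$-level structure is naive whenever the characteristic is away from $0$, we obtain $\Sht_{r,\Omega}|_{X'\setminus\{0\}}=\Sht_{r,\Gamma_0(\pf^n)}|_{X'\setminus\{0\}}=\Sht_r|_{X'\setminus\{0\}}$, so the map is an isomorphism over $X'\setminus\{0\}$.

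For the schematic image, I would argue by density inside the closed substack $\Sht_{r,\Gamma_0(\pf^n)}\hookrightarrow\varprojlim_{\ff\prec\Omega}\Sht_{r,\ff}$. Since $\Sht_{r,\Gamma_0(\pf^n)}$ is regular by Theorem~\ref{thmG0Reg}, it is reduced, and by the flatness part of the same theorem every irreducible component dominates $X'$; hence each meets the generic fibre over $X'$, where $\Sht_{r,\Omega}$ equals $\Sht_{r,\Gamma_0(\pf^n)}$. Therefore $\Sht_{r,\Omega}$ is topologically dense in $\Sht_{r,\Gamma_0(\pf^n)}$, and by reducedness also scheme-theoretically dense. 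Because $\Sht_{r,\Gamma_0(\pf^n)}$ is closed in the limit, it follows that the schematic image of $\Sht_{r,\Omega}\hookrightarrow\varprojlim_{\ff\prec\Omega}\Sht_{r,\ff}$ is precisely $\Sht_{r,\Gamma_0(\pf^n)}$. Finally, for the parahoric case $n=1$: the map $\Sht_{r,\Omega}\hookrightarrow\Sht_{r,\Gamma_0(\pf)}$ is a schematic open immersion that is essentially surjective by Proposition~\ref{propPara}, and a schematic open immersion which is also essentially surjective on groupoid-points is an equivalence.

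The main technical obstacle is the open-immersion claim in the first paragraph, i.e.\ verifying that the open condition distinguishing $\Sht_{r,\Omega}$ from $\Sht_{r,\Gamma_0(\pf^n)}$ genuinely descends through the closed immersion into the limit. Once this descent is in place, the remaining pieces are essentially flatness/density bookkeeping together with the two input propositions.
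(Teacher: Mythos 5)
Your overall strategy matches the paper's: use Propositions~\ref{prop1Open} and~\ref{prop2Closed} to get a locally closed immersion, identify the open condition intrinsically in the limit, and deduce the schematic image from density via flatness (Theorem~\ref{thmG0Reg}), with the parahoric case handled by Proposition~\ref{propPara}. However, the precise open condition you propose on the limit is incorrect. For an \emph{edge} $\ff$ in the building connecting adjacent vertices $\underline{m}$ and $\underline{m}'$, the cokernel of the corresponding isogeny $\underline{\Ec}_{\underline{m}'}\hookrightarrow\underline{\Ec}_{\underline{m}}$ has $\Oc_S$-rank equal to the number of coordinates in which $\underline{m}$ and $\underline{m}'$ differ (at most $r-1$), and it is annihilated by $\varpi$ as an $\Oc_0/\pf^n$-module. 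For $n>1$ such a module is never locally free of rank one over $\Oc_{D_{n,S}}$, so the locus you describe is empty rather than being all of $\Sht_{r,\Omega}$; your formulation conflates building-edges with the sidelength-$n$ faces of $\Omega$.

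What the paper's proof does instead is impose the rank-one condition on the \emph{composite} isogenies between corner vertices of $\Omega$ along a face, namely $\underline{\Ec}_{(n,\ldots,n,0,0,\ldots,0)}\hookrightarrow\underline{\Ec}_{(n,\ldots,n,n,0,\ldots,0)}$, which are chains of $n$ adjacent-edge isogenies and whose cokernels have $\Oc_S$-rank $n$; for these the condition of being locally free of rank one over $\Oc_{D_{n,S}}$ is meaningful and cuts out precisely $\Sht_{r,\Omega}$, and it is open by the argument of Proposition~\ref{prop1Open}. These composites are well defined from a point of $\varprojlim_{\ff\prec\Omega}\Sht_{r,\ff}$ because all the intermediate vertices lie in $\Omega$, so the condition really is intrinsic to the limit, which is the content you need. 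With that correction your argument goes through, and the remaining density, flatness, and parahoric steps are fine as you wrote them.
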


\begin{proof}
	The assertion for the parahoric case is Proposition \ref{propPara}. That the inclusion 
	$$\Sht_{r,\Omega} \to \varprojlim_{\ff \prec \Omega}\Sht_{r,\ff}$$ 
	is schematic and representable by a quasi-compact locally closed immersion follows from Propositions \ref{prop1Open} and \ref{prop2Closed}. That the image of  $\Sht_{r,\Omega}$ in $\Sht_{r, \Gamma_0(\pf^n)}$ is dense follows from the fact that the inclusion (\ref{eqnGamma0Map}) is an isomorphism away from $0$ by Lemma \ref{lemEt} together with the flatness of $\Sht_{r, \Gamma_0(\pf^n)}$ over $X'$ from Theorem \ref{thmG0Reg}. 
	
	In order to see that the map $\Sht_{r,\Omega} \to \varprojlim_{\ff \prec \Omega} \Sht_{r,\ff}$ is already open, we follow the proof of Proposition \ref{prop1Open}. One can again check that a point $(\Ec_{\underline{m}})_{\underline{m}} \in \varprojlim_{\ff \prec \Omega}\Sht_{r,\ff}$ comes from $\Sht_\Omega$ if and only if the cokernels of the isogenies $\underline{\Ec}_{(n, \ldots,n,0,0, \ldots,0)} \hookrightarrow \underline{\Ec}_{(n, \ldots,n,n,0, \ldots,0)}$ are locally free of rank 1 as $\ODnS$-modules. 
	By the argument in the proof of Proposition \ref{prop1Open}, this condition is representable by an open subscheme.
%
\end{proof}


	\bibliographystyle{alphaurl} 
	\bibliography{literature.bib}
	
\end{document}